\setlist[enumerate]{leftmargin=*}
\newenvironment{customthm}[1]
  {\innercustomthm}
  {\endinnercustomthm}
\newenvironment{customcor}[1]
  {\innercustomcor}
  {\endinnercustomcor}
\newenvironment{customprop}[1]
  {\innercustomprop}
  {\endinnercustomprop}
\newtheorem{thm}{Theorem}[section]
\newtheorem{prop}[thm]{Proposition}
\newtheorem{cor}[thm]{Corollary}
\newtheorem{lem}[thm]{Lemma}
\theoremstyle{definition}
\newtheorem{define}[thm]{Definition}
\theoremstyle{remark}
\newtheorem{rem}[thm]{Remark}
\newcommand{\ve}[1]{\boldsymbol{\mathbf{#1}}}
\newcommand{\R}{\mathbb{R}}
\newcommand{\Z}{\mathbb{Z}}
\newcommand{\N}{\mathbb{N}}
\renewcommand{\d}{\partial}
\renewcommand{\subset}{\subseteq}
\renewcommand{\tilde}{\widetilde}
\renewcommand{\bar}{\overline}
\renewcommand{\hat}{\widehat}
\newcommand{\iso}{\cong}
\DeclareMathOperator{\codim}{{codim}}
\DeclareMathOperator{\cotr}{{cotr}}
\DeclareMathOperator{\Cone}{{Cone}}
\DeclareMathOperator{\Crit}{{Crit}}
\DeclareMathOperator{\gr}{{gr}}
\DeclareMathOperator{\Hom}{{Hom}}
\DeclareMathOperator{\id}{{id}}
\DeclareMathOperator{\ind}{{ind}}
\DeclareMathOperator{\Int}{{int}}
\DeclareMathOperator{\Lef}{{Lef}}
\DeclareMathOperator{\rank}{{rank}}
\DeclareMathOperator{\red}{{red}}
\DeclareMathOperator{\Spin}{{Spin}}
\DeclareMathOperator{\sing}{{sing}}
\DeclareMathOperator{\Top}{{Top}}
\DeclareMathOperator{\Sym}{{Sym}}
\DeclareMathOperator{\Tors}{{Tors}}
\DeclareMathOperator{\tr}{{tr}}
\DeclareMathOperator{\coker}{{coker}}
\newcommand{\PD}{\mathit{PD}}
\newcommand{\bD}{\mathbb{D}}
\newcommand{\bF}{\mathbb{F}}
\newcommand{\bK}{\mathbb{K}}
\newcommand{\bL}{\mathbb{L}}
\newcommand{\bT}{\mathbb{T}}
\newcommand{\cC}{\mathcal{C}}
\newcommand{\cD}{\mathcal{D}}
\newcommand{\cE}{\mathcal{E}}
\newcommand{\cH}{\mathcal{H}}
\newcommand{\cL}{\mathcal{L}}
\newcommand{\cM}{\mathcal{M}}
\newcommand{\cR}{\mathcal{R}}
\newcommand{\cU}{\mathcal{U}}
\newcommand{\frS}{\mathfrak{S}}
\newcommand{\frT}{\mathfrak{T}}
\newcommand{\frU}{\mathfrak{U}}
\newcommand{\frj}{\mathfrak{j}}
\newcommand{\frs}{\mathfrak{s}}
\newcommand{\frt}{\mathfrak{t}}
\newcommand{\fru}{\mathfrak{u}}
\newcommand{\as}{\ve{\alpha}}
\newcommand{\bs}{\ve{\beta}}
\newcommand{\gs}{\ve{\gamma}}
\newcommand{\ds}{\ve{\delta}}
\newcommand{\zetas}{\ve{\zeta}}
\newcommand{\xis}{\ve{\xi}}
\newcommand{\taus}{\ve{\tau}}
\newcommand{\Ds}{\ve{\Delta}}
\newcommand{\sigmas}{\ve{\sigma}}
\newcommand{\ps}{\ve{p}}
\newcommand{\ws}{\ve{w}}
\newcommand{\zs}{\ve{z}}
\newcommand{\xs}{\ve{x}}
\newcommand{\ys}{\ve{y}}
\newcommand{\GY}{\Gamma^{\mathsf{Y}}}
\newcommand{\GYbar}{\bar{\Gamma}{}^{\mathsf{Y}}}
\newcommand{\GYup}{\Gamma^{\mathbin{\text{\rotatebox[origin=c]{180}{$\mathsf{Y}$}}}}}
\newcommand{\GYupbar}{\bar{\Gamma}{}^{\mathbin{\text{\rotatebox[origin=c]{180}{$\mathsf{Y}$}}}}}
\newcommand{\boldHF}{\ve{\HF}}
\newcommand{\boldCF}{\ve{\CF}}
\newcommand{\bCF}{\ve{\CF}}
\newcommand{\mix}{\mathrm{mix}}
\newcommand{\CF}{\mathit{CF}}
\newcommand{\HF}{\mathit{HF}}
\renewcommand{\a}{\alpha}
\renewcommand{\b}{\beta}
\newcommand{\g}{\gamma}
\newcommand{\Dt}{\Delta}
\newcommand{\dotsimeq}{\mathrel{\dot{\simeq}}}
\DeclareMathOperator{\grHom}{\mathrm{grHom}}
\title{Duality and mapping tori in Heegaard Floer homology}
\author{Ian Zemke}
\address{Department of Mathematics\\Princeton University\\  Princeton, NJ 08544, USA}
\email{izemke@math.princeton.edu}
\thanks{This research was supported by NSF grant DMS-1703685}
\begin{document}

\begin{abstract} We show that the graph TQFT for Heegaard Floer homology satisfies a strong version of Atiyah's duality axiom for a TQFT. As an application, we compute some Heegaard Floer mixed invariants of 4-dimensional mapping tori in terms of Lefschetz numbers on $\HF^+$. 
\end{abstract}

\maketitle

\tableofcontents

\section{Introduction}

Heegaard Floer homology is a package of invariants for 3- and 4-manifolds introduced by Ozsv\'{a}th and Szab\'{o} \cite{OSDisks} \cite{OSTriangles}. To a closed and oriented 3-manifold $Y$ with a $\Spin^c$ structure $\frs$, they associated $\Z[U]$-modules 
\[
\HF^-(Y,\frs), \quad \HF^\infty(Y,\frs),\quad  \text{and} \quad  \HF^+(Y,\frs), 
\]
and a $\Z$-module  $\hat{\HF}(Y,\frs).$ Additionally, they constructed a reduced group $\HF^{\pm}_{\red}(Y,\frs)$, which we can view  either as a submodule of $\HF^-(Y,\frs)$, or a quotient of $\HF^+(Y,\frs)$.

  If $Y_1$ and $Y_2$ are connected, closed, and oriented 3-manifolds and $W$ is a cobordism from $Y_1$ to $Y_2$ with a $\Spin^c$ structure $\frt$, they constructed a cobordism map
\[
F_{W,\frt}:\Lambda^*(H_1(W)/\Tors)\otimes_{\Z} \HF^\circ(Y_1,\frt|_{Y_1})\to \HF^\circ(Y_2,\frt|_{Y_2}),
\]
 for $\circ\in \{+,-,\infty,\wedge\}$.

If $X$ is a smooth, closed, and oriented 4-manifold with a $\Spin^c$ structure $\frt$,  Ozsv\'{a}th and Szab\'{o} defined a 4-manifold invariant
\begin{equation}
\Phi_{X,\frt}:  \Lambda^*(H_1(X)/\Tors)  \otimes_{\Z}\Z[U]\to \Z/{\pm 1}. \label{eq:mixed-invariant}
\end{equation}
The invariant $\Phi_{X,\frt}$ is often called the \emph{mixed invariant} since its construction uses both $\HF^-$ and $\HF^+$. The mixed invariants have the power to distinguish smooth structures on homeomorphic 4-manifolds, and conjecturally agree with the Seiberg-Witten invariants. Computations are often challenging.

Ozsv\'{a}th and Szab\'{o}'s cobordism maps satisfy many standard TQFT axioms, though there are several properties not satisfied by their description. For example, there is no invariant assigned to a disconnected 3-manifold. This is unsatisfying for certain purposes. For example, suppose $\bK$ is a field and $Z$ is a (3+1)-dimensional TQFT, taking values in $\bK$-vector spaces. This means that to a 3-manifold $Y$, there is a $\bK$-vector space $Z(Y)$, and to a cobordism $W$ from $Y_1$ to $Y_2$, there is a $\bK$-linear map $Z(W)$ from $Z(Y_1)$ to $Z(Y_2)$. Since $Z(\emptyset)=\bK$, a closed 4-manifold determines an element of $\bK$.  A general philosophy suggests that the numerical invariant $Z(Y\times S^1)\in \bK$ should be the rank or Euler characteristic of $Z(Y)$. A natural proof of this fact involves decomposing $Y\times S^1$ into two copies of $Y\times [0,1]$, which are glued together along $Y\sqcup -Y$. Next, one usually expects $Z(Y\sqcup -Y)\iso Z(Y)\otimes_{\bK} Z(Y)^\vee$, where $\vee$ denotes the dual of $Z(Y)$, and also that the map for $Y\times [0,1]$ is either the identity, the algebraic trace, or the cotrace map, depending on which ends are incoming or outgoing. Such a manipulation is not possible using the machinery from \cite{OSTriangles}, since there is no Heegaard Floer group assigned to $Y\sqcup -Y$, and no map assigned to $Y\times [0,1]$, viewed as a cobordism from $\emptyset$ to $Y\sqcup -Y$. We note that monopole Floer homology has a similar limitation to Heegaard Floer homology in this regard, though a description of the Seiberg--Witten invariant of $Y\times S^1$ (conjecturally equivalent to the Ozsv\'{a}th--Szab\'{o} mixed invariant) is known \cite{BaldridgeSWCircleActions}. 

The goal of this paper is compute the Heegaard Floer mixed invariant of $Y\times S^1$ using an argument of the same style as the one in the previous paragraph. To accomplish this, we use the author's graph TQFT \cite{ZemGraphTQFT}, which is a more flexible framework for cobordism maps than Ozsv\'{a}th and Szab\'{o}'s original framework. 

Implicit in Ozsv\'{a}th and Szab\'{o}'s construction of $F_{W,\frt}$ is a choice of path which connects the two components of $\d W$. It turns out that after taking homology, this dependence vanishes on $\HF^-$, $\HF^+$ and $\HF^\infty$ (but interestingly, not for $\widehat{\HF}$, and also not on the chain level) \cite{ZemGraphTQFT}*{Corollary~F}. The graph TQFT uses cobordisms which are decorated with graphs, instead of paths. This allows for cobordisms with disconnected ends. When the graph is a path connecting the two ends, we recover Ozsv\'{a}th and Szab\'{o}'s map. With this in mind, we often refer to Ozsv\'{a}th and Szab\'{o}'s cobordism maps as \emph{path cobordism maps}.

The reader may interpret $\Phi_{X,\frs}(1)$ as corresponding to the mixed invariant of $X$, decorated with a graph which is an arc. If $\xi_1,\dots, \xi_n\in H_1(X)$, then we can interpret $\Phi_{X,\frt}(\xi_1\wedge \cdots \wedge \xi_n)$ as the mixed invariant for $X$, decorated with a graph consisting of the wedge sum of an arc and the $n$ loops $\xi_1,\dots, \xi_n$. This is made precise in Theorem~\ref{thm:mixed-invariants-from-graphs}, below.

Throughout this paper, we work over $\bF_2:=\Z/2\Z$.

\subsection{Heegaard Floer mixed invariants of mapping tori}

If $V=V_0\oplus V_1$ is a finite dimensional, $\Z/2\Z$-graded vector space over a field $\bK$, and  $F:V\to V$ is a map which preserves the $\Z/2\Z$-grading, the \emph{Lefschetz number} of $F$ is the quantity
\[
\Lef(F:V\to V):=\tr(F|_{V_0})-\tr(F|_{V_1})\in \bK.
\]

 Although $\HF^\circ(Y,\frs)$ is often not a finitely generated $\bF_2$-vector space, for $\circ\in \{+,-,\infty\}$, the group $\HF^+_{\red}(Y,\frs)$ is always finitely generated over $\bF_2$. In this paper, we prove the following result about 4-manifolds which admit a non-separating cut:

\begin{thm}\label{thm:mixedinvariantmappingtorus}
Suppose $X^4$ is a closed, oriented 4-manifold with $b_2^+(X)> 1$ and $Y^3\subset X$ is a closed, oriented, connected and non-separating 3-dimensional submanifold. Write $W$ for the cobordism obtained by cutting $X$ along $Y$. Suppose $\frs\in \Spin^c(W)$ is a $\Spin^c$ structure whose restrictions to both copies of $Y$ in $\d W$ agree. Suppose further that at least one of the following holds:
\begin{enumerate}
\item $\frs|_Y$ is non-torsion, or
\item $b_2^+(W)>0$.
\end{enumerate}
 If $\xi\in \Lambda^*(H_1(W)/\Tors)\otimes \bF_2[U]$, then the $\bF_2$ mixed invariants of $X$ satisfy
\[
\Lef\big(F_{W,\frs}(\xi\otimes -):\HF^+_{\red}(Y,\frs|_{Y})\to \HF^+_{\red}(Y,\frs|_{Y})\big)=\sum_{\substack{\frt\in \Spin^c(X)\\ \frt|_W=\frs}}\Phi_{X,\frt}(\xi).
\]
\end{thm}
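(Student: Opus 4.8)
The plan is to recognize the right-hand side as the Heegaard Floer invariant of the closed $4$-manifold obtained by ``closing up'' the cobordism $W$ along its identified incoming and outgoing copies of $Y$, and the left-hand side as the trace of the resulting endomorphism, thereby reducing the theorem to the TQFT principle that such a closing-up computes a trace. Heegaard Floer departs from a naive TQFT in two ways — its closed $4$-manifold invariant is the secondary \emph{mixed} invariant rather than a primary cobordism map, and $\HF^\pm(Y,\frs|_Y)$ is in general infinitely generated — and the role of the non-torsion hypothesis on $\frs|_Y$ is to control both: it forces $\CF^\infty(Y,\frs|_Y)$ to be acyclic, so that $\HF^-(Y,\frs|_Y)\cong\hat{\HF}(Y,\frs|_Y)\cong\HF^+(Y,\frs|_Y)$ is a finite-dimensional $\bF_2$-vector space and the maps $F^-_{W,\frs}(\xi\otimes-)$, $\hat F_{W,\frs}(\xi\otimes-)$, $F^+_{W,\frs}(\xi\otimes-)$ are identified under the canonical isomorphisms.

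\textbf{Step 1 (adapting the admissible cut).} First I would choose an admissible cut $N$ of $X$ disjoint from $Y$, writing $X=X_1\cup_N X_2$ with $X_1\colon S^3\to N$, $X_2\colon N\to S^3$ and $b_2^+(X_i)>0$ (possible as $b_2^+(X)>1$), arranged so that $Y\subset\Int X_1$; then $Y$ is automatically non-separating in $X_1$, since a separating $Y\subset X_1$ would separate $X$ once one tracks how $\d X_1=S^3\sqcup N$ splits between the two sides. Following Ozsv\'ath--Szab\'o, $\Phi_{X,\frt}(\xi)$ is then computed from a chain-level secondary composition: writing $\frt_i:=\frt|_{X_i}$, the chain map $f^-_{X_1,\frt_1}(\xi_1\otimes-)\colon\CF^-(S^3)\to\CF^-(N)$ becomes nullhomotopic after composition with $\CF^-(N)\to\CF^\infty(N)$ because $b_2^+(X_1)>0$; the chain map $f^+_{X_2,\frt_2}(\xi_2\otimes-)$ annihilates the image of $\CF^\infty(N)$ because $b_2^+(X_2)>0$; and a choice of nullhomotopies splices these into a chain map $\CF^-(S^3)\to\CF^+(S^3)$ whose value on the canonical generators is $\Phi_{X,\frt}(\xi)$. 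Here $\xi\in\Lambda^*(H_1(W)/\Tors)\otimes\bF_2[U]$ is decomposed along $W=W_1\cup_N X_2$ with $W_1:=X_1\setminus\nu(Y)$, and $\Phi_{X,\frt}$ is applied to the image of $\xi$ under $H_1(W)\to H_1(X)$; by linearity it suffices to treat decomposable $\xi$.

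\textbf{Step 2 (cutting $X_1$ along $Y$ at the chain level).} Since $Y\subset\Int X_1$ is non-separating, $W_1$ is connected and $X_1$ is recovered by gluing the two copies of $Y$ in $\d W_1$. The strong form of the duality axiom supplies a chain-level perfect pairing $\CF^\circ(Y,\frs|_Y)\otimes\CF^{-\circ}(-Y,\frs|_Y)\to\bF_2$ for which turned-around cobordisms give adjoint maps and bent product cobordisms give the evaluation and coevaluation; combined with the gluing law for the graph TQFT, this realizes self-gluing of a cobordism along identified ends as a partial trace. Feeding this into the mixed construction, the secondary composition computing $\sum_{\frt|_W=\frs}\Phi_{X,\frt}(\xi)$ becomes the partial trace over $\CF(Y,\frs|_Y)$ of the analogous secondary composition attached to $W=W_1\cup_N X_2$ with $\Spin^c$ structure $\frs$ — built from $f^-_{W_1,\frs|_{W_1}}(\xi_1\otimes-)$, $f^+_{X_2,\frs|_{X_2}}(\xi_2\otimes-)$, and nullhomotopies living already on $W_1$ (the sum over $\frt$ with $\frt|_W=\frs$ being exactly the sum over intermediate $\Spin^c$ states that the partial trace produces). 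The key point is that the nullhomotopy of $\big(\CF^-(N)\to\CF^\infty(N)\big)\circ f^-_{W_1}$ can be chosen before closing up: by naturality this composite equals $f^\infty_{W_1}$ precomposed with $\CF^-(S^3)\otimes\CF(Y,\frs|_Y)\to\CF^\infty(S^3)\otimes\CF^\infty(Y,\frs|_Y)$, a chain map with acyclic target ($\HF^\infty(Y,\frs|_Y)=0$) and hence nullhomotopic over the field $\bF_2$; its partial trace recovers the nullhomotopy needed on $X$. Checking that \emph{every} piece of auxiliary data in the mixed construction for $X$ can be realized as a partial trace of corresponding data for $W$ — i.e.\ that the mixed construction commutes with the self-gluing, a chain-homotopy-coherent upgrade of the fact that self-gluing computes a trace — is the step I expect to be the main obstacle, and it is precisely here that one needs the duality axiom on the chain level rather than only on homology.

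\textbf{Step 3 (identifying the trace and finishing).} Pairing off the $\CF^\pm(S^3)$ factors against their canonical generators turns the secondary composition for $W$ into an endomorphism $\psi$ of a finite chain model of $\CF(Y,\frs|_Y)$, so Step 2 reads $\sum_{\frt|_W=\frs}\Phi_{X,\frt}(\xi)=\tr(\psi)$. On homology $\psi$ induces an endomorphism of $\HF(Y,\frs|_Y)$; since $\CF^\infty(Y,\frs|_Y)$ is acyclic the $\CF^\infty$-contributions in the construction of $\psi$ are homotopically trivial, so this endomorphism is simply $F^+_{W,\frs}(\xi\otimes-)$ under $\HF(Y,\frs|_Y)\cong\HF^+(Y,\frs|_Y)$. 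Because the trace of a chain endomorphism equals the alternating sum of the traces it induces on homology, and because over $\bF_2$ the Lefschetz number $\Lef(F)=\tr(F|_{V_0})-\tr(F|_{V_1})$ coincides with $\tr(F|_{V_0})+\tr(F|_{V_1})$, this gives $\tr(\psi)=\Lef\big(F^+_{W,\frs}(\xi\otimes-)\colon\HF^+(Y,\frs|_Y)\to\HF^+(Y,\frs|_Y)\big)$. A routine grading count reconciles the degrees on the two sides — both vanish unless $F^+_{W,\frs}(\xi\otimes-)$ preserves the relative mod-$2$ grading, which is the regime in which $\Lef$ is defined — and summing over decomposable $\xi$ completes the proof.
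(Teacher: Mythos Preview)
Your high-level strategy—realize the closed $4$-manifold invariant as a self-gluing of $W$ along $Y$ and interpret the result as a trace—is exactly the paper's approach. But there is a genuine gap in your algebra that the paper devotes a section (Proposition~\ref{prop:algebraicmappingtorus}) to resolving, and which your proposal sidesteps.

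The trace and cotrace supplied by the duality axiom (Theorem~\ref{thm:dualityv1}) are $\bF_2[U]$-linear: they go between $\CF^-(Y)\otimes_{\bF_2[U]}\CF^-(-Y)$ and $\bF_2[U]$. Consequently the ``trace of a chain endomorphism'' that self-gluing produces lives in $\bF_2[U]$, not $\bF_2$, and the Hopf trace identity you invoke in Step~3 fails over $\bF_2[U]$ when the homology is $U$-torsion. Concretely, if $\CF^-(Y)$ were the two-step complex $\ve a\xrightarrow{U^n}\ve b$, then $(\tr\circ(F\otimes\id)\circ\cotr)(1)$ for $F=\id$ is $2=0\in\bF_2[U]$, while $\chi(\HF^+)=n$. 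Your claim that $\HF^-\cong\hat\HF\cong\HF^+$ is also incorrect: for this same complex $\dim_{\bF_2}\HF^-=n$ but $\dim_{\bF_2}\hat\HF=2$. So there is no finite $\bF_2$-model of $\CF(Y,\frs|_Y)$ on which the cobordism maps act and for which a naive trace gives the Lefschetz number.

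The paper's fix has two ingredients you are missing. Algebraically, Proposition~\ref{prop:algebraicmappingtorus} shows that $\Lef(F_*\colon H^+(C)\to H^+(C))$ equals the $U^{-1}$-coefficient of $(\tr\circ\delta^{-1}\circ(F\otimes\Phi^\vee)\circ\cotr)(1)$, where $\Phi$ is the formal $U$-derivative of the differential and $\delta$ is the connecting map; the inserted $\Phi^\vee$ and $\delta^{-1}$ are precisely what correct the naive $\bF_2[U]$-trace. Geometrically, the paper realizes this by taking the cut to be $N=Y\sqcup -Y$ itself (not a cut disjoint from $Y$): the piece of $X$ between the two copies then factors as cotrace, followed by $W\sqcup(-Y\times[0,1])$ carrying a \emph{broken} path on the $-Y\times[0,1]$ factor, followed by trace (Figure~\ref{fig::4}). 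The broken-path graph cobordism is what inserts $\Phi_w^\vee$ (Lemma~\ref{lem:phi=brokenpathcobordism}), and the $\delta^{-1}$ is exactly the one appearing in the definition of the mixed invariant along $N$. Your Step~2, which tries to commute self-gluing along $Y$ with the mixed construction for an unrelated admissible cut, never produces this $\Phi^\vee$ and so cannot yield the correct formula.
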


\begin{rem}
Some care is required to define the map $F_{W,\frs}$ because the cobordism map $F_{W,\frs}$ requires a choice of path connecting the two components of $\d W$. According to \cite{ZemGraphTQFT}*{Theorem~F}, the dependence vanishes on $\HF^+$, and hence the Lefschetz number of $F_{W,\frs}$ on $\HF^+_{\red}$ is also independent of the path.
\end{rem}

A motivating example of a 4-manifold which admits a non-separating cut is a mapping torus. If $Y$ is a closed, oriented 3-manifold and $\phi:Y\to Y$ is an orientation preserving diffeomorphism, the mapping torus $X_\phi$ of the pair $(Y,\phi)$ is the 4-manifold
\[X_\phi:=\frac{Y\times [0,1]}{(y,1)\sim (\phi(y),0)} .\]  By specializing Theorem \ref{thm:mixedinvariantmappingtorus}, we obtain the following:

\begin{cor}\label{cor:mixedinvariantofactualmappingtori}Suppose $Y^3$ is closed, oriented 3-manifold and $\phi:Y\to Y$ is an orientation preserving diffeomorphism such that $b_2^+(X_\phi)>1$. If $\frs\in \Spin^c(Y)$ is non-torsion and  $\phi_*(\frs)=\frs$, then the mixed invariants of $X_\phi$ satisfy
\[
\Lef\big(\phi_*:\HF^+_{\red}(Y,\frs)\to \HF^+_{\red}(Y,\frs)\big)=\sum_{\substack{\frt\in \Spin^c(X_\phi)\\ \frt|_Y=\frs}}\Phi_{X_\phi,\frt}(1).\]
\end{cor}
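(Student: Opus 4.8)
The plan is to deduce Corollary~\ref{cor:mixedinvariantofactualmappingtori} from Theorem~\ref{thm:mixedinvariantmappingtorus} by recognizing the mapping torus $X_\phi$ as an instance of a $4$-manifold with a non-separating cut. Indeed, take $Y \subset X_\phi$ to be the slice $Y \times \{1/2\}$; this is a closed, oriented, non-separating $3$-dimensional submanifold, since cutting along it turns the circle direction $[0,1]/{\sim}$ into an interval. The cobordism $W$ obtained by cutting $X_\phi$ along this copy of $Y$ is then diffeomorphic to the product cobordism $Y \times [0,1]$, but with the two ends identified via $\phi$ and $\id$ respectively; more precisely, $W \cong (Y\times[1/2,1]) \cup_{\phi} (Y\times[0,1/2])$, which is simply $Y \times [0,1]$ as a smooth cobordism from $Y$ to $Y$, where the identification of the incoming boundary with $Y$ is by $\phi$ and the outgoing by $\id$ (or vice versa). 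Thus the cobordism map $F^+_{W,\frs}$ is, up to this boundary identification, the cobordism map of the product cobordism, which is the identity, precomposed (or postcomposed) with the map $\phi_*$ induced by the diffeomorphism $\phi$ on $\HF^+(Y,\frs)$. Hence $F^+_{W,\frs}(1 \otimes -) = \phi_*$ as an endomorphism of $\HF^+(Y,\phi_*\frs) = \HF^+(Y,\frs)$.

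Next I would address the $\Spin^c$ bookkeeping. We need a $\Spin^c$ structure $\frs \in \Spin^c(W)$ whose two boundary restrictions agree and are non-torsion. Starting from the given non-torsion $\frs \in \Spin^c(Y)$ with $\phi_*(\frs) = \frs$, pull back along the projection $W \cong Y\times[0,1] \to Y$ to obtain a $\Spin^c$ structure on $W$ restricting to $\frs$ on the outgoing end and to $\phi^*(\frs) = \frs$ on the incoming end (using $\phi_*\frs = \frs$ to identify these after the gluing identification). This is the $\frs$ to which we apply Theorem~\ref{thm:mixedinvariantmappingtorus}, with $\xi = 1 \in \Lambda^*(H_1(W;\Z)/\Tors)\otimes\bF_2[U]$. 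The sum on the right-hand side of Theorem~\ref{thm:mixedinvariantmappingtorus} ranges over $\frt \in \Spin^c(X_\phi)$ with $\frt|_W = \frs$, which matches the sum in the corollary: restricting such a $\frt$ to the slice $Y$ recovers $\frs|_Y = \frs$, and conversely every $\frt \in \Spin^c(X_\phi)$ with $\frt|_Y = \frs$ arises this way since $W$ is obtained from $X_\phi$ by cutting. One should check that the indexing sets agree exactly, i.e.\ that $\frt \mapsto \frt|_W$ is a bijection between $\{\frt : \frt|_W = \frs\}$ interpreted correctly and $\{\frt : \frt|_Y = \frs\}$; but this is immediate from the Mayer--Vietoris description of $\Spin^c$ structures under gluing.

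Combining these two observations, Theorem~\ref{thm:mixedinvariantmappingtorus} applied to $(X_\phi, Y, W, \frs, 1)$ yields
\[
\Lef\big(\phi_*:\HF^+(Y,\frs)\to\HF^+(Y,\frs)\big)
= \Lef\big(F^+_{W,\frs}(1\otimes-):\HF^+(Y,\frs)\to\HF^+(Y,\frs)\big)
= \sum_{\substack{\frt\in\Spin^c(X_\phi)\\ \frt|_Y=\frs}}\Phi_{X_\phi,\frt}(1),
\]
which is exactly the claimed identity. Note that $b_2^+(X_\phi) > 1$ is assumed, so the hypothesis $b_2^+(X) > 1$ of the theorem is met, and $\HF^+(Y,\frs)$ is finitely generated over $\bF_2$ because $c_1(\frs)$ is non-torsion, so the Lefschetz number is defined.

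The main obstacle in making this rigorous is the careful identification of the cobordism map $F^+_{W,\frs}$ with $\phi_*$: one must invoke the functoriality of the Heegaard Floer cobordism maps under diffeomorphisms (so that cutting a product region and regluing by $\phi$ really does produce the diffeomorphism-induced map), and one must be attentive to the direction of the cobordism and to which boundary component is identified via $\phi$ versus $\id$, since an incorrect convention would replace $\phi_*$ by $\phi_*^{-1} = (\phi^{-1})_*$. Everything else — the non-separating property of the slice, the product structure of the cut cobordism, and the $\Spin^c$ matching — is essentially formal.
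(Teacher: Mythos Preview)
Your proposal is correct and is exactly the specialization the paper intends: the paper offers no separate proof of the corollary beyond the phrase ``by specializing Theorem~\ref{thm:mixedinvariantmappingtorus},'' and your argument spells out precisely that specialization (cut along a fiber, identify $W$ with the cylinder so that $F^+_{W,\frs}=\phi_*$, and match the $\Spin^c$ indexing via $\Spin^c(W)\cong\Spin^c(Y)$). The one point the paper adds, which you gesture at in your final paragraph, is the basepoint issue: since $\phi$ need not fix the basepoint $w$, one isotopes $\phi$ to do so, and the resulting $\phi_*$ on $\HF^+$ is independent of the isotopy because the $\pi_1(Y,w)$-action vanishes on $\HF^+$ (see the paragraph preceding the restatement of the corollary in Section~\ref{sec:mixedinvariants}).
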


The simplest example of a mapping torus is $Y\times S^1$. In this case, the projection map $\pi:Y\times S^1\to Y$ induces a map
\[\pi^*:\Spin^c(Y)\to \Spin^c(Y\times S^1).\] We say that a $\Spin^c$ structure on $Y\times S^1$ is \textit{$S^1$-invariant} if it is in the image of $\pi^*$. Together with the adjunction inequality, Corollary~\ref{cor:mixedinvariantofactualmappingtori} gives the following:

\begin{cor}
\label{cor:invariantsofYxS1}
If $Y^3$ has $b_1(Y)>1$ and $\frs\in \Spin^c(Y)$ is non-torsion, then 
 \[\Phi_{Y\times S^1,\pi^*(\frs)}(1)=\chi(\HF^+(Y,\frs)).\] Furthermore, if $\frt\in \Spin^c(Y\times S^1)$ is not $S^1$-invariant, then
 \[\Phi_{Y\times S^1,\frt}(1)=0.\]
 \end{cor}

\subsection{Perturbed coefficients}

We prove a more robust version of Theorem~\ref{thm:mixedinvariantmappingtorus} and its corollaries by working with twisted coefficients. If $\omega$ is a closed 2-form on $Y^3$, we can consider the \emph{perturbed Heegaard Floer homology} of $Y$, denoted $\HF^\circ(Y,\frs;\Lambda_{\omega})$, for $\circ\in \{+,-,\infty\}$. The construction is due to Ozsv\'{a}th and Szab\'o \cite{OSGenusBounds}. The groups $\HF^\circ(Y,\frs;\Lambda_{\omega})$ are modules over $\Lambda[U]$, where $\Lambda$ is the Novikov ring. Recall that the Novikov ring is generated by formal sums of the form $\sum_{\a\in \R} c_{\a} e^\a$, where $\{\a\in (-\infty, C]: c_{\a}\neq 0\}$  is finite for each $C$. Here, $e$ denotes the group-ring variable.

 If $\omega$ is a closed 2-form on a cobordism $W$ from $Y_1$ to $Y_2$, then there is also a perturbed version of the cobordism maps:
 \[
 F_{W,\frs;\omega}\colon \HF^\circ(Y_1,\frs_1;\Lambda_{\omega_1})\to \HF^\circ(Y_2,\frs_2;\Lambda_{\omega_2}),
 \] 
 where $\frs_i=\frs|_{Y_i}$ and $\omega_i=\omega|_{Y_i}$.

Theorem~\ref{thm:mixedinvariantmappingtorus} has the following refinement:
\begin{thm}
\label{thm:non-separating-cut-mixed-invariant}
Suppose $X^4$ is a closed, oriented 4-manifold with $b_2^+(X)> 1$ and $Y^3\subset X$ is a closed, oriented, connected, and non-separating 3-dimensional submanifold. Write $W$ for the cobordism obtained by cutting $X$ along $Y$. Suppose $\frs\in \Spin^c(W)$ is a $\Spin^c$ structure whose restrictions to both copies of $Y$ in $\d W$ agree, $\omega$ is a closed 2-form on $W$, and $\xi\in \Lambda^*(H_1(W)/\Tors)\otimes \bF_2[U]$. Furthermore, suppose that at least one of the following holds:
\begin{enumerate}
\item $c_1(\frs)$ is non-torsion on $Y$,
\item $[\omega|_Y]\neq 0\in H^2(Y;\R)$, or
\item $b_2^+(W)>0$.
\end{enumerate}
 Then the $\bF_2$ mixed invariants of $X$ satisfy
\[
\begin{split}
&\Lef\big(F_{W,\frs;\omega|_{W}}(\xi\otimes -):\HF^+_{\red}(Y,\frs|_{Y};\Lambda_{\omega|_Y})\to \HF^+_{\red}(Y,\frs|_{Y};\Lambda_{\omega|_Y})\big)\\
\doteq&\sum_{\substack{\frt\in \Spin^c(X)\\ \frt|_W=\frs}}e^{\langle (\frt-\frt_0)\cup \omega, [X]\rangle }\cdot \Phi_{X,\frt}(\xi),
\end{split}
\]
where $\frt_0$ denotes any choice of base $\Spin^c$ structure. Here $\doteq$ denotes equality up to an overall factor of $e^z$.
\end{thm}

We can apply Theorem~\ref{thm:non-separating-cut-mixed-invariant} to mapping tori. We highlight the case of $Y\times S^1$:

\begin{cor}\label{cor:perturbed-S1xY} Suppose that $Y^3$ has $b_1(Y)>1$, $\frs\in \Spin^c(Y)$, and $\omega$ is a closed 2-form which induces a non-zero element of $H^2(Y;\R)$. Then
\[
\Phi_{Y\times S^1, \pi^*(\frs)}(1)=\chi(\HF^+_{\red}(Y,\frs;\Lambda_\omega)),
\]
and $\Phi_{Y\times S^1,\frt}(1)=0$ if $\frt$ is not $S^1$-invariant.
\end{cor}

The simplest illustration of Corollary~\ref{cor:perturbed-S1xY} is $X=\bT^4$. It is well known that if $\omega$ is a non-zero 2-form on $\bT^3$, when $\HF^+(\bT^3;\Lambda_{\omega})\iso \Lambda$, and furthermore $\HF^+$ is supported only in the torsion $\Spin^c$ structure. See the work of Ai--Peters \cite{AiPetersTwisted}*{Theorem~1.3}, Jabuka--Mark~\cite{JabukaMarkProduct}*{Theorem~10.1}, Lekili~\cite{LekiliBrokenFibrations}*{Theorem~14} and Wu~\cite{WuPerturbedFibered}. In particular, our theorem computes that $\Phi_{\bT^4,\frt_0}= 1$, where $\frt_0\in \Spin^c(\bT^4)$ is the torsion $\Spin^c$ structure, and $\Phi_{\bT^4,\frt}=0$ for all other $\Spin^c$ structures. Of course, $\Phi_{\bT^4,\frt}$ may also be computed using the fact that $\bT^4$ is symplectic, using \cite{OSTrianglesandSymplectic}*{Theorem~1.1}.
 
\subsection{Trace and cotrace cobordisms and the graph TQFT}

In \cite{OSProperties}*{Proposition~2.5}, Ozsv\'{a}th and Szab\'{o} describe the effect of orientation reversal on the Heegaard Floer complexes. They show that if $c_1(\frs)$ is torsion, then
\begin{equation}
\CF^-(-Y,\frs)\iso \grHom_{\bF_2}(\CF^+(Y,\frs), \bF_2), \label{eq:duality-old}
\end{equation}
where $\grHom$ denotes the vector space spanned by homogeneously graded $\bF_2$-linear maps. There is an alternate way of describing the effect of orientation reversal on Heegaard Floer homology, which is more natural from the perspective of a TQFT, as follows. Similar to equation~\eqref{eq:duality-old}, there is a canonical chain isomorphism
\begin{equation}
\CF^-(-Y,\frs)\iso \CF^-(Y,\frs)^\vee:= \Hom_{\bF_2[U]}(\CF^-(Y,\frs),\bF_2[U]).\label{eq:multipointeddualityiso}
\end{equation}
In particular, evaluation gives a natural $\bF_2[U]$-equivariant pairing 
\[
\tr\colon \CF^-(Y,\frs)\otimes_{\bF_2[U]}\CF^-(-Y,\frs)\to \bF_2[U],
\]
 which we call the \emph{trace pairing}. See Lemma~\ref{lem:F-pairing-equivalence} for the relation between the duality statements in equations~\eqref{eq:duality-old} and~\eqref{eq:multipointeddualityiso}.

 Following influential papers of Witten \cite{WittenTQFT} and Segal \cite{SegalCFT}, Atiyah describes  an axiomatic framework for TQFTs \cite{AtiyahTQFT} which features a duality axiom concerning orientation reversal. Accordingly, one should expect the canonical trace pairing to coincide with the cobordism map for $Y\times [0,1]$, viewed as a cobordism from $Y\sqcup -Y$ to $\emptyset$. However, as a cobordism from $Y\sqcup -Y$ to $\emptyset$, $Y\times [0,1]$  is not assigned a map in Ozsv\'{a}th and Szab\'{o}'s TQFT framework.

The graph TQFT of \cite{ZemGraphTQFT} does assign a map to $Y\times [0,1]$, viewed as a cobordism from $Y\sqcup -Y$ to $\emptyset$, as we now describe. If $(Y,\ws)$ is a multi-pointed 3-manifold, the 4-manifold with embedded graph $(Y\times [0,1],\ws\times [0,1])$ can be viewed as a graph cobordism in three ways, depending on which ends we identify as incoming and outgoing. We suggestively call these the \emph{identity cobordism}, the \emph{trace cobordism}, and the \emph{cotrace cobordism}. We illustrate the three configurations in Figure~\ref{fig::55}.  The graph TQFT from \cite{ZemGraphTQFT} assigns maps to the trace and cotrace cobordisms, though it is not immediate from their definitions that they are related to the canonical trace and cotrace maps induced by the pairing in equation~\eqref{eq:multipointeddualityiso}. A key step towards proving Theorem~\ref{thm:mixedinvariantmappingtorus} is the following:

  \begin{figure}[ht!]
  	\centering
  	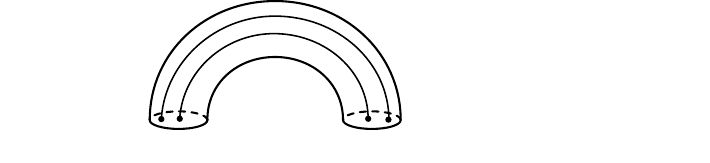
  	\caption{\textbf{The identity, trace and cotrace graph cobordisms.} All are equal to $(Y\times [0,1], \ws\times [0,1])$, but have different ends identified as incoming or outgoing.\label{fig::55}}
  \end{figure}

\begin{thm}\label{thm:dualityv1}If $(Y,\ws)$ is a multi-pointed 3-manifold, the trace graph cobordism $(Y\times [0,1], \ws\times [0,1]): (Y\sqcup -Y,\ws\sqcup \ws)\to \emptyset$ induces the canonical trace map
\[
\tr:\CF^-(Y,\ws,\frs)\otimes_{\bF_2[U]} \CF^-(-Y,\ws,\frs)\to \bF_2[U].
\]
 Similarly, the cotrace graph cobordism $(Y\times [0,1],\ws\times [0,1]):\emptyset\to (Y\sqcup -Y,\ws\sqcup \ws)$ induces the canonical cotrace map
\[
\cotr: \bF_2[U]\to \CF^-(Y,\ws,\frs)\otimes_{\bF_2[U]} \CF^-(-Y,\ws,\frs).
\]
\end{thm}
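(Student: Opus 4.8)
I will prove the theorem for the trace cobordism; the cotrace statement then follows formally. Gluing the cotrace cobordism to part of the trace cobordism along a copy of $Y$ produces, after straightening out the resulting zig-zag, the identity cobordism $(Y\times[0,1]):Y\to Y$. Since the graph TQFT of \cite{ZemGraphTQFT} is monoidal and functorial and assigns the identity map to the identity cobordism, the maps $\tr_{\mathrm{cob}}$ and $\cotr_{\mathrm{cob}}$ induced by the trace and cotrace cobordisms satisfy the snake identity $(\tr_{\mathrm{cob}}\otimes\id)\circ(\id\otimes\cotr_{\mathrm{cob}})=\id$ on $\CF^-(Y,\ws,\frs)$. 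As $\CF^-(Y,\ws,\frs)$ is a finitely generated free $\bF_2[U]$-module, the trace map of \eqref{eq:multipointeddualityiso} is a perfect pairing, so this identity has a unique solution for $\cotr_{\mathrm{cob}}$ once $\tr_{\mathrm{cob}}$ is known to be the trace map, and that solution is the cotrace map. Hence it suffices to treat the trace cobordism.

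Fix a multi-pointed Heegaard diagram $\cH=(\Sigma,\as,\bs,\ws)$ for $(Y,\ws)$, so that $(-Y,\ws)$ is presented by $-\cH=(-\Sigma,\bs,\as,\ws)$ and the isomorphism of \eqref{eq:multipointeddualityiso} is the tautological identification of $\CF^-(-\cH)$ with $\CF^-(\cH)^\vee$ carrying each generator to the dual of the same intersection point. Under the monoidal structure of the graph TQFT, $\CF^-(Y\sqcup -Y)\iso\CF^-(Y,\ws,\frs)\otimes_{\bF_2[U]}\CF^-(-Y,\ws,\bar\frs)$ and $\CF^-(\varnothing)=\bF_2[U]$, so the content is that the trace cobordism map sends $\xs\otimes\ys$ to the Kronecker pairing $\langle\xs,\ys\rangle$, extended $\bF_2[U]$-linearly. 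To compute this map I would choose a handle decomposition of $W=Y\times[0,1]$, regarded as a cobordism $(Y\sqcup -Y)\to\varnothing$, coming from a perturbation of the tent function $(y,t)\mapsto 1-|2t-1|$ by a self-indexing Morse function $g$ on $Y$ with a single index-$0$ and a single index-$3$ critical point. The critical points of the perturbed function have indices $1,2,3,4$ and correspond to those of $g$ of indices $0,1,2,3$, so $W$ is built from $Y\sqcup(-Y)$ by a single $1$-handle joining the two components, then a sequence of $2$-handles, then $3$-handles, then one $4$-handle. After the $1$-handle the middle $3$-manifold is $Y\#(-Y)$, presented by the connected-sum diagram $\cH\#(-\cH)$ on $\Sigma\#(-\Sigma)$, and I would arrange $g$ (equivalently, the framed link along which the $2$-handles are attached) so that the associated sequence of diagram moves and holomorphic triangle maps is precisely the doubled-diagram calculation underlying the proof of \cite{OSProperties}*{Proposition~2.5}, now carried out at the level of cobordism maps; the $3$-handles then kill the $S^1\times S^2$ summands that appear, and the $4$-handle caps off the final $S^3$.

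The heart of the argument — and the step I expect to be the main obstacle — is the resulting model computation: composing the $1$-handle map (the canonical inclusion $\CF^-(Y)\otimes\CF^-(-Y)\hookrightarrow\CF^-(Y\#-Y)$), the chain of $2$-handle triangle maps, the $3$-handle maps, the $4$-handle map (the canonical identification $\CF^-(S^3)\iso\bF_2[U]$), and the graph action map for $\Gamma=\ws\times[0,1]$, and checking that the composite is $\xs\otimes\ys\mapsto\langle\xs,\ys\rangle$. The key input for the $2$-handle maps is the standard nearest-point analysis of holomorphic triangle counts on a connected sum of a diagram with its mirror: the minimal-area triangles make up the diagonal and contribute exactly $\langle\xs,\ys\rangle$, while the higher-area contributions are controlled by the area filtration, just as in the stabilization-invariance arguments of Ozsv\'ath and Szab\'o. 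Two points will demand the most care. First, one must route the arcs of $\Gamma$ through the handle cobordism — over the $1$-handle and past the later handles — and verify that the graph action contributes only the identity, introducing no spurious basepoint actions $\Phi_w$ or $\Psi_w$. Second, one must deal with the bookkeeping when $Y$ is disconnected or $\ws$ has more than one point: here one reduces to the connected, singly-pointed case using the monoidal law and the free-stabilization maps of the graph TQFT, but the connecting $1$-handles and the order of handle attachments must be chosen compatibly across the pieces.
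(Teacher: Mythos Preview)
Your overall architecture is right and matches the paper's handle decomposition of the trace cobordism (Section~\ref{sec:handledecomposition}), and your reduction of the cotrace case to the trace case via the snake identity is clean (the paper instead uses Proposition~\ref{prop:turningaroundgraphcobordisms}, duality for turned-around cobordisms, but your route works just as well). The genuine gap is in what you call the ``heart of the argument.'' The area-filtration/nearest-point analysis only shows that the $2$-handle triangle map on the connected-sum diagram is a quasi-isomorphism whose \emph{leading} term is the diagonal; it does \emph{not} show the composite is the trace on the nose. In fact the paper gives an explicit example (Section~\ref{sec:connectedsumsandgraphTQFT}, Proposition~\ref{prop:E1E2relationexample} and Lemma~\ref{lem:computationEiX-Y-}) in which non-small triangles contribute to exactly this kind of intertwining map, and their contribution depends on the almost complex structure. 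So ``higher-area contributions are controlled by the area filtration'' is not an argument that those contributions vanish from the final answer. Relatedly, the step you flag as needing care---routing $\Gamma$ through the $1$-handle and showing the graph action introduces no spurious $\Phi_w$ terms---is precisely the content of Proposition~\ref{prop:OSmapsaregraphcobmaps} (that the Ozsv\'ath--Szab\'o intertwining map $\cE_1$ equals the graph cobordism map $E_1^B$), and its proof is one of the longest computations in the paper; you have not indicated a mechanism for it.

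The paper sidesteps both difficulties by proving the more general Theorem~\ref{thm:triplesandgraphcobordismmaps}: for any Heegaard triple $(\Sigma,\as,\bs,\gs)$, the graph cobordism map for $(X_{\as,\bs,\gs},\Gamma_{\as,\bs,\gs})$ equals the holomorphic triangle map $F_{\as,\bs,\gs}$. Granting this, the trace cobordism is $X_{\as,\bs,\as'}$ with $3$- and $4$-handles attached, so its map is $F_3^{\as,\as'}\circ F_{\as,\bs,\as'}$ on the \emph{original} diagram, not on a doubled one. The computation then finishes in three lines: pairing with $\Theta^-_{\as,\as'}$ and using the tautological bijection between triangles counted by $F_{\as,\bs,\as'}$ and by $F_{\as',\as,\bs}$ (together with $\Theta^-_{\as,\as'}=\Theta^+_{\as',\as}$) identifies the result with $\tr(\Psi_{\as\to\as'}^{\bs}(\xs)\otimes\Psi^{\as\to\as'}_{\bs}(\ys))=\tr(\xs\otimes\ys)$. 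The hard work is thus pushed into Theorem~\ref{thm:triplesandgraphcobordismmaps}, whose proof in turn rests on Proposition~\ref{prop:OSmapsaregraphcobmaps} and the doubled-diagram transition maps of Section~\ref{sec:doubleddiagrams}. Your proposal is essentially attempting those same ingredients by hand, but without the intermediate lemmas the triangle count on $\Sigma\#\bar\Sigma$ cannot be pinned down.
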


\begin{rem} If $C$ is a finitely generated, free module over a ring $\cR$, then there are canonical isomorphisms
\[
\Hom_{\cR}(C,C)\iso \Hom_{\cR}(C\otimes_{\cR} C^\vee, \cR)\iso \Hom_{\cR}(\cR,C^\vee\otimes_{\cR} C).
\] 
Under the above isomorphisms, the identity map, the trace map and the cotrace map are all identified. Hence Theorem~\ref{thm:dualityv1} implies that the cobordism map for $(Y\times[0,1],\ws\times [0,1])$ is independent of which ends of $Y\times [0,1]$ are identified as incoming, and which ends are identified as outgoing. Using the composition law, one sees that the map for an arbitrary cobordism is independent of which ends are incoming or outgoing, in the above sense.
\end{rem}

 If $(\Sigma,\as,\bs,\gs,\ws)$ is a Heegaard triple, Ozsv\'{a}th and Szab\'{o} constructed a 3-ended 4-manifold $X_{\a,\b,\g}$. Although Ozsv\'{a}th and Szab\'{o} extensively considered these 4-manifolds when the underlying Heegaard triple was adapted to a framed link in a 3-manifold,  their construction does not in general assign an invariant cobordism map to the 4-manifold $X_{\a,\b,\g}$. Nonetheless, there is a natural trivalent graph $\Gamma_{\a,\b,\g}$ inside of $X_{\a,\b,\g}$, and the construction from \cite{ZemGraphTQFT} assigns a cobordism map to the pair $(X_{\a,\b,\g},\Gamma_{\a,\b,\g})$. A key component of our proof of Theorem~\ref{thm:dualityv1} is proving that for an arbitrary Heegaard triple, the graph cobordism map for $(X_{\a,\b,\g}, \Gamma_{\a,\b,\g})$ coincides with the map obtained by counting holomorphic triangles on $(\Sigma,\as,\bs,\gs,\ws)$, i.e.
 \begin{equation}
 F_{X_{\a,\b,\g},\Gamma_{\a,\b,\g},\frs}\simeq F_{\a,\b,\g,\frs}.\label{eq:triangle-map-intro}
 \end{equation}
 See Theorem~\ref{thm:triplesandgraphcobordismmaps} for a precise statement. Much of the technical work of this paper is devoted to proving equation~\eqref{eq:triangle-map-intro}.

Theorem~\ref{thm:dualityv1} also generalizes another duality result of Ozsv\'{a}th and Szab\'{o}. They proved that if $W:Y_1\to Y_2$ is a cobordism between two connected 3-manifolds, and $W^\vee:-Y_2\to -Y_1$ is the cobordism obtained by turning around $W$, then 
 \begin{equation}
 F_{W^\vee,\frs}=(F_{W,\frs})^{\vee}.\label{eq:OSturningaroundcob}
 \end{equation}
See \cite{OSTriangles}*{Theorem~3.5}. It is a straightforward algebraic exercise to show that equation~\eqref{eq:OSturningaroundcob} can be derived from Theorem~\ref{thm:dualityv1} and the composition law.

\subsection{A Lefschetz number formula for complexes over $\bK[U]$}

Suppose $\bK$ is a field of characteristic 2. We now describe an algebraic result about the Lefschetz number of a map on a chain complex over $\bK[U]$.  We use this result as a key step in using the graph cobordism maps to prove Theorem~\ref{thm:mixedinvariantmappingtorus}.

Recall that if $C$ is a relatively $\Z/2\Z $-graded, finitely generated chain complex over  $\bK$, and $F:C\to C$ is a chain map which preserves the relative grading, then
\begin{equation}
(\tr\circ(F\otimes \id)\circ \cotr)(1)=\Lef(F_*:H_*(C)\to H_*(C))\in \bK\label{eq:Eulercharacteristicoverfield}.
\end{equation} 

If $C$ is a finitely generated, free chain complex over $\bK[U]$, and $F\colon C\to C$ is a chain map, then the left hand side of equation~\eqref{eq:Eulercharacteristicoverfield} is usually not interesting. We now describe a natural analog of equation~\eqref{eq:Eulercharacteristicoverfield} for chain complexes over $\bK[U]$.

Mirroring the algebra of Heegaard Floer homology, we define chain complexes
\begin{equation}
C^-:=C, \qquad C^\infty:=C\otimes_{\bK[U]} \bK[U,U^{-1}],\qquad \text{and} \qquad C^+:=(C\otimes_{\bK[U]} \bK[U,U^{-1}])/ C, \label{eq:Cminus-infty-plus-intro}
\end{equation}
and write $H^-(C),$ $H^\infty(C)$ and $H^+(C)$ for the respective homology groups. The short exact sequence
\[
0\to C^-\to C^\infty\to C^+\to 0,
\]
 induces a long exact sequence on homology. We let 
 \[
 \delta:H^+(C)\to H^-(C)
 \]
  denote the connecting homomorphism.  We define
  \[
H^-_{\red}(C)=\ker( H^-(C)\to H^\infty(C))\quad \text{and} \quad H^+_{\red}(C)=\coker(H^\infty(C)\to H^+(C)),
  \]
  and note that $\delta$ induces an isomorphism from $H_{\red}^+(C)$ to $H_{\red}^-(C)$.

If $F\colon C\to C$ is a $\bK[U]$-equivariant chain map, then there is an induced map
\[
F_*\colon H_{\red}^{+}(C)\to H_{\red}^{+}(C).
\]
Furthermore, $H_{\red}^{+}(C)$ is a finitely generated $\bK$-module, so the Lefschetz number of $F$ on $H_{\red}^{+}(C)$ is defined.

Our Lefschetz number formula also involves a $+1$ graded endomorphism $\Phi\colon C\to C$. The map $\Phi$ is obtained by writing the differential $\d$ of $C$ as a matrix in terms of a $\bK[U]$ basis of $C$, and then differentiating each entry of the matrix with respect to $U$. The map $\Phi$ is independent of the choice of basis, up to chain homotopy. 

To state the formula, we must tensor $C$ with $\bK[[U]]$. Chain complexes $C^\infty$ and $C^+$ may also be defined over $\bK[[U]]$, similarly to equation~\eqref{eq:Cminus-infty-plus-intro}. This does not change the Lefschetz number of $F$ over $H_{\red}^+$, since
  \[
H_{\red}^{+}(C\otimes_{\bK[U]} \bK[[U]])\iso H_{\red}^{+}(C).
  \]

We can now state our algebraic Lefschetz number formula:

\begin{prop}\label{prop:algebraicmappingtorus} Suppose that $C$ is a free, finitely generated, $\Z/2\Z $-graded chain complex over $\bK[[U]]$. Suppose that $F:C\to C$ is a $\bK[[U]]$-equivariant chain map which preserves the $\Z/2\Z$-grading, and vanishes on $H^\infty(C)$. Then 
$\Lef\big(F_*:H^+_{\red}(C)\to H^+_{\red}(C)\big)$ is equal to the coefficient of $U^{-1}$ in the expression 
\[
(\tr \circ (F\otimes \id)\circ  \delta^{-1}\circ (\id\otimes \Phi^\vee) \circ \cotr)(1).
\]
\end{prop}

In  Proposition~\ref{prop:algebraicmappingtorus},  $\Phi^\vee$ denotes the dual of $\Phi:C\to C$, and $\delta$ denotes the connecting homomorphism $\delta: H^+(C\otimes C^\vee)\to H^-(C\otimes C^\vee)$. Implicit in the statement is the claim that the maps $(\id\otimes \Phi^\vee)$ and $(F\otimes \id)$ factor through $H_{\red}^{\pm}(C\otimes C^\vee)$, so the above expression makes sense. The reason for working over $\bK[[U]]$ instead of $\bK[U]$ is that it allows us to factor $(F\otimes \id)$ and $(\id\otimes \Phi^{\vee})$ through $H_{\red}^{\pm}$.

In the context of Heegaard Floer complexes, the formal derivative map $\Phi$ appearing in Proposition~\ref{prop:algebraicmappingtorus} is the map induced by the ``broken path'' graph cobordism  in Figure~\ref{fig::42} (see Lemma~\ref{lem:phi=brokenpathcobordism}, below). When dealing with the Heegaard Floer complexes, we will usually write $\Phi_w$ for the broken path graph cobordism map, where $w\in Y$ is the basepoint. The composition in Proposition~\ref{prop:algebraicmappingtorus} naturally appears when computing the mixed invariants of mapping tori using graph cobordisms; see Figure~\ref{fig:59}.

  \begin{figure}[ht!]
  	\centering
  	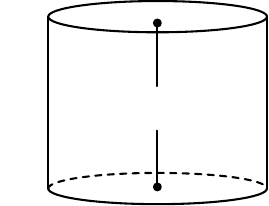
  	\caption{\textbf{The ``broken path'' graph cobordism inducing the map $\Phi_w:\CF^-(Y,w,\frs)\to \CF^-(Y,w,\frs)$.} This graph cobordism induces the map which features algebraically in Proposition~\ref{prop:algebraicmappingtorus}. The underlying 4-manifold is $Y\times [0,1]$.\label{fig::42}}
  \end{figure}

 The map $\Phi_w$ and some analogs have been studied in other contexts. An early appearance was in \cite{SarkarMovingBasepoints}, where Sarkar proved a formula for a mapping class group action that involved an analogous map on link Floer homology.  The map $\Phi_w$ appears in the formula for  the $\pi_1(Y,w)$-action on $\CF^-(Y,w,\frs)$, proven by the author in \cite{ZemGraphTQFT}. Analogous maps appear in the link Floer TQFT \cite{JCob} \cite{ZemCFLTQFT}, where they have a similar graphical interpretation in terms of certain dividing sets on cylindrical link cobordisms. The link Floer homology analogs of $\Phi_w$ also feature in a connected sum formula for involutive knot Floer homology \cite{ZemKnotConnectedSums}.

\subsection{Comparison to Seiberg-Witten theory} 

Theorem~\ref{thm:mixedinvariantmappingtorus} is inspired by several analogous results in monopole Floer homology.
One analog to Theorem~\ref{thm:mixedinvariantmappingtorus} was proven by Fr\o yshov using a version of monopole Floer homology \cite{FroyshovMonopoleFloerHomology}*{Theorem~7}. Another analog is Baldridge's computation of the Seiberg-Witten invariants of 4-manifolds with free circle actions \cite{BaldridgeSWCircleActions}.

There are isomorphisms between the 3-dimensional Heegaard Floer invariants from \cite{OSDisks} and the 3-dimensional Seiberg-Witten Floer invariants appearing in \cite{KMMonopole}. This has been established  by Kutluhan, Lee and Taubes (\cite{KLTHF=HM1,KLTHF=HM2,KLTHF=HM3,KLTHF=HM4,KLTHF=HM5}) and independently by Colin, Ghiggini and Honda (\cite{CGHHF=ECH0,CGHHF=ECH1,CGHHF=ECH2,CGHHF=ECH3}), the latter using work of Taubes \cite{TaubesECH=SW1}. An equivalence between the 4-dimensional theories is expected, though to the author's knowledge has not appeared in the literature yet.

Hence, one alternate route toward proving Theorem~\ref{thm:mixedinvariantmappingtorus} would be to prove that the 4-dimensional Heegaard Floer and monopole Floer theories are equivalent, and then use one of the aforementioned results about monopole Floer homology.

Lefschetz number formulas are fairly common in the study of TQFTs. Another recent construction involving Lefschetz numbers is due to Mrowka, Ruberman and Saveliev \cite{MRSEndPeriodic}, and takes the form of an invariant $\lambda_{SW}(X)$ for a 4-manifold $X$ with the homology of $S^1\times S^3$. The invariant has been further considered by Lin, Ruberman and Saveliev in \cite{LRShomologyS1S3}, and under the assumption that a generator of $H_3(Y)$ can be realized as a rational homology sphere $Y\subset X$, it is shown in \cite{LRShomologyS1S3} that the invariant $\lambda_{SW}(X)$ is related to the Lefschetz number of the map on monopole Floer homology induced by the cobordism obtained by cutting $X$ along $Y$. It would be interesting to see if the techniques of this paper could be applied to 4-manifolds with the homology of $S^1\times S^3$.

\subsection{Organization} In Section~\ref{sec:background} we provide background on Heegaard Floer homology. Section~\ref{sec:algebraic-Lefschetz}  covers the proof of Proposition~\ref{prop:algebraicmappingtorus}. Section~\ref{sec:graphTQFT} covers  background and preliminary results about the graph TQFT for Heegaard Floer homology. In Section~\ref{sec:handledecomposition} we describe a handle decomposition of the trace cobordism. In Sections~\ref{sec:generalized1--handleand3--handlemaps} and \ref{sec:doubleddiagrams}, we describe two technical tools which will be useful later in the paper: the generalized 1- and 3-handle maps, and doubled Heegaard diagrams. In Section~\ref{sec:connectedsumsandgraphTQFT} we describe the behavior of the graph TQFT with respect to connected sums, and prove that the maps Ozsv\'{a}th and Szab\'{o} used to prove the K\"{u}nneth theorem are in fact graph cobordism maps. In Section~\ref{sec:Heegaardtriplesandgraphcobordisms} we show that the graph cobordism map for the 4-manifold obtained from a Heegaard triple is chain homotopic to the holomorphic triangle map on that Heegaard triple. In Section~\ref{sec:traceandcotrace} we compute the cobordism maps for the trace and cotrace graph cobordisms, proving Theorem~\ref{thm:dualityv1}. In Section~\ref{sec:mixedinvariants}, we prove the untwisted versions of our theorems on the mixed invariants. In Section~\ref{sec:perturbed}, we explain how to adapt the results of the paper to the setting of twisted coefficients.

\subsection{Acknowledgments} I would like to thank Ciprian Manolescu, my Ph.D. advisor, for suggesting the mapping torus problem, as well as providing valuable suggestions and direction along the way. I would also like to thank Andr\'{a}s Juh\'{a}sz,  Jianfeng Lin, Robert Lipshitz, Thomas Mark, Anubhav Mukherjee, Peter Ozsv\'{a}th, and Luya Wang for valuable discussions and suggestions. The author would also like to thank an anonymous referee for their helpful comments. In addition, the author is indebted to Robert Lipshitz, Peter Ozsv\'{a}th and Dylan Thurston for providing some suggestions for the proof of Theorem~\ref{thm:triplesandgraphcobordismmaps}.

\section{Background on Heegaard Floer homology}
\label{sec:background}
\subsection{Multi-pointed Heegaard diagrams}

Heegaard Floer homology for multi-pointed 3-manifolds is constructed using the following notion of a multi-pointed Heegaard diagram:

\begin{define}\label{def:multipointedheegaarddiagram}If $(Y,\ws)$ is a connected, closed 3-manifold with a nonempty collection of basepoints $\ws$, we say that $\cH=(\Sigma,\as,\bs,\ws)$ is a \emph{multi-pointed Heegaard diagram for} $(Y,\ws)$ if the following are satisfied:
\begin{enumerate}[label=(HD-\arabic*), ref=HD-\arabic*,leftmargin=*, widest=IIII]
\item\label{def:mphd1} $\Sigma\subset Y$ is an embedded surface containing the points $\ws$, which splits $Y$ into two handlebodies, $U_{\a}$ and $U_{\b}$, oriented so that $\Sigma=\d U_{\a}=-\d U_{\b}$.
\item\label{def:mphd2} $\as=\{\alpha_1,\dots, \alpha_{g(\Sigma)+|\ws|-1}\}$ and $\bs=\{\beta_1,\dots, \beta_{g(\Sigma)+|\ws|-1}\}$ each consist of $g(\Sigma)+|\ws|-1$ pairwise disjoint, simple closed curves on $\Sigma$.
\item\label{def:mphd3} The collection $\as$ bounds pairwise disjoint compressing disks in $U_{\a}$, and $\bs$ bounds pairwise disjoint compressing disks in $U_{\b}$.
\item\label{def:mphd4} Each of the sets $\as$ and $\bs$ are homologically independent in $H_1(\Sigma\setminus \ws; \Z)$.
\end{enumerate}
\end{define}

It follows from the above definition that if $(\Sigma,\as,\bs,\ws)$ is a multi-pointed Heegaard surface, then each connected component of $\Sigma\setminus \as$ and $\Sigma\setminus \bs$ is planar and contains exactly one $\ws$ basepoint.

\subsection{The Heegaard Floer complexes}

If $(Y,\ws)$ is a connected, closed, oriented 3-manifold with basepoints $\ws$ and a $\Spin^c$ structure $\frs$, Ozsv\'{a}th and Szab\'{o} \cite{OSDisks} \cite{OSLinks} define $\bF_2[U]$-modules
\[
\HF^-(Y,\ws,\frs), \qquad \HF^\infty(Y,\ws,\frs), \qquad \HF^+(Y,\ws,\frs), \qquad \text{and} \qquad \hat{\HF}(Y,\ws,\frs).
\] We view  $U$ as acting by zero on $\hat{\HF}(Y,\ws,\frs)$.

 To define these homology groups, one first picks a multi-pointed Heegaard diagram $\cH=(\Sigma,\as,\bs,\ws)$ for $(Y,\ws)$.
  There are two tori,
 \[
 \bT_{\a}:=\alpha_1\times \dots \times \alpha_n \qquad \text{and} \qquad \bT_{\b}:=\beta_1\times \dots \times \beta_n,
 \] inside of the symmetric product $\Sym^n(\Sigma)$, where $n:=|\as|=|\bs|=g(\Sigma)+|\ws|-1$. There is a map 
 \[
 \frs_{\ws}:\bT_{\a}\cap \bT_{\b}\to \Spin^c(Y),
 \] 
 defined in \cite{OSDisks}*{Section~2.6}. 
 
 Suppose $J$ is an almost complex structure on $\Sym^n(\Sigma)$. The chain complex $\CF^-_J(\cH,\frs)$ is freely generated over $\bF_2[U]$ by  intersection points $\ve{x}\in \bT_{\a}\cap \bT_{\b}$ with $\frs_{\ws}(\xs)=\frs$.  If $\phi$ is a homology class of disks in $\Sym^n(\Sigma)$, with boundary on $\bT_{\a}\cup \bT_{\b}$, and $\mu(\phi)=1$, then the moduli space $\cM_J(\phi)$ is generically 1-dimensional, and has a free action of $\R$. We write
 \[
 \hat{\cM}(\phi):=\cM(\phi)/\R.
 \]
 The differential on $\CF^-_{J}(\cH,\frs)$ counts Maslov index 1 holomorphic strips in $\Sym^n(\Sigma)$ via the formula
 \[
 \d\ve{x}=\sum_{\ys\in \bT_{\a}\cap \bT_{\b}} \sum_{\substack{\phi\in \pi_2(\xs,\ys)\\
 \mu(\phi)=1}} \# \hat{\cM}_J(\phi) U^{n_{\ws}(\phi)} \cdot \ve{y}.
 \]
 In the above expression, $n_{\ws}(\phi)$ denotes the total multiplicity of the homology class $\phi$ over the $\ws$ basepoints. We usually omit the subscript $J$ from the notation. Under the strong $\frs$-admissibility assumption on the diagram $\cH$  (see \cite{OSDisks}*{Section~4.2.2}), the total number of holomorphic disks contributing to $\d\xs$ is finite.  
 
 The complexes $\CF^\infty(\cH,\frs)$, $\CF^+(\cH,\frs)$ and $\hat{\CF}(\cH,\frs)$ are obtained algebraically from $\CF^-(\cH,\frs)$ by the formulas
  \[\CF^\infty:=\CF^-\otimes_{\bF_2[U]} \bF_2[U,U^{-1}],\qquad \CF^+:=\CF^\infty/\CF^-\qquad \text{and}\]\[  \hat{\CF}:=\CF^-\otimes_{\bF_2[U]} \bF_2[U]/(U=0).\] The short exact sequence $0\to \CF^-(\cH,\frs)\to \CF^\infty(\cH,\frs)\to \CF^+(\cH,\frs)\to 0$ yields a long exact sequence on homology. We let
  \[\delta:\HF^+(\cH,\frs)\to \HF^-(\cH,\frs)\] denote the connecting homomorphism.
  
  The groups $\HF_{\red}^{+}(\cH,\frs)$ and $\HF_{\red}^-(\cH,\frs)$ are defined as
   \[\HF_{\red}^-(\cH,\frs):=\ker\big(\HF^-(\cH,\frs)\to \HF^\infty(\cH,\frs)\big)\] and
   \[\HF^+_{\red}(\cH,\frs):=\coker\big(\HF^\infty(\cH,\frs)\to \HF^+(\cH,\frs)\big).\] The connecting homomorphism $\delta$ induces an isomorphism from $\HF_{\red}^+(\cH,\frs)$ to $\HF_{\red}^-(\cH,\frs)$. The modules $\HF_{\red}^-$ and $\HF^+_{\red}$ are always finitely generated over $\bF_2$.

We need the following naturality result:

\begin{prop} If $\cH_1$ and $\cH_2$ are two strongly $\frs$-admissible diagrams for $(Y,\ws)$, equipped with almost complex structures $J_1$ and $J_2$, then there is a transition map
\[
\Psi_{(\cH_1,J_1)\to (\cH_2,J_2)}: \CF^-_{J_1}(\cH_1,\frs)\to \CF^-_{J_2}(\cH_2,\frs),
\] 
which is well-defined up to chain homotopy. Furthermore
\[
\Psi_{(\cH,J)\to (\cH,J)}\simeq \id_{\CF^-_J(\cH,\frs)}.
\]
If $\cH_1,$ $\cH_2$ and $\cH_3$ are three $\frs$-admissible Heegaard diagrams, with almost complex structures $J_1,$ $J_2$ and $J_3$, then
\[
\Psi_{(\cH_1,J_1)\to (\cH_3,J_3)}\simeq \Psi_{(\cH_2,J_2)\to (\cH_3,J_3)}\circ \Psi_{(\cH_1,J_1)\to (\cH_2,J_2)}.
\]
\end{prop}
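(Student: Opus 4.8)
The plan is to deduce this from a relative Reidemeister--Singer theorem together with the standard catalogue of ``elementary'' chain homotopy equivalences in Heegaard Floer homology, i.e.\ to import the naturality machinery developed by Juh\'{a}sz and Thurston for $\hat{\HF}$ and sutured Floer homology, in the form adapted to multi-pointed diagrams. First I would recall that any two strongly $\frs$-admissible multi-pointed Heegaard diagrams for the \emph{fixed} pair $(Y,\ws)$ are connected by a finite sequence of moves: isotopies of the curves $\as$ or $\bs$ in the complement of $\ws$, handleslides of one $\alpha$-curve over another (respectively one $\beta$-curve over another) along a band disjoint from $\ws$, index one/two stabilizations and destabilizations, and changes of the auxiliary almost complex structure together with isotopies of the embedded Heegaard surface inside $Y$. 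Since $\ws$ is never moved, no basepoint-creation or basepoint-moving moves enter. One also checks, as in \cite{OSDisks}*{Section~4.2.2}, that each move can be realized through strongly $\frs$-admissible diagrams, after a preliminary isotopy winding the curves to restore admissibility if necessary.

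Next I would attach to each elementary move a chain homotopy equivalence on the $\CF^-$ level: a continuation map for an isotopy of curves or a change of almost complex structure; the holomorphic triangle map associated to a handleslide triple $(\Sigma,\as,\as',\bs)$, where $\as'$ is obtained from $\as$ by a handleslide and a small Hamiltonian isotopy, which is a homotopy equivalence because the triangle count pairs against the top-degree generator $\Theta$ of the homology of the handleslide diagram $(\Sigma,\as,\as')$, using associativity of the triangle maps; and the canonical multi-pointed stabilization isomorphism of \cite{OSLinks}, which after isotoping the new $\alpha$- and $\beta$-curves to meet transversely in two points is a local model computation. The $U$-powers recorded by $n_{\ws}$ are trivial in all of these local models, so the maps and the arguments are identical to the $\hat{\HF}$ case. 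Defining $\Psi_{\cH_1\to\cH_2}$ as the composite of these maps along any chosen chain of moves from $\cH_1$ to $\cH_2$, the relation $\Psi_{\cH\to\cH}\simeq \id_{\CF^-(\cH,\frs)}$ (use the empty chain) and the composition law $\Psi_{\cH_1\to\cH_3}\simeq \Psi_{\cH_2\to\cH_3}\circ \Psi_{\cH_1\to\cH_2}$ (concatenate chains) are then immediate, \emph{provided} the map is well defined up to chain homotopy.

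That well-definedness is the only substantive point, and I expect it to be the main obstacle. The strategy is the standard one: form the graph whose vertices are strongly $\frs$-admissible diagrams for $(Y,\ws)$ and whose edges are elementary moves, and exhibit a finite list of relations --- moves of disjoint support commute; handleslides satisfy the pentagon/associativity relation; stabilization commutes with distant moves; and the handful of genuinely three-dimensional relations among stabilizations and handleslides isolated by Juh\'{a}sz and Thurston --- which together make this graph simply connected. One then verifies that the assigned elementary maps satisfy each relation up to chain homotopy: the disjoint-support commutations are formal, the handleslide relations follow from associativity of the triangle maps combined with model computations of the relevant triangle counts, and the stabilization relations are computations local to the stabilization region. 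Simple connectivity of the move graph then forces any two chains of moves between $\cH_1$ and $\cH_2$ to induce chain homotopic maps, which completes the proof. As indicated above, this verification has already been carried out in the naturality literature for $\hat{\HF}$ and in the multi-pointed setting; the only point specific to $\CF^-$ is the routine observation that all of the relevant local models involve holomorphic curves with zero multiplicity at $\ws$.
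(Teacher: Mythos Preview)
Your outline matches the paper's treatment, which likewise does not prove this proposition from scratch but refers to \cite{JTNaturality} for the simply-connected move graph and the resulting naturality on homology, and then to \cite{LipshitzCylindrical}*{Proposition~11.4} and \cite{HMInvolutive}*{Proposition~2.3} for the upgrade to $\bF_2[U]$-equivariant chain homotopy equivalences well defined up to chain homotopy.

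One point deserves correction. Your justification that ``all of the relevant local models involve holomorphic curves with zero multiplicity at $\ws$,'' so that the $\CF^-$ argument is identical to the $\hat{\HF}$ one, is not accurate: the homotopies and higher polygon counts that witness the relations (associativity of triangle maps, the handleslide and stabilization relations in the Juh\'{a}sz--Thurston list) certainly involve curves crossing the basepoints and carrying nontrivial $U$-powers. More to the point, \cite{JTNaturality} as stated gives well-definedness on \emph{homology}; deducing well-definedness up to \emph{chain homotopy} requires the separate input that the elementary maps are chain homotopy equivalences (not merely quasi-isomorphisms) and that the relations among them hold up to chain homotopy. This is exactly why the paper singles out Lipshitz's result and the discussion in \cite{HMInvolutive}. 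Your sketch would be improved by replacing the zero-multiplicity claim with a reference to these chain-level inputs.
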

 
   We refer the reader to \cite{JTNaturality} for more about the problem of naturality, however we make a few remarks. If $\cH_1$ and $\cH_2$ are two admissible Heegaard diagrams for $(Y,\ws)$, one can always connect $\cH_1$ and $\cH_2$ by a sequence of elementary Heegaard moves.  Using this fact, Ozsv\'{a}th and Szab\'{o} construct a transition map $\Psi_{(\cH_1,J_1)\to (\cH_2,J_2)}$ from $\CF^-_{J_1}(\cH_1,\frs)$ to $\CF^-_{J_2}(\cH_2,\frs)$ in \cite{OSDisks}.  They show that $\Psi_{(\cH_1,J_1)\to (\cH_2,J_2)}$ is a quasi-isomorphism, though it is not obviously independent of the sequence of intermediate Heegaard diagrams between $\cH_1$ and $\cH_2$. The main result of \cite{JTNaturality} is that the map $\Psi_{(\cH_1,J_1)\to (\cH_2,J_2)}$ is independent on homology from the choice of Heegaard moves from $\cH_1$ to $\cH_2$. Using this, it is possible to define a single $\bF_2[U]$-module $\HF^-(Y,\ws,\frs)$  as the transitive limit of the groups $\HF^-(\cH,\frs)$ (see \cite{JTNaturality}*{Definition~1.1}). In fact, using some additional results proven by Lipshitz (in particular \cite{LipshitzCylindrical}*{Proposition~11.4}), one can show that $\Psi_{(\cH_1,J_1)\to (\cH_2,J_2)}$ and $\Psi_{(\cH_2,J_2)\to (\cH_1,J_1)}$ are homotopy inverses, and show that $\Psi_{(\cH_1,J_1)\to (\cH_2,J_2)}$ is well-defined up to $\bF_2[U]$-equivariant chain homotopy. We refer the reader to \cite{HMInvolutive}*{Proposition~2.3} for an overview of this last fact.

 For some of the neck-stretching arguments  in this paper, an important tool will be Lipshitz's cylindrical reformulation of Heegaard Floer homology \cite{LipshitzCylindrical}. Lipshitz constructs a chain complex, generated by intersection points $\xs\in\bT_{\a}\cap \bT_{\b}$, as above, but with a differential that counts holomorphic curves in $\Sigma\times [0,1]\times \R$,  with boundary on $\bs\times \{0\}\times \R$ and $\as\times \{1\}\times \R$. We describe some additional technical details about this approach in Section~\ref{section:analyticalaspects}. The cylindrical reformulation is motivated by the ``tautological correspondence'' between $(\frj_{\bD}, \Sym^{n}(\frj_\Sigma))$-holomorphic maps $u: \bD\to \Sym^{n}(\Sigma)$ and
$(\frj_S, \frj_{\Sigma}\times \frj_{\bD})$-holomorphic maps $u':S\to \Sigma\times \bD,$ where  $S$ is a Riemann surface and $\pi_{\bD}\circ u'$ is an $n$-fold branched cover of $\bD$. Here $\Sym^n(\frj_{\Sigma})$ and $\frj_{\Sigma}\times \frj_{\bD}$ denote product almost complex structures. See \cite{OSDisks}*{Lemma~3.6} and \cite{LipshitzCylindrical}*{Section~13} for more details about the tautological correspondence and the equivalence between the two constructions.

\subsection{Duality and the Heegaard Floer complexes}
\label{sec:dualityofcomplexes}
If $\cH=(\Sigma,\as,\bs,\ws)$ is a diagram for $(Y,\ws)$, then $\cH^\vee:=(\Sigma,\bs,\as,\ws)$ is a diagram for $(-Y,\ws)$.  In \cite{OSTriangles}*{Section~5}, Ozsv\'{a}th and Szab\'{o} define a pairing map
\[
\langle ,\rangle :\CF^\infty(\cH,\frs)\otimes_{\bF_2} \CF^\infty(\cH^\vee,\frs)\to \bF_2,
\] 
by the formula
\begin{equation}
\langle U^i\cdot \xs, U^j\cdot \ys \rangle=\begin{cases}1& \text{ if }i+j=-1 \text{ and } \xs=\ys\\
0& \text{ otherwise.}
\end{cases}\label{eq:F2pairingdefinition}
\end{equation}
 We emphasize that $\langle, \rangle$ is not $\bF_2[U]$-equivariant. As such, the pairing map $\langle,\rangle$ cannot have an interpretation in terms of the cobordism maps.  Using the pairing $\langle, \rangle$, Ozsv\'{a}th and Szab\'{o} show  in \cite{OSProperties}*{Proposition~2.5} that there is a chain isomorphism 
  \begin{equation}
  \CF^-(-Y,\ws,\frs)\iso \grHom_{\bF_2}(\CF^+(Y,\ws,\frs),\bF_2),\label{eq:OSdualcomplex}
  \end{equation}
  where $\grHom_{\bF_2}$ denotes the $\bF_2$ span of homogeneously graded $\bF_2$-linear homomorphisms.

As described in the introduction, there is also a  trace pairing, which takes the form of a map
\[
\tr: \CF^-(\cH,\frs)\otimes_{\bF_2[U]} \CF^-(\cH^\vee,\frs)\to \bF_2[U],
\] 
given by
\[
\tr( U^i \cdot \xs, U^j\cdot \ys )=\begin{cases}U^{i+j}& \text{ if }\xs=\ys\\
0& \text{ otherwise}.
\end{cases}
\]
 Since there is a bijection between flowlines from $\xs$ to $\ys$ on $\cH$ and flowlines from $\ys$ to $\xs$ on $\cH^\vee$, it follows that
\[
\tr(\d_{\cH} (\xs),\ys) =\tr( \xs, \d_{\cH^\vee} (\ys)).
\] Hence there is a chain isomorphism
\begin{equation}
\CF^-(-Y,\ws,\frs)\iso \Hom_{\bF_2[U]}(\CF^-(Y,\ws,\frs), \bF_2[U]).
\label{eq:F[U]duality}
\end{equation}

Despite its slightly different appearance, the duality isomorphism from equation~\eqref{eq:F[U]duality} equivalent to the isomorphism from equation~\eqref{eq:OSdualcomplex}, as we explain in the following lemma:
\begin{lem}
\label{lem:F-pairing-equivalence}
If $C$ is a free, finitely generated chain complex over $\bF_2[U]$ which is relatively $\Z$-graded, then
\[
\grHom_{\bF_2}(C^+, \bF_2)\iso \Hom_{\bF_2[U]}(C,\bF_2[U]),
\]
 where $C^+:=(C\otimes \bF_2[U,U^{-1}])/C$.
\end{lem}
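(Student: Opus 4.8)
The plan is to construct an explicit $\bF_2[U]$-linear chain isomorphism $\Theta\colon C^\vee\to\Hom_{\bF_2}(C^+,\bF_2)$, where $C^\vee:=\Hom_{\bF_2[U]}(C,\bF_2[U])$, by means of a ``residue at $U^{-1}$'' pairing. (Throughout, $\Hom_{\bF_2}(C^+,\bF_2)$ should be read as the graded dual, formed grading by grading; since $C$ is free and finitely generated, $C^+$ is finitely generated over $\bF_2$ in each grading, so this is harmless, and it is in any case forced on us if the two sides are to be abstractly isomorphic.)

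First I would define the pairing. For $\phi\in C^\vee$ write $\phi^\infty:=\phi\otimes\id\colon C^\infty\to\bF_2[U,U^{-1}]$ for its localization. Given $\xi\in C^+=C^\infty/C$, choose any lift $\tilde\xi\in C^\infty$ and set
\[\langle\phi,\xi\rangle:=\big(\text{coefficient of }U^{-1}\text{ in }\phi^\infty(\tilde\xi)\big)\in\bF_2.\]
This is well defined: two lifts of $\xi$ differ by an element of $C$, on which $\phi^\infty$ takes values in $\bF_2[U]\subset\bF_2[U,U^{-1}]$, whose $U^{-1}$-coefficient vanishes. The pairing is $\bF_2$-bilinear, and from $(U\phi)^\infty(\tilde\xi)=U\cdot\phi^\infty(\tilde\xi)=\phi^\infty(U\tilde\xi)$ one gets $\langle U\phi,\xi\rangle=\langle\phi,U\xi\rangle$; hence $\Theta(\phi):=\langle\phi,-\rangle$ defines an $\bF_2[U]$-linear map $\Theta\colon C^\vee\to\Hom_{\bF_2}(C^+,\bF_2)$.

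Next I would check $\Theta$ is a chain map. Let $\d^\vee$ denote the transpose differential on $C^\vee$ and also the dual differential on $\Hom_{\bF_2}(C^+,\bF_2)$, and recall that the differential on $C^+$ is induced from $\d^\infty$ on $C^\infty$. Since $(\d^\vee\phi)^\infty=\phi^\infty\circ\d^\infty$, both $\Theta(\d^\vee\phi)$ and $\d^\vee(\Theta\phi)$ send $\xi$ to the $U^{-1}$-coefficient of $\phi^\infty(\d^\infty\tilde\xi)$, so they agree.

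Finally I would verify $\Theta$ is bijective by choosing a free $\bF_2[U]$-basis $e_1,\dots,e_n$ of $C$ with dual basis $e_1^*,\dots,e_n^*$ of $C^\vee$. Then $\{U^j e_i^*:1\le i\le n,\ j\ge 0\}$ is an $\bF_2$-basis of $C^\vee$ and $\{[U^{-k}e_i]:1\le i\le n,\ k\ge 1\}$ is an $\bF_2$-basis of $C^+$, and a one-line computation gives $\langle U^j e_i^*,[U^{-k}e_l]\rangle=\delta_{il}\delta_{k,j+1}$. Thus $\Theta$ carries the first basis bijectively onto the dual basis of the second; being already known to be $\bF_2[U]$-linear and a chain map, it is the asserted isomorphism. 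I do not expect any real obstacle here — the statement is pure (graded) linear algebra over the PID $\bF_2[U]$ — the only point needing a moment's care is that the residue pairing intertwines $\d$ and $\d^\vee$, i.e.\ that $\Theta$ is a chain map, together with the routine bookkeeping around the graded dual.
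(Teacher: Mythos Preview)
Your proof is correct. The paper takes a more abstract route: it writes $C^+ = C\otimes_{\bF_2[U]}(\bF_2[U,U^{-1}]/\bF_2[U])$ and applies tensor--hom adjunction to obtain
\[
\Hom_{\bF_2}(C^+,\bF_2)\cong \Hom_{\bF_2[U]}\bigl(C,\ \Hom_{\bF_2}(\bF_2[U,U^{-1}]/\bF_2[U],\bF_2)\bigr),
\]
and then simply asserts that $\Hom_{\bF_2}(\bF_2[U,U^{-1}]/\bF_2[U],\bF_2)\cong\bF_2[U]$ as $\bF_2[U]$-modules. Your residue pairing is precisely this last isomorphism made explicit (it sends $U^j$ to the functional $[U^{-k}]\mapsto\delta_{k,j+1}$), so the two arguments really produce the same map, just packaged differently. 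Your version buys you two things the paper leaves implicit: you flag the graded-dual issue (without which $\Hom_{\bF_2}(\bF_2[U,U^{-1}]/\bF_2[U],\bF_2)$ would be $\bF_2[[U]]$ rather than $\bF_2[U]$), and you verify that $\Theta$ intertwines the differentials, which is of course what makes the lemma useful in context.
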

\begin{proof} We can write $C^+$ as $C\otimes_{\bF_2[U]} (\bF_2[U,U^{-1}]/\bF_2[U])$. Using tensor-hom adjunction, we have
\begin{align*}\grHom_{\bF_2}(C^+,\bF_2)
&= \grHom_{\bF_2}(C\otimes_{\bF_2[U]} (\bF_2[U,U^{-1}]/\bF_2[U]),\bF_2)\\
&\iso \Hom_{\bF_2[U]}(C, \grHom_{\bF_2}(\bF_2[U,U^{-1}]/\bF_2[U],\bF_2)).
\end{align*}
 It is easy to construct an isomorphism of $\bF_2[U]$-modules
\[
\grHom_{\bF_2}(\bF_2[U,U^{-1}]/\bF_2[U],\bF_2)\iso \bF_2[U].
\]
  The main claim now follows.
\end{proof}

\begin{rem} Omitting the assumption that $C$ is relatively $\Z$-graded, one has
\[
\Hom_{\bF_2}(C^+,\bF_2)\iso \Hom_{\bF_2[U]}(C, \bF_2[[U]]),
\]
since $\Hom_{\bF_2}(\bF_2[U,U^{-1}], \bF_2)\iso \bF_2[[U]]$.
\end{rem}

There is also a cotrace map
\[
\cotr:\bF_2[U]\to \CF^-(Y,\ws,\frs)\otimes_{\bF_2[U]} \CF^-(-Y,\ws, \frs),
\] 
which we can define as the dual of the trace map with domain $\CF^-(-Y,\ws,\frs)\otimes_{\bF_2[U]} CF^-(Y,\ws,\frs)$. On the level of generators, the cotrace map takes the form
\[
\cotr(1)=\sum_{i=1}^n \xs_i \otimes \xs_i^\vee,
\] 
for a basis $\xs_1,\dots, \xs_n$ of $\CF^-(Y,\ws,\frs)$.

\subsection{Heegaard Floer mixed invariants} 

To a closed, oriented 4-manifold $X$ with $b_2^+(X)>1$, Ozsv\'{a}th and Szab\'{o} define a mixed invariant $\Phi_{X,\frt}$ \cite{OSTriangles}, which is a map
\[
\Phi_{X,\frt}:\Lambda^*( H_1(X)/\Tors)\otimes_{\bF_2} \bF_2[U]\to \bF_2.
\]
 In this section, we describe Ozsv\'{a}th and Szab\'{o}'s construction, and state some basic properties. For notational reasons, we will focus on $\Phi_{X,\frs}(1)$.

An important component of the construction of the mixed invariant is the following definition:
\begin{define}
\label{def:admissible-cut}
An \emph{admissible cut} of a 4-manifold $X$ is a closed, connected 3-manifold $N^3\subset X$, which separates $X$ into two connected submanifolds, $X_1$ and $X_2$, meeting along $N$, such  that $b_2^+(X_i)>0$ and such that the restriction map
\[H^2(X)\to H^2(X_1)\oplus H^2(X_2)\] is an injection.
\end{define}

Given an admissible cut $N\subset X$, we construct a cobordism $W_1:S^3\to N$ by removing a 4-ball from $X_1$. We construct a cobordism $W_2:N\to S^3$ similarly. The condition that $b_2^+(X_i)>0$ ensures that both
\[
F_{W_1, \frt|_{W_1}}:\Lambda^*( H_1(W_1)/\Tors) \otimes \HF^\infty(S^3)\to \HF^\infty(N, \frt|_{N}) 
\] and
\[F_{W_2, \frt|_{W_2}}: \Lambda^*( H_1(W_2)/\Tors)\otimes  \HF^\infty(N, \frt|_{N})\to \HF^\infty(S^3)\] vanish \cite{OSTriangles}*{Lemma~8.2}. It follows that if $N$ is an admissible cut, the maps  $F_{W_1,\frt|_{W_1}}$ and $F_{W_2,\frt|_{W_2}}$ factor through $\HF_{\red}$, as in the following diagram:
\[\begin{tikzcd}\, &&& \HF^-(S^3)\arrow{d}{F_{W_1,\frt|_{W_1}}}\arrow[dashed]{dl}\\
\HF^+(N,\frt|_N)\arrow{r}\arrow[swap]{d}{F_{W_2,\frt|_{W_2}}}& \HF^+_{\red}(N,\frt|_N) \arrow[dashed]{dl}\arrow{r}{\delta}[swap]{\iso}&\HF^-_{\red}(N,\frt|_{N})\arrow{r}& \HF^-(N,\frt|_{N})\\
\HF^+(S^3)&&&
\end{tikzcd}.\]
The mixed invariant $\Phi_{X,\frt}$ is then defined as the coefficient of $U^{-1}$ in the expression 
\[
(F_{W_2,\frt|_{W_2}}\circ \delta^{-1}\circ F_{W_1,\frt|_{W_1}})(1)\in \HF^+(S^3).
\]

 More generally, if $\xi_1\in \bF_2[U]\otimes \Lambda^*(H_1(X_1)/\Tors)$ and $\xi_2\in \Lambda^* (H_1(X_2)/\Tors)$, then the invariant $\Phi_{X,\frt}(\xi_1\wedge \xi_2)\in \bF_2$ is defined as the coefficient of $U^{-1}$ in the expression 
\begin{equation}F_{W_2, \frt|_{W_2}}(\xi_2\otimes \delta^{-1}( F_{W_1, \frt|_{W_1}}^{-}(\xi_1))).\label{eq:mixedinvariantwithhomologyaction}
\end{equation}

It is also often convenient to compute the mixed invariant with coefficients in $\bF_2[[U]]$. It is easy to see that the mixed invariant over $\bF_2[[U]]$ contains the same information as the mixed invariant with coefficients in $\bF_2[U]$. The advantage is that one can sometimes use more general cuts than those in Definition~\ref{def:admissible-cut}. See \cite{OSTrianglesandSymplectic}*{Section~2} and \cite{JabukaMarkProduct}*{Definition~8.12} for examples, as well as Section~\ref{sec:mixedinvariants} of our present paper.

 \subsection{Almost complex structures, moduli spaces and transversality}
 \label{section:analyticalaspects}
  We now describe the moduli spaces which appear in this paper, and state some transversality results which will be helpful for some gluing arguments that appear in Section~\ref{sec:generalized1--handleand3--handlemaps}.

 If $(\Sigma,\as,\bs,\ws)$ is a multi-pointed Heegaard diagram, we will primarily be interested in almost complex structures on the cylindrical 4-manifold $\Sigma\times [0,1]\times \R$ which satisfy the following axioms (taken from \cite{LipshitzCylindrical}):

\begin{enumerate}[label=($J$\arabic*),leftmargin=*, widest=III]
\item\label{def:J1} $J$ is tamed by the product symplectic form.
\item\label{def:J2} $J$ is split (i.e. equal to $\frj_\Sigma\times \frj_{\bD}$) in a cylindrical neighborhood of $\ws\times [0,1]\times \R$.
\item\label{def:J3} $J$ is translation invariant in the $\R$ factor.
\item\label{def:J4} $J(\d/\d s)=\d/\d t$.
\item\label{def:J5} $J$ preserves the 2-planes $T(\Sigma\times \{(s,t)\})$ for all $(s,t)\in [0,1]\times \R$.
\end{enumerate}

For the purposes of a gluing argument in Section~\ref{sec:generalized1--handleand3--handlemaps}, these will not be generic enough, so we state an alternate fifth axiom (also from \cite{LipshitzCylindrical}):

\begin{enumerate}[label=($J$\arabic*$'$),leftmargin=*, widest=III]
\setcounter{enumi}{4}
\item \label{def:J5'} There is a 2-plane distribution $\xi$ on $\Sigma\times [0,1]$ such that the restriction of $\omega$ to $\xi$ is non-degenerate, $J$ preserves $\xi$ and the restriction of $J$ to $\xi$ is compatible with $\omega$. We further assume that $\xi$ is tangent to $\Sigma\times \{pt\}$ near $(\as\cup \bs)\times [0,1]$ and near $\Sigma\times \{0,1\}$.
\end{enumerate}

Similar to \cite{LipshitzCylindrical}, if $J$ is an almost complex structure on $\Sigma\times [0,1]\times \R$ satisfying \ref{def:J1}--\ref{def:J5}, we define the moduli space $\cM_J(\phi)$ to consist of equivalence classes of triples $(S,j,u)$, where $S$ is a Riemann surface with boundary, $n:=g(\Sigma)+|\ws|-1$ positive punctures $p_1,\dots, p_n$ and $n$ negative punctures $q_1,\dots, q_n$, and $u\colon S\to \Sigma\times [0,1]\times \R$ is a $(j,J)$-holomorphic map representing the homology class $\phi$, satisfying the following: 
 \begin{enumerate}[label=($M$\arabic*),leftmargin=*, widest=III]
 \item\label{def:M1} $S$ is smooth (not nodal).
 \item\label{def:M2} $u(\d S)\subset (\as\times \{1\}\times \R)\cup (\bs\times \{0\}\times \R)$.
 \item\label{def:M4} $\lim_{z\to p_i} (\pi_{\R}\circ u)(z)=-\infty$ and $\lim_{z\to q_i} (\pi_{\R}\circ u)(z)=\infty$.
 \item\label{def:M5} $u$ has finite energy.
  \item\label{def:M3} $\pi_{\bD}\circ u$ is locally non-constant.
 \item\label{def:M6} $u$ is an embedding.
 \end{enumerate}
 Here, we say two triples $(S',j',u')$ and $(S,j,u)$ are equivalent if they are related by a reparametrization of $S$.

We also will need to consider a weaker version of the \ref{def:M3} axiom:

\begin{enumerate}[label=($M$\arabic*$'$),leftmargin=*, widest=III]
\setcounter{enumi}{4}
\item \label{def:M3'} There is no non-empty open subset $U\subset S$ such that $\pi_{\bD}\circ u|_U$ is constant, and takes value near $\{0,1\}\times \R$ (in the sense of \ref{def:J5'}).
\end{enumerate}

It is important for our purposes to compute the expected dimension of moduli spaces.   To deal with the presence of curves which are potentially non-embedded, it is helpful to consider a refinement of the moduli space $\cM(\phi)$ which takes into account the topological source curve $S$. If $S$ is fixed Riemann surface, and $\phi$ is a homology class, we can consider the moduli space
\[\cM_J(S,\phi)\] consisting of the elements of $\cM_J(\phi)$ which have source $S$. Near a holomorphic curve $u$ where $J$ achieves transversality, $\cM_J(S,\phi)$ will be a smooth manifold of dimension equal to the Fredholm index of $D\bar{\d}$ at $u$.

 It follows from \cite{LipshitzCylindrical}*{Corollary 4.3} that if $u:S\to \Sigma\times [0,1]\times \R$ is a holomorphic curve which is an embedding, then the Fredholm index agrees with the Maslov index, so the expected dimension of $\cM(S,\phi)$ is $\mu(\phi)$. More generally, the Fredholm index satisfies
\[
\ind(u)=\mu(\phi)-2\sing(u),
\] 
where $\sing(u)$ is the local self-intersection number of $u$, in the sense of \cite{McDuffLocal}*{Section~4}. By definition, if $u$ is holomorphic, then $\sing(u)\ge 0$, and $\sing(u)=0$ if and only if $u$ is an embedding. Immersed, interior  double points contribute $\pm 1$, and immersed boundary double points contribute $\pm \tfrac{1}{2}$. See \cite{LipshitzErrata}*{Proposition~4.2'} or \cite{LOTBordered}*{Proposition~5.69} for a proof.
Suppose $\phi\in \pi_2(\xs,\ys)$ is a homology class, $p\in \Sigma\setminus (\as\cup \bs)$ is a point, and $X\subset \Sym^n(\bD)$ is a submanifold (where $n=n_p(\phi)$). We will need to consider the matched moduli space
\[
\cM(S,\phi,X):=\{u\in \cM(S,\phi): \rho^p(u)\in X\},
\] 
where $n=n_p(\phi)$ and $\rho^p:\cM(S,\phi)\to \Sym^n(\bD)$ is the map
\begin{equation}
\rho^p(u):=(\pi_{\bD}\circ u)\big((\pi_{\Sigma}\circ u)^{-1}(p)\big).\label{eq:rhopdefinition}
\end{equation} 
To simplify a few of the arguments, we focus on subsets $X\subset \Sym^n(\bD)$ which avoid the fat diagonal in $\Sym^n(\bD)$, i.e., the codimension 2 subset consisting of tuples with at least one repeated entry.

 We need the following transversality result:

\begin{prop}\label{prop:transversalitydisks}Suppose $J$ is a generic almost complex structure on $\Sigma\times [0,1]\times \R$ satisfying \textup{\ref{def:J1}--\ref{def:J5}}. Then, near a holomorphic curve $u:S\to \Sigma\times [0,1]\times \R$ satisfying \textup{\ref{def:M1}--\ref{def:M3}}, the moduli space $\cM(S,\phi)$ is a smooth manifold of dimension
\[
\ind(u)=\mu(\phi)-2\sing(u).
\] 
Similarly, if $X\subset \Sym^n(\bD)$ is a submanifold which avoids the fat diagonal, then near any curve $u\in \cM(S,\phi,X)$ satisfying \textup{\ref{def:M1}--\ref{def:M3}}, the space $\cM(S,\phi,X)$ is a smooth manifold of dimension
\[
\ind(u)=\mu(\phi)-2\sing(u)-\codim(X).
\]

If $J$ is a generic almost complex structure on $\Sigma\times [0,1]\times \R$ which satisfies \textup{\ref{def:J1}--\ref{def:J4}} and \textup{\ref{def:J5'}}, then the same statements hold at a holomorphic curve $u:S\to \Sigma\times [0,1]\times \R$ which satisfies \textup{\ref{def:M1}}, \textup{\ref{def:M2}}, \textup{\ref{def:M4}}, \textup{\ref{def:M5}} and \textup{\ref{def:M3'}},  with no multiply covered closed components, and with no components $S_0$ such that $\pi_{\bD}\circ u|_{S_0}$ is constant and takes on a value near $\{0,1\}\times \R$ (in the sense of \textup{\ref{def:J5'}}). 
\end{prop}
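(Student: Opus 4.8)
The plan is to follow the standard Sard--Smale framework for proving generic transversality of moduli spaces of pseudoholomorphic curves, adapting the argument of Lipshitz in \cite{LipshitzCylindrical}*{Section~3} to the refined moduli spaces $\cM(S,\phi)$ and the matched moduli spaces $\cM(S,\phi,X)$. First I would set up the universal moduli space: fix a large Banach manifold $\mathcal{J}$ of almost complex structures satisfying \ref{def:J1}--\ref{def:J5} (respectively \ref{def:J1}--\ref{def:J4} and \ref{def:J5'}), and consider the universal moduli space $\mathcal{M}^{\mathrm{univ}}(S,\phi)$ of pairs $(u,J)$ with $u$ a $J$-holomorphic map satisfying \ref{def:M1}--\ref{def:M3} in the source-curve class $S$ and homology class $\phi$. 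One realizes this as the zero set of a section $\bar\partial$ of a Banach bundle over a space of maps of class $W^{k,p}$ (with a suitable exponential/cylindrical weighting at the punctures to handle the asymptotics \ref{def:M4}, \ref{def:M5}), and the key point is that the linearization $D\bar\partial$, allowed to vary $J$ as well, is surjective at every solution. Surjectivity of the full linearized operator is where the somewhere-injectivity hypothesis enters: by the standard argument (unique continuation plus the fact that the evaluation map is a submersion at an injective point), one shows that the cokernel of the $u$-linearization is killed by perturbing $J$ in a neighborhood of an injective point of $u$ lying away from the regions where $J$ is pinned (near $\ws$, near $\as\cup\bs$, and near $\Sigma\times\{0,1\}$). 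The hypotheses in the statement---no multiply covered closed components, and no components mapped by $\pi_{\bD}$ to a constant value near $\{0,1\}\times\R$---are precisely what guarantee that such an injective point exists in the region where we are free to perturb $J$; this is the step I expect to be the main obstacle, since for the \ref{def:J5'} version one must be careful that the constraint "$\xi$ tangent to $\Sigma\times\{pt\}$ near $(\as\cup\bs)\times[0,1]\times\R$ and near $\Sigma\times\{0,1\}\times\R$" still leaves enough room to achieve somewhere-injectivity off those loci, and one must rule out the bad degenerate components by hand.

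Granting surjectivity of the universal linearization, the implicit function theorem makes $\mathcal{M}^{\mathrm{univ}}(S,\phi)$ a smooth Banach manifold, and the projection $\pi:\mathcal{M}^{\mathrm{univ}}(S,\phi)\to\mathcal{J}$ is Fredholm of index equal to the Fredholm index of $D\bar\partial$ at a solution. The Sard--Smale theorem then gives a comeager set of $J\in\mathcal{J}$ for which $\pi$ is a submersion over its regular values, i.e.\ for which $\cM(S,\phi)=\pi^{-1}(J)$ is a smooth manifold of dimension $\ind(u)$. To convert the Fredholm index into the stated formula, I would invoke the index computations already recorded in the excerpt: by \cite{LipshitzCylindrical}*{Corollary~4.3} the Fredholm index equals the Maslov index $\mu(\phi)$ when $u$ is embedded, and in general $\ind(u)=\mu(\phi)-2\sing(u)$ by \cite{LipshitzCylindricalErrata}*{Proposition~4.2'}; this is purely a local index-theory input and requires no new work here. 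A standard second-category / Baire argument (intersecting the comeager sets over the countably many relevant pairs $(S,\phi)$) then yields a single generic $J$ that works simultaneously.

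For the matched moduli spaces $\cM(S,\phi,X)$, I would run the same universal-moduli-space argument but now cut out by the section $(\bar\partial\,u,\ \rho^p(u)\bmod X)$, where $\rho^p$ is the evaluation-type map of Equation~\eqref{eq:rhopdefinition}. The additional ingredient is that the map $\rho^p:\cM(S,\phi)\to\Sym^n(\bD)$ is a submersion at solutions, which holds because $X$ avoids the fat diagonal (so the $n$ preimage points in $\Sigma$ are distinct and each contributes independently to the differential of $\rho^p$) and because, again using somewhere-injectivity in the free region, one can perturb $J$ near the sheet through any one preimage point to move $\rho^p(u)$ in an arbitrary direction. Surjectivity of the combined linearization then follows, the implicit function theorem gives a smooth universal moduli space, and Sard--Smale produces a generic $J$ making $\cM(S,\phi,X)$ smooth of dimension $\ind(u)-\codim(X)$, exactly as claimed. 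The \ref{def:J5'}/\ref{def:M3'} variant is handled identically, with the only difference being the smaller class $\mathcal{J}$ and the weaker source condition; the proof that the degenerate-component exclusions in the hypothesis suffice to locate a good injective point is again the crux, and I would lean on the corresponding discussion in \cite{LipshitzCylindrical} that these are exactly the curves for which the somewhere-injectivity machinery applies.
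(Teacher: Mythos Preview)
Your proposal is correct and aligns with what the paper does. The paper itself does not give a self-contained proof of this proposition; it simply cites \cite{LipshitzCylindrical}*{Sections~3, 4} and \cite{LipshitzCylindricalErrata} for the unmatched moduli spaces, and \cite{MS04:HolomorphicCurvesSymplecticTopology}*{Theorem~3.4.1} (together with \cite{JTNaturality}*{Section~9.3}) for the matched moduli spaces, noting that the fat-diagonal hypothesis is what makes the matched case tractable. Your sketch is precisely the Sard--Smale/universal-moduli-space argument those references carry out, so there is nothing to correct.
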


The proof of the statements involving the unmatched moduli spaces $\cM(S,\phi)$ can be found in \cite{LipshitzCylindrical}*{Sections~3, 4} for embedded curves. Some corrections, and proofs for curves which are not embedded can be found in \cite{LipshitzErrata}.  We refer the reader to \cite{JTNaturality}*{Section~9.3} for a proof of the statement about the matched moduli spaces $\cM(\phi,S,X)$. In analogy to the situation in \cite{MS04:HolomorphicCurvesSymplecticTopology}*{Theorem~3.4.1}, the proof that $\cM(S,\phi,X)$ is transversely cut out is substantially simplified by assuming that $X$ avoids the fat diagonal in $\Sym^n(\bD)$. We note that the condition that $X$ avoids the fat diagonal also implies that there are no multiply covered closed components.

We now describe the moduli spaces of holomorphic triangles appearing in this paper. Let $\Delta$ denote a triangular region in the complex plane, which has three boundary components, and three cylindrical ends, each identified with $[0,1]\times [0,\infty)$. As in \cite{LipshitzCylindrical}, we will primarily consider almost complex structures on $\Sigma\times \Delta$ which satisfy the following axioms:
\begin{enumerate}[label=($J'$\arabic*),leftmargin=*, widest=III]
\item\label{def:J'1} $J$ is tamed by the split symplectic form on $\Sigma\times \Delta$.
\item \label{def:J'2}There is a finite collection of points $P\subset \Sigma\setminus(\as\cup\bs\cup\gs)$ with at least one point in each component of $\Sigma\setminus (\as\cup \bs\cup \gs)$ such that $J$ is split on a product neighborhood of $P\times \Delta$. 
\item\label{def:J'3} In the cylindrical ends of $\Delta$, $J$ is equal to a cylindrical almost complex structure satisfying \ref{def:J1}--\ref{def:J5}.
\item\label{def:J'4} The projection map $\pi_\Delta:\Sigma\times \Delta\to \Delta$ is holomorphic and the tangent space of each fiber of $\pi_\Sigma$ is a complex line.
\end{enumerate}

There will be some instances when we need to consider a more generic set of almost complex structures on $\Sigma\times \Delta$. We need the following alternate axioms:

\begin{enumerate}[label=($J'$\arabic*$'$),leftmargin=*, widest=III]
\setcounter{enumi}{2}
\item\label{def:J'3'} In the cylindrical ends of $\Sigma\times \Delta$, $J$ agrees with cylindrical almost complex structures satisfying $(J1)$--$(J4)$ and $(J5')$, above.
\item\label{def:J'4'} The 2-planes of $T(\{p\}\times \Delta)$ are complex lines of $J$ for all $p\in \Sigma$.
\item\label{def:J'5'} The 2-planes of $T(\Sigma\times \{d\})$, for $d\in \Delta$, are complex lines for $J$ near $(\as\cup \bs\cup \gs)\times \Delta$ and on $\Sigma\times U$ for an open subset $U\subset \Delta$ containing the three  components of $\d \Delta$.
\end{enumerate}

Given a source surface $S$ and a homology class of triangles $\psi$, we can consider the moduli space $\cM(S,\psi)$ of curves  satisfying the natural analogs of \ref{def:M1}--\ref{def:M5}, for triangles. If $p\in \Sigma\setminus (\as\cup \bs\cup \gs)$ is a point, we can also consider the map $\rho^p:\cM(S,\psi)\to \Sym^{n_p(\psi)}(\Delta)$, defined analogously to equation~\eqref{eq:rhopdefinition}. If $X\subset \Sym^n(\Delta)$ is a subset (which we will always assume avoids the fat diagonal), then we can consider the matched moduli space $\cM(S,\psi,X)$, as before. In analogy to Proposition~\ref{prop:transversalitydisks}, we state the following transversality result:

\begin{prop}\label{prop:transversalitytriangles} Suppose that $J$ is a generic almost complex structure on $\Sigma\times \Delta$ which satisfies \textup{\ref{def:J'1}--\ref{def:J'4}}. Then near any holomorphic curve $u:S\to \Sigma\times \Delta$, satisfying the analogs of \textup{\ref{def:M1}--\ref{def:M5}} for triangles, the moduli space $\cM(S,\psi)$ is a smooth manifold of dimension
\[
\ind(u)=\mu(\psi)-2\sing(u).
\]
 If $X\subset \Sym^n(\Delta)$ is a submanifold which avoids the fat diagonal, then near any curve $u\in \cM(S,\psi,X)$ satisfying \textup{\ref{def:M1}--\ref{def:M5}}, the space $\cM(S,\psi,X)$ is a  smooth manifold of dimension
\[
\ind(u)=\mu(\psi)-2\sing(u)-\codim(X).
\]

If $J$ is a generic almost complex structure on $\Sigma\times \Delta$ which satisfies \textup{\ref{def:J'1}}, \textup{\ref{def:J'2}},  \textup{\ref{def:J'3'}}, \textup{\ref{def:J'4'}} and \textup{\ref{def:J'5'}}, then the same statements hold at any holomorphic curve $u:S\to \Sigma\times \Delta$ which satisfies the analogs of \textup{\ref{def:M1}}, \textup{\ref{def:M2}}, \textup{\ref{def:M4}}, \textup{\ref{def:M5}} and \textup{\ref{def:M3'}} for triangles, with no multiply covered closed components,  and with no components $S_0$ such that $\pi_{\Delta}\circ u|_{S_0}$ is constant and takes on a value near $\d \Delta$ (in the sense of \textup{\ref{def:J'5'}})
\end{prop}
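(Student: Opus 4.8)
\textbf{Proof proposal for Proposition~\ref{prop:transversalitytriangles}.} The plan is to mirror the proof of Proposition~\ref{prop:transversalitydisks} (i.e.\ Lipshitz's transversality arguments \cite{LipshitzCylindrical}*{Sections~3,4}, \cite{LipshitzCylindricalErrata}), replacing the strip $\Sigma\times[0,1]\times\R$ by $\Sigma\times\Delta$. The key structural observation is that $\Delta$ has three cylindrical ends, each isometric to $[0,1]\times[0,\infty)$, so in a neighborhood of each end the almost complex structure is by \ref{def:J'3} (resp.\ \ref{def:J'3'}) exactly one of the cylindrical almost complex structures governed by Proposition~\ref{prop:transversalitydisks}; the only genuinely new content lives over the compact core of $\Delta$. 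First I would set up, for a fixed topological source $S$ with the prescribed punctures and a fixed homology class $\psi$, the Banach manifold of $W^{k,p}$-maps $u\colon S\to\Sigma\times\Delta$ asymptotic to the appropriate Reeb-type orbits in the cylindrical ends and with boundary on $(\as\cup\bs\cup\gs)\times\d\Delta$, together with the Banach bundle whose fiber is $W^{k-1,p}(\Lambda^{0,1}_J\otimes u^*T(\Sigma\times\Delta))$ and the section $\bar\d_J$. The universal section $(u,J)\mapsto\bar\d_J u$ is then studied over the space $\cJ$ of almost complex structures satisfying \ref{def:J'1}--\ref{def:J'4} (resp.\ \ref{def:J'1},\ref{def:J'2},\ref{def:J'3'},\ref{def:J'4'},\ref{def:J'5'}), with a suitable Floer $C^\varepsilon$ norm.

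The heart of the argument is showing that the linearization $D\bar\d_J + L_J$ (linearizing in both $u$ and $J$) of the universal section is surjective at every zero; this is where the \ref{def:M3} axiom (resp.\ \ref{def:M3'}) is used. As in \cite{LipshitzCylindrical}, one argues that $\pi_\Delta\circ u$ is a branched cover of $\Delta$, so for a point $p$ in the interior of a component of $\Sigma\setminus(\as\cup\bs\cup\gs)$ the fiber $(\pi_\Sigma\circ u)^{-1}(p)$ consists of $n_p(\psi)$ generically distinct points lying over distinct points of $\Delta$, which supplies enough somewhere-injectivity to run the standard argument: if a cokernel element were nonzero, it would be supported on an open set where $u$ is an embedding into a region of $\Sigma\times\Delta$ over which $J$ is allowed to vary (guaranteed by \ref{def:J'2}), contradicting a unique-continuation argument against an explicitly constructed $J$-variation. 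Under the primed axioms, the region where $J$ may be perturbed is constrained (split near the boundary and near $(\as\cup\bs\cup\gs)\times\Delta$), so the \ref{def:M3'} hypothesis — no component on which $\pi_\Delta\circ u$ is constant with value near $\d\Delta$, and no multiply covered closed components — is precisely what guarantees that each component meets the allowable perturbation region, exactly as in Lipshitz's treatment of \ref{def:J5'}. Sard--Smale and the usual intersection-of-comeager-sets argument over an exhaustion of $S$ and $\psi$ then give genericity of $J$; the dimension of $\cM(S,\psi)$ equals the Fredholm index, which is computed from $\chi(S)$ and $\psi$ by Lipshitz's index formula adapted to $\Sigma\times\Delta$, and the passage $\ind(u)=\mu(\psi)-2\sing(u)$, with equality $\ind=\mu$ for embedded curves, is \cite{LipshitzCylindricalErrata}*{Proposition~4.2'} applied in the triangle setting.

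For the matched moduli spaces $\cM(S,\psi,X)$, I would adapt \cite{MS04:HolomorphicCurvesSymplecticTopology}*{Theorem~3.4.1}: after achieving universal transversality as above, one shows that the evaluation-type map $\rho^p\colon\cM(S,\psi)\to\Sym^{n_p(\psi)}(\Delta)$ is, on the universal level, a submersion onto its image away from the fat diagonal. Here the hypothesis that $X$ avoids the fat diagonal is essential: it forces the $n_p(\psi)$ preimage points in $\Sigma$ to be distinct, so the positions in $\Delta$ of the corresponding sheets of the branched cover $\pi_\Delta\circ u$ can be varied independently by perturbing $u$ (and, if necessary, $J$ near the distinct preimage points, using \ref{def:J'2} or the allowable region under the primed axioms). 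Transversality of $\rho^p$ to $X$ then makes $\cM(S,\psi,X)=(\rho^p)^{-1}(X)$ a smooth manifold of the expected codimension $\codim(X)$; as noted, avoiding the fat diagonal also rules out multiply covered closed components, consistent with the hypotheses. Finally, the statement under the primed axioms \ref{def:J'1},\ref{def:J'2},\ref{def:J'3'},\ref{def:J'4'},\ref{def:J'5'} follows by the same scheme, importing the cylindrical-end analysis verbatim from the \ref{def:J5'} case of Proposition~\ref{prop:transversalitydisks}.

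\textbf{Main obstacle.} I expect the principal difficulty to be the surjectivity of the linearized universal operator in the presence of the constrained perturbation space for $J$ — in particular verifying that every component of every relevant holomorphic curve genuinely intersects the region of $\Sigma\times\Delta$ where $J$ may be varied, which is exactly the purpose of the \ref{def:M3} / \ref{def:M3'} hypotheses, and, for the matched spaces, checking that $\rho^p$ can be made transverse to $X$ while respecting those same constraints. Once this somewhere-injectivity-type input is in place, the remaining steps (Sard--Smale, index bookkeeping via Lipshitz's formulas, and the McDuff--Salamon-style constrained transversality) are routine adaptations of the corresponding steps in Proposition~\ref{prop:transversalitydisks}.
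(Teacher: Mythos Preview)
Your proposal is correct and follows essentially the same approach as the paper: the paper itself does not give a detailed proof, but simply refers to \cite{JTNaturality}*{Section~9.3} for a sketch and states that the argument follows from adapting \cite{LipshitzCylindrical}*{Sections~3,~4} for the unmatched moduli spaces and \cite{MS04:HolomorphicCurvesSymplecticTopology}*{Theorem~3.4.1} for the matched moduli spaces. Your outline is a faithful and more detailed expansion of exactly those adaptations.
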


A proof is sketched in \cite{JTNaturality}*{Section~9.3}.

\section{On Lefschetz numbers over \texorpdfstring{$\bK[[U]]$}{K[[U]]}}

\label{sec:algebraic-Lefschetz}

In this section, we prove Proposition~\ref{prop:algebraicmappingtorus}, our Lefschetz number formula.

\subsection{Background on chain complexes over \texorpdfstring{$\bK[[U]]$}{K[[U]]}}

Suppose that $\bK$ is a field of characteristic 2, and that $C$ is a finitely generated, free chain complex over $\bK[[U]]$. We are mostly interested in the case that $\bK=\bF_2$, though in Section~\ref{sec:perturbed} we consider the case where $\bK=\Lambda$, the Novikov field. We assume that $C$ has a relative $\Z/2\Z$ grading, which is lowered by $\d$, and which is preserved by the action of $U$. We define chain complexes $C^-,$ $ C^\infty$ and $C^+$ by the formulas
   \[
   C^-:=C, \qquad C^\infty:=C^-\otimes_{\bK[[U]]} \bK[[U,U^{-1}]\qquad\text{and} \qquad C^+:=C^\infty/C^-.
   \] 
   We write $H^\circ(C)$ for the homology group $H_*(C^\circ)$, for $\circ\in \{+,-,\infty\}$. 
   Write $\delta\colon H^+(C)\to H^-(C)$ for the connecting homomorphism, and define
   \[
   H_{\red}^-(C):=\ker(H^-(C)\to H^\infty(C) )\qquad \text{and} \qquad H_{\red}^+(C):=\coker(H^\infty(C)\to H^+(C)).
   \]
     The connecting homomorphism $\delta$ induces an isomorphism from $H_{\red}^+(C)$ to $H_{\red}^-(C)$. 

\begin{rem}
In the context of Heegaard Floer homology, one often works with chain complexes over $\bK[U]$, instead of $\bK[[U]]$. For the purposes of computing Lefschetz numbers on $H_{\red}^{+}(C)$, we lose no generality by working over $\bK[[U]]$. Indeed if $C$ is a finitely generated, free chain complex over $\bK[U]$, then $H_{\red}^{+}(C)\iso H_{\red}^{+}(C\otimes \bK[[U]])$, since $\bK[[U]]$ is a flat $\bK[U]$-module, so $H_{\red}^{+}(C\otimes \bK[[U]])\iso H_{\red}^{+}(C)\otimes \bK[[U]]$. On the other hand, for large $n$, $U^n$ annihilates $H_{\red}^{+}(C)$, so $H_{\red}^{+}(C)\otimes \bK[[U]]\iso H_{\red}^{\pm}(C)$.
\end{rem}

We recall some notation. If $\cR$ is a ring,  we say a chain complex $C$ is a \emph{1-step complex} if $C\iso \cR$ with vanishing differential. We say $C$ is a \emph{2-step complex} if $C\iso \cR\oplus \cR$, with generators $\ve{a}$ and $\ve{b}$, and the differential is given by $\d \ve{a}=p\cdot \ve{b}$ and $\d \ve{b}=0$, for some non-zero $p\in \cR$. We have the following:

\begin{lem}\label{lem:classificationfgPID}If $C$ is a free, finitely generated chain complex over $\bK[[U]]$, then $C$ is chain isomorphic to a direct sum of 1-step complexes and 2-step complexes of the form 
\[
\ve{a}\xrightarrow{U^n} \ve{b}.
\]
\end{lem}
\begin{proof}The classification theorem for finitely generated chain complexes over a PID (see, e.g., \cite{HMZConnectedSum}*{Lemma~6.1}) says that any free, finitely generated chain complex over a PID $\cR$ decomposes into a direct sum of 1-step complexes, and 2-step complexes $\ve{a}\xrightarrow{p} \ve{b}$, for various $p\in \cR$. In our case, $\cR=\bK[[U]]$, and we need to reason that for any 2-step complex which appears, the element $p$ can be taken to be a nonnegative power of $U$. Write $p=U^n(\a+Uq(U))$ for some $q(U)\in \bK[[U]]$, and $\a\in \bK^\times$. Since $\a+Uq(U)$ is a unit in $\bK[[U]]$, the complex 
\[
\ve{a}\xrightarrow{U^n(\a+Uq(U))} \ve{b}
\] 
is chain isomorphic to the complex 
\[
\ve{a}'\xrightarrow{U^n} \ve{b}'
\]
 via the map $\ve{a}\mapsto \ve{a}'$ and $\ve{b}\mapsto (\a+Uq(U))^{-1}\cdot \ve{b}'$.
\end{proof}

\subsection{Trace and cotrace maps}

Let $C$ be a free and finitely generated chain complex over $\bK[[U]]$. Recall the natural trace map
\begin{equation}\tr: C\otimes_{\bK[[U]]} C^\vee\to \bK[[U]]\label{eq:trace}\end{equation} defined by the formula $\tr(\xs\otimes \ys)=\ys(\xs)$. Since $C$ is finitely generated, there is a cotrace map
\[\cotr: \bK[[U]]\to C\otimes_{\bK[[U]]} C^\vee,\]
which can be defined as the dual of a trace map. If $\xs_1,\dots, \xs_n$ is a basis for $C$, then the cotrace map takes the form
\[\cotr(1)=\sum_{i=1}^n \xs_i\otimes  \xs_i^\vee.\] The trace and cotrace maps are easily seen to be chain maps.

\subsection{The  endomorphism $\Phi$}
\label{sec:Phi-def}
We now describe our special endomorphism $\Phi:C\to C$. Suppose that $C$ is a free, finitely generated chain complex over $\bK[[U]]$, with a chosen basis $B=\{\xs_1,\dots, \xs_n\}$. (The construction also works over $\bK[U]$, as above)

 We can write
\begin{equation}
\d\ve{x}_i=\sum_{j=1}^n P_{i,j}\cdot \ve{x}_j,\label{eq:Pijdef}
\end{equation}
 for $P_{i,j}\in \bK[[U]]$. Let $P_{i,j}'$ denote the derivative of $P_{i,j}$ with respect to $U$. We  define the map
\[
\Phi_B:C\to C
\] by the formula
\[
\Phi_B(\xs_i)=\sum_{j=1}^n P_{i,j}' \cdot \xs_j.
\]
 Viewing $\d$ as a matrix over the basis $B$, we can differentiate the expression $\d^2=0$ using the Leibniz rule to see that $\Phi_B$ is a chain map. Similarly, if $x\in C$, applying the Leibniz rule to the expression $\Phi_B(x)$ (viewed as a product of a matrix and a column vector) implies the relation
\begin{equation}
\Phi_B=\d\circ \frac{d}{d U}\bigg|_B+\frac{d}{d U}\bigg|_B \circ \d.
\label{eq:Phi-null-homotopic}
\end{equation}
 The map $d/d U|_B$ is defined by writing an element $x\in C$ in terms of the basis $B$, and then differentiating the coefficients of the basis elements. The map $d/d U|_B$ does not commute with the action of $U$. Note that equation~\eqref{eq:Phi-null-homotopic} implies that $\Phi_B$ is chain homotopic to 0, but not $U$-equivariantly.

The map $\Phi_B$ is independent of the chosen basis, up to chain homotopy, in the following sense:
   
   \begin{lem}\label{lem:Phiwelldefineduptochainhomotopy}If $B_1$ and $B_2$ are two bases of $C$ over $\bK[[U]]$, then the maps $\Phi_{B_1}$ and $\Phi_{B_2}$ are chain homotopic over $\bK[[U]]$. 
   \end{lem}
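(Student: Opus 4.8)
The plan is to exploit the formula $\Phi_B = \d\circ\tfrac{d}{dU}\big|_{B} + \tfrac{d}{dU}\big|_{B}\circ\d$ recorded above. Although the operator $\tfrac{d}{dU}\big|_{B}$ is not $\bF_2[U]$-linear, I claim that passing from one basis to another changes it only by a genuinely $\bF_2[U]$-linear operator, and this correction term will serve as the chain homotopy.

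First I would fix notation, writing $B_1 = \{\ve{x}_1,\dots,\ve{x}_n\}$ and $B_2 = \{\ve{y}_1,\dots,\ve{y}_n\}$, and letting $M = (M_{ij})\in \Mat_n(\bF_2[U])$ be the change-of-basis matrix with $\ve{x}_i = \sum_{j} M_{ij}\,\ve{y}_j$. For $x = \sum_i a_i(U)\,\ve{x}_i \in C$ one has $\tfrac{d}{dU}\big|_{B_1}(x) = \sum_{i}a_i'\,\ve{x}_i = \sum_{i,j}a_i' M_{ij}\,\ve{y}_j$, whereas rewriting $x = \sum_j\big(\sum_i a_i M_{ij}\big)\ve{y}_j$ and differentiating the $B_2$-coefficients gives $\tfrac{d}{dU}\big|_{B_2}(x) = \sum_{i,j}\big(a_i' M_{ij} + a_i M_{ij}'\big)\ve{y}_j$. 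The $a_i' M_{ij}$ terms cancel upon subtraction, leaving
\[
\Big(\tfrac{d}{dU}\big|_{B_2} - \tfrac{d}{dU}\big|_{B_1}\Big)(x) \;=\; \sum_i a_i\Big(\sum_j M_{ij}'\,\ve{y}_j\Big) \;=\; N(x),
\]
where $N\colon C\to C$ is the $\bF_2[U]$-linear map determined by $N(\ve{x}_i) := \sum_j M_{ij}'\,\ve{y}_j$. Hence $\tfrac{d}{dU}\big|_{B_2} = \tfrac{d}{dU}\big|_{B_1} + N$ with $N$ honestly $\bF_2[U]$-linear (and of the same relative degree as $\tfrac{d}{dU}$).

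Substituting into the formula for $\Phi$ then gives
\[
\Phi_{B_2} \;=\; \d\circ\Big(\tfrac{d}{dU}\big|_{B_1} + N\Big) + \Big(\tfrac{d}{dU}\big|_{B_1} + N\Big)\circ\d \;=\; \Phi_{B_1} + \big(\d\circ N + N\circ\d\big),
\]
so $\Phi_{B_1} - \Phi_{B_2} = \d\circ N + N\circ\d$ with $N$ an $\bF_2[U]$-linear map, which is exactly the statement that $N$ is an $\bF_2[U]$-equivariant chain homotopy from $\Phi_{B_1}$ to $\Phi_{B_2}$. The same argument applies verbatim over $\bF_2[[U]]$. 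The only point requiring care is the first step — identifying the basis-dependence of $\tfrac{d}{dU}\big|_{B}$ and observing that the $a_i' M_{ij}$ contributions cancel, so that the difference of the two operators is $\bF_2[U]$-linear; once that is in hand the rest is a formal manipulation of the already-established commutator identity for $\Phi_B$.
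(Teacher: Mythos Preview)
Your proof is correct and arrives at the same chain homotopy as the paper: the $\bF_2[U]$-linear operator whose matrix is the entrywise $U$-derivative $M'$ of the change-of-basis matrix. The paper's presentation differs only in framing: rather than working through the commutator identity $\Phi_B = \d\circ\tfrac{d}{dU}\big|_B + \tfrac{d}{dU}\big|_B\circ\d$, it differentiates the matrix equation $F\circ\d_1 + \d_2\circ F = 0$ for an arbitrary $\bF_2[U]$-equivariant chain map $F\colon (C_1,B_1)\to (C_2,B_2)$, obtaining $F\circ\Phi_{B_1} + \Phi_{B_2}\circ F = \d_2\circ F' + F'\circ\d_1$ with homotopy $F'$, and then specializes to $C_1=C_2$, $F=\id$. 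This more general formulation is reused later in the paper (for instance to prove Relation~\ref{rel:R3'}), but for the lemma itself your direct computation is equivalent and arguably more transparent about why the difference of the two non-$\bF_2[U]$-linear derivation operators is genuinely $\bF_2[U]$-linear.
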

   \begin{proof}Consider the more general situation, where $(C_1,\d_1)$ and $(C_2,\d_2)$ are chain complexes over $\bK[[U]]$ with bases $B_1$ and $B_2$ and $F:C_1\to C_2$ is an $\bK[[U]]$-equivariant chain map. Differentiating the matrix equation (written in terms of the bases $B_1$ and $B_2$)
   \[
   F\circ \d_1+\d_2\circ F=0,
   \]  
   we get that
   \[
   F\circ\Phi_{B_1}+\Phi_{B_2}\circ F\simeq 0.
   \]
    By specializing to the case that $(C_1,\d_1)=(C_2,\d_2)$ and $F:C_1\to C_2$ is the identity map, but the bases $B_1$ and $B_2$ are different, we obtain the lemma statement.
   \end{proof}

We henceforth write just $\Phi$, for the map $\Phi_B$ for some chosen basis $B$.

\subsection{The Lefschetz number formula}

We now prove our main Lefschetz number formula, Proposition~\ref{prop:algebraicmappingtorus} of the introduction.

We being with a helpful algebraic lemma:
\begin{lem}\label{lem:0-map-tensor}
Suppose $C$ is a free, finitely generated complex over $\bK[[U]]$, and $F\colon C\to C$ is a chain map such that $F_*\colon H^\infty(C)\to H^\infty(C)$ vanishes. Then $F\otimes \id$ and $\id \otimes F^\vee$ vanish on $H^\infty(C\otimes C^\vee)$.
\end{lem}
\begin{proof} Clearly it is sufficient to consider just $F\otimes \id$. We use Lemma~\ref{lem:classificationfgPID} to write $C$ as a direct sum of two complexes, $C_1$ and $C_2$, where $C_1$ is a direct sum of 1-step complex, and $C_2$ is a direct sum of 2-step complexes of the form $\ve{a}\xrightarrow{U^n} \ve{b}$. Over the ring $\bK[[U,U^{-1}]$, the complex $C_2$ is homotopy equivalent to the 0 complex, since $U^n$ is a unit in $\bK[[U,U^{-1}]$. In particular, the canonical inclusion $I$ of $C_1$ into $C$, and the canonical projection $\Pi$ of $C$ onto $C_1$ are both homotopy equivalences. Hence, $F$ vanishes on $H^\infty(C)$ if and only if $\Pi\circ F\circ I=0$.

Suppose $F$ vanishes on $H^\infty(C)$ so $\Pi\circ F\circ I=0$, by above. Then
\[
(\Pi\otimes I^\vee)\circ (F\circ \id) \circ (I\otimes \Pi^\vee)=(\Pi\circ F\circ I)\otimes (I^\vee\circ \Pi^\vee)=0,
\]
which implies that $F\otimes \id\simeq 0$, since $\Pi\otimes I^\vee$ and $I\otimes \Pi^\vee$ are homotopy equivalences between $C\otimes C^\vee$ and $C_1\otimes C_1^\vee$.
\end{proof}

\begin{rem} Lemma~\ref{lem:0-map-tensor} fails over $\bK[U]$. For example, if $C$ is the 2-step complex $\ve{a}\xrightarrow{1+U} \ve{b}$ and $F=\Phi$, it is easy to check that $\Phi\otimes \id$ is non-zero on $H^\infty(C\otimes C^\vee)$.
\end{rem}

Suppose that $C$ is a free, finitely generated chain complex over $\bK[[U]]$ and $F:C\to C$ is a $\Z/2\Z$-grading preserving chain map. By Lemma~\ref{lem:0-map-tensor}, both of the maps $(F\otimes \id)$ and $\id \otimes \Phi^\vee$ vanish on $H^\infty(C\otimes C^\vee)$. Hence, we may define a quantity $\Delta(C,F)\in \bK$ to be the coefficient of $U^{-1}$ in the expression
\begin{equation}
\left(\tr\circ (F\otimes \id)\circ \delta^{-1}\circ(\id\otimes \Phi^\vee) \circ \cotr\right)(1).\label{eq:DeltaCFdefinition}
\end{equation}

The main algebraic result of this section is the following:
\begin{customprop}{\ref{prop:algebraicmappingtorus}}
 Suppose that $\bK$ is a field of characteristic 2, and $C$ is a finitely generated, relatively $\Z/2\Z$ graded, free chain complex over $\bK[[U]]$. Suppose that $F\colon C\to C$ is a chain map which preserves the relative $\Z/2\Z$ grading, and such that the induced map $F^\infty_*\colon H_*(C^\infty)\to H_*(C^\infty)$ vanishes. Then
 \[
\Lef\left(F_* \colon H_{\red}^+(C)\to H_{\red}^+(C) \right) \in \bK
 \]
 coincides with $\Delta(C,F)$.
\end{customprop}
\begin{proof}
Applying Lemma~\ref{lem:classificationfgPID} shows that $C$ can be decomposed as a direct sum of 1-step and 2-step complexes of the form $\ve{a}\xrightarrow{U^n} \ve{b}$. 

We first consider the case that $C$ is a 1-step complex. In this case, $\Phi=0$, since the differential vanishes. Hence $\Delta(C,\id)=0$. On the other hand, $H^+_{\red}(C)=0$, so the claim holds.

 We now consider the claim when $C=(\ve{a}\xrightarrow{U^n} \ve{b})$ and $F=\id$. In this case
\[
\Lef\big(F_*:H^+_{\red}(C)\to H^+_{\red}(C)\big)=\chi(H^+_{\red}(C))=n.
\] 
The dual complex $C^\vee$ is the 2-step complex $\ve{b}^\vee\xrightarrow{U^n} \ve{a}^\vee$. The complex $(C\otimes C^\vee)^-$ is shown below:
  \[(C\otimes C^\vee)^-=\begin{tikzcd}& \ve{a}\ve{b}^\vee\arrow[swap]{dl}{U^n} \arrow{dr}{U^n}&\\
  \ve{bb}^\vee\arrow[swap]{dr}{U^n} && \ve{aa}^\vee\arrow{dl}{U^n}\\
  & \ve{ba}^\vee&
  \end{tikzcd}.\]
  With this notation, the map $1\otimes \Phi^\vee$ takes the form
  \[(1\otimes \Phi^\vee)=\begin{tikzcd}& \ve{ab}^\vee \arrow{dr}{nU^{n-1}}&\\
  \ve{bb}^\vee\arrow{dr}{nU^{n-1}} && \ve{aa}^\vee\\
  & \ve{ba}^\vee&
  \end{tikzcd}.\]

   It is easy to compute that $H^-_{\red}(C\otimes C^\vee)$ is generated over $\bF_2$ by the classes $[U^i\cdot(\ve{a}\ve{a}^\vee+\ve{b}\ve{b}^\vee)]$ and $[U^i\cdot \ve{b}\ve{a}^\vee]$ for $0\le i\le n-1$, with no relations.
   
   Similarly $H^+_{\red}(C\otimes C^\vee)$ is generated over $\bK$ by the classes $[U^i\cdot \ve{a}\ve{b}^\vee]$, $[U^i\cdot \ve{a}\ve{a^\vee}]$ and $[U^i\cdot\ve{b}\ve{b}^\vee]$, for $-n\le i\le -1$, with the only relation being $[U^i\cdot\ve{a}\ve{a}^\vee]=[U^i\cdot\ve{b}\ve{b}^\vee]$.

  The connecting homomorphism $\delta$ satisfies
  \[
  \delta([U^i\cdot \ve{a}\ve{b}^\vee])=[U^{i+n}\cdot(\ve{aa}^\vee+\ve{bb}^\vee)]\qquad \text{and} \qquad \delta([U^i\cdot\ve{aa}^\vee])=[U^{i+n}\cdot \ve{ba}^\vee].
  \]
   It is now an easy matter to compute
  \[
  (\tr\circ \id\circ \delta^{-1}\circ (\id\otimes\Phi^\vee)\circ \cotr) (1)=nU^{-1},
  \] 
  which verifies the claim in this case, since $n=\chi(H^+_{\red}(C))=\Lef(\id:H^+_{\red}(C)\to H^+_{\red}(C))$.
  
  We now consider the case that $C$ is still the 2-step complex $\ve{a}\xrightarrow{U^n} \ve{b}$, but $F:C\to C$ is an arbitrary chain map which preserves the relative $\Z/2\Z$-grading. Since $F$ preserves the relative grading, it follows that $F(\ve{a})=p(U) \cdot\ve{a}$ and  $F(\ve{b})=q(U)\cdot \ve{b}$ for some $p(U),q(U)\in \bK[[U]]$. Since $F$ is a chain map, it follows that $p(U)=q(U)$. Hence $F$ is equal to multiplication by $p(U)$, for some $p(U)\in \bK[[U]]$. Write
  \[
  p(U)=\alpha+U p_0(U),
  \]
   where $\alpha\in  \bK$ and $p_0(U)\in \bK[[U]]$. Clearly
  \[
  \Lef\big(F_*:H^+_{\red}(C)\to H^+_{\red}(C)\big)=\alpha n.
  \]
   On the other hand, it is easy to compute that
  \[
  (\tr\circ (F\otimes\id )\circ \delta^{-1}\circ(\id \otimes \Phi^\vee) \circ \cotr)(1)=\alpha nU^{-1}.
  \] 
  In particular,
   \[
   \Delta(C,F)=\Lef\big(F_*:H^+_{\red}(C)\to H^+_{\red}(C)\big).
   \]

  Finally, we consider the case that $F$ is an arbitrary sum of 1 and 2-step complexes. Write
  \[
C=C_1\oplus \cdots \oplus C_n
  \]
  where each $C_{1}$ is either a 1-step complex, or a 2-step complex, as above. 
    We have
  \[
  H^+_{\red}(C)=H^+_{\red}(C_1)\oplus \cdots \oplus H^+_{\red}(C_n),
  \]
   and we can write
    \[
    C\otimes C^{\vee}=\sum_{i,j} C_i\otimes C_j^\vee,
    \]
   We can decompose $F$ and $\Phi^\vee$ as  
  \[
  F=\sum_{1\le i,j\le n} F_{i,j},\qquad\text{and} \qquad \Phi^\vee=\sum_{k=1}^n \Phi_k^\vee \]
   where $F_{i,j}=I_j\circ \Pi_j\circ F\circ I_i\circ \Pi_i$, where $\Pi_i:C\to C_i$ is projection and $I_i\colon C_i\to C$ is inclusion. The maps $\Phi_k^\vee\colon C_k^\vee\to C_k^\vee$ are defined similarly. (Note that only one index is necessary for the decomposition of $\Phi^\vee$, since $\Phi$ maps $C_k$ to $C_k$).
   
   In a similar manner, the trace and cotrace maps on $C\otimes C^\vee$ decompose as
  \[
  \cotr=\sum_{i=1}^n \cotr_i, \qquad \text{and} \qquad \tr=\sum_{i=1}^n \tr_i,
  \]
   where $\tr_i$ and $\cotr_i$ are the trace and cotrace maps on $C_i\otimes C_i^\vee$. We can write 
  \[
  \delta=\sum_{1\le i,j\le n}\delta_{i}^j
  \]
   where $\delta_{i}^j$ is the connecting homomorphism in the minus-infinity-plus long exact sequence for the subcomplex $C_i\otimes C_j^\vee$. Finally,  we compute 
  \begin{align*}&(\tr\circ (F\otimes \id)\circ \delta^{-1}\circ (\id \otimes \Phi^\vee)\circ \cotr)(1)\\
  =&\Bigg(\bigg(\sum_{i=1}^n \tr_i\bigg)\circ \bigg(\sum_{1\le i,j\le n} F_{i,j}\otimes \id\bigg)\circ \bigg(\sum_{1\le i,j\le n} (\delta_i^j)^{-1}\bigg) \circ \bigg(\sum_{k=1}^n \id \otimes \Phi_k^\vee\bigg)\circ \bigg(\sum_{i=1}^n \cotr_i\bigg) \Bigg)(1)\\
  =&\sum_{i=1}^n (\tr_i \circ (F_{i,i}\otimes\id)\circ (\delta_{i}^{i})^{-1}\circ (\id \otimes \Phi^\vee_i)\circ  \cotr_i)(1).
  \end{align*}   
   By our result for 1-step and 2-step complexes, the $U^{-1}$ coefficient of the above expression is exactly
  \[
  \sum_{i=1}^n \Lef\big((F_{i,i})_*:H^+_{\red}(C_i)\to H^+_{\red}(C_i)\big),
  \]
   which is  $\Lef\big(F_*:H^+_{\red}(C)\to H^+_{\red}(C) \big)$, completing the proof.
\end{proof}

\section{The graph TQFT for Heegaard Floer homology}
\label{sec:graphTQFT}
In this section, we provide an overview of the graph TQFT for Heegaard Floer homology \cite{ZemGraphTQFT}, and prove some properties which are relevant to this paper. The graph TQFT uses the following notion of cobordism between multi-pointed 3-manifolds:

\begin{define}
\begin{enumerate}
\item A \emph{ribbon graph} is a graph with no valence zero vertices, together with a choice of cyclic ordering of the edges adjacent to each vertex.
\item A \emph{ribbon graph cobordism} $(W,\Gamma):(Y_1,\ve{w}_1)\to (Y_2,\ve{w}_2)$ between two multi-pointed 3-manifolds is a pair consisting of a 4-manifold $W$ with $\d W=-Y_1\sqcup Y_2$ as well as a ribbon graph $\Gamma\subset W$ such that $\Gamma\cap Y_i=\ve{w}_i$ and each basepoint of $\ve{w}_i$ has valence 1 in $\Gamma$.
\end{enumerate}
\end{define}

To a ribbon graph cobordism $(W,\Gamma):(Y_1,\ve{w}_1)\to (Y_2,\ve{w}_2)$, equipped with a $\Spin^c$ structure $\frs$, the author \cite{ZemGraphTQFT} associates two cobordism maps
\[
F_{W,\Gamma,\frs}^A, \quad F_{W,\Gamma,\frs}^B: \CF^-(Y_1,\ve{w}_1,\frs|_{Y_1})\to \CF^-(Y_2,\ve{w}_2,\frs|_{Y_2}).
\] 

 The graph cobordism maps coincide with Ozsv\'{a}th and Szab\'{o}'s cobordism maps from \cite{OSTriangles} when $Y_1$ and $Y_2$ are connected and $\Gamma$ is a path connecting $Y_1$ to $Y_2$ \cite{ZemGraphTQFT}*{Theorem~B}. In particular, the type-$A$ maps and type-$B$ maps coincide for such cobordisms. More generally, there is a symmetry
\[
F_{W,\Gamma,\frs}^A\simeq F_{W,\bar{\Gamma},\frs}^B,
\]
where $\bar{\Gamma}$ is the graph obtained by reversing the cyclic orders of $\Gamma$ \cite{HMZConnectedSum}*{Lemma~5.9}. Consequently the set of type-$A$ maps and the set of type-$B$ maps contain the same information, though an asymmetry in the construction makes it more convenient to work with both versions.

In Sections~\ref{sec:mapsandrelations} and \ref{sec:outlineofconstruction}, we provide an outline of the construction. In Sections~\ref{sec:furtherrelationsofGraphTQFT}, \ref{sec:graphsandhomologyactions} and \ref{sec:relativehomologyandTriangles} we prove several useful relations for our paper.

\subsection{Ingredients of the graph TQFT}
\label{sec:mapsandrelations}

The following  maps are used in the construction of the graph cobordism maps in \cite{ZemGraphTQFT}:
\begin{enumerate}
\item \label{graphmap1} Handle attachment maps for 1-, 2-, and 3-handles, attached away from the basepoints.
\item \label{graphmap2}Handle attachment maps for 0- and 4-handles, which add or remove a copy of $S^3$, with a single basepoint.
\item \label{graphmap3}\textit{Free-stabilization maps} for adding or removing basepoints in a 3-manifold.
\item \label{graphmap4}\textit{Relative homology maps} associated to paths between two  basepoints in a multi-pointed 3-manifold.
\end{enumerate}

The original cobordism maps from \cite{OSTriangles} are built as a composition of maps 1-, 2- and 3-handles. We now describe the maps of type~\eqref{graphmap2}, \eqref{graphmap3} and \eqref{graphmap4}, which are new to the construction in \cite{ZemGraphTQFT}.

The 0-handle and 4-handle maps are defined using the canonical isomorphism
\[\CF^-(Y\sqcup S^3, \ws\cup \{w_0\},\frs\sqcup \frs_0)\iso \CF^-(Y,\ws,\frs)\otimes_{\bF_2[U]} \CF^-(S^3,w_0,\frs_0).\] Under this isomorphism, the 0-handle map $F_0$ and the 4-handle map $F_4$ take the form
\[
F_0(\ve{x})=\ve{x}\otimes \ve{c}_0\qquad \text{and} \qquad F_4(\ve{x}\otimes \ve{c}_0)=\ve{x},
\] 
where $\ve{c}_0\in \CF^-(S^3,w_0,\frs)$ is a cycle which generates the homology group $\HF^-(S^3,w_0,\frs_0)\iso \bF_2[U]$.
The free-stabilization maps
\[S_{w}^+:\CF^-(Y,\ve{w},\frs)\to \CF^-(Y,\ve{w}\cup \{w\},\frs)\] and
\[S_w^-:\CF^-(Y,\ve{w}\cup \{w\},\frs)\to \CF^-(Y,\ve{w},\frs)\]  are constructed somewhat analogously to the 1-handle and 3-handle maps defined by Ozsv\'{a}th and Szab\'{o}. One picks a diagram $(\Sigma,\as,\bs,\ws)$ for $(Y,\ws)$ such that $w\in \Sigma\setminus (\as\cup \bs)$. A diagram $(\Sigma,\as\cup \{\alpha_0\},\bs\cup \{\beta_0\},\ws\cup \{w\})$ for $(Y,\ws\cup \{w\})$ is constructed by  adding the basepoint  $w$, as well two new curves, $\alpha_0$ and $\beta_0$, both contained in a small disk on $\Sigma\setminus (\as\cup \bs\cup \ws)$, such that $|\alpha_0\cap \beta_0|=2$. Writing $\theta^+$ and $\theta^-$ for the higher and lower graded intersection points of $\alpha_0\cap \beta_0$, the free-stabilization maps are defined by the formulas
\[S_w^+(\ve{x})=\ve{x}\times \theta^+,\]
\[S_w^-(\ve{x}\times \theta^+)=0\qquad \text{and} \qquad S_w^-(\ve{x}\times \theta^-)=\ve{x},\]
extended $\bF_2[U]$-equivariantly. For appropriately stretched almost complex structure, these are chain maps, which commute with the transition maps for changing the Heegaard diagram $(\Sigma,\as,\bs,\ws)$.  See \cite{ZemGraphTQFT}*{Section~6} for more details on the construction. They turn out to be the graph cobordism maps induced by the graphs in $Y\times [0,1]$ which are shown on the left side of Figure~\ref{fig::29}. 

The formulas for the free-stabilization maps resemble the formulas for the 1-handle and 3-handle maps from \cite{OSTriangles}. This is, of course, no accident. We can decompose the cobordism for $S_w^+$ as a 0-handle (which adds a copy of $S^3$ and the basepoint $w$), followed by a 1-handle  which cancels the 0-handle topologically, but leaves the basepoint $w$ in $Y$. Analogously, the graph cobordism for $S_{w}^-$ can be written as a composition of a 3-handle (with attaching 2-sphere bounding a small ball containing the basepoint $w$), followed by a 4-handle.

Another map which appears in the graph TQFT is the map $\Phi_w$,  which is an endomorphism of $\CF^-(Y,\ve{w},\frs)$ when $w\in \ve{w}$. It is defined on $\CF^-(Y,\ws,\frs)$ by the formula
\[
\Phi_w(\ve{x})=U^{-1}\sum_{\ve{y}\in \bT_{\a}\cap \bT_{\b}} \sum_{\substack{\phi\in \pi_2(\ve{x},\ve{y})\\
\mu(\phi)=1}}n_w(\phi) \# \hat{\cM}(\phi) U^{n_{\ve{w}}(\phi)}\cdot \ve{y},
\]
extended $\bF_2[U]$-equivariantly. In Lemma~\ref{lem:phi=brokenpathcobordism}, we prove that the broken graph cobordism $(Y\times [0,1], \Gamma_w)$ on the right side of  Figure~\ref{fig::29} induces the map $\Phi_w$. As the cobordism for $\Phi_w$ in Figure~\ref{fig::29} suggests, the map $\Phi_w$ satisfies $\Phi_w\simeq S_w^+S_w^-$ when $w$ is not the only basepoint (so that $S_w^+$ and $S_w^-$ can be defined). 

\begin{rem} If $w$ is the only basepoint on $Y$, then the map $\Phi_w$ coincides with the formal derivative of the differential on $\CF^-(Y,w,\frs)$, as defined in Section~\ref{sec:Phi-def}. If $\ws=\{w_1,\dots, w_n\}$ is a collection of basepoints on $Y$, then the formal derivative map of $\CF^-(Y,\ws,\frs)$ is instead chain homotopic to $\Phi_{w_1}+\cdots +\Phi_{w_n}$.
\end{rem}

\begin{figure}[ht!]
	\centering
	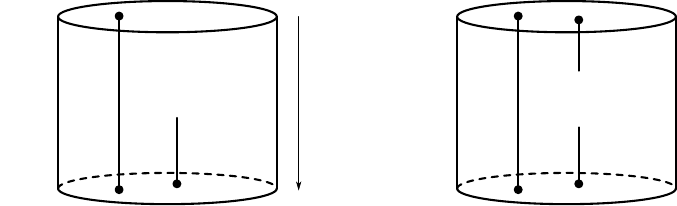
	\caption{\textbf{Left: the graph cobordism inducing the free-stabilization maps $S_w^+$ and $S_w^-$. Right: the broken path graph cobordism $(Y\times [0,1], \Gamma_w)$ inducing the map $\Phi_w$.} The maps $S_w^+$ and $S_w^-$ are only defined in the presence of an additional basepoint (so that both ends have at least one basepoint), whereas the map $\Phi_w$ is defined regardless of whether there are additional basepoints or not. \label{fig::29}}
\end{figure}

The next maps which feature in the construction of the graph TQFT are the relative homology maps. Suppose $(\Sigma,\as,\bs,\ve{w})$ is a multi-pointed Heegaard diagram, and $\lambda$ is an immersed path on $\Sigma$ between two basepoints $w_1$ and $w_2$. If $\phi\in \pi_2(\xs,\ys)$, we can define a quantity $a(\lambda,\phi)\in \bF_2$ by summing the changes in multiplicity of $\phi$ across each of the $\as$ curves as one travels along the path $\lambda$. Using the quantities $a(\lambda,\phi)$, we can define a $-1$ graded endomorphism
\[A_{\lambda}:\CF^-(\Sigma,\as,\bs,\ws,\frs)\to \CF^-(\Sigma,\as,\bs,\ve{w},\frs)\]  using the formula
\begin{equation}
A_{\lambda}(\ve{x})=\sum_{\ve{y}\in \bT_{\a}\cap \bT_{\b}}\sum_{\substack{\phi\in \pi_2(\xs,\ys)\\ \mu(\phi)=1}} a(\phi,\lambda) \# \hat{\cM}(\phi)U^{n_{\ws}(\phi)}\cdot \ys,
\label{def:rel-homology-action}
\end{equation}
extended $\bF_2[U]$-equivariantly.

  By counting the ends of 2-dimensional moduli spaces, one obtains the equality
\[\d \circ A_\lambda+A_\lambda\circ \d=0.\] Note that one can consider more general Heegaard Floer complexes than the ones considered in this paper, where one  associates a variable to each basepoint. In this more general setting, if $\lambda$ is a path from $w_1$ to $w_2$, then $\d \circ A_{\lambda}+A_{\lambda}\circ \d=U_{w_1}+U_{w_2}$. See \cite{ZemGraphTQFT}*{Lemma~5.1}.

One could replace the factor of $a(\lambda,\phi)$ in the definition of $A_\lambda$, with the symmetric quantity $b(\lambda,\phi)$, obtained by counting changes across only the $\bs$ curves. Doing so yields a map $B_\lambda$, which is also a chain map. For a path $\lambda$ with ends on two basepoints, $w_1$ and $w_2$, the maps $A_\lambda$ and $B_{\lambda}$ are in general not equal, or even chain homotopic. Instead, since the quantities $a(\lambda,\phi)$ and $b(\lambda,\phi)$ satisfy
\[a(\lambda,\phi)+b(\lambda,\phi)=n_{w_1}(\phi)-n_{w_2}(\phi),\] it follows that $A_{\lambda}$ and $B_{\lambda}$ satisfy the  relation
\begin{equation}A_{\lambda}+B_{\lambda}=U\Phi_{w_1}+U\Phi_{w_2}.\label{eq:Alambda+Blambda}\end{equation}

For an immersed, closed curve $\gamma$ in $\Sigma$, one can define maps $A_{\gamma}$ and $B_{\gamma}$, using the same formula as the maps $A_{\lambda}$ and $B_{\lambda}$. When $\gamma$ is a closed curve in $\Sigma$, the maps $A_{\gamma}$ and $B_{\gamma}$ are chain maps. In contrast to equation~\eqref{eq:Alambda+Blambda}, since $a(\gamma,\phi)=b(\gamma,\phi)$ when $\gamma$ is a closed loop, we have 
\[
A_\gamma=B_{\gamma}.
\] 
The map $A_\gamma$ is the map induced by the action of $H_1(Y)/\Tors$ described in \cite{OSDisks}, using the homology class induced by $\gamma$ under the inclusion $\Sigma\hookrightarrow Y$.

We now list some algebraic relations from \cite{ZemGraphTQFT} which will be useful for this paper:

\begin{enumerate}[label=($R$\arabic*), leftmargin=*, widest=IIII]
\item \label{rel:R1}$S_{w}^-S_{w}^+\simeq 0$ \cite{ZemGraphTQFT}*{Lemma~6.15}.
\item \label{rel:R2}$S_{w}^+S_{w}^-\simeq \Phi_w$ \cite{ZemGraphTQFT}*{Lemma~14.15}.
\item \label{rel:R3'} $S_{w'}^{\circ} \Phi_w\simeq \Phi_w S_{w'}^{\circ}$ if $w\neq w'$ and $\circ \in \{+,-\}$ \cite{ZemGraphTQFT}*{Lemma~14.17}.
\item\label{rel:R3} $A_{\lambda_1}A_{\lambda_2}+A_{\lambda_2}A_{\lambda_1}\simeq \#((\d \lambda_1)\cap (\d \lambda_2))\cdot  U$ \cite{ZemGraphTQFT}*{Lemma~5.4}.
\item\label{rel:R4} If $\lambda$ is a path from $w_1$ to $w_2$ and $\lambda'$ is a path from $w_2$ to $w_3$ then $A_{\lambda}+A_{\lambda'}=A_{\lambda'*\lambda}$, where $*$ denotes concatenation \cite{ZemGraphTQFT}*{Lemma~5.3}.
\item\label{rel:R6} If $\lambda$ is a path from $w_1$ to $w_2$ and $w_1\neq w_2$, then $A_{\lambda}^2\simeq U$ \cite{ZemGraphTQFT}*{Lemma~5.5}.
\item\label{rel:R6'} If $\lambda$ is a path from $w$ to another basepoint, then $\Phi_w A_{\lambda}+A_{\lambda}\Phi_w\simeq \id$ \cite{ZemGraphTQFT}*{Lemma~14.16}.
\item \label{rel:R7} $S_{w}^{\circ}S_{w'}^{\circ'}\simeq S_{w'}^{\circ'}S_{w}^{\circ}$, for $\circ,\circ'\in \{+,-\}$ \cite{ZemGraphTQFT}*{Proposition~6.14}.
\item \label{rel:R7'} If $e$ is an edge which we can write as the concatenation of two edges, $e_1$ and $e_2$, whose intersection consists of a single vertex $v$, then $A_e\simeq S_v^- A_{e_2}A_{e_1} S_v^+$ \cite{ZemGraphTQFT}*{Lemma~7.11}. 
\item \label{rel:R8} If $\lambda$ is a path from $w$ to another basepoint, then $S_{w}^-A_{\lambda}S_{w}^+\simeq \id$ \cite{ZemGraphTQFT}*{Lemma~7.10}.
\item \label{rel:R9}If $\lambda$ is a path from $w_1$ to $w_2$ (and $w_1\neq w_2$) then $S_{w_1}^- A_{\lambda} S_{w_2}^+\simeq \phi_*$, where $\phi$ is a diffeomorphism of $Y$ which moves $w_1$ to $w_2$ along $\lambda$, and is supported in a neighborhood of $\lambda$ \cite{ZemGraphTQFT}*{Theorem~14.11}.
\end{enumerate}

\subsection{Outline of the construction of the graph cobordism maps}
\label{sec:outlineofconstruction}
We now briefly summarize the construction of the graph cobordism maps, in terms of the maps described in the last section.

The first step is to define maps for graph cobordisms of the form $(Y\times [0,1],\Gamma):(Y,\ws_1)\to (Y,\ws_2)$. For such cobordisms, the maps are defined as a composition of the free-stabilization maps, and the relative homology maps, as we now describe. It is more convenient to project the graph $\Gamma\subset Y\times [0,1]$ into $Y$, and define a map for a ribbon graph embedded in $Y$, which has designated incoming and outgoing vertices. We call such a graph, embedded in $Y$, a \emph{ribbon flow graph}, and write $\Gamma:\ws_1\to \ws_2$. For a ribbon flow graph $\Gamma:\ws_1\to \ws_2$ in $Y$, a map
\[A_{\Gamma}: \CF^-(Y,\ws_1,\frs)\to \CF^-(Y,\ws_2,\frs)\] is constructed in \cite{ZemGraphTQFT}*{Section~7}, which we call the \emph{type-$A$ graph action map}. The type-$A$ graph cobordism map for a graph cobordism $(Y\times [0,1],\Gamma)$ is equal to the type-$A$ graph action map for the graph obtained by projecting $\Gamma$ into $Y$.

Given a flow graph $\Gamma:\ws_1\to \ws_2$ in $Y$, to construct the graph action map, one decomposes the graph $\Gamma$ into a sequence of \textit{elementary flow graphs}, which are flow-graphs taking one of the following three forms: 

\begin{enumerate}[label= ($E\Gamma$-\arabic*):, ref=Type ($E\Gamma$-\arabic*),leftmargin=*, widest=IIII]
\item\label{elementarygraphtype1} (\emph{translation}) $|\ws_1|=|\ws_2|$ and each edge of $\Gamma$ connects a vertex in $\ws_1$ to a vertex in $\ws_2$.
\item\label{elementarygraphtype2} (\emph{interior vertex}) There is a single vertex $v_0$ of $\Gamma$ which is not in $\ws_1$ or $\ws_2$, and all edges of $\Gamma$ connect either $\ws_1$ to $\ws_2$, or connect a point of $\ws_1$ or $\ws_2$ to $v_0$.
\item\label{elementarygraphtype3} (\emph{local extrema}) $|\ws_1|=|\ws_2|\pm 2$, and all edges of $\Gamma$, except for a single edge $e$, connect $\ws_1$ to $\ws_2$. Furthermore $e$ connects two vertices of $\ws_1$ together, or connects two vertices of $\ws_2$ together.
\end{enumerate}

Examples of elementary flow-graphs are shown in Figure~\ref{fig::56}. We call such a decomposition of a flow-graph into elementary flow-graphs a \textit{Cerf decomposition} of the graph.

\begin{figure}[ht!]
	\centering
	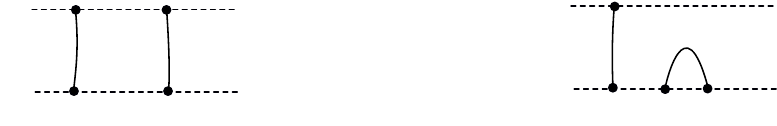
	\caption{\textbf{Examples of the three types of elementary flow graphs.} These graphs are embedded in a fixed 3-manifold $Y$. \label{fig::56}}
\end{figure}

The graph action map $A_\Gamma$ for an elementary flow graph $\Gamma:\ws_1\to \ws_2$ of  \ref{elementarygraphtype1} is equal to the composition of $| E(\Gamma)|$ terms of the form 
\begin{equation}S_{w_1}^- A_{e} S_{w_2}^+,\label{eq:elemengraphtype1}\end{equation} where $e$ is an edge of $\Gamma$ and $w_1$ and $w_2$ are the incoming and outgoing ends of $e$. Note that by \ref{rel:R9}, the induced map $A_{\Gamma}$ is the diffeomorphism map obtained by moving $\ws_1$ to $\ws_2$ along the edges of $\Gamma$. Also, we note that it is easy to use Relations~\ref{rel:R1}--\ref{rel:R9} to see that the map is independent of the ordering of the terms of the form $S_{w_1}^- A_e S_{w_2}^+$.

The graph action map $A_{\Gamma}$ for an elementary flow graph $\Gamma:\ws_1\to \ws_2$ of \ref{elementarygraphtype2} is defined as follows. Let $v_0$ be the interior vertex, and let $e_1,\dots, e_n$ denote the edges adjacent to $v_0$, ordered in any way which is compatible with the cyclic ordering.  The graph action is defined as a composition of expressions of the form shown in equation~\eqref{eq:elemengraphtype1}, for the edges $e$ which are not incident to $v_0$, as well as the map
\begin{equation}
S_{\ws_1'\cup \{v_0\}}^- A_{e_n}\cdots A_{e_1} S_{\ws_2'\cup \{v_0\}}^+,
\label{eq:elemengraphtype2}\end{equation} where $\ws_i'$ denotes the
subset of vertices in $\ws_i$ which are connected by an edge to $v_0$. Also, $S_{\ws_2'\cup \{v_0\}}^{+}$ denotes the the composition of the maps $S_v^+$ for $v\in \ws_2'\cup \{v_0\}$, and similarly for $S_{\ws_1'\cup \{v_0\}}^-$. Note that Relation~\ref{rel:R7} implies that the order of the vertices in $\ws_i'\cup \{v_0\}$ does not affect the composition.

\begin{rem}
The map appearing in equation~\eqref{eq:elemengraphtype2} turns out to be invariant under cyclic permutation of the edges $e_1,\dots, e_n$ \cite{ZemGraphTQFT}*{Lemma~7.13}. In general, the map is not invariant under arbitrary permutations, which is the reason that we decorate graphs with a ribbon structure in the TQFT. Note that the ribbon structure need not be respected by the embedding of the graph in any geometric sense (e.g lie in a 2-plane field in a natural way). Instead, the ribbon structure only determines the order of terms appearing in equation~\eqref{eq:elemengraphtype2}.
\end{rem}

Finally, the graph action map $A_{\Gamma}$ for an elementary flow graph of \ref{elementarygraphtype3} is defined as follows. Let $e$ denote the edge of $\Gamma$ which does not have a vertex in both $\ws_1$ and $\ws_2$, and write $v_1$ and $v_2$ for the two vertices of $e$. If $v_1,v_2\in \ws_1$, the map $A_{\Gamma}$ is defined as a composition of terms like equation~\eqref{eq:elemengraphtype1}, ranging over the edges of $\Gamma$ which have an end in both $\ws_1$ and $\ws_2$, as well as a single term of the form
\[
S^-_{v_1}S^-_{v_2} A_e.
\] 
If instead $v_1,v_2\in \ws_2$, the map is defined by replacing the above expression with $A_e S_{v_1}^+S_{v_2}^+$.

The map $A_{\Gamma}$ is independent up to chain homotopy of the choice of Cerf decomposition of the flow graph $\Gamma$, and is also invariant under subdivision of the edges of $\Gamma$ \cite{ZemGraphTQFT}*{Theorem~B}.

We note that the map $A_{\Gamma}$ is defined somewhat asymmetrically, since we chose to use the $A_{\lambda}$ maps, which count changes across the alpha curves. We could instead define a graph action map $B_{\Gamma}$, by using the construction described above and replacing each instance of $A_{\lambda}$ with $B_{\lambda}$.

Having constructed the graph action maps, the next step is to define maps for 4-dimensional handles. If each component of $W$ has non-empty incoming and outgoing ends, the maps are defined using the graph action map, as well as 1-, 2- and 3-handle maps which are similar to the ones described by Ozsv\'{a}th and Szab\'{o} in \cite{OSTriangles}.

If a cobordism $(W,\Gamma)$ has a connected component which is missing either an incoming or outgoing end, one must remove a collection of 4-balls from $W$. A single arc is added to $\Gamma$ for each 4-ball we remove. Each arc has one endpoint on a new boundary 3-sphere, as well as a point on $\Gamma$. Writing $(W',\Gamma')$ for a graph cobordism obtained by puncturing $W$ and adding strands to $\Gamma$ in the above manner, the map $F_{W,\Gamma,\frs}^A$ is then defined as the composition of $F_{W',\Gamma',\frs|_{W'}}^A$ together with 0-handle and 4-handle maps for the excised 4-balls. Using \cite{ZemGraphTQFT}*{Proposition~11.1}, it follows that the induced cobordism map is independent from which 4-balls we removed, and which arcs we pick to connect them to $\Gamma$.

The type-$B$ graph cobordism map is defined similarly, but using the type-$B$ graph action map. The type-$A$ and type-$B$ versions of the graph cobordism maps are related as follows:

\begin{lem}[\cite{HMZConnectedSum}*{Lemma~5.9}] \label{lem:reversecyclicordering} The type-$A$ and type-$B$ graph cobordism maps satisfy the relation
\[F_{W,\Gamma,\frs}^A\simeq F_{W,\bar{\Gamma},\frs}^B,\] where $\bar{\Gamma}$ is the ribbon graph obtained by reversing the cyclic orderings on $\Gamma$.
\end{lem}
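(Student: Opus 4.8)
The plan is to reduce the identity to the level of the graph action maps, where it becomes a bookkeeping computation with the relations \ref{rel:R1}--\ref{rel:R9} and Equation~\eqref{eq:Alambda+Blambda}.

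First I would observe that neither the handle attachment maps (for 0-, 1-, 2-, 3- and 4-handles) nor the free-stabilization maps depend on a ribbon structure, and that they enter $F_{W,\Gamma,\frs}^A$ and $F_{W,\bar\Gamma,\frs}^B$ in exactly the same way; only the graph action maps of the cylindrical pieces of a handle decomposition of $(W,\Gamma)$ see the cyclic orderings. Since reversing all cyclic orders on $\Gamma$ reverses them on each cylindrical piece, it is enough to prove $A_{\Gamma}\simeq B_{\bar\Gamma}$ for every flow graph $\Gamma$ in a fixed $3$-manifold $Y$. By the invariance of $A_\Gamma$ under the choice of Cerf decomposition and under subdivision of edges \cite{ZemGraphTQFT}*{Theorem~B}, and because any Cerf decomposition of $\Gamma$ becomes, after reversing all cyclic orders, a Cerf decomposition of $\bar\Gamma$, this further reduces to checking $A_\Gamma\simeq B_{\bar\Gamma}$ when $\Gamma$ is an elementary flow graph of Type~\ref{elementarygraphtype1}, \ref{elementarygraphtype2} or \ref{elementarygraphtype3}.

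For Types~\ref{elementarygraphtype1} and \ref{elementarygraphtype3} every vertex of $\Gamma$ has valence at most $2$, so $\bar\Gamma=\Gamma$ and one only needs $A_\Gamma\simeq B_\Gamma$. Here the graph action maps are composites of terms of the shapes $S_{w_1}^-A_eS_{w_2}^+$, $S_{v_1}^-S_{v_2}^-A_e$ and $A_eS_{v_1}^+S_{v_2}^+$, with the free-stabilizations taking place exactly at the endpoints of $e$. Writing $A_e = B_e + U\Phi_{w_1}+U\Phi_{w_2}$ (with $w_1,w_2$ the endpoints of $e$) by Equation~\eqref{eq:Alambda+Blambda}, the relations \ref{rel:R1} and \ref{rel:R2} give $S_w^-\Phi_w\simeq 0$ and $\Phi_w S_w^+\simeq 0$, and after using \ref{rel:R3'} to commute the $\Phi$-corrections to the adjacent free-stabilization, every correction term is killed. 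Hence $A_\Gamma\simeq B_\Gamma$ in these cases.

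The substantive case, and the one I expect to be the main obstacle, is Type~\ref{elementarygraphtype2}, where $\Gamma$ has an interior vertex $v_0$ of valence $n$ and reversing the cyclic order at $v_0$ genuinely changes the graph once $n\ge 3$. The relevant term of $A_\Gamma$ has the shape $S_{v_0}^-A_{e_n}\cdots A_{e_1}S_{v_0}^+$, where $e_1,\dots,e_n$ are the edges at $v_0$ listed compatibly with the cyclic order and $u_i$ denotes the outer endpoint of $e_i$; it must be matched with the term $S_{v_0}^-B_{e_1}\cdots B_{e_n}S_{v_0}^+$ of $B_{\bar\Gamma}$, in which the edges occur in the reversed order. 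Substituting Equation~\eqref{eq:Alambda+Blambda} one edge at a time now introduces $\Phi_{v_0}$- and $\Phi_{u_i}$-corrections trapped inside the product $A_{e_n}\cdots A_{e_1}$, and since $\Phi_{v_0}$ and $\Phi_{u_i}$ fail to commute with $A_{e_i}$ they cannot simply be pushed out to the free-stabilizations as in the previous step. The heart of the argument is to show, using the near-commutation relations \ref{rel:R3} among the $A_{e_i}$, the relation \ref{rel:R6'} (so that moving a $\Phi$ past the corresponding $A$ produces an identity term), and the cyclic invariance of the Type~\ref{elementarygraphtype2} map \cite{ZemGraphTQFT}*{Lemma~7.12}, that these corrections reorganize precisely so as to reverse the order of the $B_{e_i}$'s, with all the residual $U$- and $\Phi$-corrections cancelling in pairs. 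Carrying out this bookkeeping is exactly the content of \cite{HMZConnectedSum}*{Lemma~5.9}, whose proof we are following.
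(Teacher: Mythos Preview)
The paper does not prove this lemma; it records the statement and cites \cite{HMZConnectedSum}*{Lemma~5.9}. There is therefore no in-paper argument to compare your proposal against.

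That said, your outline is a reasonable sketch of how such a proof is organized. The reduction to the graph action identity $A_\Gamma\simeq B_{\bar\Gamma}$ is correct, since the handle maps and free-stabilization maps in the construction of $F_{W,\Gamma,\frs}^A$ and $F_{W,\bar\Gamma,\frs}^B$ are identical and only the relative homology maps see the cyclic orders. The further reduction to elementary flow graphs via \cite{ZemGraphTQFT}*{Theorem~B} is the right move, and your handling of the \ref{elementarygraphtype1} and \ref{elementarygraphtype3} cases is correct. (One small remark: in the \ref{elementarygraphtype1} case each $\Phi$ correction already sits adjacent to the matching free-stabilization, so \ref{rel:R3'} is not actually needed there; it is genuinely used for the \ref{elementarygraphtype3} terms $S_{v_1}^-S_{v_2}^-A_e$ and $A_eS_{v_1}^+S_{v_2}^+$, to move $\Phi_{v_1}$ past $S_{v_2}^{\pm}$.) For \ref{elementarygraphtype2} you have correctly identified the target identity $S_{v_0}^-A_{e_n}\cdots A_{e_1}S_{v_0}^+\simeq S_{v_0}^-B_{e_1}\cdots B_{e_n}S_{v_0}^+$ and named the relevant relations, but you do not carry out the induction and instead invoke \cite{HMZConnectedSum}*{Lemma~5.9} once more. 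Since that is precisely the result being stated, this last appeal is circular; if you intend a self-contained proof, the \ref{elementarygraphtype2} bookkeeping must actually be written out.
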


\subsection{Further relations in the graph TQFT}
\label{sec:furtherrelationsofGraphTQFT}

In this section, we prove two useful results about the graph TQFT. The first is the \emph{vertex breaking relation} shown in Figure~\ref{fig::39}, which describes the effect of changing the cyclic ordering at a vertex. The second result is a proof that the broken path cobordism induces the map $\Phi_w$.

\begin{lem}\label{lem:vertexbreakingrelation}Suppose that $(W,\Gamma)$ is a graph cobordism, $v_0$ is a vertex in the interior of $\Gamma$, and $e_1$ and $e_2$ are two edges incident to $v_0$, which are adjacent in the cyclic ordering. Let $\Gamma'$ denote the ribbon graph obtained by switching the relative ordering of $e_1$ and $e_2$. Let $\Gamma''$ denote the graph obtained by removing a connected subarc from the interiors of  $e_1$ and $e_2$ (as in Figure~\ref{fig::39}). Then
\[F_{W,\Gamma,\frs}^A+F_{W,\Gamma',\frs}^A\simeq U\cdot F_{W,\Gamma'',\frs}^A.\] The same relation holds for the type-$B$ maps.
\end{lem}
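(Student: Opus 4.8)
The plan is to localize the difference $F^A_{W,\Gamma,\frs}+F^A_{W,\Gamma',\frs}$ near $v_0$ and recognize it, via the commutation relation \ref{rel:R3} for relative homology maps, as $U$ times the cobordism map of the broken graph $\Gamma''$. First I would use the composition law for graph cobordism maps together with invariance of the type-$A$ maps under puncturing and under subdivision of edges to choose a Cerf decomposition that simultaneously computes $F^A_{W,\Gamma,\frs}$, $F^A_{W,\Gamma',\frs}$ and $F^A_{W,\Gamma'',\frs}$, in which $v_0$ appears as the unique interior vertex of an elementary flow graph $\Gamma_0\subset Y_0\times[0,1]$ of \ref{elementarygraphtype2} whose edges $e_1,\dots,e_n$ are exactly the edges of $\Gamma$ incident to $v_0$ (subdivided, if necessary, so that the far endpoint $w_i$ of $e_i$ is a basepoint with $w_1\neq w_2$), with $e_1,e_2$ consecutive in a linear order compatible with the cyclic order. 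Since $\Gamma$, $\Gamma'$, $\Gamma''$ agree outside $\Gamma_0$, all three cobordism maps are obtained by composing one fixed sequence of handle and graph-action maps with $A_{\Gamma_0}$, $A_{\Gamma_0'}$, $A_{\Gamma_0''}$ respectively, so it suffices to prove $A_{\Gamma_0}+A_{\Gamma_0'}\simeq U\cdot A_{\Gamma_0''}$.

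By Equation~\eqref{eq:elemengraphtype2}, $A_{\Gamma_0}$ is a composition containing the block $S^-_{v_0}\,A_{e_n}\cdots A_{e_3}A_{e_2}A_{e_1}\,S^+_{v_0}$ (the other factors, of the form $S^-_{w}A_{e}S^+_{w}$ for edges of $\Gamma_0$ not touching $v_0$, are irrelevant here), while $A_{\Gamma_0'}$ is obtained by interchanging the two \emph{adjacent} factors $A_{e_1}$ and $A_{e_2}$. Hence, up to the fixed outer maps,
\[
A_{\Gamma_0}+A_{\Gamma_0'}\;\simeq\;S^-_{v_0}\,A_{e_n}\cdots A_{e_3}\,\bigl(A_{e_1}A_{e_2}+A_{e_2}A_{e_1}\bigr)\,S^+_{v_0}.
\]
Since $e_1$ and $e_2$ meet only at $v_0$, Relation~\ref{rel:R3} gives $A_{e_1}A_{e_2}+A_{e_2}A_{e_1}\simeq U\cdot\id$, and, all maps involved being $U$-equivariant, the right-hand side equals $U$ times the composition obtained from $A_{\Gamma_0}$ by deleting the two factors $A_{e_1}$ and $A_{e_2}$.

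It remains to identify that leftover composition with $A_{\Gamma_0''}$, which is the step needing the most care. In $\Gamma_0''$ the four stubs of $e_1$ and $e_2$ each terminate at a new valence-one interior vertex; in a Cerf decomposition of $\Gamma_0''$ each such stub is run through a free stabilization at its valence-one endpoint, and Relation~\ref{rel:R8} (a relative homology map along a path into an extra basepoint, flanked by the corresponding $S^+$ and $S^-$, is chain homotopic to the identity) shows that every broken stub contributes the identity. Thus $A_{\Gamma_0''}$ is chain homotopic to the graph action map of $\Gamma_0$ with the edges $e_1,e_2$ simply deleted, which is exactly the composition above; this yields $F^A_{W,\Gamma,\frs}+F^A_{W,\Gamma',\frs}\simeq U\cdot F^A_{W,\Gamma'',\frs}$, and the type-$B$ statement follows verbatim since Relations~\ref{rel:R3} and~\ref{rel:R8} hold for the $B$-maps as well (alternatively, apply Lemma~\ref{lem:reversecyclicordering}). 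The main obstacle is precisely this last identification — verifying that the broken stubs contribute trivially and that the Cerf decompositions of $\Gamma,\Gamma',\Gamma''$ can genuinely be taken to agree off $\Gamma_0$; one must also separately dispose of the degenerate case where $e_1$ and $e_2$ share their far endpoint too (so $\#((\d e_1)\cap(\d e_2))\equiv 0$), in which the left side vanishes by \ref{rel:R3} and one checks $U\cdot F^A_{W,\Gamma'',\frs}$ vanishes as well, or rules it out by a small isotopy.
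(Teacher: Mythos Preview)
Your argument is correct and follows the same route as the paper's proof: reduce to an elementary flow graph of \ref{elementarygraphtype2} containing $v_0$, apply Relation~\ref{rel:R3} to the adjacent factors $A_{e_1}A_{e_2}$, and then identify the leftover expression $S_{v_0}^-A_{e_n}\cdots A_{e_3}S_{v_0}^+$ with $A_{\Gamma_0''}$ by showing the broken stubs contribute trivially. The paper phrases that last identification as ``adding trivial strands using Relation~\ref{rel:R8} and subdividing edges using Relation~\ref{rel:R7'}'', which is the same manipulation you describe; your explicit handling of the degenerate case $w_1=w_2$ via prior subdivision is a small extra bit of care the paper leaves implicit.
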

\begin{figure}[ht!]
	\centering
	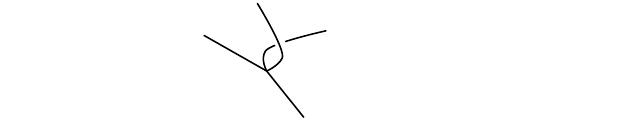
	\caption{\textbf{The vertex breaking relation for changing the relative ordering of two vertices.}  \label{fig::39}}
\end{figure}

\begin{proof} It is sufficient to show the analogous relation for the graph action map. Suppose that $\Gamma:\ws_1\to \ws_2$ is a ribbon flow graph, embedded in $Y$. Since the graph action map is defined as a composition of the graph action maps for elementary flow graphs, only one of which will contain the vertex $v_0$, it is sufficient to show the claim for an elementary flow graph of \ref{elementarygraphtype2}, which contains the vertex $v_0$ as its interior vertex. Let $e_1,\dots, e_n$ be the edges adjacent to $v_0$, indexed compatibly with their cyclic ordering. Let $\ws_1'$ denote the set of vertices in $\ws_1$ which are adjacent to one of $e_1,\dots, e_n$, and let $\ws_2'$ denote the vertices in $\ws_2$ which are adjacent to one of $e_1,\dots, e_n$. For notational simplicity, we assume that $\ws_2=\ws_2'$ and $\ws_1=\ws_1'$, and that there are no edges which are not incident to $v_0$. The more general case is handled by the argument we give presently, by simply adding in extra terms of the form shown in equation~\eqref{eq:elemengraphtype1} for edges not adjacent to $v_0$, though these maps have no interaction with any of the maps related to the component containing $v_0$. The map $A_{\Gamma}$ is equal to
\[
S_{\ws_1\cup \{v_0\}}^- A_{e_n}\cdots A_{e_2}A_{e_1}S_{\ws_2\cup \{v_0\}}^+.
\]
 Using Relation~\ref{rel:R3}, we obtain
\[
S_{\ws_1\cup \{v_0\}}^- A_{e_n}\cdots A_{e_2}A_{e_1}S_{\ws_2\cup \{v_0\}}^++S_{\ws_1\cup \{v_0\}}^- A_{e_n}\cdots A_{e_1}A_{e_2}S_{\ws_2\cup \{v_0\}}^+\simeq U \cdot S_{\ws_1\cup \{v_0\}}^- A_{e_n}\cdots A_{e_3} S_{\ws_2\cup \{v_0\}}^+.
\]
 By definition, the expression $S_{\ws_1\cup \{v_0\}}^- A_{e_n}\cdots A_{e_1}A_{e_2}S_{\ws_2\cup \{v_0\}}^+$ is the graph action map $A_{\Gamma'}$. We now claim that the third expression 
 \begin{equation}
 A_{\Gamma''}\simeq S_{\ws_1'\cup \{v_0\}}^- A_{e_n}\cdots A_{e_3} S_{\ws_2'\cup \{v_0\}}^+
\label{eq:A-Gamma''}
 \end{equation}
The right hand side of equation~\eqref{eq:A-Gamma''} is almost the definition of $A_{\Gamma''}$, but there are some small differences. Nonetheless, by adding trivial strands using Relation~\ref{rel:R8}, and subdividing edges using Relation~\ref{rel:R7'}, it is straightforward to manipulate the expression so that it is, on the nose, the map induced by a Cerf decomposition for the graph $\Gamma''$.
\end{proof}

We now consider the broken path cobordism $(Y\times [0,1], \Gamma_w)$,  shown in Figure~\ref{fig::29}.

\begin{lem}\label{lem:phi=brokenpathcobordism}If $(Y,\ws)$ is a multi-pointed 3-manifold (possibly with only one basepoint) and $(Y\times [0,1],\Gamma_w)$ is the broken path cobordism shown in Figure~\ref{fig::29}, then
\[F_{Y\times [0,1],\Gamma_w,\frs}^A\simeq \Phi_w.\] The same holds for the $B$ versions of the cobordism maps.
\end{lem}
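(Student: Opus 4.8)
The plan is to handle the cases $|\ws|>1$ and $|\ws|=1$ separately, reducing the second to the first by adjoining an auxiliary basepoint. It suffices to prove the statement for the type-$A$ maps: every vertex of $\Gamma_w$ has valence at most two, so reversing the cyclic orderings leaves $\Gamma_w$ unchanged, and Lemma~\ref{lem:reversecyclicordering} then identifies $F_{Y\times[0,1],\Gamma_w,\frs}^A$ with $F_{Y\times[0,1],\Gamma_w,\frs}^B$.

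Assume first that $|\ws|>1$. Cutting $Y\times[0,1]$ along a slice $Y\times\{t\}$ lying in the ``gap'' of the broken path — the region where the two strands of $\Gamma_w$ over $w$ have ended or have not yet begun, while the $|\ws|-1\geq1$ vertical strands over the remaining basepoints persist — presents $(Y\times[0,1],\Gamma_w)$ as the composition of the graph cobordism inducing the free-stabilization map $S_w^-:\CF^-(Y,\ws,\frs)\to\CF^-(Y,\ws\setminus\{w\},\frs)$, followed by the one inducing $S_w^+$. These two halves are exactly the cobordisms on the left of Figure~\ref{fig::29}, and this identification is essentially the definition of the maps $S_w^{\pm}$ in \cite{ZemGraphTQFT}. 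Since the graph cobordism maps are functorial up to chain homotopy \cite{ZemGraphTQFT}, we get $F_{Y\times[0,1],\Gamma_w,\frs}\simeq S_w^+\circ S_w^-$, and Relation~\ref{rel:R2} gives $S_w^+S_w^-\simeq\Phi_w$.

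Now assume $\ws=\{w\}$, so that $S_w^{\pm}$ are undefined and the above decomposition is unavailable. Fix a point $w'\in Y$ with $w'\neq w$ and away from the support of $\Gamma_w$, and put $\gamma_{w'}=\{w'\}\times[0,1]$. Then $(Y\times[0,1],\Gamma_w\sqcup\gamma_{w'}):(Y,\{w,w'\})\to(Y,\{w,w'\})$ is the broken-path cobordism of Figure~\ref{fig::29} for the two-pointed manifold $(Y,\{w,w'\})$, so by the previous paragraph $F_{Y\times[0,1],\Gamma_w\sqcup\gamma_{w'},\frs}\simeq\Phi_w$ on $\CF^-(Y,\{w,w'\},\frs)$. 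Because $\Gamma_w$ is disjoint from $w'$, the free-stabilization map $S_{w'}^-$ is compatible with the graph cobordism maps, giving
\[S_{w'}^-\circ F_{Y\times[0,1],\Gamma_w\sqcup\gamma_{w'},\frs}\simeq F_{Y\times[0,1],\Gamma_w,\frs}\circ S_{w'}^-;\]
combining this with Relation~\ref{rel:R3'}, which gives $S_{w'}^-\circ\Phi_w\simeq\Phi_w\circ S_{w'}^-$, we obtain
\[\Phi_w\circ S_{w'}^-\simeq S_{w'}^-\circ F_{Y\times[0,1],\Gamma_w\sqcup\gamma_{w'},\frs}\simeq F_{Y\times[0,1],\Gamma_w,\frs}\circ S_{w'}^-\]
as maps $\CF^-(Y,\{w,w'\},\frs)\to\CF^-(Y,w,\frs)$. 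Finally, choosing a path $\lambda'$ in $Y$ from $w'$ to $w$, Relation~\ref{rel:R8} gives $S_{w'}^-\circ A_{\lambda'}\circ S_{w'}^+\simeq\id$, so precomposing the last homotopy with $A_{\lambda'}\circ S_{w'}^+$ cancels the $S_{w'}^-$ on the right and yields $\Phi_w\simeq F_{Y\times[0,1],\Gamma_w,\frs}$.

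The one point that genuinely needs care is the compatibility of $S_{w'}^-$ with the graph cobordism map of a graph disjoint from $w'$: this should be deduced from \cite{ZemGraphTQFT}, where $S_{w'}^{\pm}$ is shown to commute (up to homotopy) with the handle maps, the relative homology maps, and the $0$- and $4$-handle maps out of which $F_{W,\Gamma,\frs}$ is built. Everything else is either definitional — the identification of the two halves of $\Gamma_w$ with the free-stabilization cobordisms — or formal manipulation with Relations~\ref{rel:R1}--\ref{rel:R9} and functoriality.
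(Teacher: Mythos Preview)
Your proof is correct. The $|\ws|>1$ case matches the paper exactly. For $|\ws|=1$ you take a genuinely different route: the paper isotopes the broken path cobordism into a composition $S_{w'}^-\circ(\text{basepoint-swap cobordism})\circ S_{w'}^+$, invokes the computation \cite{ZemGraphTQFT}*{Proposition~14.27} that the swap cobordism induces $\Phi_w A_\lambda+A_\lambda\Phi_{w'}$, and then simplifies using \ref{rel:R1}, \ref{rel:R2}, \ref{rel:R3'}, \ref{rel:R8}. Your argument is lighter---it reduces directly to the multi-pointed case by adjoining a trivial strand, then cancels the auxiliary basepoint formally via \ref{rel:R3'} and \ref{rel:R8}---and avoids importing the separate swap-cobordism calculation. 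One small refinement: your compatibility step $S_{w'}^-\circ F_{\Gamma_w\sqcup\gamma_{w'}}\simeq F_{\Gamma_w}\circ S_{w'}^-$ is most cleanly justified not by commuting $S_{w'}^-$ past constituent maps, but by observing that both sides are graph cobordism maps for isotopic graph cobordisms $(Y\times[0,1],\,\text{broken strand over }w\,\sqcup\,\text{terminating strand over }w')$ from $(Y,\{w,w'\})$ to $(Y,\{w\})$, differing only in where along the $[0,1]$ direction the $w'$-strand terminates; functoriality and isotopy invariance of the graph TQFT then give the equality immediately.
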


\begin{proof}If $|\ws|>1$, the result follows from Relation~\ref{rel:R2} since $(Y\times [0,1],\Gamma_w)$ is a composition of a free-destabilization cobordism and a free-stabilization cobordism. In the case when $\ws$ consists of a single point $w$, we use invariance of the graph cobordism maps under isotopies of the graph. We pick a new basepoint $w'\not\in \ws$, as well as a path $\lambda$ from $w$ to $w'$. There is a \textit{basepoint swapping} graph cobordism $(Y\times [0,1],\Gamma_{\lambda}^X)$ from $(Y,\{w,w'\})$ to $(Y, \{w,w'\})$ which swaps the basepoints $w$ and $w'$ along the path $\lambda$. The graph $\Gamma_{\lambda}^X\subset Y\times [0,1]$ is well-defined up to isotopy, since $Y\times [0,1]$ is a 4-manifold (unlike the analogous situation in 3-manifolds, where there are two non-isotopic choices of crossings). We can decompose the broken path cobordism $(Y\times [0,1],\Gamma_w)$ into a composition of a free-stabilization graph cobordism at $w'$, followed by a basepoint swapping cobordism $(Y\times [0,1],\Gamma_{\lambda}^X)$, followed by a free-destabilization cobordism at $w'$. The configuration is shown in Figure~\ref{fig::44}.

\begin{figure}[ht!]
	\centering
	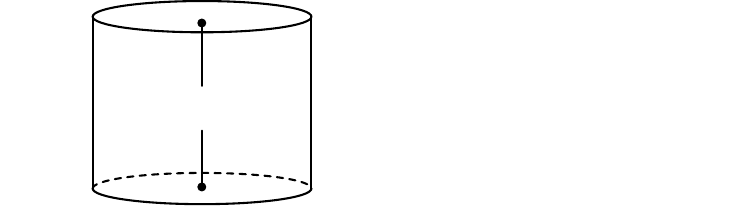
	\caption{\textbf{Manipulating the broken path cobordism $(Y\times [0,1],\Gamma_w)$ so that a basepoint swapping cobordism appears in the middle.} Note that since $Y\times [0,1]$ is 4-dimensional, the crossing shown in our picture carries no meaning.  \label{fig::44}}
\end{figure}

By \cite{ZemGraphTQFT}*{Proposition~14.24}, the cobordism $(Y\times [0,1],\Gamma_{\lambda}^X)$ induces the map
\[F_{Y\times [0,1],\Gamma_{\lambda}^X,\frs}^A\simeq \Phi_{w} A_{\lambda}+A_{\lambda} \Phi_{w'}.\] Using this, we compute that
\begin{align*}
F_{Y\times [0,1],\Gamma_w,\frs}^A&\simeq S_{w'}^-(\Phi_{w} A_{\lambda}+A_{\lambda} \Phi_{w'}) S_{w'}^+&&\\
&\simeq S_{w'}^- \Phi_{w} A_{\lambda} S_{w'}^+&& \text{\ref{rel:R1}, \ref{rel:R2}}\\
& \simeq \Phi_w S_{w'}^- A_{\lambda} S_{w'}^+&&\text{\ref{rel:R3'}}\\
& \simeq \Phi_w&& \text{\ref{rel:R8}}.
\end{align*}
Replacing the $A_\lambda$ maps with $B_{\lambda}$ yields the result for the type-$B$ maps, as well.
\end{proof}

\subsection{Graphs for the actions of \texorpdfstring{$U$}{U} and \texorpdfstring{$H_1(Y)/\Tors$}{H1(Y)/Tors}}
\label{sec:graphsandhomologyactions}
In this section, we describe how the $\bF_2[U]$-module action and the action of $H_1(Y)/\Tors$ are encoded into the graph cobordism maps. We consider the graph cobordisms $(Y\times[0,1],\Gamma_\gamma)$ and $(Y\times [0,1], \Gamma_U)$ shown in Figure~\ref{fig::26}.

\begin{figure}[ht!]
	\centering
	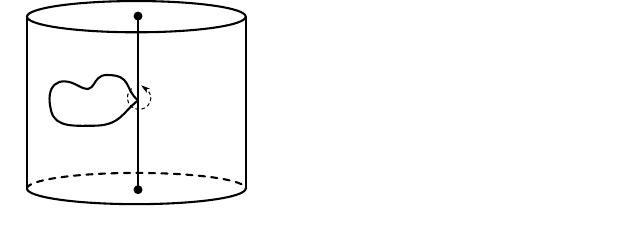
	\caption{\textbf{Graph cobordisms for the action of  $[\gamma]\in H_1(Y)/\Tors$ and the action of $U$.} The two loops on the cobordism for the $U$ map are both null-homologous. \label{fig::26}}
\end{figure}

\begin{prop}\label{prop:spliceinloopsforUandH_1} Suppose $\gamma$ is a closed, embedded loop in $Y$, which intersects $w$. Let $\Gamma_\gamma\subset Y\times [0,1]$ be the graph $\Gamma_{\gamma}:=(\{w\}\times [0,1])\cup (\gamma\times \{\tfrac{1}{2}\})$, shown in Figure~\ref{fig::26}. Then
\[
F_{Y\times [0,1],\Gamma_\gamma,\frs}^A\simeq F_{Y\times [0,1],\Gamma_\gamma,\frs}^B\simeq A_{\gamma},
\]
 where $A_\gamma$ denotes the action of $H_1(Y)/\Tors$. Let $\Gamma_U\subset Y\times [0,1]$ denote the graph on the right side of Figure~\ref{fig::26}. Then
\[
F_{Y\times[0,1],\Gamma_U,\frs}^A\simeq F_{Y\times[0,1],\Gamma_U,\frs}^B\simeq U.
\]
\end{prop}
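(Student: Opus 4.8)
The plan is to identify each graph cobordism with a composition of elementary maps whose effect is already known from the relations \ref{rel:R1}--\ref{rel:R9} and the constructions of Section~\ref{sec:mapsandrelations}. For the loop $\Gamma_\gamma$: projecting into $Y$, the graph $\Gamma_\gamma$ is an elementary flow graph of \ref{elementarygraphtype2} with a single interior vertex $v_0$ lying on $w\times\{\tfrac12\}$, and two edges $e_1,e_2$ whose union is the loop $\gamma$, both running from $w$ to $v_0$. By the construction of the type-$A$ graph action map for \ref{elementarygraphtype2}, we get $A_{\Gamma_\gamma}\simeq S_w^- A_{e_2}A_{e_1}S_w^+$. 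First I would concatenate: by Relation~\ref{rel:R4}, $A_{e_1}$ and $A_{e_2}$ add to $A_{e_2*e_1}=A_\gamma$ (the closed loop map), but here I need the composition $A_{e_2}A_{e_1}$, not the sum. Instead I would use Relation~\ref{rel:R7'} in reverse: $A_\gamma\simeq S_v^- A_{e_2}A_{e_1}S_v^+$ when $\gamma$ is the concatenation of $e_1$ and $e_2$ meeting at the single vertex $v=v_0$. This is exactly the expression defining $A_{\Gamma_\gamma}$, so $F^A_{Y\times[0,1],\Gamma_\gamma,\frs}\simeq A_\gamma$, the $H_1(Y;\Z)/\Tors$ action recalled at the end of Section~\ref{sec:mapsandrelations}. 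Since $\gamma$ is a closed loop we have $A_\gamma=B_\gamma$, and running the identical argument with the type-$B$ graph action map (replacing each $A_\lambda$ by $B_\lambda$) gives $F^B_{Y\times[0,1],\Gamma_\gamma,\frs}\simeq B_\gamma=A_\gamma$, proving the first claim.

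For $\Gamma_U$: the picture shows a path $w\times[0,1]$ with two small null-homologous loops attached at an interior vertex (equivalently, one may read it as a vertex $v_0$ on $w\times\{\tfrac12\}$ with two null-homotopic edges $e_1,e_2$ forming a figure made of two contractible loops). I would again project to $Y$ and recognize this as an elementary flow graph of \ref{elementarygraphtype2}, so that $F^A_{Y\times[0,1],\Gamma_U,\frs}\simeq S_w^- A_{e_2}A_{e_1}S_w^+$ where $e_1,e_2$ are null-homotopic loops based at $v_0$. Since each $e_i$ is null-homotopic in $Y$, the boundary condition $\d e_i$ is a single point, so $\#(\d e_1\cap \d e_2)=1$, and Relation~\ref{rel:R3} gives $A_{e_1}A_{e_2}+A_{e_2}A_{e_1}\simeq U$. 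Because the two loops are disjoint (pushed off each other) one of the two orderings appearing in $\Gamma_U$ versus the switched ordering differ by $U$ times a broken configuration via Lemma~\ref{lem:vertexbreakingrelation}; alternatively, directly, a null-homotopic edge $e_i$ satisfies $A_{e_i}\simeq 0$ up to the $S^{\pm}_w$ sandwich by Relation~\ref{rel:R8} applied to a trivial strand, while the relevant nontrivial contribution comes precisely from the $U$ term in \ref{rel:R3}. The cleanest route is: by Lemma~\ref{lem:vertexbreakingrelation}, switching the cyclic order of $e_1,e_2$ at $v_0$ changes $F^A$ by $U\cdot F^A_{W,\Gamma'',\frs}$ where $\Gamma''$ is obtained by cutting both loops open — and $\Gamma''$ is then just the plain path $w\times[0,1]$, which induces the identity. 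Since for null-homotopic disjoint loops the two cyclic orderings give chain-homotopic maps (the underlying embedded graphs are isotopic), the left side of that equation vanishes modulo arranging signs, forcing $F^A_{Y\times[0,1],\Gamma_U,\frs}\simeq U\cdot\id = U$. The type-$B$ statement follows identically, since Lemma~\ref{lem:vertexbreakingrelation} and Relation~\ref{rel:R3} hold verbatim for type-$B$ maps.

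The main obstacle I anticipate is pinning down the precise combinatorial description of $\Gamma_U$ from Figure~\ref{fig::26} and making the bookkeeping of which two orderings/broken configurations appear genuinely match the statement $F\simeq U\cdot\id$ rather than $F\simeq U\cdot(\text{something else})$ or $0$; in other words, verifying that the "cut open both loops" graph $\Gamma''$ really does induce the identity and that the switched-ordering graph really is isotopic (hence homotopic as a cobordism map) to the original, so that Lemma~\ref{lem:vertexbreakingrelation} isolates exactly one copy of $U\cdot\id$. Once the graph is correctly decomposed into elementary flow graphs, everything else is a mechanical application of Relations~\ref{rel:R3}, \ref{rel:R7'}, \ref{rel:R8} and Lemma~\ref{lem:vertexbreakingrelation}, with the $A$-versus-$B$ symmetry handled uniformly because $\gamma$ is closed and the vertex-breaking relation is stated for both flavors.
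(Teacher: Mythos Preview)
For $\Gamma_\gamma$, your idea is right but the bookkeeping is off: you put $v_0$ at $w\times\{\tfrac12\}$, which projects to $w$ itself, and then write $S_w^\pm$ where the Type~(2) formula calls for $S_{v_0}^\pm$ at a genuinely new interior vertex $v_0\notin\ws_1\cup\ws_2$. The paper instead splits the 4-valent vertex into two trivalent ones (a nontrivial step it justifies via ribbon equivalence from the link cobordism interpretation), writes the cobordism as the composition of $(Y\times[0,1],\Gamma_{\lambda_1}^{\curlyvee})$ and $(Y\times[0,1],\Gamma_{\lambda_2}^{\curlywedge})$ with $\lambda_2*\lambda_1=\gamma$, and then Lemma~\ref{lem:computationofYshapedgraphcobordisms} together with \ref{rel:R7'} give $S_{w'}^-B_{\lambda_2}B_{\lambda_1}S_{w'}^+\simeq B_\gamma=A_\gamma$. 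Once you place $v_0=w'\neq w$ correctly, your Type~(2) shortcut can be made to work, but the vertex-splitting is not automatic and the paper explicitly flags it.

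The argument for $\Gamma_U$ has a real gap. You invoke Lemma~\ref{lem:vertexbreakingrelation} to get $F_{\Gamma}+F_{\Gamma'}\simeq U\cdot F_{\Gamma''}$ with $F_{\Gamma''}\simeq\id$, and then claim that since the two cyclic orderings have isotopic underlying graphs, $F_{\Gamma}\simeq F_{\Gamma'}$. But over $\bF_2$ this would give $0\simeq U\cdot\id$, a contradiction---not $F_{\Gamma}\simeq U$. The mistake is that the cyclic order is genuine extra data on a ribbon graph, not determined by the embedding; Lemma~\ref{lem:vertexbreakingrelation} is precisely the statement that changing it changes the map. (Your side attempt via \ref{rel:R3} also fails: for closed loops $e_i$ one has $\d e_i=\emptyset$, so $\#(\d e_1\cap\d e_2)=0$, not $1$.) The paper's fix is to observe that the \emph{switched} ordering makes the two-loop configuration decompose as a stack of two single-loop cobordisms, so by the first half of the proposition $F_{\Gamma'}\simeq A_{\gamma_2}A_{\gamma_1}$. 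The relation then reads $F_{\Gamma_U}\simeq U+A_{\gamma_1}A_{\gamma_2}$, and since the loops are null-homologous each $A_{\gamma_i}$ vanishes, giving $F_{\Gamma_U}\simeq U$.
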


It is convenient to break the computation into two pieces. If $\lambda$ is a path from $w$ to $w'$ in $Y$, then let $\GY_{\lambda}$ and $\GYup_{\lambda}$ denote the Y-shaped graphs in $Y\times [0,1]$ shown in Figure~\ref{fig::48}.
\begin{figure}[ht!]
	\centering
	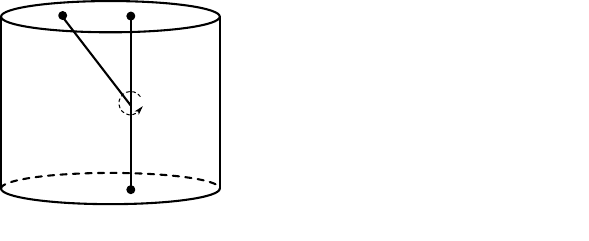
	\caption{\textbf{The graph cobordisms $(Y\times [0,1], \GY_\lambda)$ and $(Y\times [0,1],\protect\GYup_{\lambda})$ considered in Lemma~\ref{lem:computationofYshapedgraphcobordisms}.} These depend on a choice of path, $\lambda$, from $w$ to $w'$ in $Y$. \label{fig::48}}
\end{figure}

\begin{lem}\label{lem:computationofYshapedgraphcobordisms}The graph cobordism maps for $(Y\times [0,1], \GY_{\lambda})$ and $(Y\times [0,1], \GYup_{\lambda})$ satisfy 
\[
F_{Y\times [0,1], \GY_{\lambda},\frs}^A \simeq B_{\lambda} S_{w'}^+ \qquad \text{and} \qquad F_{Y\times [0,1], \GYup_{\lambda},\frs}^A\simeq S_{w'}^-B_{\lambda}.
\] 
If $\GYbar_{\lambda}$ and $\GYupbar_{\lambda}$ denote the graphs with the opposite cyclic order, then
\[
F_{Y\times [0,1], \GYbar_{\lambda},\frs}^A \simeq A_{\lambda} S_{w'}^+ \qquad \text{and} \qquad F_{Y\times [0,1], \GYupbar_{\lambda},\frs}^A\simeq S_{w'}^-A_{\lambda}.
\]
\end{lem}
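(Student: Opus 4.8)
The plan is to read off $F^A_{Y\times[0,1],\Gamma^{\curlyvee}_{\lambda},\frs}$ directly from the construction of the type-$A$ graph action map, and then deduce the remaining three formulas by minor variations. Since all four maps in the statement are graph action maps for the flow graphs in $Y$ obtained by projecting the graphs of Figure~\ref{fig::48} into $Y$, it suffices to identify those graph action maps. Now $\bar{\Gamma}^{\curlyvee}_{\lambda}$ (resp.\ $\bar{\Gamma}^{\curlywedge}_{\lambda}$) differs from $\Gamma^{\curlyvee}_{\lambda}$ (resp.\ $\Gamma^{\curlywedge}_{\lambda}$) only in the cyclic order at its unique interior, trivalent vertex, and $\Gamma^{\curlywedge}_{\lambda}$ is $\Gamma^{\curlyvee}_{\lambda}$ with the roles of the incoming and outgoing ends exchanged. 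So once the two $\curlyvee$ formulas are established, the two $\curlywedge$ formulas follow by running the same computation with each $S^{+}$ replaced by $S^{-}$ and the composition reversed, simplified using Relations~\ref{rel:R7}, \ref{rel:R7'}, \ref{rel:R8}. Thus it is enough to prove $F^A_{Y\times[0,1],\Gamma^{\curlyvee}_{\lambda},\frs}\simeq B_\lambda S_{w'}^+$ and $F^A_{Y\times[0,1],\bar{\Gamma}^{\curlyvee}_{\lambda},\frs}\simeq A_\lambda S_{w'}^+$.

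First I would isotope $\Gamma^{\curlyvee}_{\lambda}$ so that, as a flow graph in $Y$, it consists of the constant strand at $w$ together with a single trivalent vertex $v_0$ lying on that strand, from which one further edge runs, along $\lambda$, to the outgoing copy of $w'$. This is an elementary flow graph of \ref{elementarygraphtype2}, so by \cite{ZemGraphTQFT}*{Section~7} its graph action map is, up to chain homotopy, a composition of free-stabilization maps with the relative homology maps attached to the three edges at $v_0$ (two of which are short and trivial, the third being isotopic to $\lambda$). Using Relations~\ref{rel:R1}, \ref{rel:R2}, \ref{rel:R8} and \ref{rel:R9}, the two trivial edges contribute identity maps, and what remains is a single relative homology map along $\lambda$ post-composed with a free-stabilization $S_{w'}^+$ creating the new outgoing basepoint. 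The only remaining point is to decide whether that relative homology map is $A_\lambda$ or $B_\lambda$, which is exactly where the cyclic order at $v_0$ enters.

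To settle this, rather than tracking the counts of changes of multiplicity across the $\as$ versus the $\bs$ curves by hand, I would invoke the vertex-breaking relation, Lemma~\ref{lem:vertexbreakingrelation}, at $v_0$:
\[F^A_{Y\times[0,1],\Gamma^{\curlyvee}_{\lambda},\frs}+F^A_{Y\times[0,1],\bar{\Gamma}^{\curlyvee}_{\lambda},\frs}\simeq U\cdot F^A_{Y\times[0,1],\Gamma'',\frs},\]
where $\Gamma''$ is the graph obtained from $\Gamma^{\curlyvee}_{\lambda}$ by breaking the two edges at $v_0$ that are exchanged. Up to isotopy $\Gamma''$ is the disjoint union of the broken-path cobordism for $w$ and a dangling strand creating $w'$, so $F^A_{Y\times[0,1],\Gamma'',\frs}\simeq \Phi_w S_{w'}^+$ by Lemma~\ref{lem:phi=brokenpathcobordism}. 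Since Equation~\eqref{eq:Alambda+Blambda} reads $A_\lambda = B_\lambda + U\Phi_w + U\Phi_{w'}$ over $\bF_2$ (with $w_1=w$, $w_2=w'$), and $\Phi_{w'}S_{w'}^+\simeq S_{w'}^+S_{w'}^-S_{w'}^+\simeq 0$ by Relations~\ref{rel:R1} and \ref{rel:R2}, this shows that the $\curlyvee$ and $\bar{\curlyvee}$ answers are interchanged by $A_\lambda\leftrightarrow B_\lambda$; hence it is enough to verify $F^A_{Y\times[0,1],\bar{\Gamma}^{\curlyvee}_{\lambda},\frs}\simeq A_\lambda S_{w'}^+$, which is the case that comes out directly because the type-$A$ graph action map is assembled from the $A_e$ maps. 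As a final consistency check, Lemma~\ref{lem:reversecyclicordering} then gives $F^B_{Y\times[0,1],\Gamma^{\curlyvee}_{\lambda},\frs}=F^A_{Y\times[0,1],\bar{\Gamma}^{\curlyvee}_{\lambda},\frs}\simeq A_\lambda S_{w'}^+$, i.e.\ the type-$B$ cobordism map carries the opposite relative homology map, as it should.

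The step I expect to be the main obstacle is the bookkeeping in the middle paragraph: writing down precisely which free-stabilization and relative homology maps the construction of \cite{ZemGraphTQFT}*{Section~7} assigns to the trivalent vertex and its two trivial edges --- in particular the order in which the basepoints $v_0$ and $w'$ are created and destroyed --- and checking that after the collapse via Relations~\ref{rel:R1}--\ref{rel:R9} nothing survives beyond the single $A_\lambda$ (or $B_\lambda$) and the single $S_{w'}^{+}$. Everything else is routine manipulation of the relations.
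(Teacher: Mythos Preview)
Your plan is essentially the paper's proof, slightly reorganized, and it would go through once the ``direct computation'' step is actually carried out. But the one place where you commit to a specific claim without justification is also the place where your intuition is off.

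You assert that $F^A_{Y\times[0,1],\bar\Gamma^{\curlyvee}_\lambda,\frs}\simeq A_\lambda S_{w'}^+$ ``comes out directly because the type-$A$ graph action map is assembled from the $A_e$ maps.'' That heuristic cannot be right as stated: both $F^A_\Gamma$ and $F^A_{\bar\Gamma}$ are built from $A_e$ maps (the cyclic order only changes the order of the factors $A_{e_i}$ at the trivalent vertex), so this reasoning does not distinguish which one lands on $A_\lambda$ and which on $B_\lambda$. In fact the paper records that the direct computation, carried out in \cite{HMZConnectedSum}*{Lemmas~5.5, 5.6}, yields
\[
F^A_{Y\times[0,1],\Gamma^{\curlyvee}_\lambda,\frs}\simeq (A_\lambda+U\Phi_w)\,S_{w'}^+,
\]
and only after applying Equation~\eqref{eq:Alambda+Blambda} together with $\Phi_{w'}S_{w'}^+\simeq 0$ does this collapse to $B_\lambda S_{w'}^+$. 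So the raw output of the graph-action recipe is \emph{not} simply one relative homology map; the two short edges at $v_0$ leave behind exactly the $U\Phi_w$ term you hoped would vanish. Your acknowledged ``main obstacle'' is therefore genuinely the whole proof, and the rest of your argument (the vertex-breaking relation plus Equation~\eqref{eq:Alambda+Blambda}) only tells you that $\{F^A_\Gamma,F^A_{\bar\Gamma}\}=\{A_\lambda S_{w'}^+,B_\lambda S_{w'}^+\}$ without pinning down which is which.

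Once you do the honest computation (or cite it), your vertex-breaking detour becomes unnecessary: the paper instead invokes Lemma~\ref{lem:reversecyclicordering} to pass from $\Gamma$ to $\bar\Gamma$, which immediately swaps $A_\lambda\leftrightarrow B_\lambda$ in the already-simplified formula. That route is shorter and avoids having to identify $\Gamma''$ and compute its map.
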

\begin{proof}In \cite{HMZConnectedSum}*{Lemmas~5.5 and 5.6} it is computed (directly from the definition of the graph action map) that 
\[
F_{Y\times [0,1], \GY_{\lambda},\frs}^A \simeq (A_{\lambda}+U\Phi_{w}) S_{w'}^+ \qquad \text{and} \qquad F_{Y\times [0,1], \GYup_{\lambda},\frs}^A\simeq S_{w'}^-(A_\lambda+U\Phi_{w}).
\]
 By using  equation~\eqref{eq:Alambda+Blambda} and Relations~\ref{rel:R1} and \ref{rel:R2}, we see that
\[(A_{\lambda}+U\Phi_{w}) S_{w'}^+\simeq (B_{\lambda}+U\Phi_{w'})S_{w'}^+\simeq B_{\lambda}S_{w'}^+,\] and similarly
\[S_{w'}^-(A_{\lambda}+U\Phi_{w}) \simeq S_{w'}^-(B_{\lambda}+U\Phi_{w'})\simeq S_{w'}^-B_{\lambda}.\]

To prove the statements about the graph cobordisms with the opposite cyclic orders, we use Lemma~\ref{lem:reversecyclicordering}, which shows that the effect of switching the cyclic ordering is to replace the $A_{\lambda}$ maps with the $B_{\lambda}$ maps, and vice-versa.
\end{proof}

We can now compute the graph cobordism maps for $(Y\times [0,1], \Gamma_\gamma)$ and $(Y\times [0,1],\Gamma_U)$:

\begin{proof}[Proof of Proposition~\ref{prop:spliceinloopsforUandH_1}] We first consider the cobordism $(Y\times [0,1], \Gamma_\gamma)$, shown on the left side of Figure~\ref{fig::26}. Write $\gamma$ as a concatenation of two arcs, $\lambda_1$ and $\lambda_2$, which both have one endpoint at $w$, and one endpoint at a new basepoint, $w'$. We first claim that the graph cobordism map for $(Y\times [0,1], \Gamma_\gamma)$ is invariant under splitting the single vertex in the interior of the graph into two vertices connected by an edge, as shown in Figure~\ref{fig::49}. One way of establishing this equality would be to compute directly from the definition. This is not hard, though somewhat tedious. For convenience, we will instead appeal to the link cobordism interpretation of the graph cobordism maps \cite{ZemCFLTQFT}*{Theorem~C}, from which it follows that ribbon-equivalent graphs induce the same map (see \cite{ZemCFLTQFT}*{Corollary~D}). After splitting this vertex into two vertices, the resulting graph cobordism can be written as the composition of the two graph cobordisms, $(Y\times [0,1], \GYup_{\lambda_2})$ and $(Y\times [0,1], \GY_{\lambda_1})$.

\begin{figure}[ht!]
	\centering
	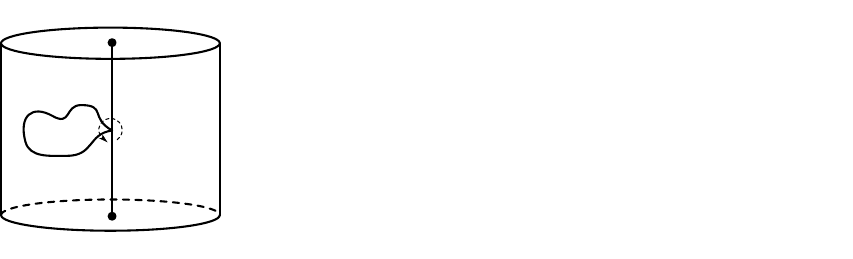
	\caption{\textbf{Computing the cobordism map for $(Y\times[0,1], \Gamma_\gamma)$.} On the left is the graph cobordism $(Y\times [0,1], \Gamma_\gamma)$. In the middle is a ribbon equivalent graph cobordism. On the right is the composition of $(Y\times [0,1], \protect\GYup_{\lambda_2})$ and $(Y\times [0,1], \GY_{\lambda_1})$, The concatenation $\lambda_2* \lambda_1$ is equal to the closed curve $\gamma$. In Proposition~\ref{prop:spliceinloopsforUandH_1}, we show that the induced map is the $H_1(Y)/\Tors$ action map, $A_\gamma$.\label{fig::49}}
\end{figure}

Lemma~\ref{lem:computationofYshapedgraphcobordisms} describes the maps induced by $(Y\times [0,1], \GYup_{\lambda_2})$ and $(Y\times [0,1], \GY_{\lambda_1})$. Accordingly,
\[
F_{Y\times [0,1],\Gamma_\gamma,\frs}^A\simeq S_{w'}^- B_{\lambda_2} B_{\lambda_1} S_{w'}^+.
\] By Relation~\ref{rel:R7'}, this is chain homotopic to $B_{\gamma}$. We recall also that $B_{\gamma}=A_{\gamma}$, since the quantities $a(\gamma,\phi)$ and $b(\gamma,\phi)$ agree for any homology class $\phi$, when $\gamma$ is a closed curve, completing the proof  for $(Y\times [0,1],\Gamma_\gamma)$.

We now compute the map for the graph cobordism $(Y\times[0,1],\Gamma_U)$ from Figure~\ref{fig::26}. We use the previous result for $(Y\times [0,1],\Gamma_\gamma)$, as well as the vertex breaking relation from Lemma~\ref{lem:vertexbreakingrelation}.  It is convenient to consider the more general case that the loops in $\Gamma_U$ are not necessary null-homologous, and instead have homology classes $\gamma_1$ and $\gamma_2$.  As shown in Figure~\ref{fig::28}, by using the vertex breaking relation, and the computation of map induced by $(Y\times [0,1], \Gamma_{\gamma_i})$, we have that the cobordism on the right of Figure~\ref{fig::26} has induced map
\begin{equation}
U+A_{\gamma_1}A_{\gamma_2}.\label{eq:U+Ag1Ag2}
\end{equation}
 The graph $\Gamma_U$ from the main statement is obtained by picking $\gamma_1$ and $\gamma_2$ to be null-homotopic in $Y$, so $A_{\gamma_1}$ and $A_{\gamma_2}$ vanish. The remaining term in equation~\eqref{eq:U+Ag1Ag2} is $U$, completing the proof.
\end{proof}
\begin{figure}[ht!]
	\centering
	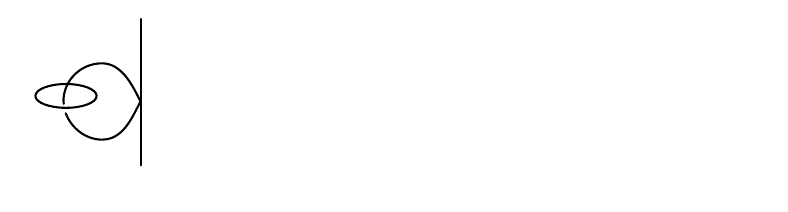
	\caption{\textbf{Computing the graph cobordism on the left by using the vertex breaking relation.} The vertex breaking relation is proven in Lemma~\ref{lem:vertexbreakingrelation}. The cyclic orders are counterclockwise, with respect to the page.\label{fig::28}}
\end{figure}

\subsection{Relative homology maps and holomorphic triangles}
\label{sec:relativehomologyandTriangles}
In this section, we describe the interaction of the relative homology maps with the holomorphic triangle maps. 

Suppose that $(\Sigma,\ve{\alpha},\ve{\beta},\ve{\gamma},\ve{w})$ is a multi-pointed Heegaard triple. In \cite{OSDisks}*{Section~8}, Ozsv\'{a}th and Szab\'{o} construct a 4-manifold $X_{\a,\b,\g}$, as well as a map
\[
\frs_{\ws}:\pi_2(\xs,\ys,\zs)\to \Spin^c(X_{\a,\b,\g}).
\] 
If $\frs\in \Spin^c(X_{\a,\b,\g})$, the holomorphic triangle map 
\[
F_{\a,\b,\g,\frs}:\CF^-(\Sigma,\ve{\alpha},\ve{\beta},\frs_{\a,\b})\otimes_{\bF_2[U]} \CF^-(\Sigma,\ve{\beta},\ve{\gamma},\frs_{\b,\g})\to \CF^-(\Sigma,\ve{\alpha},\ve{\gamma},\frs_{\a,\g})
\]
 is defined by counting holomorphic triangles which represent Maslov index 0 homology classes $\psi$ with $\frs_{\ws}(\psi)=\frs$.

 If $\lambda$ is a path between two basepoints on the Heegaard surface $\Sigma$, and $\phi$ is a homology class of disks, let $a(\lambda,\phi)$, $b(\lambda,\phi)$ and $c(\lambda,\phi)$ denote the sum of the changes of multiplicities of $\phi$, across only the $\ve{\alpha},$ $\ve{\beta}$ or $\ve{\gamma}$ curves, respectively. Let $A_{\lambda},$ $B_\lambda$ and $C_\lambda$ denote the maps which count holomorphic disks with an extra factor of $a(\lambda,\phi),$ $b(\lambda,\phi)$ or $c(\lambda,\phi)$, respectively (all three maps may be defined on any of the three complexes involved in the triple). We have the following:

\begin{lem}\label{lem:graphactionandtriangles} If $\frs\in \Spin^c(X_{\a,\b,\g})$, then the holomorphic triangle map
$F_{\a,\b,\g,\frs}$ satisfies 
	\begin{align*}F_{\a,\b,\g,\frs}(A_\lambda\otimes \id)+A_\lambda \circ F_{\a,\b,\g,\frs}(\id\otimes \id)&\simeq 0\\
	F_{\a,\b,\g,\frs}(B_\lambda\otimes \id)+F_{\a,\b,\g,\frs}(\id\otimes B_\lambda)&\simeq 0\\
	F_{\a,\b,\g,\frs}(\id\otimes C_\lambda)+C_\lambda\circ  F_{\a,\b,\g,\frs}(\id\otimes \id)&\simeq 0.
	\end{align*}
\end{lem}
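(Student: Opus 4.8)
The plan is to prove each of the three relations by the standard ``count the ends of one-dimensional moduli spaces of triangles'' argument, exactly parallel to how the chain-map relation $\partial\circ A_\lambda + A_\lambda\circ\partial = 0$ is established for disks, but now with a Maslov index $1$ family of holomorphic triangles whose boundary degenerations produce the terms in the lemma. Fix a $\Spin^c$ structure $\frs\in\Spin^c(X_{\as,\bs,\gs})$ and a homology class of triangles $\psi$ with $\frs_{\ws}(\psi)=\frs$ and $\mu(\psi)=1$; the moduli space $\cM(\psi)$ is then generically one-dimensional, and I would count the weighted number of boundary points $\sum_\psi a(\lambda,\psi)\,\#\d\overline{\cM}(\psi)$ (and similarly with $b$ and $c$). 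The ends of $\overline{\cM}(\psi)$ are of two kinds: breaking off a Maslov index $1$ holomorphic \emph{disk} at one of the three incoming/outgoing ends (giving compositions $F_{\as,\bs,\gs,\frs}\circ(\d\otimes\id)$, $F_{\as,\bs,\gs,\frs}\circ(\id\otimes\d)$, and $\d\circ F_{\as,\bs,\gs,\frs}$), and breaking off a Maslov index $1$ triangle while a disk bubbles. The key bookkeeping point is the additivity of the weight $a(\lambda,\cdot)$ under splicing: if $\psi = \psi'\ast\phi$ where $\phi$ is a disk class at one of the ends, then $a(\lambda,\psi) = a(\lambda,\psi') + a(\lambda,\phi)$, and likewise for $b$ and $c$. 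Summing the signed (here, mod $2$) count of ends to zero and regrouping according to which factor the disk bubbled off yields the stated identities.

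The asymmetry among the three relations is exactly the asymmetry in how $a(\lambda,\cdot)$, $b(\lambda,\cdot)$, $c(\lambda,\cdot)$ see the three sets of attaching curves. For the first relation, a disk class $\phi$ on the $(\as,\bs)$-complex changes multiplicities only across $\as$ and $\bs$ curves, and a disk on the $(\as,\gs)$-complex only across $\as$ and $\gs$; the total ``$\as$-weight'' appearing when we differentiate $F_{\as,\bs,\gs,\frs}(\id\otimes\id)$ therefore collects precisely into $F_{\as,\bs,\gs,\frs}(A_\lambda\otimes\id) + A_\lambda\circ F_{\as,\bs,\gs,\frs}$, with the $(\bs,\gs)$-factor contributing nothing because $\as$-curves are disjoint from that subdiagram. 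That is, one must observe $a(\lambda,\cdot)\equiv 0$ on $\pi_2$ of the $(\bs,\gs)$-diagram, which forces the ``$\id\otimes A_\lambda$ on the $(\bs,\gs)$-factor'' term to be absent. The second relation is the symmetric one: every disk on any of the three subdiagrams $(\as,\bs)$, $(\bs,\gs)$, $(\as,\gs)$ can have $\bs$-weight only through $(\as,\bs)$ or $(\bs,\gs)$, and the $(\as,\gs)$-diagram has no $\bs$-curves, so $b(\lambda,\cdot)\equiv0$ there and the outgoing term $B_\lambda\circ F$ drops out, leaving $F_{\as,\bs,\gs,\frs}(B_\lambda\otimes\id) + F_{\as,\bs,\gs,\frs}(\id\otimes B_\lambda)\simeq 0$. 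The third relation is dual to the first with $\gs$ in place of $\as$.

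The main obstacle, as usual in these degeneration arguments, is controlling the compactness: one must rule out (or correctly account for) bubbling of holomorphic spheres and boundary degenerations along the $\as$, $\bs$, or $\gs$ tori in $\Sym^n(\Sigma)$, so that the only codimension-one ends of $\overline{\cM}(\psi)$ are the disk-breaking ends described above. For the weighted counts one also needs the standard fact that the weight $a(\lambda,\psi)$ depends only on the homology class and splices additively under gluing — which is immediate from its definition as a sum of local multiplicity changes — together with admissibility of the Heegaard triple to ensure the relevant sums are finite. I would invoke the gluing and transversality package already set up in the paper (Proposition~\ref{prop:transversalitytriangles}) to justify that $\cM(S,\psi)$ is cut out transversely, and then the argument is a routine adaptation of \cite{OSDisks}*{Section~8} and the disk-level relation already recorded in Section~\ref{sec:mapsandrelations}; I would not belabor the analytic details beyond citing them.
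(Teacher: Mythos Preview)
Your proposal is correct and follows the same approach as the paper: weight the count of ends of Maslov index~1 triangle moduli spaces by $a(\lambda,\psi)$ (respectively $b$, $c$), use the additivity $a(\lambda,\psi'*\phi)=a(\lambda,\psi')+a(\lambda,\phi)$, and observe that the weight vanishes on disks living on the subdiagram missing the relevant set of curves. The paper makes the chain homotopy explicit as the map $H_{\as,\bs,\gs,\frs}^A$ counting index~0 triangles with the extra factor $a(\lambda,\psi)$; your description of the ends is slightly imprecise (the terms $F\circ(\d\otimes\id)$ etc.\ you list are what an \emph{unweighted} count would give---in the weighted count the additivity splits each end's contribution into a ``weight on the disk'' piece yielding $F\circ(A_\lambda\otimes\id)$ and $A_\lambda\circ F$, and a ``weight on the triangle'' piece yielding the homotopy $H^A$ against the differentials), but the substance is right.
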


\begin{proof} We will focus on the first relation, as the other two follow from similar arguments. If $\psi\in \pi_2(\xs,\ys,\zs)$ is a Maslov index 1 homology class of triangles, then the moduli space $\cM(\psi)$ is 1-dimensional, and can be compactified into a compact 1-manifold $\bar{\cM(\psi)}$ with boundary, by adjoining pairs consisting of an index 0 holomorphic triangle and an index 1 holomorphic disk.  Hence, summing over all such classes, for fixed $\xs,$ $\ys$ and $\zs$, we get
\[
0=\sum_{\substack{\psi\in \pi_2(\ve{x},\ve{y},\ve{z})\\
	\mu(\psi)=1\\
\frs_{\ve{w}}(\psi)=\frs}} a(\lambda,\psi)\# \d \left(\bar{\cM(\psi)}\right)U^{n_{\ve{w}}(\psi)}.
\]
 Furthermore, if $\psi$ is a homology class of triangles and $\phi$ is a homology class of disks, we have $a(\lambda,\psi+\phi)=a(\lambda,\psi)+a(\lambda,\phi)$. Also, if $\phi$ is a homology class of disks on $(\Sigma,\ve{\beta},\ve{\gamma})$ then $a(\lambda,\phi)=0$. Hence 
\[
F_{\a,\b,\g,\frs}(A_\lambda\otimes \id)+A_\lambda F_{\a,\b,\g,\frs}(\id\otimes \id)=\d_{\a,\g} H_{\a,\b,\g,\frs}^{A}+H_{\a,\b,\g,\frs}^{A}(\d_{\a,\b}\otimes \id+\id\otimes \d_{\b,\g}),
\]
 where $H_{\a,\b,\g,\frs}^{A}$ counts holomorphic triangles with $\frs_{\ve{w}}(\psi)=\frs$ with an additional factor of $a(\lambda,\psi)$.
\end{proof}

\section{A handle decomposition of the trace cobordism}
\label{sec:handledecomposition}
In this section, we describe a handle decomposition of the trace cobordism $Y\times [0,1]:-Y\sqcup Y\to \emptyset$, and also recall the singular homology of mapping tori.

\subsection{A handle decomposition of the trace cobordism}
\label{sec:handledecomptracecobordism}
In this section, we describe how a Heegaard diagram for $Y$ induces a handle decomposition of the trace cobordism. In this section, it is more convenient to view the trace cobordism as $Y\times [-1,1]$, instead of $Y\times [0,1]$.

Suppose $f:Y\to [1,3]$ is a Morse function which induces the diagram $(\Sigma,\as,\bs,\ws)$, by which we mean that $f$ admits a gradient like vector field such that the following hold:

\begin{enumerate}
\item $f$ has $|\ws|$ index 0 critical points, all with critical value 1.
\item $f$ has $g(\Sigma)+|\ws|-1$ index 1 critical points, all with critical values in $(1,2)$, whose ascending manifolds intersect $\Sigma$ along $\as$.
\item $f^{-1}(2)=\Sigma$.
\item $f$ has $g(\Sigma)+|\ws|-1$ index 2 critical points, all with critical values in $(2,3)$, whose descending manifolds intersect $\Sigma$ along $\bs$.
\item $f$ has $|\ws|$ index 3 critical points, all with critical value $3$.
\end{enumerate}

We construct a Morse function $F:Y\times [-1,1]\to [0,3]$ by the formula
\[F(y,s)=(1-s^2)\cdot f(y).\] It is easy to see that the critical set of $F$ is equal to $\Crit(f)\times \{0\}\subset Y\times [-1,1]$. Furthermore, if $p$ is a critical point of $f$, then
\[\ind_{(p,0)}(F)=\ind_p(f)+1.\]

For our purposes, it is important to precisely describe the attaching spheres of the handles. To this end, we define the following submanifolds:
\[W_t:=F^{-1}([0,t]),\qquad 
M_t:=F^{-1}(t),\qquad
Y_t:= f^{-1}([t,3]),\qquad \text{and}\qquad
\Sigma_t :=f^{-1}(t).
\]
 The following lemma describes the level sets of the trace cobordism:

\begin{lem}\label{lem:levelsetdescription}Suppose that $t\in [1,3]$ is a regular value of $f$. The projection map $\pi_Y:Y\times [-1,1]\to Y$ restricts to a homeomorphism between $M_t\cap (Y\times [-1,0])$ and $Y_t$. The map $\pi_Y$ also restricts to a homeomorphism between $M_t\cap (Y\times [0,1])$ and $Y_t$. On each of the above sets, the map $\pi_Y$ is a diffeomorphism away from $Y\times \{0\}$. Putting these maps together yields a homeomorphism between $M_t$ and  $Y_t\cup_{\Sigma_t} -Y_t$, which is a diffeomorphism away from $M_t\cap (Y\times \{0\})$.
\end{lem}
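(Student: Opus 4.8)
The statement is entirely about the topology of the Morse function $F(y,s)=(1-s^2)f(y)$ on $Y\times[-1,1]$, so the plan is to analyze the fibers of $F$ directly via the projection $\pi_Y$. First I would fix a regular value $t\in[1,3]$ of $f$ and consider the equation $(1-s^2)f(y)=t$. For a point $(y,s)\in M_t$ one must have $f(y)\ge t$ (since $1-s^2\le 1$), i.e.\ $y\in Y_t=f^{-1}([t,3])$, and conversely for each $y\in Y_t$ the equation $1-s^2=t/f(y)\in(0,1]$ has exactly two solutions $s=\pm\sqrt{1-t/f(y)}$ unless $f(y)=t$, in which case $s=0$ is the unique solution. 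This shows set-theoretically that $M_t\cap(Y\times[-1,0])$ maps bijectively onto $Y_t$ via $\pi_Y$ (the solution with $s\le 0$), and similarly $M_t\cap(Y\times[0,1])$ maps bijectively onto $Y_t$; the two pieces agree precisely over $\Sigma_t=f^{-1}(t)$, where $s=0$.

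Next I would upgrade these bijections to the claimed regularity statements. On the region where $f(y)>t$ strictly, the formula $s=\pm\sqrt{1-t/f(y)}$ is smooth in $y$ (the radicand is bounded away from $0$ and $f$ is smooth with no critical values in the relevant range since $t$ is regular — actually one only needs $f$ smooth here), so $\pi_Y$ restricted to $M_t\cap(Y\times(-1,0))$ is a diffeomorphism onto the interior $f^{-1}((t,3])$, and likewise on the upper piece; combined with the fact that $\pi_Y$ is a continuous bijection of compact Hausdorff spaces on each closed piece $M_t\cap(Y\times[-1,0])$ and $M_t\cap(Y\times[0,1])$, it is a homeomorphism there. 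Gluing the two homeomorphisms along their common image $\Sigma_t$ (where they both restrict to the identity on $\Sigma_t\times\{0\}$) produces a homeomorphism $M_t\cong Y_t\cup_{\Sigma_t}(-Y_t)$; the orientation reversal on the second copy comes from the fact that $s\mapsto -s$ reverses the $[-1,1]$-direction, so the induced orientation on $M_t\cap(Y\times[0,1])$, transported to $Y_t$, is the opposite of the one coming from the lower piece. The diffeomorphism statement away from $M_t\cap(Y\times\{0\})$ follows from the interior case just discussed, since $M_t\cap(Y\times\{0\})=\Sigma_t\times\{0\}$ is exactly the locus where $s=0$.

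The one genuinely delicate point — the main obstacle — is the behavior of $\pi_Y$ near $\Sigma_t\times\{0\}$: there $s=\pm\sqrt{1-t/f(y)}$ has a square-root singularity in the normal direction to $\Sigma_t$, so $\pi_Y$ is \emph{not} a local diffeomorphism along $\Sigma_t\times\{0\}$, only a homeomorphism. I would handle this by introducing a local coordinate $\rho$ transverse to $\Sigma_t$ with $f=t+\rho$ (valid since $t$ is regular), so that near $\Sigma_t$ the fiber $M_t$ is $\{s^2(t+\rho)=-\rho\}$, i.e.\ $\rho=-s^2 t/(1+s^2)$ is a smooth graph over the $s$-coordinate; this exhibits $M_t$ as a smooth submanifold near $\Sigma_t\times\{0\}$ and shows $\pi_Y$ there is $\rho\mapsto$ (something vanishing to order $2$ in $\rho$), confirming it is a homeomorphism but not a diffeomorphism at that locus, and that $M_t$ is genuinely smoothly glued from the two copies of $Y_t$ along $\Sigma_t$ in the topological category, matching the stated conclusion. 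Everything else is bookkeeping with the explicit formula.
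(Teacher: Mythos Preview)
Your argument is correct and follows essentially the same route as the paper: analyze the level-set equation $(1-s^2)f(y)=t$ directly and invoke the compact--Hausdorff criterion for the homeomorphism claim. The paper's write-up is considerably terser; in particular, for the local-diffeomorphism statement it does not write out the explicit inverse $s=\pm\sqrt{1-t/f(y)}$ but simply observes that $\pi_Y|_{M_t}$ is a local diffeomorphism at $(y,s)$ iff $\partial/\partial s\notin T_{(y,s)}M_t$, i.e.\ iff $\partial F/\partial s=-2sf(y)\neq 0$, which holds precisely when $s\neq 0$. Your local-coordinate analysis near $\Sigma_t\times\{0\}$ goes beyond what the paper provides, and contains a harmless sign slip: from $(1-s^2)(t+\rho)=t$ one obtains $\rho=s^2 t/(1-s^2)$, not $-s^2 t/(1+s^2)$; the conclusion (smooth graph over $s$, vanishing to second order) is unaffected.
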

\begin{proof}To see that $\pi_Y$ induces a homeomorphism on the stated spaces, it is sufficient to show that it is bijective, and maps between the stated subsets, which is an easy exercise from the definitions of the maps $f$ and $F$. To see that $\pi_Y$ induces a local diffeomorphism on each of $M_t\cap (Y\times [-1,0))$ and $M_t\cap (Y\times (0,1])$,  one simply needs to show that $\d/\d s\not \in T_{(y,s)}M_t$. However it is easily checked that $\d/\d s\in T_{(y,s)} M_t$ if and only if $s=0$.
\end{proof}

Using Lemma~\ref{lem:levelsetdescription}, we obtain the following description of the critical points of $F$ on $Y\times [-1,1]$ and their attaching spheres:

\begin{enumerate}[label=(Index \arabic*),leftmargin=*, widest=IIIII]
\item $F$ has $|\ws|$ index 1 critical points, corresponding to the index 0 critical points of $f$. All have critical value equal to 1.
The attaching 0-sphere in $Y\sqcup -Y$ of each of these critical points is equal to the union of the corresponding index 0 critical point of $f$ in $Y$, together with its image in $-Y$.
\item $F$ has $g(\Sigma)+|\ws|-1$ index 2 critical points, which have attaching spheres equal to the union of the descending flow lines of the index 1 critical points of $f$ in $Y_{1+\epsilon}$  (which have their boundary on $\d Y_{1+\epsilon}$) and their images in $-Y_{1+\epsilon}$. They have critical values in $(1,2)$. The framings are discussed in more detail, below.
\item $F$ has $g(\Sigma)+|\ws|-1$ index 3 critical points. We can view the attaching spheres as being in $M_2=F^{-1}(2)$, which is homeomorphic to $U_{\b}\cup_{\Sigma} -U_{\b}$  by Lemma~\ref{lem:levelsetdescription}. The descending manifolds of the index $2$ critical points of $f$ intersect $U_{\b}$ in a set of 2-dimensional compressing disks which meet $\Sigma$ along the $\bs$ curves. The union of these disks with their images in $-U_{\b}\subset -Y$ are spheres in $M_2=U_{\b}\cup_{\Sigma} -U_{\b}$, and these spheres are the attaching spheres of the index 3 critical points of $F$. They have critical values in $(2,3)$.
\item $F$ has $|\ws|$ index 4 critical points, corresponding to the $|\ws|$ index 3 critical points of $f$. They all have critical value equal to $3$.
\end{enumerate}

 We now discuss the framings of the index 2 critical points in somewhat more detail. Note that the exact framing of the attaching spheres of the 2-handles in $M_{1+\epsilon}\iso Y\, \#_{\ws} -Y$ (where $Y\, \#_{\ws} -Y$ denotes the manifold obtained by adding a connected sum tube near each point in $\ws$) depends on some additional data (such as a choice of gradient like vector field), however the framing of the portion of the link in $Y_{1+\epsilon}$ can be taken to be the mirror of the framing of the portion in $-Y_{1+\epsilon}$. Up to isotopy, a framing is uniquely determined by this property.

\begin{rem}A handlebody decomposition of the cotrace cobordism can be obtained by turning around the above decomposition for the trace cobordism.
\end{rem}

\subsection{Singular homology of mapping tori}

We recall in this section that the singular homology of a mapping torus is given by a mapping cone.

First, let us recall the algebraic mapping cone construction. If $(C_1,\d_1)$ and $(C_2,\d_2)$ are two chain complexes, and $F:C_1\to C_2$ is a degree zero chain map, the \emph{mapping cone of $F$}, written as 
\[\Cone(C_1\xrightarrow{F} C_2),\] is defined to be the complex
\[\Cone(C_1\xrightarrow{F} C_2):=C_1[1]\oplus C_2,\] with differential
\[\d=\begin{pmatrix}-\d_1& 0\\
F& \d_2
\end{pmatrix}.\]

\begin{lem}\label{lem:CWhomologyofmappingtori}Suppose that $X^4$ is a smooth, oriented 4-manifold and $Y^3\subset X^4$ is a smooth, oriented, non-separating cut. Let $W^4$ be the result of cutting $X$ along $Y$, and let $\iota_0$ and $\iota_1$ denote the two inclusions of $Y$ into $W$ (corresponding to the two copies of $Y$ in $\d W$). Then $C_*^{CW}(X;\Z)$ is quasi-isomorphic to
\[\Cone(C_*^{CW}(Y;\Z)\xrightarrow{(\iota_0)_*-(\iota_1)_*} C_*^{CW}(W;\Z)).\]
\end{lem}
 
\begin{rem}Note that Lemma~\ref{lem:CWhomologyofmappingtori} specializes in the case of a mapping torus to show that $C_*^{CW}(X_\phi;\Z)$ is quasi-isomorphic to $\Cone(C_*^{CW}(Y;\Z)\xrightarrow{\id_*-\phi_*} C_*^{CW}(Y;\Z))$.
\end{rem}

\begin{proof}[Proof of Lemma~\ref{lem:CWhomologyofmappingtori}] The proof is by explicit construction of a $CW$ decomposition of $X$ whose homology is that of the mapping cone. Pick a $CW$ decomposition of $Y$, and pick a $CW$ decomposition of $W$ which extends this fixed decomposition (on both boundary components). The $CW$ decomposition of $Y$ naturally yields a $CW$ decomposition of $Y\times [0,1]$, via the product construction. If $e_i$ is a cell of dimension $i$ in our decomposition for $Y$, then there are three cells in our decomposition for $Y\times [0,1]$, namely $e_i\times \{0\}, e_i\times \{1\}$ and $e_i\times [0,1]$. Furthermore
\[\d(e_i\times [0,1])=e_i\times \{1\}-e_i\times \{0\}.\] We can construct a $CW$ decomposition of $X$, by taking our $CW$ decomposition for $W$, and adding in the cells of the form $e_i\times [0,1]$, where $e_i$ is a cell in $Y$. Manifestly, we have an isomorphism of groups
\[C_*^{CW}(X;\Z)\iso C_*^{CW}(Y;\Z)[1]\oplus C_*^{CW}(W;\Z),\] and the differential on $C_*^{CW}(X;\Z)$ is given by
\[\d=\begin{pmatrix}-\d_Y&0\\
(\iota_1)_*-(\iota_0)_*& \d_W
\end{pmatrix},\] which is the mapping cone.
\end{proof}

\section{Generalized 1-handle and 3-handle maps}
\label{sec:generalized1--handleand3--handlemaps}

In \cite{OSTriangles}, Ozsv\'{a}th and Szab\'{o} define cobordism maps associated to attaching a 4-dimensional 1-handle or 3-handle. In this section, we describe a map for attaching many 1-handles or 3-handles simultaneously, which will be useful for our analysis of the trace and cotrace cobordisms.

\subsection{Definition of the generalized 1- and 3-handle maps}
\label{section:defgeneralized1-handlemaps}

Suppose that $(\Sigma_0,\xis,\zetas,\ws_0)$ is a multi-pointed Heegaard diagram for $(S^1\times S^2)^{\# g(\Sigma_0)}$. We claim that
\begin{equation}
\HF^-(\Sigma_0,\xis,\zetas,\ws_0)\iso V^{\otimes (g(\Sigma_0)+|\ws_0|-1)}\otimes_{\bF_2} \bF_2[U],
\label{eq:Floer-homology-s1s2}
\end{equation}
where $V\iso H^1(S^1;\bF_2)$. Ozsv\'{a}th and Szab\'{o} prove equation~\eqref{eq:Floer-homology-s1s2} in \cite{OSDisks}*{Lemma~9.1} in the case that $|\ws_0|=1$. Furthermore, Ozsv\'{a}th and Szab\'{o} describe the effect on the Heegaard Floer complexes of adding a basepoint in \cite{OSLinks}*{Proposition~6.5}. It follows from equation~20 therein that if we work with the complexes with just one $U$ variable, then adding a basepoint has the effect of tensoring over $\bF_2$ with $V$. In particular, equation~\eqref{eq:Floer-homology-s1s2} holds for any number of basepoints.

 In particular, it follows from equation~\eqref{eq:Floer-homology-s1s2} that there is a well-defined top degree element on homology. We will further restrict to diagrams where 
 \begin{equation}
 |\xi_i\cap \zeta_j|=2 \delta_{ij}, \label{eq:kronecker-delta}
 \end{equation}
  where $\delta_{ij}$ denotes the Kronecker-delta. This last assumption implies that the top degree element of homology is realized as a unique top degree intersection point $\Theta_{\xi,\zeta}^+\in \bT_{\xi}\cap \bT_{\zeta}$. There is also a well-defined bottom degree intersection point $\Theta^-_{\xi,\zeta}$. Of course, equation~\eqref{eq:kronecker-delta} is satisfied if $\xis$ are appropriately chosen and sufficiently small isotopies of $\zetas$, however this case will not be sufficient for our purposes.

Now suppose that  $\cH=(\Sigma,\as,\bs,\ws)$ is a multi-pointed Heegaard diagram for an arbitrary 3-manifold, and $f:\ws_0\to \Sigma\setminus (\as\cup \bs\cup \ws)$ is an embedding. Write $\ps\subset \Sigma$ for the image $\ws_0$ under $f$.  We join the diagrams $(\Sigma,\as,\bs,\ws)$ and $(\Sigma_0,\xis,\zetas,\ws_0)$ by adding a connected sum tube at each pair of points identified by $f$. We remove the basepoints $\ws_0$, and obtain a diagram
\[
(\Sigma\, \#_f \Sigma_0, \as\cup \xis, \bs\cup \zetas, \ws).
\]
 An example is shown in Figure~\ref{fig::31}. We define the generalized 1-handle map 
\[
F_1^{\xi,\zeta}:\CF^-(\Sigma,\as,\bs,\ve{w})\to \CF^-(\Sigma\, \#_f \Sigma_0, \as\cup \xis, \bs\cup \zetas, \ve{w})
\]
 by the formula
\begin{equation}
F_1^{\xi,\zeta}(\ve{x})=\ve{x}\otimes \Theta^+_{\xi,\zeta}.\label{eq:formula1-handlemap}
\end{equation}
 Dually, we define the generalized 3-handle map via the formula
\[
F_3^{\xi,\zeta}(U^i\cdot \ve{x}\times \theta)=
\begin{cases}
U^i \cdot \ve{x}& \text{if } \theta=\Theta_{\xi,\zeta}^-\\
0& \text{otherwise}.
\end{cases}
\]

\begin{figure}[ht!]
	\centering
	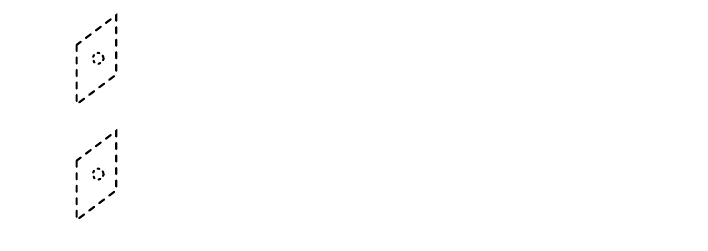
	\caption{\textbf{An example of the generalized 1-handle operation.} The connected sum is taken at the points $\ps\subset \Sigma$ and $\ws_0\subset \Sigma_0$, using the identification given by $f$. Only a small portion of the Heegaard surface $\Sigma$ is shown.\label{fig::31}}
\end{figure}

\subsection{Holomorphic disks and the generalized 1-handle and 3-handle maps}

We now show that the generalized 1-handle and 3-handle maps defined in Section~\ref{section:defgeneralized1-handlemaps} are chain maps, for  appropriate choices of almost complex structures.

 Write $\d_0$ for the differential on $\CF^-(\Sigma,\as,\bs)$, defined using an almost complex structure $J$ which is split on a cylindrical neighborhood of $\ve{p}\times [0,1]\times \R$. Write $\d_{J(T)}$ for the differential on $\CF^-(\Sigma\,\#_f \Sigma_0,\as\cup \xis,\bs\cup \zetas, \ws)$, for an almost complex structure $J(T)$, defined using the connected sum of $J$ with an almost complex structure on $\Sigma_0\times [0,1]\times \R$, and inserting a neck of length $T$  into each connected sum tube. We prove the following (compare \cite{OSProperties}*{Proposition~6.4}):

\begin{prop}\label{prop:differentialcomp}
Suppose that $(\Sigma_0,\xis,\zetas,\ws_0)$ is a diagram for $(S^1\times S^2)^{\# g(\Sigma_0)}$, with $|\xi_i\cap \zeta_j|=2 \delta_{ij}$ and $(\Sigma,\as,\bs,\ws)$ is a Heegaard diagram for an arbitrary 3-manifold, equipped with an embedding $f\colon \ws_0\to \Sigma\setminus (\as\cup \bs\cup \ws)$.  For sufficiently large $T$, the generalized 1-handle and 3-handle maps satisfy
\[
\d_{J(T)}\circ F_1^{\xi,\zeta}=F_1^{\xi,\zeta}\circ \d_0\qquad \text{and}\qquad \d_0\circ F_3^{\xi,\zeta}=F_3^{\xi,\zeta}\circ \d_{J(T)}.
\]
\end{prop}
\begin{proof}
 We introduce some notation. If $\cH$ is a Heegaard diagram, let $\langle , \rangle':\CF^-(\cH)\otimes_{\bF_2[U]} \CF^-(\cH)\to \bF_2[U]$ denote the map
\begin{equation}
\langle U^i\cdot \ve{x}, U^j\cdot \ve{y} \rangle'=\begin{cases} U^{i+j}& \text{if } $\xs=\ys$,\\
0& \text{otherwise.}
\end{cases}\label{eq:pairingdefinition}
\end{equation} 

The first relation of the main claim can be restated as
\begin{equation}
\langle \d_{J(T)}(\xs\times \Theta^+_{\xi,\zeta}), \ys\times \theta\rangle'=\langle \d_0(\xs),\ys\rangle'\cdot \langle' \Theta^+_{\xi,\zeta}, \theta\rangle',
\label{eq:gen-1handle-chain-restate}
\end{equation}
for any $\theta\in \bT_{\xi}\cap \bT_{\zeta}$. The second relation can be restated as
\[
\langle \d_{J(T)}(\xs\times \theta), \ys\times \Theta^-_{\xi,\zeta} \rangle'=\langle \d_0(\xs), \ys \rangle' \cdot \langle \theta,\Theta^-_{\xi,\zeta} \rangle'.
\]

We first consider the claim about $F_1^{\xi,\zeta}$.  If $ \phi_0\in \pi_2( \theta, \theta')$ is a class on $(\Sigma_0,\xis,\zetas,\ws_0)$, then by the definition of the Maslov grading on $\CF^-(\Sigma_0,\xis,\zetas,\ws_0)$, one has
\begin{equation}
\mu(\phi_0)=2n_{\ve{w}_0}(\phi_0)+\gr(\theta,\theta'),
\label{eq:Maslovindexgeneralized1-handle}
\end{equation}
where $\gr(\theta,\theta')$ is the drop in grading from $\theta$ to $\theta'$.

Next, we recall the Maslov index formula of Lipshitz \cite{LipshitzCylindrical}*{Corollary~4.10} (see also \cite{LipshitzErrata}*{Proposition~4.8'}), which says that
\begin{equation}
\mu(\phi)=e(\cD(\phi))+n_{\ve{x}}(\phi)+n_{\ve{y}}(\phi).
\label{eq:Euler-measure-maslov}
\end{equation}
Here, $\cD(\phi)$ denotes the domain of $\phi$ on $\Sigma$, and $e(\cD(\phi))$ is its \emph{Euler measure} (recall that the Euler measure is $1/2\pi$ times the integral of the Gaussian curvature of $\Sigma$ with respect to a metric where the $\as$ and $\bs$ are geodesics which intersect at right angles). Furthermore, $n_{\ve{x}}(\phi)$ and $n_{\ve{y}}(\phi)$ denote the sum of the average multiplicities at each point in $\ve{x}$ and $\ve{y}$, respectively.

Using equation~\eqref{eq:Euler-measure-maslov}, we can compute $\mu(\phi\# \phi_0)$. Namely, the only quantity in equation~\eqref{eq:Euler-measure-maslov} which is not additive under connected sum is the Euler measure. Since the Euler measure of a disk is 1, 
\[
e(\cD(\phi\# \phi_0))=e(\cD(\phi))+e(\cD(\phi_0))-2n_{\ws_0}(\phi_0).
\]
Hence, by using equation~\eqref{eq:Maslovindexgeneralized1-handle} as well, we see that if $\phi\# \phi_0\in \pi_2(\xs\times \theta,\ys\times \theta')$ is a class on $(\Sigma\#_f \Sigma_0, \as\cup \xis,\bs\cup \zetas,\ws),$ then
\begin{equation}
\label{eq:Maslovindexgeneralized1--handledisk}
\begin{split}\mu(\phi\# \phi_0)&=\mu(\phi)+\mu(\phi_0)-2n_{\ve{w}_0}(\phi_0) 
\\&=\mu(\phi)+\gr(\theta,\theta').
\end{split}
\end{equation}
 Furthermore, if $\phi\#\phi_0$ has a representative for arbitrarily large $T$, then we can extract a limit to a broken representative of both $\phi$ and $\phi_0$. By transversality, $\mu(\phi)\ge 0$, with equality to zero if and only if $\phi$ is the constant class. 

Since the claim concerns $\d_{J(T)}(\ve{x}\times \Theta_{\xi,\zeta}^+)$, we assume $\theta=\Theta^+_{\xi,\zeta}$. Since  $\Theta^+_{\xi,\zeta}$ is the top graded intersection point, we have $\gr(\Theta_{\xi,\zeta}^+,\theta')\ge 0$. 

Since $\gr(\Theta_{\xi,\zeta}^+,\theta')\ge 0$ and $\mu(\phi)\ge 0$, equation~\eqref{eq:Maslovindexgeneralized1--handledisk} implies there are two possible cases: 
\begin{enumerate}[label= (c-\arabic*), ref=c-\arabic*,leftmargin=*, widest=III]
\item\label{case:c1} $\mu(\phi)=0\qquad\text{and} \qquad \gr(\Theta_{\xi,\zeta}^+,\theta')=1$;
\item\label{case:c2} $\mu(\phi)=1\qquad\text{and} \qquad \gr(\Theta_{\xi,\zeta}^+,\theta')=0$.
\end{enumerate}

In Case~\eqref{case:c1}, $\phi$ must be a constant class, since it has Maslov index zero and admits a broken holomorphic representative for a cylindrical almost complex structure. Hence $\phi_0$ is an index 1 disk which has zero multiplicity over $\ws_0$.  As $\Theta_{\xi,\zeta}^+$ is a cycle in $\hat{\CF}(\Sigma_0,\xis,\zetas,\ws_0)$, all holomorphic disks of this form cancel, modulo 2, and hence (together) make no contribution to  $\d_{J(T)}(\ve{x}\times \Theta_{\xi,\zeta}^+)$.

 In Case~\eqref{case:c2}, we have $\theta=\theta'=\Theta_{\xi,\zeta}^+$. If $\phi\#\phi_0$ admits holomorphic representatives for arbitrarily long neck lengths, then we can extract (potentially broken) limiting curves $U$ and $U_0$ representing $\phi$ and $\phi_0$, respectively. Since $\mu(\phi)=1$, by transversality the broken curve $U$ consists of a single, non-broken holomorphic strip, $u$.
 
  Write $\ws_0=\{w_1,\dots, w_k\}$ and write $\ps=\{p_1,\dots, p_k\}$ for the image of $f$, where $f(w_i)=p_i$. There must be a holomorphic strip $u_0$ in the broken curve $U_0$ which matches $u$, i.e., which satisfies
 \[
 \rho^{\ve{p}}(u)=\rho^{\ws_0}(u_0),
 \] where 
 \[
 \rho^{\ps}: \cM(\phi)\to \Sym^{n_1}(\bD)\times \cdots \times \Sym^{n_k}(\bD)
 \]
  is the map 
 \[
 \rho^{\ps}(u)=\big((\pi_{\bD}\circ  u)((\pi_{\Sigma}\circ u)^{-1}(p_1)),\dots,(\pi_{\bD} \circ u)((\pi_\Sigma\circ u)^{-1}(p_k))\big),
 \]
  and
 \[
 n_i:=n_{p_i}(\phi)=n_{w_i}(\phi_0).
 \]
  The map $\rho^{\ws_0}$ is defined analogously to $\rho^{\ps}$.
 
 The argument now diverges slightly, depending on whether $|\ws_0|=1$ or $|\ws_0|>1$.  In the case that $|\ws_0|>1$, we can consider almost complex structures satisfying \ref{def:J1}--\ref{def:J5}, whereas when $|\ws_0|=1$, we will have to consider slightly generic almost complex structures, satisfying instead \ref{def:J1}--\ref{def:J4} and \ref{def:J5'}.

  We first consider the case that $|\ws_0|>1$, as this case is slightly simpler. We claim that the broken curve $U_0$, described above, consists of only the unbroken holomorphic strip $u_0$. This follows from expected dimension counts, and transversality, as we now describe. 
 
Write $\phi_0'$ for the homology class of $u_0$. For a generic choice of almost complex structure on $\Sigma\times [0,1]\times \R$, the set $\rho^{\ps}(u)$ will be disjoint from the fat diagonal whenever $u$ has Maslov index 1. In the case that $|\ws_0|>1$, it is not hard to see that the curve $u_0$ will satisfy conditions \ref{def:M1}--\ref{def:M3} (\ref{def:M1}--\ref{def:M5} are straightforward to verify, and \ref{def:M3} follows since $\rho^{\ws_0}(u_0)$ is not in the fat diagonal, so there can be no components which map constantly to $\bD$). Using Proposition~\ref{prop:transversalitydisks} we see that for a generic almost complex structure on $\Sigma_0\times [0,1]\times \R$, if $S_0$ denotes the source curve of $u_0$, and $X\subset \Sym^{n_1}(\bD)\times \cdots \times  \Sym^{n_k}(\bD)$ is a smooth submanifold avoiding the fat diagonal, then $\cM(S_0,\phi_0',X)$ is a smooth manifold of dimension $\mu(\phi_0')-\codim(X)-2\sing(u_0)$ near $u_0$. We consider
\[
X(\phi):=\{\rho^{\ps}(u): u\in \cM(\phi)\}\subset  \Sym^{n_1}(\bD)\times \cdots \times \Sym^{n_k}(\bD),
\] which has codimension $2(n_1+\cdots +n_k)-1$.

It follows from equation~\eqref{eq:Maslovindexgeneralized1-handle} that 
\[
\mu(\phi_0')\le \mu(\phi_0)=2(n_1+\cdots +n_k).
\]
 Hence, using Proposition~\ref{prop:transversalitydisks}, it follows that near $u_0$
\begin{align*}
\dim \cM(S_0,\phi_0',X(\phi))&=\mu(\phi_0')-\codim(X(\phi))-2\sing(u_0)\\
&\le \mu(\phi_0)-\codim(X(\phi))-2\sing(u_0)\\
&\le 1,
\end{align*} with equality if and only if $u_0$ is embedded and $\mu(\phi_0')=\mu(\phi_0)=2(n_1+\cdots+n_k)$. 
We conclude that $u_0$ is an embedding, and $\mu(\phi_0')=\mu(\phi_0)=2(n_1+\cdots+n_k)$. There cannot be any remaining curves of $U_0$, since they would have Maslov index at least 1 by transversality, and hence would raise the Maslov index of $\phi_0$ above $2(n_1+\dots+ n_k)$, a contradiction. Hence $\phi_0'=\phi_0$.

Summarizing, any sequence of curves representing $\phi\# \phi_0$ for a sequence of almost complex structures $J(T_i)$, with $T_i\to \infty$, limits to a pair $(u,u_0)\in \cM(\phi)\times \cM(\phi_0)$ which satisfies $\rho^{\ps}(u)=\rho^{\ws_0}(u_0)$.

 If the almost complex structures achieve transversality at $u$ and $u_0$, then it follows from \cite{LipshitzCylindrical}*{Proposition~A.2} that there is a neighborhood $\cU$ of $(u,u_0)$ in the compactification of the space of holomorphic disks on $(\Sigma\, \#_f\Sigma_0)\times [0,1]\times \R$ such that
 \[\cU\cap\bigg(\bigcup_{T>0}\hat{\cM}_{J(T)}(\phi\# \phi_0)\cup \{(u,u_0)\}\bigg)\iso (0,1].\]
 
 If  $\ve{d}\in \Sym^{n_1}(\bD)\times \cdots \times \Sym^{n_k}(\bD)$ is not in the fat diagonal, define
 \[
 \cM(\phi_0,\ve{d}):=\{u\in \cM(\phi_0): \rho^{\ve{w}_0}(u)=\ve{d}
 \}.
 \]
 If $\ve{d}$ is  point which is not in the fat diagonal, then by Proposition~\ref{prop:transversalitydisks}, for a generic choice of almost complex structures,   the space $\cM(\phi_0,\ve{d})$ is a compact 0-dimensional manifold. We will show that if $\ve{d}$ is fixed, then for a generic choice of almost complex structure,
  \begin{equation}\sum_{\substack{\phi_0\in \pi_2(\Theta_{\xi,\zeta}^+,\Theta_{\xi,\zeta}^+)\\ n_{p_i}(\phi_0)=n_i}} \#\cM(\phi_0,\ve{d})\equiv 1 \pmod{2}.\label{eq:maincountofdisks}\end{equation} 
  
  Note that the main statement is a consequence of equation~\eqref{eq:maincountofdisks}, since using the argument described thus far, we see that
  \begin{equation*}\begin{split}\d_{J(T)}(\ve{x}\times \Theta_{\xi,\zeta}^+)&=\sum_{\substack{\theta'\in \bT_{\a}\cap \bT_{\b}\\ \phi\# \phi_0\in \pi_2(\xs\times \Theta^+_{\xi,\zeta}, \ys\times \theta')\\ \mu(\phi\# \phi_0)=1}}\# \hat{\cM}(\phi\# \phi_0) U^{n_{\ws}(\phi)}\cdot \ys\times \theta'\\
  &=\sum_{\substack{\phi\in \pi_2(\xs,\ys)\\\mu(\phi)=1}} \sum_{\substack{\phi_0\in \pi_2(\Theta_{\xi,\zeta}^+, \Theta_{\xi,\zeta}^+)\\ n_{w_i}(\phi_0)=n_{p_i}(\phi)}} \#\hat{\cM}(\phi\# \phi_0) U^{n_{\ws}(\phi)} \cdot \ys\times \Theta_{\xi,\zeta}^+\\
  &=\sum_{\substack{\phi\in \pi_2(\xs,\ys)\\\mu(\phi)=1}}  \sum_{u \in \hat{\cM}(\phi)} \bigg(\sum_{\substack{\phi_0\in \pi_2(\Theta_{\xi,\zeta}^+, \Theta_{\xi,\zeta}^+)\\ n_{w_i}(\phi_0)=n_{p_i}(\phi)}}  \# \cM(\phi_0, \rho^{\ws}(u))\bigg) U^{n_{\ws}(\phi)}\cdot \ys\times \Theta^+_{\xi,\zeta}\\
  &=\sum_{\substack{\phi\in \pi_2(\xs,\ys)\\ \mu(\phi)=1}} \sum_{u\in  \hat{\cM}(\phi)} U^{n_{\ws}(\phi)}\cdot \ys\times \Theta_{\xi,\zeta}^+\\
  &=\d_0(\xs)\otimes \Theta_{\xi,\zeta}^+.
 \end{split}\end{equation*}

  Hence it remains to establish the count from equation~\eqref{eq:maincountofdisks}. We establish equation~\eqref{eq:maincountofdisks} by considering a path $\cD\colon [0,\infty)\to \Sym^{n_1}(\bD)\times \cdots \times \Sym^{n_k}(\bD)$ satisfying the following:
\begin{enumerate}
\item $\cD(0)=\ve{d}$.
\item $\cD$ has image disjoint from the fat diagonal.
\item The components of $\cD(t)$ are spaced at least distance $t$ apart, with respect to the Euclidean metric on $[0,1]\times \R$.
\item The $[0,1]$-component of each point of $\cD(t)$ approaches $1$.
\end{enumerate}
We consider the moduli space
\[
\cM(\cD):=\bigcup_{t\in [0,\infty)}\bigcup_{\substack{\phi_0\in \pi_2(\Theta_{\xi,\zeta}^+,\Theta_{\xi,\zeta}^+)\\ n_{w_i}(\phi_0)=n_i}} \cM(\phi_0,\cD(t)).
\]

For generically chosen almost complex structure, the space $\cM(\cD)$ is a 1-manifold, with three types of ends: ends at $t=0$, ends at finite $t>0$, and ends appearing as $t\to \infty$. At finite $t$, the dimension count from Proposition~\ref{prop:transversalitydisks} implies that the only ends which can appear correspond to an index 1 strip breaking off, which has 0 multiplicity over the basepoints. The total number of such ends is zero (modulo 2), since $\hat{\d}(\Theta_{\xi,\zeta}^+)=0$. The limiting curve as $t\to \infty$ consists of $n_1+\cdots +n_k$ index 2 cylindrical boundary degenerations (i.e. curves $u\colon S\to \Sigma\times [0,\infty)\times \R)$, and a holomorphic strip representing the constant class $e_{\Theta_{\xi,\zeta}^+}$. According to \cite{OSLinks}*{Theorem~5.5}, the count of index 2 boundary degenerations modulo the action of conformal automorphisms of $[0,\infty)\times \R$ is $1$. It follows that the total count of ends appearing as $t\to \infty$ is 1, so equation~\eqref{eq:maincountofdisks} holds.

 We now consider the case that $|\ws_0|=1$. In this case, the conditions \ref{def:J1}--\ref{def:J5} do not prevent curves $u_0$ from appearing in $U_0$ which do not satisfy \ref{def:M3}. For example, a closed copy of $\Sigma_0\times \{(s,t)\}$ could appear. As in the proof of stabilization invariance from \cite{LipshitzCylindrical}*{Section~12} one solution to this problem is to consider almost complex structures satisfying \ref{def:J1}--\ref{def:J4} and \ref{def:J5'}, which achieve transversality at curves satisfying \ref{def:M1}--\ref{def:M5} and \ref{def:M3'}, and have no multiply covered components. In this case, the assumption that $\ve{d}$ avoids the fat diagonal implies that no curve in $\cM(\phi_0,\ve{d})$ has a multiply covered component. Hence, for a generic choice of almost complex structure, the space $\cM(\phi_0,\ve{d})$ will be transversely cut out by Proposition~\ref{prop:transversalitydisks}.

  We now establish the count appearing in equation~\eqref{eq:maincountofdisks}. Similar to before, we consider a path $\cD$ with $\cD(0)=\ve{d}$, such that $\cD(t)$ is not in the fat diagonal, and the components of $\cD(t)$ are spaced at least distance $t$ apart. Unlike before, we now assume that the points of $\cD(t)$ are bounded away from $\{0,1\}\times \R$. Adapting the previous argument, it remains to count the elements of the moduli space $\cM(\phi_0,d)$, where $\phi_0$ is an index 2 class with domain $[\Sigma_0]$, and $d$ is a single point in $(0,1)\times\R$. We do this somewhat indirectly, using a modification of Lipshitz's argument to prove stabilization invariance (compare~\cite{LipshitzCylindrical}*{Figure~19}).
  
  We stabilize the diagram $(\Sigma_0,\xis,\zetas,w_0)$ near $w_0$, adding a new basepoint $w$ and a pair of isotopic curves, $\alpha_0$ and $\beta_0$. See Figure~\ref{fig::32} for the precise configuration. Let $\alpha_0\cap \beta_0=\{x^+,x^-\}$. There is an index 1 class in $\pi_2(x^+\times \Theta_{\xi,\zeta}^+,x^-\times \Theta_{\xi,\zeta}^+)$, formed by taking the connected sum of the bigon $B$ on $(S^2,\alpha_0,\beta_0,w,w_0)$ which has  multiplicity 1 in the connected sum region, together with the class $\phi_0$. See Figure~\ref{fig::32}. 
  
By a gluing argument, we see that for sufficiently stretched neck,  
\begin{equation}
\#\hat{\cM}(B\# \phi_0)\equiv \# \hat{\cM}(B)\cdot\#\cM(\phi_0,d)=\# \cM(\phi_0,d).\label{eq:gluing-phi-0-to-bigon}
\end{equation} 

There is another bigon class $B'\in \pi_2(x^+\times \Theta_{\xi,\zeta}^+, x^-\times \Theta_{\xi,\zeta}^+)$, which has a unique representative. See Figure~\ref{fig::32}.

For sufficiently large $T$, our previous argument implies that the differential, applied to $x^+\times \Theta_{\xi,\zeta}^+$, counts only classes which satisfy Case~\eqref{case:c1} or~\eqref{case:c2}. Consequently, $B'$ and $B\# \phi_0$ are the only classes in $\pi_2(x^+\times \Theta_{\xi,\zeta}^+, x^-\times \Theta_{\xi,\zeta}^+)$ which contribute to the differential.

However, by invariance of Heegaard Floer homology, 
\[
\hat{\HF}(\Sigma_0, \alpha_0\cup \xis, \beta_0\cup \zetas, w,w_0)\iso V^{\otimes (g(\Sigma_0)+1)},
\] 
(where $V\iso H^1(S^1;\bF_2)$) which has the same rank as $\hat{\CF}(\Sigma_0,\alpha_0\cup \xis,\beta_0\cup \zetas,w,w_0)$. Consequently, the differential on $\hat{\CF}(\Sigma_0,\alpha_0\cup \xis,\beta_0\cup \zetas,w,w_0)$ must vanish, so
\[
\# \hat{\cM}(B\# \phi_0)\equiv \# \hat{\cM}(B')\equiv 1.
\]
Combining this with equation~\eqref{eq:gluing-phi-0-to-bigon}, we conclude that $\cM(\phi_0,d)\equiv 1$, which concludes the proof of the first formula in the main statement, when $|\ws_0|=1$.

 A straightforward modification proves the relation $\d_0\circ F_3^{\xi,\zeta}=F_3^{\xi,\zeta}\circ \d_{J(T)}$, completing the proof.
\end{proof}

 \begin{figure}[ht!]
 	\centering
 	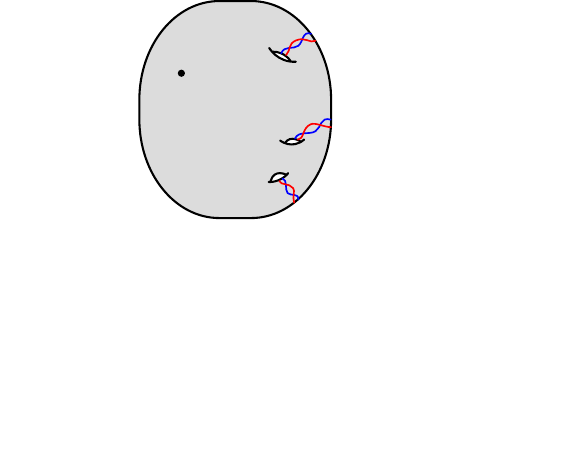
 	\caption{\textbf{Counting $\#\cM(\phi_0,d)$ indirectly when $|\ws_0|=1$.} We stretch the almost complex structure on the dashed line.\label{fig::32}}
 \end{figure}

\subsection{Holomorphic triangles and generalized 1-handle and 3-handle maps}

We now address the interaction of the holomorphic triangle maps with the generalized 1-handle and 3-handle maps. Our result should be thought of as a stronger version of the holomorphic triangle map computation used to show the well-definedness of the 1-handle map (\cite{OSTriangles}*{Theorem~4.10}).

Suppose that $(\Sigma,\as,\bs,\gs,\ws)$ and $(\Sigma_0,\xis,\zetas,\taus,\ws_0)$ are Heegaard triples. Suppose further that $(\Sigma_0,\xis,\zetas,\taus,\ws_0)$ satisfies the following:

\begin{enumerate}[label= ($T$\arabic*),ref= ($T$\arabic*),leftmargin=*, widest=III]
\item\label{cond:triple1} The Heegaard triple $(\Sigma_0,\xis,\zetas,\taus,\ws_0)$ is related by a sequence of handleslides and isotopies to a triple where all three sets of attaching circles are equal.
\item\label{cond:triple2} The collections $\xis,$ $\zetas,$ and $\taus$ can be ordered so that $|\xi_i\cap \zeta_j|=|\xi_i\cap \tau_j|=|\zeta_i\cap \tau_j|=2\delta_{ij}$, where $\delta_{ij}$ denotes the Kronecker delta function.
\end{enumerate}

Condition~\ref{cond:triple1} allows us to interpret the triangle counts on $(\Sigma_0,\xis,\zetas,\taus,\ws_0)$ as being associated to a sequence of Heegaard moves on a diagram for $(S^1\times S^2)^{\# g(\Sigma_0)}$. Together, Conditions~\ref{cond:triple1} and \ref{cond:triple2} imply that there are top degree intersection points 
\[
\Theta_{\xi,\zeta}^+\in \bT_{\xi}\cap \bT_{\zeta},\qquad \Theta^+_{\zeta,\tau}\in \bT_{\zeta}\cap \bT_{\tau} \qquad \text{and} \qquad \Theta^+_{\xi,\tau}\in \bT_{\xi}\cap \bT_{\tau}.
\] 
Similarly there are well-defined bottom degree intersection points $\Theta_{\xi,\zeta}^-,$ $\Theta^-_{\zeta,\tau},$ and $\Theta^-_{\xi,\tau}$.

If we pick an embedding $f:\ws_0\to \Sigma\setminus (\as\cup \bs\cup \gs\cup \ws)$, we can form the connected sum of the two Heegaard triples at the points identified by $f$, and obtain a Heegaard triple
\[
(\Sigma\, \#_f \Sigma_0, \as\cup \xis,\bs\cup \zetas,\gs\cup \taus,\ws).
\]
 In \cite{OSDisks}*{Section~8} Ozsv\'{a}th and Szab\'{o} describe a 4-manifold $X_{\a,\b,\g}$ associated to the Heegaard triple $(\Sigma,\as,\bs,\gs)$. We review the construction presently. Let $\Delta$ denote a triangle, with edges labeled $e_{\a},$ $e_{\b}$ and $e_{\g}$ (in that order, clockwise). We let $U_{\a},$ $ U_{\b}$ and $U_{\g}$ denote the 3-dimensional handlebody obtained by attaching compressing disks to $\Sigma\times [0,1]$, along $\as$, $\bs$, or $\gs$, and then attaching $|\ws|$ 3-handles. The 4-manifold $X_{\a,\b,\g}$ is defined by
\begin{equation}
X_{\a,\b,\g}:=\big((\Sigma\times \Delta)\sqcup (U_{\a}\times e_{\a})\sqcup (U_{\b}\times e_{\b})\sqcup (U_{\g}\times e_{\g})\big)/{\sim}
\label{eq:Xabgdef}
\end{equation} where $\sim$ is the relation determined by gluing $U_{\sigma} \times e_{\sigma}$ to $\Sigma\times \Delta$ along $\Sigma \times e_{\sigma}$ for each $\sigmas\in \{\as,\bs,\gs\}$, using the natural identification.

We begin with the following topological lemma about 4-dimensional $\Spin^c$ structures:
\begin{lem}\label{lem:connectedsumspincstructures}Suppose $(\Sigma,\as,\bs,\gs,\ws)$ and $(\Sigma_0,\xis,\zetas,\taus,\ws_0)$ are arbitrary Heegaard triples, with a chosen embedding $f:\ws_0\to \Sigma\setminus (\as\cup \bs\cup \gs\cup \ws)$, with which we take the connected sum. Writing $X_{\a\cup \xi, \b\cup \zeta,\g\cup \tau}$ for the 4-manifold constructed from the triple $(\Sigma\, \#_{f} \Sigma_0, \as\cup \xis,\bs\cup \zetas,\gs\cup \taus,\ws)$, there is a natural isomorphism
\[\Spin^c(X_{\a\cup \xi,\b\cup \zeta,\g\cup \tau})\iso \Spin^c(X_{\a,\b,\g})\times \Spin^c(X_{\xi,\zeta,\tau}).\]
\end{lem}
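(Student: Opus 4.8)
The plan is to reduce the lemma to the behaviour of $\Spin^c$ structures under connected sum. Set $X:=X_{\as,\bs,\gs}$, $X_0:=X_{\xis,\zetas,\taus}$, $\hat X:=X_{\as\cup\xis,\bs\cup\zetas,\gs\cup\taus}$ and $k:=|\ws_0|$, and let $\nu(p_i)\subset\Sigma$ and $\nu(w_i)\subset\Sigma_0$ be the small disks identified by $f$ (so $\ps=\{p_1,\dots,p_k\}$). The surface $\Sigma\,\#_f\Sigma_0$ is $\Sigma$ and $\Sigma_0$ with these disks removed and $k$ tubes glued in, and correspondingly each handlebody bounded by $\Sigma\,\#_f\Sigma_0$ is obtained from the corresponding handlebodies of $\Sigma$ and of $\Sigma_0$ by attaching $k$ three--dimensional $1$--handles along the disks $\nu(p_i)$ and $\nu(w_i)$. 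Feeding this into the definition~\eqref{eq:Xabgdef}, $\hat X$ splits as $A\cup_N B$, where $A$ is the part built from $\Sigma$ and its handlebodies, $B$ is the part built from $\Sigma_0$ and its handlebodies, and $N$ is a neighbourhood of the $k$ connected--sum necks. I claim that $A$ is diffeomorphic to $X$ with $k$ disjoint open $4$--balls removed, the $i$--th ball being $\nu(p_i)\times\Delta\cong D^2\times D^2$ inside the interior of the $\Sigma\times\Delta$ piece of $X$, and similarly $B\cong X_0$ with the balls $\nu(w_i)\times\Delta$ removed. Indeed, the frontier of $\nu(p_i)\times\Delta$ in $X$ is $(\partial\nu(p_i)\times\Delta)\cup(\nu(p_i)\times\partial\Delta)=(S^1\times\Delta)\cup(D^2\times\partial\Delta)$, and since $\Delta\cong D^2$ this is $(S^1\times D^2)\cup_{S^1\times S^1}(D^2\times S^1)\cong S^3$, the genus--one Heegaard splitting. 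Hence $N\cong\bigsqcup_k(S^3\times I)$, and $\hat X$ is the connected sum of $X$ and $X_0$ along the $k$ pairs of points identified by $f$.

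Granting this decomposition, with $N\simeq\bigsqcup_k S^3$, the remainder is routine algebraic topology. Since $H^1(N)=H^2(N)=0$, Mayer--Vietoris shows that the restriction map $H^2(\hat X;\Z)\to H^2(A;\Z)\oplus H^2(B;\Z)$ is an isomorphism; and since $H^j(B^4,S^3)=0$ for $j\le 3$, the long exact sequences of the pairs together with excision give $H^2(A;\Z)\cong H^2(X;\Z)$ and $H^2(B;\Z)\cong H^2(X_0;\Z)$. Likewise, $H^2(B^4,S^3)=H^3(B^4,S^3)=0$ implies that the restrictions $\Spin^c(X)\to\Spin^c(A)$ and $\Spin^c(X_0)\to\Spin^c(B)$ are bijections. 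One then defines
\[\Spin^c(\hat X)\longrightarrow \Spin^c(X)\times\Spin^c(X_0)\]
by restricting a $\Spin^c$ structure to $A$ and to $B$ and extending over the removed balls. This map is equivariant over the isomorphism $H^2(\hat X;\Z)\cong H^2(X;\Z)\oplus H^2(X_0;\Z)$ just identified, and $\Spin^c(\hat X)$, $\Spin^c(X)$, $\Spin^c(X_0)$ are all nonempty; an equivariant map of torsors lying over an isomorphism of the acting groups, with nonempty source, is automatically a bijection, which gives the claimed isomorphism. Naturality in the input data is automatic, since the map is built only from restriction and extension operations. As an alternative to the torsor count one can glue $\Spin^c$ structures directly: because $\Spin^c(S^3)$ is a single element, the restrictions to $N$ of any $\frt_1\in\Spin^c(X)$ and $\frt_2\in\Spin^c(X_0)$ agree automatically, and since $H^1(N)=H^2(N)=0$ they glue to a unique $\Spin^c$ structure on $\hat X$, furnishing the inverse.

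The main obstacle is the first step: verifying carefully that applying the connected--sum--of--Heegaard--triples operation piece by piece to~\eqref{eq:Xabgdef} really does reassemble into ``remove $k$ interior $4$--balls and reglue along $S^3$'s,'' with the three side pieces $U_\sigmas\times e_\sigmas$ (and $U_{\sigmas}\times e_\sigmas$ on the $\Sigma_0$ side) matching correctly across the necks, and with no twisting introduced in the $S^3$--gluings. Once this diffeomorphism picture is pinned down, everything else above is standard.
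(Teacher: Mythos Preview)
Your overall strategy---decompose $\hat X$ into a piece $A$ coming from $\Sigma$ and a piece $B$ coming from $\Sigma_0$, run Mayer--Vietoris, and then compare $A$ to $X$ by filling in balls---is exactly the paper's. The paper defines $A=X_{\Sigma\setminus N(\ps),\as,\bs,\gs}$ and $B=X_{\Sigma_0\setminus N(\ws_0),\xis,\zetas,\taus}$ and carries out the same two restriction maps you describe.

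The gap is in your topological claim. The ball $\nu(p_i)\times\Delta$ is \emph{not} in the interior of $X$: the three vertex disks $\nu(p_i)\times\{\text{vertex}\}$ lie on $\partial X$, one in each boundary component $Y_{\as,\bs},Y_{\bs,\gs},Y_{\as,\gs}$. (An immediate check: $\partial\hat X$ has each component equal to a $k$--fold connected sum of the corresponding boundary components of $X$ and $X_0$, e.g.\ $Y_{\as\cup\xis,\bs\cup\zetas}\cong Y_{\as,\bs}\,\#_k\,Y_{\xis,\zetas}$. An interior connected sum of $4$--manifolds would leave $\partial X\sqcup\partial X_0$ untouched.) So the frontier of $\nu(p_i)\times\Delta$ in $X$ is not the full $S^3$ you computed, but $S^3$ with three open $2$--disks deleted---a thrice--punctured $S^3$. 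This is precisely what the paper states: $A$ and $B$ are glued along $|\ws_0|$ thrice--punctured copies of $S^3$.

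This matters for the rest of your argument. A thrice--punctured $S^3$ is homotopy equivalent to $S^2\vee S^2$, so $H^2(N)\cong\Z^{2k}\neq 0$; your Mayer--Vietoris step only gives injectivity of $H^2(\hat X)\to H^2(A)\oplus H^2(B)$, not the claimed isomorphism. Likewise $H^3(D^4,S^3\setminus 3D^2)\cong\Z^2\neq 0$, so the pair argument for $\Spin^c(X)\cong\Spin^c(A)$ needs more than the vanishing you invoke. Both statements are still true (and the paper leaves them ``to the reader''), but you have to actually analyze the maps into $H^2(N)$ and $H^3(X,A)$ rather than quoting that the target groups vanish.
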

\begin{proof}The claim is proven by analyzing two Mayer-Vietoris exact sequences. We will define a map from $\Spin^c(X_{\a\cup \xi,\b\cup \zeta,\g\cup \tau})$ to $\Spin^c(X_{\a,\b,\g})\times \Spin^c(X_{\xi,\zeta,\tau})$ as a composition of a restriction map, and the inverse of another restriction map (which is an isomorphism). 

For notational clarity, if $(\Sigma',\as',\bs',\gs')$ is a Heegaard triple, we will write $X_{\Sigma',\a',\b',\g'}$ for the 4-manifold constructed from the Heegaard triple $(\Sigma',\as',\bs',\gs')$. The construction makes sense when $\Sigma'$ has non-empty boundary.

Let $\ve{p}\subset \Sigma$ denote the image of $f$. Note that topologically $X_{\Sigma\, \#_{f} \Sigma_0,\a\cup \xi,\b\cup \zeta,\g\cup \tau}$ is obtained by gluing $X_{\Sigma\setminus N(\ps),\a,\b,\g}$ to $X_{\Sigma_0\setminus N(\ws_0), \xi,\zeta,\tau}$ along $|\ws_0|$ thrice punctured copies of $S^3$. We leave it to the reader to analyze the Mayer-Vietoris exact sequence for cohomology and verify that the map
\[
\Spin^c(X_{\Sigma\, \#_f \Sigma_0,\a\cup \xi,\b\cup \zeta,\g\cup \tau})\to \Spin^c(X_{\Sigma\setminus N(\ps),\a,\b,\g})\times \Spin^c(X_{\Sigma_0\setminus N(\ws_0),\xi,\zeta,\tau})
\]
 is an isomorphism. Finally, it is not hard to verify that
\[
X_{\Sigma,\a,\b,\g}\setminus X_{\Sigma\setminus N(\ps),\a,\b,\g},
\]
 is a topologically a 4-ball, so a similar argument shows that the restriction maps
\[
\Spin^c(X_{\Sigma,\a,\b,\g})\to \Spin^c( X_{\Sigma\setminus N(\ps),\a,\b,\g})
\]
and
\[
\Spin^c(X_{\Sigma_0,\xi,\zeta,\tau})\to \Spin^c(X_{\Sigma_0\setminus N(\ws_0), \xi,\zeta,\tau})
\]
 are also isomorphisms.
\end{proof}

Note that Condition~\ref{cond:triple1} implies that the 4-manifold $X_{\xi,\zeta,\tau}$ is diffeomorphic to $X_{\xi,\xi,\xi}$. Upon filling in each end with 3-handles and 4-handles we obtain $(S^1\times S^3)^{\# g(\Sigma_0)}$. In particular,  there is a unique $\Spin^c$ structure $\frs_0\in \Spin^c(X_{\xi,\zeta,\tau})$ which restricts to the torsion $\Spin^c$ structure on each end of $X_{\xi,\zeta,\tau}$. Using Lemma~\ref{lem:connectedsumspincstructures}, it follows that if $\frs\in \Spin^c(X_{\a,\b,\g})$, then there is a well-defined $\Spin^c$ structure 
\[
\frs\# \frs_0\in \Spin^c(X_{\a\cup \xi,\b\cup \zeta,\g\cup \tau}).
\]

\begin{prop}\label{prop:generalized1-handlesandtriangles}
Suppose that $(\Sigma, \ve{\alpha},\ve{\beta},\ve{\gamma},\ve{w})$ and $(\Sigma_0,\xis,\zetas,\taus,\ve{w}_0)$ are Heegaard triples with a fixed embedding $f:\ws_0\to \Sigma\setminus(\as\cup \bs\cup \gs\cup \ws)$ and consider the triple $(\Sigma\#_f \Sigma_0,\as\cup \xis,\bs\cup \zetas,\gs\cup \taus, \ws)$. Furthermore, suppose that $(\Sigma_0,\xis,\zetas,\taus,\ws_0)$ satisfies Conditions~\ref{cond:triple1} and~\ref{cond:triple2}, above. If $\frs\in \Spin^c(X_{\a,\b,\g})$,  then for an almost complex structure sufficiently stretched on the connected sum necks, we have 
\begin{align*}
F_{\a\cup \xi,\b\cup \zeta,\g\cup \tau,\frs\# \frs_0}(F_{1}^{\xi,\zeta}(\xs), F_1^{\zeta,\tau}(\ys))&=F_1^{\xi,\tau}( F_{\a,\b,\g,\frs}(\xs,\ys)),\\
F_3^{\xi,\tau}( F_{\a\cup \xi,\b\cup \zeta,\g\cup \tau,\frs\# \frs_0}(F_{1}^{\xi,\zeta}(\xs), \ys\times \theta_2))&= F_{\a,\b,\g,\frs} (\xs, F_3^{\zeta,\tau}(\ys\times\theta_2)),\\
F_3^{\xi,\tau}( F_{\a\cup \xi,\b\cup \zeta,\g\cup \tau,\frs\# \frs_0}(\xs\times \theta_1, F_1^{\zeta,\tau}(\ys)))&= F_{\a,\b,\g,\frs} (F_3^{\xi,\zeta}(\xs\times \theta_1), \ys),
\end{align*}
for any choice of intersection points $\xs$, $\ys$, $\zs$, $\theta_1$ and $\theta_2$.

\end{prop}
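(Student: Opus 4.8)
The plan is to mimic the proof of Proposition~\ref{prop:differentialcomp}, replacing holomorphic strips with holomorphic triangles throughout. Fix intersection points $\xs,\ys$ on the $(\Sigma,\as,\bs)$ and $(\Sigma,\bs,\gs)$ diagrams and an output intersection point $\zs$ on $(\Sigma,\as,\gs)$, and analyze homology classes of triangles $\psi\#\psi_0$ on the connected-sum triple, where $\psi\in\pi_2(\xs,\ys,\zs)$ lives on $(\Sigma,\as,\bs,\gs)$ and $\psi_0$ lives on $(\Sigma_0,\xis,\zetas,\taus)$. As in Equation~\eqref{eq:Maslovindexgeneralized1--handledisk}, the connected-sum formula for the Maslov index (applied to triangles via the triangle analog of \cite{LipshitzCylindrical}*{Equation~(6)}) gives $\mu(\psi\#\psi_0)=\mu(\psi)+\gr(\Theta^+_{\xis,\zetas},\theta)+\gr(\Theta^+_{\zetas,\taus},\theta')-\gr(\Theta^\pm_{\xis,\taus},\theta'')$ after subtracting the multiplicities over the connected-sum points; since we are pairing with the top-degree generators $\Theta^+_{\xis,\zetas}$ and $\Theta^+_{\zetas,\taus}$ coming out of $F_1^{\xis,\zetas}$ and $F_1^{\zetas,\taus}$, and since $\mu(\psi)\ge 0$ whenever $\psi$ admits a broken holomorphic representative for a split almost complex structure (with equality only for $\mu(\psi)=0$ classes, which is exactly the index relevant to the triangle map), one is forced into the case $\mu(\psi)=0$, $\theta''=\Theta^+_{\xis,\taus}$, and $\psi_0$ an index-$2(n_1+\cdots+n_k)$ triangle on $(\Sigma_0,\xis,\zetas,\taus)$ with the prescribed multiplicities over $\ws_0$ and with $\Theta^+$ inputs and $\Theta^+_{\xis,\taus}$ output.

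First I would set up the prematching/gluing apparatus for triangles: a limiting argument (neck-stretching, extracting broken limits) shows any sequence of holomorphic triangles representing $\psi\#\psi_0$ for $T\to\infty$ limits to a prematched pair $(u,u_0)$ with $\rho^{\ps}(u)=\rho^{\ws_0}(u_0)$, where now $\rho^{\ps}$ is the evaluation map on the triangle moduli space $\cM(S,\psi)$ valued in a product of symmetric powers of $\Delta$. Using the transversality statement for matched triangle moduli spaces, Proposition~\ref{prop:transversalitytriangles} (with the submanifold $X(\psi):=\{\rho^{\ps}(u):u\in\cM(\psi)\}$ avoiding the fat diagonal for generic $J$, of codimension $2(n_1+\cdots+n_k)$ since $\cM(\psi)$ is $0$-dimensional), a dimension count forces $u_0$ to be embedded with $\mu(\psi_0)=2(n_1+\cdots+n_k)$ and rules out extra components. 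Then a Morse-type gluing theorem (the triangle analog of \cite{LipshitzCylindrical}*{Proposition~A.2}) identifies a neighborhood of the prematched pair in the compactified moduli space with $(0,1]$. The remaining enumerative input is the triangle analog of Equation~\eqref{eq:maincountofdisks}: for a fixed point $\ve{d}\in\Sym^{n_1}(\Delta)\times\cdots\times\Sym^{n_k}(\Delta)$ off the fat diagonal,
\[
\sum_{\substack{\psi_0\in\pi_2(\Theta^+_{\xis,\zetas},\Theta^+_{\zetas,\taus},\Theta^+_{\xis,\taus})\\ n_{p_i}(\psi_0)=n_i}}\#\cM(\psi_0,\ve{d})\equiv 1\pmod 2.
\]
This is proven exactly as in Proposition~\ref{prop:differentialcomp}: independence of $\ve{d}$ by a cobordism argument along a path $\ve{d}(t)$ (the ends where a Maslov index $1$ strip or triangle breaks off cancel because $\hat\d\Theta^+=0$ and $F_{\xis,\zetas,\taus,\frs_0}(\Theta^+\otimes\Theta^+)=\Theta^+$ in the relevant graded piece, using Condition~\ref{cond:triple1} to identify the count with the standard $(S^1\times S^2)^{\#g}$ triangle count), then degenerating $\ve{d}$ either toward the boundary of $\Delta$ to reduce to a boundary-degeneration count via \cite{OSLinks}*{Theorem~5.5}, or (when $|\ws_0|=1$) using the free-stabilization bigon-splicing trick of Figure~\ref{fig::32}. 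Assembling these pieces, the same computation as at the end of the proof of Proposition~\ref{prop:differentialcomp} yields $F_{\as\cup\xis,\bs\cup\zetas,\gs\cup\taus,\frs\#\frs_0}(F_1^{\xis,\zetas}(\xs),F_1^{\zetas,\taus}(\ys))=F_1^{\xis,\zetas}(F_{\as,\bs,\gs,\frs}(\xs,\ys))$, which is the first relation.

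For the second and third relations, I would not repeat the analysis but instead dualize. The generalized $3$-handle map $F_3^{\xis,\zetas}$ is, by construction, adjoint to $F_1^{\xis,\zetas}$ under the pairing of Equation~\eqref{eq:pairingdefinition}, and the holomorphic triangle map on the connected-sum triple respects the tensor decomposition $\cM(S,\psi\#\psi_0)$ in a way compatible with this pairing on the $(\Sigma_0,\xis,\zetas,\taus)$ factor. Concretely, the second relation is obtained by the same prematching-and-gluing argument but now the $\Sigma_0$-side input is an arbitrary $\theta$ (paired against $\Theta^-_{\zetas,\taus}$ on the output of $F_3^{\zetas,\taus}$) rather than $\Theta^+$; the grading constraints now pin $\theta$ on the $\zetas,\taus$ factor and one again reduces to the count above (now with the roles of top/bottom generators swapped on one end). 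The third relation is symmetric, swapping which incoming end carries the free-stabilization data. Alternatively, and more cleanly, one can derive the second and third relations formally from the first together with the fact that $F_1$ and $F_3$ on $(S^1\times S^2)^{\#g}$ are mutual adjoints and the triangle map $F_{\xis,\zetas,\taus,\frs_0}$ sends top to top and bottom to bottom; I would state this adjunction precisely and spell out the short algebraic manipulation.

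The main obstacle I anticipate is establishing the triangle analog of Equation~\eqref{eq:maincountofdisks} rigorously, specifically the neck-stretching/gluing package for \emph{triangles} matching an interior point of $\Delta$. Lipshitz's gluing results and the boundary-degeneration count of \cite{OSLinks}*{Theorem~5.5} are stated for strips/disks, so the genuine work is checking that these transfer verbatim to the triangle setting (using the cylindrical ends of $\Delta$ for the degeneration argument, and Proposition~\ref{prop:transversalitytriangles} for the matched transversality). The $|\ws_0|=1$ case, as in Proposition~\ref{prop:differentialcomp}, requires the more delicate almost complex structures satisfying \ref{def:J'1}, \ref{def:J'2}, \ref{def:J'3'}, \ref{def:J'4'}, \ref{def:J'5'} and the bigon-splicing reduction, and transcribing that argument to triangles is where most of the care is needed; everything else is bookkeeping parallel to the strip case.
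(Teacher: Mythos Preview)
Your proposal is correct and follows the same strategy as the paper: derive the connected-sum index formula for triangles (the paper obtains $\mu(\psi\#\psi_0)=\mu(\psi)-\gr(\Theta^+_{\xis,\zetas},\theta_1)-\gr(\Theta^+_{\zetas,\taus},\theta_2)+\gr(\Theta^+_{\xis,\taus},\theta_3)$ from Sarkar's formula \cite{SMaslov} together with a grading identity on $(\Sigma_0,\xis,\zetas,\taus)$, so watch your signs), extract prematched limits, glue via the triangle analog of \cite{LipshitzCylindrical}*{Proposition~A.2}, and reduce to the matched count $\sum_{\psi_0}\#\cM(\psi_0,\ve{d})\equiv 1$. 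The one tactical difference worth noting is in the $|\ws_0|=1$ case: rather than redoing the bigon-splicing trick of Figure~\ref{fig::32} for triangles as you propose, the paper sends $\ve{d}$ into a cylindrical end of $\Delta$ \emph{without} approaching $\partial\Delta$ (indeed axiom \ref{def:J'5'} forbids harsh perturbation near $\partial\Delta$, so boundary-degeneration is unavailable here), obtaining in the limit an index-$0$ triangle with zero multiplicity on $\ws_0$ (count $=1$ by Condition~\ref{cond:triple1} and invariance of $\hat{\HF}$) together with $|\ve{d}|$ index-$2$ strips on $(\Sigma_0,\xis,\taus)$ each matching a single interior point of $[0,1]\times\R$; that last strip count is precisely what was already established in the proof of Proposition~\ref{prop:differentialcomp}, so no new work is needed. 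For the second and third relations the paper simply adapts the argument (your ``concretely'' route); your proposed formal-dualization shortcut does not go through cleanly, since the $F_3$ appears on the $(\xis,\taus)$-output in one relation but on an input factor in the others, so there is no single adjunction to invoke.
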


\begin{proof}Writing out the definitions of the above maps, we wish to show
\begin{align*}F_{\a\cup \xi,\b\cup \zeta,\g\cup \tau,\frs\# \frs_0}(\ve{x}\times \Theta_{\xi,\zeta}^+, \ve{y}\times \Theta_{\zeta,\tau}^+)&=F_{\a,\b,\g,\frs}(\ve{x},\ve{y})\otimes \Theta_{\xi,\tau}^+,\\
\langle F_{\a\cup \xi,\b\cup \zeta,\g\cup \tau,\frs\# \frs_0}(\ve{x}\times \Theta_{\xi,\zeta}^+, \ve{y}\times \theta_2),\ve{z}\times \Theta_{\xi,\tau}^-\rangle'   &=\langle  F_{\a,\b,\g,\frs}(\ve{x},\ve{y}),\ve{z}\rangle' \cdot \langle \theta_2, \Theta_{\zeta,\tau}^-\rangle' ,\\
\langle  F_{\a\cup \xi,\b\cup \zeta,\g\cup \tau,\frs\# \frs_0}(\ve{x}\times \theta_1, \ve{y}\times \Theta^+_{\zeta,\tau}),\ve{z}\times \Theta_{\xi,\tau}^-\rangle'  &= \langle F_{\a,\b,\g,\frs}(\ve{x},\ve{y}),\ve{z}\rangle' \cdot \langle \theta_1, \Theta_{\xi,\zeta}^- \rangle'  ,
\end{align*}
where $\langle, \rangle' $ denotes the pairing from equation~\eqref{eq:pairingdefinition}.

We focus on the first formula involving only the generalized 1-handle maps. The two formulas involving the generalized 3-handle map follow from a straightforward adaptation of the following argument.

We first claim that if $\psi_0\in \pi_2(\theta_1,\theta_2,\theta_3)$ is a homology class on $(\Sigma_0,\xis,\zetas,\taus,\ws_0)$ and $\frs_{\ws_0}(\psi_0)$ is torsion on the 3 ends of $X_{\xi,\zeta,\tau}$, then
\begin{equation}
\mu(\psi_0)=-\gr(\Theta_{\xi,\zeta}^+,\theta_1)-\gr(\Theta_{\zeta,\tau}^+,\theta_2)+\gr(\Theta_{\xi,\tau}^+,\theta_3)+2n_{\ws_0}(\psi_0).\label{eq:Maslovindexisotopictriple}
\end{equation} 
equation~\eqref{eq:Maslovindexisotopictriple} holds for at least one class of triangles, since invariance of $\hat{\HF}((S^1\times S^2)^{\# g(\Sigma_0)}, \ws_0)$ implies that the holomorphic triangle map on $\hat{\CF}$ (which counts index 0 triangles which have zero multiplicity on all of the basepoints) maps the element $\Theta_{\xi,\zeta}^+\otimes \Theta_{\zeta,\tau}^+$ to $\Theta_{\xi,\tau}^+$. As there is a unique $\Spin^c$ structure on $X_{\xi,\zeta,\tau}$ which restricts to the torsion $\Spin^c$ structure on each end, it follows that if $\psi_0'$ is another triangle in $\pi_2(\theta_1,\theta_2,\theta_3)$, then the difference $\psi_0-\psi_0'$ is a sum of doubly periodic domains. Hence it is sufficient to show that the formula respects splicing disks into a homology class of triangles. To see that the formula respects this, we can use the Maslov index formula for disks from equation~\eqref{eq:Maslovindexgeneralized1-handle}, which implies equation~\eqref{eq:Maslovindexisotopictriple} in general.

In a similar fashion to our proof of equation~\eqref{eq:Maslovindexgeneralized1--handledisk}, above, we may compute the Maslov index of a connected sum of two triangle classes $\psi\# \psi_0$, as follows. If $\psi\# \psi_0\in \pi_2(\ve{x}\times \theta_1, \ve{y}\times \theta_2, \ve{z}\times \theta_3)$, then the formula for the index from \cite{SarkarMaslov} implies that
 \begin{equation}
 \mu(\psi\# \psi_0)=\mu(\psi)+\mu(\psi_0)-2n_{\ws_0}(\psi_0).
 \label{eq:Maslov-index-triangles-stabilization}
 \end{equation}
 (Similar to the case of disks, described in Proposition~\ref{prop:differentialcomp}, the only term in Sarkar's formula which is not additive under connected sum is the Euler measure, which must be corrected by $2n_{\ws_0}(\psi_0)$ when taking the connected sum).
  Combining equations~\eqref{eq:Maslovindexisotopictriple} and ~\eqref{eq:Maslov-index-triangles-stabilization}, we see
 \begin{equation}
 \mu(\psi\# \psi_0)=\mu(\psi)-\gr(\Theta_{\xi,\zeta}^+,\theta_1)-\gr(\Theta_{\zeta,\tau}^+,\theta_2)+\gr(\Theta_{\xi,\tau}^+,\theta_3).\label{eq:indexforgeneratriangles}
 \end{equation}
 Given a class $\psi\# \psi_0\in \pi_2(\ve{x}\times \Theta^+_{\xi,\zeta}, \ve{y}\times \Theta_{\zeta,\tau}^+, \ve{z}\times \theta_3)$, equation~\eqref{eq:indexforgeneratriangles} specializes to the formula
\begin{equation}\mu(\psi\# \psi_0)=\mu(\psi)+\gr(\Theta^+_{\xi,\tau},\theta_3).\label{eq:indexforrelevanttriangles}
\end{equation}

From here, the argument proceeds similarly to the proof of Proposition~\ref{prop:differentialcomp}. Given a sequence of holomorphic triangles in the homology class $\psi\# \psi_0$, for a sequence of almost complex structures $J(T_i)$, with necks of length $T_i$ inserted along the connected sum tubes, where $T_i\to \infty$, we can extract a limit consisting of a broken holomorphic triangle $U$ representing $\psi$, and a broken holomorphic triangle $U_0$ representing $\psi_0$. From  equation~\eqref{eq:indexforrelevanttriangles}, it follows that $\mu(\psi)=0$ and $\gr(\Theta_{\xi,\tau}^+, \theta_3)=0$. Hence $U$ consists of a single index 0 holomorphic triangle and $\theta_3=\Theta_{\xi,\tau}^+$. 

Write $\ps=\{p_1,\dots, p_k\}$ and $\ws_0=\{w_1,\dots, w_k\}$, where $f(w_i)=p_i$. We consider the map
  \[
  \rho^{\ws_0}: \cM(\psi_0)\to \Sym^{n_1}(\Delta)\times \cdots \times \Sym^{n_k}(\Delta)
  \] 
  defined by the formula
  \[
  \rho^{\ws_0}(u):=\big((\pi_\Delta \circ u)((\pi_\Sigma\circ u)^{-1}(w_1)),\dots,(\pi_\Delta\circ u)((\pi_\Sigma\circ u)^{-1}(w_k))\big).
  \]
   Here, $n_i:=n_{p_i}(\psi)=n_{w_i}(\psi_0)$.
   
    If $\ve{d}\in \Sym^{n_1}(\Delta)\times \cdots \times \Sym^{n_k}(\Delta)$, we consider the matched moduli space
  \[
  \cM(\psi_0,\ve{d}):=\{u_0\in \cM(\psi_0): \rho^{\ws_0}(u_0)=\ve{d}\}.
  \]
  
  If $\psi_{0}\in \pi_2(\Theta_{\xi,\zeta}^+,\Theta_{\zeta,\tau}^+,\Theta_{\xi,\tau}^+)$ is a class with $n_{w_i}(\psi_0)=n_i$, then equation~\eqref{eq:Maslovindexisotopictriple} implies that $\cM(\psi_0,\ve{d})$ has expected dimension 0. We will show that if $\ve{d}$ is not contained in the fat diagonal, then for a sufficiently generic almost complex structure
\begin{equation}\sum_{\substack{\psi_0\in \pi_2(\Theta_{\xi,\zeta}^+,\Theta_{\zeta,\tau}^+,\Theta_{\xi,\tau}^+)\\
n_{w_i}(\psi_0)=n_i}} \# \cM(\psi_0,\ve{d})\equiv 1\pmod{2}.
\label{eq:matchedholomorphictrianglecount}
\end{equation}
As in the proof of Proposition~\ref{prop:differentialcomp}, the precise meaning of ``sufficiently generic'' depends whether $|\ws_0|=1$ or $|\ws_0|>1$. If $|\ws_0|>1$, we may use an almost complex structure satisfying  \ref{def:J'1}--\ref{def:J'4}. If $|\ws_0|=1$, we must use almost complex structures instead satisfying \ref{def:J'1}, \ref{def:J'2}, \ref{def:J'3'}--\ref{def:J'5'}.
  
A gluing argument \cite{LipshitzCylindrical}*{Proposition~A.2} implies that if $\mu(\psi)=0$ and $\psi_0\in \pi_2(\Theta_{\xi,\zeta}^+,\Theta_{\zeta,\tau}^+,\Theta_{\xi,\tau}^+)$, then for sufficiently stretched almost complex structure 
 \[
 \# \cM(\psi\# \psi_0)=\sum_{u\in\cM(\psi)} \# \cM(\psi_0, \rho^{\ps}(u)).
 \]
  Using this, the main result follows from the following manipulation
 \begin{align*} &\qquad F_{\a\cup \xi,\b\cup \zeta,\g\cup \tau,\frs\# \frs_0}(\ve{x}\times \Theta_{\xi,\zeta}^+, \ve{y}\times \Theta_{\zeta,\tau}^+)\\
 &=\sum_{\theta_3\in \bT_{\xi}\cap \bT_{\tau}}\sum_{\substack{ 
 \psi\#\psi_0\in \pi_2(\xs\times \Theta_{\xi,\zeta}^+,\ys\times \Theta_{\zeta,\tau}^+,\zs\times \theta_3)\\\mu(\psi\# \psi_0)=0\\\frs_{\ws}(\psi\#\psi_0)=\frs\#\frs_0}} \# \cM(\psi\# \psi_0) U^{n_{\ws}(\psi\# \psi_0)}\cdot  \zs\times \theta_3\\
 &=\sum_{\substack{\psi\in \pi_2(\xs,\ys,\zs)\\ \mu(\psi)=0\\\frs_{\ws}(\psi)=\frs}} \sum_{\substack{\psi_0\in \pi_2(\Theta_{\xi,\zeta}^+,\Theta_{\zeta,\tau}^+,\Theta_{\xi,\tau}^+)\\ n_{w_i}(\psi_0)=n_i}} \# \cM(\psi\# \psi_0) U^{n_{\ws}(\psi)} \cdot \zs\times \Theta_{\xi,\tau}^+\\
 &=\sum_{\substack{\psi\in \pi_2(\xs,\ys,\zs)\\ \mu(\psi)=0\\\frs_{\ws}(\psi)=\frs}} \sum_{\substack{\psi_0\in \pi_2(\Theta_{\xi,\zeta}^+,\Theta_{\zeta,\tau}^+,\Theta_{\xi,\tau}^+)\\ n_{w_i}(\psi_0)=n_i}} \sum_{u\in\cM(\psi)} \# \cM(\psi_0, \rho^{\ps}(u))  U^{n_{\ws}(\psi)} \cdot \zs\times \Theta_{\xi,\tau}^+\\
 &=\sum_{\substack{\psi\in \pi_2(\xs,\ys,\zs)\\ \mu(\psi)=0\\\frs_{\ws}(\psi)=\frs}} \sum_{u\in\cM(\psi)}U^{n_{\ws}(\psi)}\cdot \zs\times \Theta_{\xi,\tau}^+\\
 &=F_1^{\xi,\tau}(F_{\a,\b,\g,\frs}(\xs,\ys)).
 \end{align*}

 It remains to establish equation~\eqref{eq:matchedholomorphictrianglecount}. The argument is similar to the one in Proposition~\ref{prop:differentialcomp}. We consider a path
 \[
\cD\colon [0,\infty)\to \Sym^{n_1}(\Delta)\times \cdots \times \Sym^{n_k}(\Delta),
 \]
 such that
 \begin{enumerate}
 \item  $\cD(0)=\ve{d}$.
 \item The image of $\cD$ is disjoint from the fat diagonal.
 \item The components of $\cD(t)$ approach $\infty$ in the $\xis$-$\zetas$ cylindrical end of $\Delta$.
 \item In the $\xis$-$\zetas$ cylindrical end of $\Delta$, as $t\to \infty$,  $\cD(t)$ approaches some fixed $\ve{d}'\in \Sym^{n_1}(\bD)\times \cdots \times \Sym^{n_k}(\bD)$ (up to overall translation by $\R$).
 \end{enumerate}
 
 We consider the ends of the 1-dimensional moduli space
 \[
\cM(\cD)=\bigcup_{t\in [0,\infty)}\bigcup_{\substack{\psi_0\in \pi_2(\Theta_{\xi,\zeta}^+,\Theta_{\zeta,\tau}^+,\Theta_{\xi,\tau}^+)\\ n_{w_i}(\psi_0)=n_i}} \cM(\psi_0,\cD(t)). 
 \]
 
 The ends of $\cM(\cD)$ at $t=0$ have total count equal to the left side of equation~\eqref{eq:matchedholomorphictrianglecount}. 
 
 As in the proof of Proposition~\ref{prop:differentialcomp}, the ends at finite $t$ correspond to index 1 holomorphic disks breaking off, which do not cover $\ws_0$. Since $\Theta_{\xi,\zeta}^+,$ $\Theta_{\zeta,\tau}^+$ and $\Theta_{\xi,\tau}^+$ are all cycles in their respective hat complexes, these ends cancel modulo 2. 

A broken curve appearing at $t=\infty$ must contain a level containing a holomorphic triangle, which has zero multiplicity over $\ws_0$, as well as a 
level containing a holomorphic strip in the $\xis$-$\zetas$ cylindrical end which matches $\ve{d}'$. Additional levels are prohibited since they would raise the Maslov index. Let $\phi_0\in \pi_2(\Theta_{\xi,\zeta}^+,\theta)$ and $\hat{\psi}_0\in \pi_2(\theta,\Theta_{\zeta,\tau}^+,\Theta_{\xi,\tau}^+)$ be the classes of a holomorphic disk and triangle which appear in the limit. The Maslov index of $\hat{\psi}_0$ is $-\gr(\Theta_{\xi,\zeta}^+,\theta)$ by equation~\eqref{eq:Maslovindexisotopictriple}. Hence, for $\cM(\hat{\psi}_0)$ to be non-empty, we must have $\theta=\Theta_{\xi,\zeta}^+$.

 Hence, the ends appearing as $t\to \infty$ can be identified with the Cartesian product
 \begin{equation}
\Bigg(\bigcup_{\substack{\hat{\psi}_0\in \pi_2(\Theta_{\xi,\zeta}^+,\Theta_{\zeta,\tau}^+,\Theta_{\xi,\tau}^+)\\ n_{w_i}(\hat{\psi}_0)=0\\\mu(\hat{\psi}_0)=0}}\cM(\hat{\psi}_0)\Bigg)\times \Bigg( \bigcup_{\substack{\phi_0\in \pi_2(\Theta^+_{\xi,\zeta},\Theta^+_{\xi,\zeta})\\ n_{w_i}(\phi_0)=n_i}} \cM(\phi_0, \ve{d}')\Bigg). 
 \label{eq:infinity-limit-triangles}
 \end{equation}
 
 We note
 \begin{equation}
 \sum_{\substack{\hat{\psi}_0\in \pi_2(\Theta_{\xi,\zeta}^+,\Theta_{\zeta,\tau}^+,\Theta_{\xi,\tau}^+)\\ n_{w_i}(\hat{\psi}_0)=0\\\mu(\hat{\psi}_0)=0}}\#\cM(\hat{\psi}_0)\equiv 1 \pmod{2}, \label{eq:hat-count-triangles}
 \end{equation}
 since Conditions~\ref{cond:triple1} and~\ref{cond:triple2} allow us to interpret the count of such triangles as the $\Theta_{\xi,\tau}^+$-component of the image of $\Theta_{\xi,\zeta}^+$ under the map from naturality which moves $\zetas$ to $\taus$ via a sequence of handleslides and isotopies.
 
 Combining equation~\eqref{eq:infinity-limit-triangles} with equations~\eqref{eq:hat-count-triangles} and~\eqref{eq:maincountofdisks}, we obtain that the total count of the ends of $\cM(\cD)$ which appear as $t\to \infty$ is 1. Hence the $t=0$ ends of $\cM(\cD)$ have the same total count as the $t=\infty$ end, so equation~\eqref{eq:matchedholomorphictrianglecount} follows, and the proof is complete.

\end{proof}

 \subsection{Variations of the generalized 1-handle and 3-handle maps}

We will  need a variation of the generalized 1-handle and 3-handle maps, where the diagram for $(S^1\times S^2)^{\# n}$ is allowed to have extra basepoints which are not merged to those in $Y$:

 \begin{rem}\label{rem:extrabasepoints}Our definition of the generalized 1-handle and 3-handle maps extends to the case when we wish to attach a diagram  $(\Sigma_0,\xis,\zetas,\ws_0\cup \ws_1)$ for $(S^1\times S^2)^{\# g(\Sigma_0)}$ to a diagram $(\Sigma,\as,\bs,\ws)$, using an embedding $f:\ws_0\to \Sigma\setminus (\as\cup \bs\cup \ws)$. In order to use the holomorphic curve counts from Proposition~\ref{prop:differentialcomp}, we describe how this construction is a special case, rather than a generalization, of our previous construction. To see this, we add a copy of $(S^2,w)$ to $(\Sigma,\as,\bs,\ws)$ for each basepoint $w\in \ws_1$, and then use the generalized 1-handle map for joining $(\Sigma_0,\xis,\zetas,\ws_0\cup \ws_1)$ to $(\Sigma,\as,\bs,\ws)\cup \coprod_{w\in \ws_1}(S^2,w)$ using the natural extension of $f$ to $\ws_0\cup \ws_1$. The generalized 1-handle map has domain $\CF^-(\Sigma\cup \coprod_{w\in \ws_1} S^2,\as,\bs,\ws\cup \ws_1)$, which is canonically isomorphic to $\CF^-(\Sigma,\as,\bs,\ws)$. Similarly the holomorphic triangle counts from Proposition~\ref{prop:generalized1-handlesandtriangles} can be applied when we are given two Heegaard triples, $(\Sigma,\as,\bs,\gs,\ws)$ and $(\Sigma_0,\xis,\zetas,\taus,\ws_0\cup \ws_1)$, and an embedding $f:\ws_0\to \Sigma\setminus (\as\cup \bs\cup \gs\cup \ws)$ used to form their connected sum.
 \end{rem}

\section{Doubling a Heegaard diagram}
\label{sec:doubleddiagrams}

 Suppose that $\cH=(\Sigma, \ve{\alpha},\ve{\beta},\ws)$ is a multi-pointed Heegaard diagram for $(Y,\ws)$. In this section we describe two natural diagrams for $(Y,\ws)$,
\[
D_{\a}(\cH) \qquad \text{and} \qquad D_{\b}(\cH),
\] 
 which can be constructed from the diagram $\cH$.
  We call these the \emph{doubled Heegaard diagrams of} $\cH$. They naturally appear when we compute the trace and cotrace cobordism maps. We also give an expression for the transition maps between $\cH$ and its doubles.

\subsection{Construction of the doubled diagrams}

\label{sec:doubleddiagram}

We first describe the construction of the diagram $\cD_{\a}(\cH)$. Pick a regular neighborhood $N(\Sigma)\iso \Sigma\times [-1,1]$ of $\Sigma$ in $Y$, such that $\Sigma$ is embedded as $\Sigma\times \{0\}$. Let $N'(\ws)$ denote a collection of $|\ws|$ pairwise disjoint closed disks in $\Sigma\setminus (\as\cup \bs)$,  each containing a single basepoint of $\ws$ in its boundary (i.e. the disks $N'(\ws)$ are obtained by translating a regular neighborhood $N(\ws)$ slightly so that $\ws\subset \d N'(\ws)$).

Remove $(\Int N'(\ws))\times [-1,1]$ from $N(\Sigma)$ to obtain a handlebody of genus $2g(\Sigma)+|\ws|-1$, which we denote by $U_\Sigma$. Write $\Sigma\, \#_{\ws} \bar{\Sigma}$ for $-\d U_\Sigma$, and note that $\Sigma \, \#_{\ws} \bar{\Sigma}$ is a Heegaard surface in $Y$ which contains $\ws$. Using the standard orientation convention,  $Y\setminus U_\Sigma$ becomes the $\alpha$-handlebody, and $U_\Sigma$ the $\beta$-handlebody.

 Pick a set of compressing curves $\Ds$ on $\Sigma\, \#_{\ws} \bar{\Sigma}$ for the handlebody $U_\Sigma$. Let $\as\subset \Sigma\, \#_{\ws} \bar{\Sigma}$ denote the images of the original $\as$ curves on $\Sigma\setminus N'(\ws)$, and let $\bar{\bs}\subset \Sigma\,\#_{\ws} \bar{\Sigma}$ denote the images of the original $\bs$ curves on $\bar{\Sigma}\setminus N'(\ws)$. The curves $\as\cup \bar{\bs}\subset\Sigma\, \#_{\ws} \bar{\Sigma}$  bound compressing disks in $Y\setminus U_\Sigma$. Define the diagram $D_{\a}(\cH)$ as 
\[
D_{\a}(\cH):=(\Sigma\, \#_{\ws} \bar{\Sigma},\ve{\alpha}\cup \bar{\ve{\beta}},\Ds,\ve{w}).
\]

 We define $D_{\b}(\cH)$ as the conjugate Heegaard diagram, i.e., the diagram obtained by reversing the orientation of the surface and switching the roles of the $\alpha$- and $\beta$-handlebodies:
\[
D_{\b}(\cH):=(\bar{\Sigma}\, \#_{\ws} \Sigma, \Ds,\bar{\ve{\alpha}}\cup \ve{\beta},\ve{w}).
\]

An example of a neighborhood of a basepoint in a doubled Heegaard diagram is shown in Figure~\ref{fig::10}.

\begin{figure}[ht!]
	\centering
	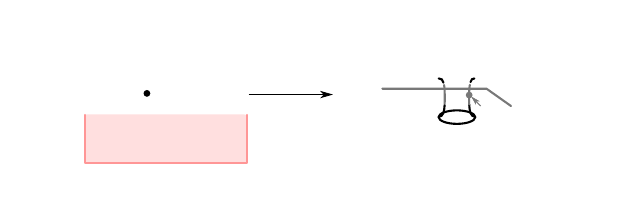
	\caption{\textbf{A neighborhood of a basepoint $w\in \ws$ in a Heegaard diagram $\cH$ and its double $D_{\a}(\cH)$.} The red and blue shaded strips denote portions of compressing disks attached to the Heegaard surface. On the right, a single $\Ds$ compressing disk is shown (blue) in $U_\Sigma$.\label{fig::10}}
\end{figure}

 There is a natural way to construct compressing curves $\Ds$ for $U_{\Sigma}$, as we now describe. Define the subsurface
 \[
 \Sigma(\ws):=\Sigma\setminus (\Int N'(\ws)),
 \] 
 where $N'(\ws)\subset \Sigma$ denotes the translated regular neighborhood of $\ws$ described previously. Pick a collection of closed arcs 
 \[
 A\subset (\d N'(\ws))\setminus \ws
 \]
  such that the set $A$ contains one arc per component of $\d \Sigma(\ws)$. We then form the surface $\Sigma(\ws)\,\natural_A \bar{\Sigma}(\ws)$, where $\natural_A$ denotes the boundary connected sum along $A$. The surface $\Sigma(\ws)\,\natural_A \bar{\Sigma}(\ws)$ has one puncture per basepoint of $\ws$, and is homeomorphic to $(\Sigma\, \#_{\ws} \bar{\Sigma})\setminus N(\ws)$.

  We now pick a collection of pairwise disjoint, properly embedded arcs $d_1,\dots, d_{2n}$ on $\Sigma(\ws)$, which have endpoints on $A$, and which form a basis of $H_1(\Sigma(\ws), A)$ (here $n=|\as|=|\bs|=g(\Sigma)+|\ws|-1$).  We take the arcs $d_i$ on $\Sigma(\ws)$, and concatenate them with their mirrors on $\bar{\Sigma}(\ws)$ to form a collection of $2n$ simple closed curves $\delta_1,\dots, \delta_{2n}$ on $\Sigma\, \#_{\ws} \bar{\Sigma}$. In the definition of the doubled Heegaard diagram, above, we can take $\Ds=\{\delta_1,\dots, \delta_{2n}\}$ as our choice of compressing curves for the handlebody $U_\Sigma$.

We  now show that any set of curves $\Ds$, constructed using the doubling procedure above, form a valid set of attaching curves in the sense of Definition~\ref{def:multipointedheegaarddiagram}. They clearly satisfy requirements \eqref{def:mphd1}--\eqref{def:mphd3}. It remains to show~\eqref{def:mphd4}:

\begin{lem}\label{lem:doublingvalid}The curves $\Ds$ are homologically independent in $ (\Sigma\,\, \#_{\ws} \bar{\Sigma})\setminus \ws$.
\end{lem}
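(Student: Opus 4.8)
The plan is to reduce the assertion to the fact, built into the doubling procedure, that the relative classes $[d_1],\dots,[d_{2n}]$ form a $\Z$-basis of $H_1(\Sigma(\ws),A;\Z)$: I would construct a homomorphism out of $H_1$ of the doubled surface carrying each $[\delta_i]$ to $[d_i]$, so that linear independence of the $[\delta_i]$ follows at once. As a preliminary reduction, note that $(\Sigma\,\#_{\ws}\bar\Sigma)\setminus\ws$ deformation retracts onto $\Sigma\,\#_{\ws}\bar\Sigma$ with small open disks about the points of $\ws$ removed, which is precisely $S:=\Sigma(\ws)\,\natural_A\bar\Sigma(\ws)$ by the construction above. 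Since every $\delta_i$ lies in $S$, it suffices to prove that $[\delta_1],\dots,[\delta_{2n}]$ are linearly independent in $H_1(S;\Z)$.

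The main step uses the decomposition $S=\Sigma(\ws)\cup\bar\Sigma(\ws)$, in which the two pieces meet along a regular neighborhood of $A$ — a disjoint union of $|\ws|$ disks deformation retracting onto $A$. Excising the interior of $\bar\Sigma(\ws)$ away from this overlap shows that the inclusion of pairs induces an isomorphism $H_1(\Sigma(\ws),A;\Z)\xrightarrow{\cong}H_1(S,\bar\Sigma(\ws);\Z)$, so one obtains a homomorphism
\[
\Psi\colon H_1(S;\Z)\longrightarrow H_1\big(S,\bar\Sigma(\ws);\Z\big)\;\xrightarrow{\ \cong\ }\;H_1(\Sigma(\ws),A;\Z).
\]

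To evaluate $\Psi$ on the generators of interest, recall that by construction $\delta_i$ is the concatenation of the arc $d_i\subset\Sigma(\ws)$ with its mirror $\bar d_i\subset\bar\Sigma(\ws)$, so at the level of singular chains (with appropriate orientations) $\delta_i=d_i+\bar d_i$. Since $\bar d_i$ is supported in $\bar\Sigma(\ws)$, its contribution dies in $H_1(S,\bar\Sigma(\ws);\Z)$, and under the excision isomorphism $\Psi([\delta_i])=[d_i]\in H_1(\Sigma(\ws),A;\Z)$. Now if $\sum_i c_i[\delta_i]=0$ in $H_1(S;\Z)$ for some $c_i\in\Z$, then applying $\Psi$ gives $\sum_i c_i[d_i]=0$, and since $\{[d_i]\}$ was chosen to be a basis of $H_1(\Sigma(\ws),A;\Z)$, every $c_i$ vanishes; this establishes the lemma.

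The only point demanding care — and the step I expect to be the main, if modest, obstacle — is the bookkeeping for the decomposition of $S$: one must verify that, up to isotopy and up to homotopy equivalence of the pieces, $S$ is the union of $\Sigma(\ws)$ and $\bar\Sigma(\ws)$ glued along a collar of $A$ with contractible components, so that both the excision isomorphism and the chain-level identity $\delta_i=d_i+\bar d_i$ are legitimate. Everything else is purely formal homological algebra.
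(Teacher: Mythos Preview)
Your proposal is correct and follows essentially the same argument as the paper: both identify $(\Sigma\,\#_{\ws}\bar\Sigma)\setminus N(\ws)$ with $\Sigma(\ws)\,\natural_A\bar\Sigma(\ws)$, then use the composition of the natural map $H_1(\Sigma(\ws)\,\natural_A\bar\Sigma(\ws);\Z)\to H_1(\Sigma(\ws)\,\natural_A\bar\Sigma(\ws),\bar\Sigma(\ws);\Z)$ with the inverse excision isomorphism to $H_1(\Sigma(\ws),A;\Z)$, sending each $[\delta_i]$ to $[d_i]$. Your write-up supplies a little more detail on the chain-level identity $\delta_i=d_i+\bar d_i$ and the deformation retraction, but the structure is identical.
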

\begin{proof}As described above, we have a diffeomorphism $(\Sigma\, \#_{\ws} \bar{\Sigma})\setminus N(\ws)\iso \Sigma(\ws)\,\natural_A\bar{\Sigma}(\ws)$. Consider the composition
\[H_1(\Sigma(\ws)\,\natural_A \bar{\Sigma}(\ws))\to H_1(\Sigma(\ws)\,\natural_A \bar{\Sigma}(\ws), \bar{\Sigma}(\ws); \Z)\to H_1(\Sigma(\ws), A).\] The first map is the natural map, and the second map is the inverse of the excision isomorphism. The composition sends $\delta_i$ to $d_i$. Since the $d_i$ are linearly independent by assumption, it follows that the $\delta_i$ are as well.
\end{proof}

\subsection{Computing the transition map between \texorpdfstring{$\cH$}{H} to \texorpdfstring{$D_{\a}(\cH)$}{Da(H)}}

 It will be important for our purposes to have a simple formula for the transition maps $\Psi_{\cH\to D_{\a}(\cH)}$ and $\Psi_{D_{\a}(\cH)\to \cH}$. In this section, we define a candidate map, and then prove that it coincides with the transition map. 

Note that if $\bs$ is any set of attaching curves on $\Sigma$, then  $(\Sigma\, \#_{\ws} \bar{\Sigma},\bs\cup \bar{\bs},\Ds,\ws)$ is a multi-pointed diagram for $(S^1\times S^2)^{\# g(\Sigma)}$, because it is a double of the diagram $(\Sigma,\bs,\bs,\ws)$. Hence $\HF^-(\Sigma\, \#_{\ws} \bar{\Sigma},\bs\cup \bar{\bs},\Ds,\ws)$ contains a top degree element $\Theta_{\b\cup \bar{\b},\Dt}^+$. 

We consider the generalized 1-handle map
\[
F_1^{\bar{\b},\bar{\b}}: \CF^-(\Sigma,\as,\bs,\ws)\to \CF^-(\Sigma\, \#_{\ws} \bar{\Sigma}, \as\cup \bar{\bs}, \bs\cup \bar{\bs}, \ws),
\]
 as described in Section~\ref{sec:generalized1--handleand3--handlemaps}. (In the above map, the second copy of $\bar{\bs}$ should be replaced with the image of $\bar{\bs}$ under a small isotopy, though we suppress this from the notation).

The main result of this section is the following:

\begin{prop}\label{prop:changeofdiagramsmapcomp}If $\cH=(\Sigma,\as,\bs,\ws)$ is a multi-pointed Heegaard diagram, and $\Ds$ is any set of attaching curves obtained by the doubling procedure described in Section~\ref{sec:doubleddiagram},  then the transition map $\Psi_{\cH\to D_{\a}(\cH)}$ satisfies the formula
\[
\Psi_{\cH\to D_{\a}(\cH)}\simeq F_{\a\cup \bar{\b}, \b\cup \bar{\b}, \Delta}( F_1^{\bar{\b},\bar{\b}}, \Theta_{\b\cup \bar{\b},\Dt}^+).
\]
\end{prop}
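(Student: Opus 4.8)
The plan is to show that the right-hand side is the cobordism map of a cobordism diffeomorphic to the product $Y\times[0,1]$, carrying the parametrizations $\cH$ and $D_\as(\cH)$ on its incoming and outgoing ends; since the cobordism map of such a product cobordism is, by the functoriality of the graph TQFT \cite{ZemGraphTQFT} (whose well-definedness ultimately rests on the naturality theorem of \cite{JTNaturality}), precisely the transition map $\Psi_{\cH\to D_\as(\cH)}$, this will give the claim. First I would unpack the two factors. The target of $F_1^{\bar{\bs},\bar{\bs}}$ is the auxiliary diagram $\cH':=(\Sigma\#_{\ws}\bar{\Sigma},\as\cup\bar{\bs},\bs\cup\bar{\bs},\ws)$, which shares its Heegaard surface with $D_\as(\cH)$ but has $\beta$-curves $\bs\cup\bar{\bs}$ in place of $\Ds$; its ``$\bar{\Sigma}$-half'' is the double of $(\Sigma,\bs,\bs,\ws)$, so the underlying $3$-manifold of $\cH'$ is $Y$ with several copies of $S^1\times S^2$ connect-summed on. As recalled in Section~\ref{sec:generalized1--handleand3--handlemaps}, $F_1^{\bar{\bs},\bar{\bs}}$ is, by construction, a composite of $0$-handle maps and four-dimensional $1$-handle maps, and hence equals the cobordism map of the cobordism $W_1$ obtained by attaching those handles to $Y\times[0,1]$.

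For the second factor, observe that $(\Sigma\#_{\ws}\bar{\Sigma},\bs\cup\bar{\bs},\Ds,\ws)$ is a diagram for $(S^1\times S^2)^{\# g(\Sigma)}$, with $\Theta^+_{\bs\cup\bar{\bs},\Ds}$ its top-degree generator, so that $F_{\as\cup\bar{\bs},\bs\cup\bar{\bs},\Ds}(-,\Theta^+_{\bs\cup\bar{\bs},\Ds})$ has the shape of a holomorphic triangle map paired against the top class of a connected-sum-of-$S^1\times S^2$ diagram. Using transition maps — which fix the top generator up to $\bF_2[U]$-equivariant chain homotopy — together with the associativity of triangle maps to replace this $(\beta\gamma)$-diagram by a standard one, I would identify this composite with the cobordism map of a cobordism $W_2$ from the connected sum back to $Y$ obtained by attaching a $2$-handle for each $S^1\times S^2$ summand. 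The constraint on $\Spin^c$ structures — that only those restricting to the torsion structure on the $(\beta\gamma)$-end contribute, and that these match the $\Spin^c$ structures on $W_2$ — follows from a Mayer--Vietoris computation in the spirit of Lemma~\ref{lem:connectedsumspincstructures}.

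The remaining, and principal, task is to verify that $W_1\cup W_2$ is diffeomorphic to the product $Y\times[0,1]$, i.e.\ that each $2$-handle of $W_2$ is geometrically dual to a $1$-handle of $W_1$. This is where the hypothesis that $\Ds$ comes from the doubling procedure enters: the curves $\delta_i$ are the doubles of arcs $d_1,\dots,d_{2n}$ on $\Sigma(\ws)$ chosen to form a basis of $H_1(\Sigma(\ws),A;\Z)$ (Section~\ref{sec:doubleddiagram}), and it is exactly this basis condition that makes the belt spheres of the $1$-handles of $W_1$ meet the attaching spheres of the $2$-handles of $W_2$ transversely in single points. Any two admissible choices of $\Ds$ are related by handleslides and isotopies, so by triangle associativity it suffices to treat one convenient choice, for which the attaching data is explicit. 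Granting this, the composite $F_{\as\cup\bar{\bs},\bs\cup\bar{\bs},\Ds}(F_1^{\bar{\bs},\bar{\bs}},\Theta^+_{\bs\cup\bar{\bs},\Ds})$ is the cobordism map of the trivial cobordism with ends $\cH$ and $D_\as(\cH)$, namely $\Psi_{\cH\to D_\as(\cH)}$. I expect this last step — the handle-cancellation bookkeeping, showing the composite cobordism is genuinely the product and not a product with a nontrivial closed summand — to be the main obstacle, since it is precisely what forces the answer to be the transition map rather than leaving an ambiguity by an automorphism of $\HF^-(Y)$.
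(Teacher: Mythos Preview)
Your overall strategy is correct and matches the paper's: interpret $F_1^{\bar{\bs},\bar{\bs}}$ as a composition of $1$-handle maps, interpret the triangle map as a $2$-handle map, reduce (via associativity) to a convenient choice of $\Ds$, and then cancel the handles to recognize the product cobordism. The paper carries this out via Lemmas~\ref{lem:generalized1-handlemapiscompof1-handles} and~\ref{lem:randomtrianglemapis2-handlemap}.

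There is one genuine gap in your sketch. You assert that $F_1^{\bar{\bs},\bar{\bs}}$ is ``by construction, a composite of $0$-handle maps and four-dimensional $1$-handle maps,'' citing Section~\ref{sec:generalized1--handleand3--handlemaps}. This is not what that section establishes. The generalized $1$-handle map is \emph{defined} by the formula $\xs\mapsto \xs\times\Theta^+_{\bar{\bs},\bar{\bs}}$ and shown to be a chain map for almost complex structures stretched along the $|\ws|$ connected-sum necks between $\Sigma$ and $\bar{\Sigma}$ (Proposition~\ref{prop:differentialcomp}). The composition $F_{S_n}\circ\cdots\circ F_{S_1}$ of ordinary $1$-handle maps is given by the same formula on generators, but requires the almost complex structure to be stretched along the $2|\bar{\bs}|$ necks parallel to the $\bar{\bs}$ curves. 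These are \emph{different} stretching regimes (see Figure~\ref{fig::33}), and the statement that the two maps agree is exactly the content of Lemma~\ref{lem:generalized1-handlemapiscompof1-handles}, whose proof involves stretching all $2|\bar{\bs}|+|\ws|$ necks simultaneously and extracting a Gromov limit to show that the change-of-almost-complex-structure map preserves $\xs\times\Theta^+_{\bar{\bs},\bar{\bs}}$. This is not automatic, and without it you do not know that your ``$W_1$'' is actually a $1$-handle cobordism in the sense required for the functoriality package you invoke.

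A smaller point: your description of the handle-cancellation step is slightly off. There are $2n$ curves in $\Ds$ but only $n$ $1$-handles; in the paper's convenient choice of $\Ds$ (Lemma~\ref{lem:particularbasisisarealbasis}), half of the $\Ds$ curves are doubled dual arcs serving as longitudes of the cancelling link $\bL$, while the other half (doubled pushoffs of the $\beta_i$) are the auxiliary curves making the triple subordinate to a bouquet after a handleslide (Lemma~\ref{lem:randomtrianglemapis2-handlemap}). So it is not that each $\delta_i$ cancels a $1$-handle.
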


\begin{rem} In Proposition~\ref{lem:doublingvalid}, we have omitted a $\Spin^c$ structure in the triangle map  $F_{\a\cup \bar{\b}, \b\cup \bar{\b}, \Dt}$. We will see that the triple $(\Sigma\, \#_{\ws} \bar{\Sigma}, \as\cup \bar{\bs}, \bs\cup \bar{\bs},\Ds)$ represents surgery on a link embedded in $Y\#(S^1\times S^2)^{\# |\bs|}$ which topologically cancels the summands of $S^1\times S^2$ added by the generalized 1-handle map. Thus, by attaching 3-handles and 4-handles to $X_{\a\cup \bar{\b}, \b\cup \bar{\b}, \Dt}$ we obtain the identity cobordism $Y\times [0,1]$, so there is a unique $\Spin^c$ structure on $X_{\a\cup \bar{\b}, \b\cup \bar{\b}, \Dt}$ which extends to $\frs$ on $Y\times [0,1]$.
\end{rem}

The proof of Proposition~\ref{prop:changeofdiagramsmapcomp} is somewhat involved, though the idea is simple to state: the expression in Proposition~\ref{prop:changeofdiagramsmapcomp} represents the cobordism map for a collection of topologically canceling 1-handles and 2-handles. If the reader is satisfied with this level of reasoning, they can safely skip the remainder of the proof. For the undeterred reader, we now embark upon providing a proper proof of Proposition~\ref{prop:changeofdiagramsmapcomp}.

The first step is to specify a collection of 0-spheres and a framed link. Let $D_{\beta_i}\subset Y$ denote a choice of compressing disk for $\beta_i\in \bs$. Inside of the handlebody $U_{\b}$, we pick regular neighborhoods 
\[
N(\Sigma),\, N(D_{\beta_i})\subset U_{\b}.
\]
 Further, we pick trivializing diffeomorphisms
\[
\tau_i:N(D_{\beta_i})\to D_{\beta_i}\times [-1,1] \qquad \text{and} \qquad \tau':N(\Sigma)\to \Sigma\times [0,1],
\] 
such that $\tau_i(D_{\beta_i})=D_{\beta_i}\times \{0\}$ and $\tau'(\Sigma)=\Sigma\times \{0\}$.

Using the maps $\tau_i$ and $\tau'$, we can specify 0-spheres $S_1,\dots, S_n$ and a framed link $\bL\subset Y(S_1,\dots, S_n)$. We define the 0-sphere $S_i\subset Y$ to be 
\begin{equation}
S_i:=\{0\}\times \left\{-\tfrac{1}{2},\tfrac{1}{2}\right\} \subset D_{\beta_i}\times [-1,1],
\label{eq:defframed0spheres}
\end{equation}
 and we define the link component $\ell_i\subset Y(S_1,\dots, S_n)$ of $\bL$ to be
\begin{equation}
\ell_i:=\{0\}\times \left[-\tfrac{1}{2}, \tfrac{1}{2}\right]\subset D_{\beta_i}\times [-1,1].
\label{eq:defframedlink}
\end{equation}

 A neighborhood of the disk $D_{\beta_i}$, the nearby 0-sphere $S_i$, and link component $\ell_i$ are shown in Figure~\ref{fig::52}.

The trivialization $\tau_i$ of $N(D_{\beta_i})$ determines a framing of the link component $\ell_i$, which is given as a vector field along $\ell_i$ that projects to a single vector in $T_0 D_{\beta_i}$ under the projection map $D_{\beta_i}\times [-1,1]\to D_{\beta_i}$.

 \begin{figure}[ht!]
 	\centering
 	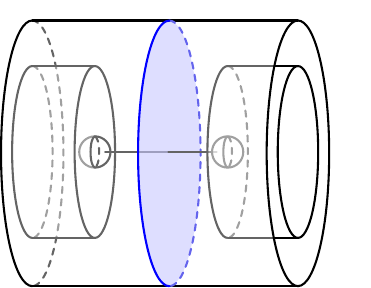
 	\caption{\textbf{The Heegaard surface $\Sigma'\subset Y(S_1,\dots, S_n)$ inside the neighborhood $N(D_{\beta_i})$ of the compressing disk $D_{\beta_i}$.} A neighborhood $N(\Sigma)\subset U_{\b}$ of the original Heegaard surface $\Sigma$ is identified with $\Sigma\times [0,1]$. The original Heegaard surface $\Sigma$ is identified with $\Sigma\times \{0\}$. The surface  $\Sigma'$ is the union of $\Sigma\times\{0\}$, a portion of $\Sigma\times \{1\}$, and the two annuli $A_-$ and $A_+$.  Surgery on the knot $\ell_i$ cancels surgery on the 0-sphere $S_i$.
 	\label{fig::52}}
 \end{figure}

We can specify a Heegaard surface 
\[\Sigma'\subset Y(S_1,\dots, S_n),\] as follows. Outside of the union of the neighborhoods $N(D_{\beta_i})$, the surface $\Sigma'$ is equal to $\Sigma\,\#_{\ws} \bar{\Sigma}= \d ((\Sigma\setminus N'(\ws))\times [0,1])$. Inside of $N(D_{\beta_i})$, we define
\begin{align*}
\Sigma'\cap N(D_{\beta_i}):=&(\Sigma\times \{0\})\cap N(D_{\beta_i})\\
&\cup (\Sigma\times \{1\})\cap \left(D_{\beta_i}\times \left(\left[-1,-\tfrac{1}{2}\right]\cup \left[\tfrac{1}{2},1\right]\right)\right)\\
&\cup A_-\cup A_+,
\end{align*}  where $A_{-}$ and $A_{+}$ are two annular subsets of $D_{\beta_i}\times \left\{-\tfrac{1}{2}\right\}$ and $D_{\beta_i} \times  \left\{\tfrac{1}{2}\right\}$, respectively. The surface $\Sigma'$  is shown in Figure~\ref{fig::52}.

It is not hard to see that the two diffeomorphisms $\tau_i$ and $\tau'$ also determine a diffeomorphism
\begin{equation}\phi:\Sigma\, \#_{\ws} \bar{\Sigma}\to \Sigma',\label{eq:embeddingphidef}\end{equation} up to isotopy (examine Figure~\ref{fig::52}).

For each 0-sphere $S_i$, there is a 1-handle map $F_{S_i}$, defined in \cite{ZemGraphTQFT}*{Section~8}. The definition is similar but not identical to the definition of the 1-handle map from \cite{OSTriangles}.

As a first step, we relate the map $F_1^{\bar{\b},\bar{\b}}$ appearing in Proposition~\ref{prop:changeofdiagramsmapcomp} with a cobordism map:

\begin{lem}\label{lem:generalized1-handlemapiscompof1-handles}Suppose $(\Sigma,\as,\bs,\ws)$ is a Heegaard diagram for $Y$, and let $S_1,\dots, S_{n}$ be the 0-spheres in $U_{\b}$, described above. Let $\phi:\Sigma\, \#_{\ws} \bar{\Sigma}\to Y(S_1,\dots, S_n)$ denote the embedding describe above. Then
\[
\phi_* \circ F_1^{\bar{\b},\bar{\b}}\simeq F_{S_n}\circ \cdots \circ F_{S_1}.
\]
\end{lem}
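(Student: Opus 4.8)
The plan is to recognize both sides of the asserted identity as ``generalized handle maps'' of the form $\xs\mapsto\xs\otimes\Theta^+$ (Equation~\eqref{eq:formula1-handlemap}), attached to two Heegaard diagrams for $Y(S_1,\dots,S_n)\iso Y\#(S^1\times S^2)^{\# n}$ that are identified by the diffeomorphism $\phi$ of Equation~\eqref{eq:embeddingphidef} together with a sequence of Heegaard moves, and then to observe that the distinguished top-degree generators match for grading reasons. Here $n=|\bs|$, with one $0$-sphere $S_i$ for each $\beta_i\in\bs$. To unwind the right-hand side, recall from the end of Section~\ref{sec:generalized1--handleand3--handlemaps} that each $0$-sphere $1$-handle map $F_{S_i}$ is itself a generalized $1$-handle map: it glues the doubly based genus-$0$ diagram for $S^3$ (two isotopic curves $\xi_0^{(i)},\zeta_0^{(i)}$ on $S^2$ meeting in two points) along the two feet of $S_i$, which sit inside $N(D_{\beta_i})$, and then deletes the two auxiliary basepoints. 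Since the neighborhoods $N(D_{\beta_1}),\dots,N(D_{\beta_n})$ are pairwise disjoint these connected-sum regions do not interact, so the $F_{S_i}$ commute, and — choosing sufficiently long necks in all $n$ regions at once, so that Proposition~\ref{prop:differentialcomp} applies simultaneously — iterating Equation~\eqref{eq:formula1-handlemap} identifies $F_{S_n}\circ\cdots\circ F_{S_1}$ with the generalized handle map $\xs\mapsto\xs\otimes\theta_1^+\otimes\cdots\otimes\theta_n^+$ for the Heegaard diagram $\cH'$ obtained by gluing all $n$ genus-$0$ pieces to $\cH=(\Sigma,\as,\bs,\ws)$ near the disks $D_{\beta_i}$. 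The surface underlying $\cH'$ is a copy of the surface $\Sigma'\subset Y(S_1,\dots,S_n)$ from Figure~\ref{fig::52}, and $\theta_1^+\otimes\cdots\otimes\theta_n^+$ is the top-degree intersection point among the glued-in pieces.

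Next I would carry out the topological comparison. Using the explicit model of $\Sigma'$ and the diffeomorphism $\phi\colon\Sigma\,\#_{\ws}\bar{\Sigma}\to\Sigma'$ of Equation~\eqref{eq:embeddingphidef} (drawn in Figure~\ref{fig::52}), I would check that $\phi$ is the identity on the part of $\Sigma'$ coming from $\Sigma\times\{0\}$ — so it carries $\as$ and $\bs$ to the corresponding curves of $\cH'$ — while it carries the mirror curves $\bar{\bs}$, which lie on the $\Sigma\times\{1\}$ part of $\Sigma'$ and run once over each surgered knot $\ell_i$ beside $D_{\beta_i}$, to a curve system which, after a sequence of handleslides and isotopies, coincides with the glued-in genus-$0$ curves $\{\xi_0^{(i)}\}$ (equivalently, up to isotopy, with $\{\zeta_0^{(i)}\}$, the two being isotopic on each $S^2$-piece). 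Thus $\phi$, followed by the transition map for these Heegaard moves, is a chain homotopy equivalence $\Psi$ from the target diagram $(\Sigma\,\#_{\ws}\bar{\Sigma},\as\cup\bar{\bs},\bs\cup\bar{\bs},\ws)$ of $F_1^{\bar{\bs},\bar{\bs}}$ to $\cH'$. Since $\Psi$ preserves the relative grading and the top-degree part of the homology of the attached $(S^1\times S^2)^{\# n}$-summand is one-dimensional over $\bF_2$, $\Psi$ sends the cycle $\xs\otimes\Theta^+_{\bar{\bs},\bar{\bs}}$ to a cycle homologous to $\xs\otimes\theta_1^+\otimes\cdots\otimes\theta_n^+$. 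As $\phi$ was built from the trivializations $\tau_i,\tau'$ to be the natural identification, $\phi_*$ agrees with $\Psi$ on homology, and this gives $\phi_*\circ F_1^{\bar{\bs},\bar{\bs}}\simeq F_{S_n}\circ\cdots\circ F_{S_1}$.

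The main obstacle is the topological identification in the second step: pinning down $\phi$ precisely and verifying that wrapping a handle of $\Sigma'$ over each surgered knot $\ell_i$ next to $D_{\beta_i}$ really reproduces, up to Heegaard moves, the doubled-$\beta_i$ configuration carried by $\bar{\bs}$. This is a hands-on Kirby-calculus argument phrased in the language of Heegaard diagrams, and the bookkeeping — which feet, tubes and auxiliary basepoints are glued where, the compatibility of the finitely many neck-stretchings required by Proposition~\ref{prop:differentialcomp}, and the fact that the transition maps for the handleslides preserve the top-degree generator — is where the real work lies; everything else reduces to the defining formula~\eqref{eq:formula1-handlemap} and a grading count.
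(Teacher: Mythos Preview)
Your opening observation — that both sides are given, on generators, by $\xs\mapsto\xs\otimes\Theta^+$ — is exactly where the paper starts; it even says the result is ``obvious'' if one ignores almost complex structures. But you have inverted the difficulty: the topological identification via $\phi$ is the easy part (no handleslides are needed — under $\phi$ the curves $\bar{\bs}$ already become the meridional pairs added by the $1$-handle maps, so the two target diagrams literally coincide), while the almost complex structure compatibility, which you relegate to ``bookkeeping'', is the entire content of the proof. The point is that $F_1^{\bar{\bs},\bar{\bs}}$ is only a chain map for $J$ stretched along the $|\ws|$ connected-sum necks joining $\Sigma$ to $\bar{\Sigma}$, whereas each $F_{S_i}$ requires $J$ stretched along the two necks parallel to the new small curves near $\bar{\beta}_i$. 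These are different families of almost complex structures on the \emph{same} diagram, and the lemma reduces to showing that the change-of-$J$ map $\Psi_{J_1\to J_2}$ sends $\xs\times\Theta^+_{\bar{\bs},\bar{\bs}}$ to itself on the chain level.

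Your grading argument does not establish this. Saying ``$\Psi$ preserves the relative grading and the top-degree part of the $(S^1\times S^2)^{\# n}$ homology is one-dimensional'' constrains nothing on $\CF^-$ of the full manifold: an index-$0$ disk from $\xs\times\Theta^+$ could land on $\ys\times\theta$ with $\gr(\ys)>\gr(\xs)$ compensating $\gr(\theta)<\gr(\Theta^+)$, and $\xs\times\Theta^+$ is not even a cycle in general, so ``homologous'' is not the right relation. The paper's argument stretches \emph{all} $2|\bar{\bs}|+|\ws|$ necks simultaneously and uses the index splitting $\mu(\phi\#\phi_0)=\mu(\phi)+\gr(\Theta^+,\theta')$ from Equation~\eqref{eq:Maslovindexgeneralized1--handledisk}: extracting a broken limit on $\Sigma\times[0,1]\times\R$ forces $\mu(\phi)\ge 0$, hence both summands vanish, so $\theta'=\Theta^+$ and $\phi$ is constant. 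That Gromov-compactness step, giving a genuine holomorphic curve on the $\Sigma$ side, is what makes the index splitting effective — and it is precisely what is missing from your proposal.
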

\begin{proof} If we ignore almost complex structures, the statement is immediate, since the two maps are both defined by the formula $\ve{x}\mapsto \ve{x}\times \Theta_{\bar{\b},\bar{\b}}^+$. However the two maps have different requirements concerning how the almost complex structure on $(\Sigma\,\#_{\ws} \bar{\Sigma})\times [0,1]\times \R$ must be stretched. 

To each 0-sphere $S_i$, there is a distinguished annular region on $\bar{\Sigma}$, which contains the corresponding pair of attaching curves in the two copies of $\bar{\bs}$. By definition, an almost complex structure can be used  to compute $F_{S_i}$ if it is stretched along the two boundary components of this annulus, and the change of almost complex structure map associated to additional stretching preserves intersection points of the form $\ve{x}\times \theta^+$ (see \cite{ZemGraphTQFT}*{Definition~8.2}). 

Analogously, an almost complex structure may be used to compute $F_1^{\bar{\b},\bar{\b}}$ if it is sufficiently stretched along the $|\ws|$ connected sum necks between $\Sigma$ and $\bar{\Sigma}$.

Schematically, the two almost complex structures are shown in Figure~\ref{fig::33}. It is not \textit{a-priori} obvious that a single almost complex structure can be chosen to compute both maps.

 \begin{figure}[ht!]
 	\centering
 	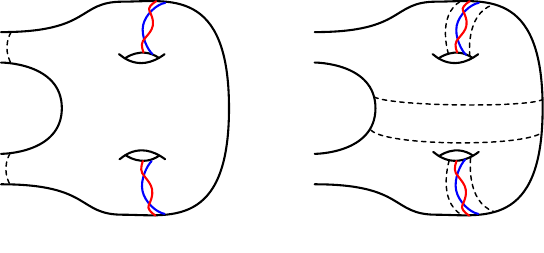
 	\caption{\textbf{A schematic of the almost complex structures used to compute the maps $F_1^{\bar{\b},\bar{\b}}$ and  $F_{S_n}\circ \cdots \circ F_{S_1}$.} The dashed lines indicate where we stretch the almost complex structures for the two maps. Shown is the subset of $\Sigma\, \#_{\ws} \bar{\Sigma}$ corresponding to $\bar{\Sigma}$. \label{fig::33}}
 \end{figure}

 We use a variation of the proof that the 1-handle maps commute with each other (\cite{ZemGraphTQFT}*{Proposition~8.3}) to see that we can pick a single almost complex, which has been stretched on $2|\bar{\bs}|+|\ws|$ necks simultaneously, and can compute both $F_1^{\bar{\b},\bar{\b}}$ and $F_{S_n}\circ \cdots \circ F_{S_1}$.

 Fix an almost complex structure $J$ on $(\Sigma\,\#_{\ws} \bar{\Sigma})\times [0,1]\times \R$. If $\ve{T}=(T_1,\dots, T_k)$ is a tuple of positive numbers with $k=2|\bs|+|\ws|$, let us write $J(\ve{T})$ for the almost complex structure on $(\Sigma\,\#_{\ws} \bar{\Sigma})\times [0,1]\times \R$ obtained from $J$ by inserting necks of length $T_1,\dots, T_k$  along  the $|\ws|$ connected sum tubes of $\Sigma\,\#_{\ws} \bar{\Sigma}$, and along the $2|\bar{\bs}|$ curves which are parallel to the $\bar{\bs}$ curves on  $\bar{\Sigma}$.

 We claim the following: If $\ve{T}$ and $\ve{T}'$ are two tuples of neck lengths, and all of the components of $\ve{T}$ and $\ve{T}'$ are sufficiently large, then
\begin{equation}
\Psi_{J(\ve{T})\to J(\ve{T}')}(\ve{x}\times \Theta_{\bar{\b},\bar{\b}}^+)=\ve{x}\times \Theta_{\bar{\b},\bar{\b}}^+.
\label{eq:cxstr1-handle=generalized1-handle}
\end{equation} Importantly, we do not assume anything about the relative sizes of the components of $\ve{T}$ and $\ve{T}'$. 

The main claim follows from equation~\eqref{eq:cxstr1-handle=generalized1-handle}, since it implies that we can find a $J(\ve{T})$ such that the change of almost complex structure map associated to additional stretching along any subcollection of the necks preserves elements of the form $\ve{x}\times \Theta_{\bar{\b},\bar{\b}}^+$.

To establish equation~\eqref{eq:cxstr1-handle=generalized1-handle},  note that the change of almost complex structure map $\Psi_{J(\ve{T})\to J(\ve{T}')}$ counts Maslov index 0 holomorphic curves for a non-cylindrical almost complex structure which agrees with $J(\ve{T})$ on $(\Sigma\, \#_{\ws} \bar{\Sigma})\times [0,1]\times (-\infty,-1]$ and agrees with $J(\ve{T}')$ on  $(\Sigma\, \#_{\ws} \bar{\Sigma})\times [0,1]\times [1,\infty)$. Suppose we are given two sequences of tuples, $\ve{T}_i$ and $\ve{T}_i'$, such that each component of each tuple individually approaches $+\infty$. We can pick a sequence of interpolating almost complex structures $\hat{J}_i$ between $J(\ve{T}_i)$ and $J(\ve{T}'_i)$ on $(\Sigma\, \#_{\ws} \bar{\Sigma}) \times [0,1]\times \R$ such that the almost complex manifold $((\Sigma\, \#_{\ws} \bar{\Sigma})\times [0,1]\times \R,\hat{J}_i)$ contains the almost complex submanifold 
\[
(\Sigma\setminus N_i(\ws)\times [0,1]\times \R,J_0|_{\Sigma\setminus N_i(\ws)\times [0,1]\times \R}),
\] 
where $J_0$ denotes a fixed, cylindrical almost complex structure on
$\Sigma\times[0,1]\times \R$, and $ N_i(\ws)$ is a sequence of regular neighborhoods of $\ws$ such that $N_{i+1}(\ws)\subset N_i(\ws)$ and $\bigcap_i N_i(\ws)=\ws$.  

Suppose $\{u_i\}_{i\in \N}$ is a sequence of $\hat{J}_i$-holomorphic curves on $(\Sigma\, \#_{\ws} \bar{\Sigma})\times [0,1]\times \R$, which represent a Maslov index 0 class $\phi\# \phi_0\in \pi_2(\xs\times \Theta_{\bar{\b},\bar{\b}}^+, \ys\times \theta)$, where $\phi$ is a class on $(\Sigma,\as,\bs)$ and $\phi_0$ is a class on $(\bar{\Sigma},\bar{\bs},\bar{\bs})$. The index formula from equation~\eqref{eq:Maslovindexgeneralized1--handledisk} shows that
\begin{equation}\mu(\phi\#\phi_0)=\mu(\phi)+\gr(\Theta_{\bar{\b},\bar{\b}}^+,\theta).\label{eq:indexformuladyamicdisks}\end{equation} 

Adapting \cite{LipshitzCylindrical}*{Sublemma A.12}, by letting $i\to \infty$, we can extract a potentially broken limiting curve on $\Sigma\times [0,1]\times \R$, for the cylindrical almost complex structure $J_0$, representing the class $\phi$. By transversality, $\mu(\phi)\ge 0$. Using equation~\eqref{eq:indexformuladyamicdisks} and the fact that $\mu(\phi\# \phi_0)=0$,  we conclude that $\mu(\phi)=0$ and $\gr(\Theta_{\bar{\b},\bar{\b}}^+,\theta)=0$. Transversality at the limiting representative of $\phi$ implies that $\phi$ is the constant homology class. Since $\gr(\Theta_{\bar{\b},\bar{\b}}^+,\theta)=0$, it also follows that $\Theta_{\bar{\b},\bar{\b}}^+=\theta$.

 It is straightforward to examine the diagram $(\bar{\Sigma},\bar{\bs},\bar{\bs},\ws)$ and observe that the only nonnegative homology class in $\pi_2(\Theta_{\bar{\b},\bar{\b}}^+, \Theta_{\bar{\b},\bar{\b}}^+)$ which has zero multiplicity on the basepoints is the constant class.

Hence, if $\ve{T}$ and $\ve{T}'$ have components which are all sufficiently large, the map $\Psi_{J(\ve{T})\to J(\ve{T}')}$ counts only the constant homology class, when applied to $\ve{x}\times \Theta_{\bar{\b},\bar{\b}}^+$. This establishes equation~\eqref{eq:cxstr1-handle=generalized1-handle}, completing the proof.
\end{proof}

 We now focus our attention on the triangle map $F_{\a\cup \bar{\b}, \b\cup \bar{\b}, \Dt}(-,\Theta_{\bar{\b},\bar{\b}}^+)$ appearing in the statement of Proposition~\ref{prop:changeofdiagramsmapcomp}. In equation~\eqref{eq:defframedlink}, we described a framed link $\bL\subset Y(S_1,\dots, S_n)$ which topologically cancels the 0-spheres $S_1,\dots, S_n\subset Y$, defined in equation~\eqref{eq:defframed0spheres}.   We give the following alternate description of the framing of $\bL$:

\begin{lem}\label{lem:welldefinedframing}
Let $\phi:\Sigma\, \#_{\ws} \bar{\Sigma}\to Y(S_1,\dots, S_n)$ denote the embedding defined above.
\begin{enumerate}
\item Suppose  $b_i\subset \Sigma\setminus N'(\ws)$ is a properly embedded arc which intersects $\beta_i$ once, intersects none of the other $\beta_j$ curves, and  intersects the boundary of $ \Sigma\setminus N'(\ws)$ at two points. Then $\bL$ is isotopic to the link obtained by doubling $b_1\cup \cdots \cup b_n$ onto $\Sigma\, \#_{\ws} \bar{\Sigma}$.
\item The framing on $\bL$ given by the trivializations of $N(\Sigma)$ and $N(D_{\beta_i})$, described above, agrees with the framing induced by a normal vector to $\Sigma\, \#_{\ws} \bar{\Sigma}$ along the curves obtained by doubling $b_1,\dots, b_n$.
\end{enumerate}
\end{lem}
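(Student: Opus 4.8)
\textbf{Proof plan for Lemma~\ref{lem:welldefinedframing}.}

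The plan is to unwind the explicit descriptions of the 0-sphere $S_i$, the link component $\ell_i$, the Heegaard surface $\Sigma'$, and the embedding $\phi$, all of which live inside the product neighborhood $N(D_{\beta_i})\iso D_{\beta_i}\times [-1,1]$ and the neighborhood $N(\Sigma)\iso \Sigma\times[0,1]$. The first step is to understand the local model: surgery on the 0-sphere $S_i=\{(0,\tfrac12),(0,-\tfrac12)\}$ attaches a 1-handle joining the two balls around these points, and the resulting manifold $Y(S_1,\dots,S_n)$ near $D_{\beta_i}$ looks like the double picture of Figure~\ref{fig::52}: the surface $\Sigma'$ is built from $\Sigma\times\{0\}$, a piece of $\Sigma\times\{1\}$, and the two connecting annuli $A_\pm$. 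The link component $\ell_i=\{(0,t):t\in[-\tfrac12,\tfrac12]\}$ runs along the core of this surgery 1-handle, so after the 0-surgery it becomes a closed curve. I would first verify that $\ell_i$ meets $\Sigma'$ transversally in exactly the band region dual to $\beta_i$, and then trace through Figure~\ref{fig::52} to see that $\ell_i$, viewed inside $\Sigma'$, is precisely the curve obtained by doubling, across the connected-sum region, the properly embedded arc $b_i\subset\Sigma\setminus N'(\ws)$ dual to $\beta_i$: the two endpoints of $b_i$ on $\d(\Sigma\setminus N'(\ws))$ are exactly where the doubling arcs attach to the annuli $A_\pm$. This gives part (1) up to isotopy, since any two properly embedded arcs in $\Sigma\setminus N'(\ws)$ meeting $\beta_i$ once, missing the other $\beta_j$, and with the prescribed boundary behavior are isotopic rel the relevant constraints, hence their doubles are isotopic in $\Sigma\,\#_{\ws}\bar\Sigma$; transporting through $\phi$ finishes it.

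For part (2), the point is to compare two framings of $\ell_i$: the one coming from the trivializations $\tau_i$ of $N(D_{\beta_i})$ and $\tau'$ of $N(\Sigma)$ — namely the vector field along $\ell_i$ projecting to a fixed vector in $T_0 D_{\beta_i}$ — with the ``surface framing'' given by a normal vector field to $T(\Sigma\,\#_{\ws}\bar\Sigma)$ (equivalently, to $T\Sigma'$ after applying $\phi$). I would work in the coordinates of $D_{\beta_i}\times[-1,1]$: along $\ell_i=\{0\}\times[-\tfrac12,\tfrac12]$, a normal vector to $\Sigma'$ is a vector tangent to $D_{\beta_i}$ (since $\Sigma'$ near $\ell_i$ is spanned by the $[-1,1]$-direction together with the directions in $D_{\beta_i}$ tangent to the annuli $A_\pm$, and a transverse complement is a single $D_{\beta_i}$-direction). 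On the other hand, the $\tau_i$-framing is by definition a constant $T_0 D_{\beta_i}$-vector along $\ell_i$. Both framings are thus specified by a nonvanishing section of the pullback of $TD_{\beta_i}$ over $\ell_i\simeq[-\tfrac12,\tfrac12]$, a contractible base, so the space of such sections is connected and the two framings agree up to isotopy. I would spell out just enough of the picture in Figure~\ref{fig::52} to see that the annuli $A_\pm$ are genuinely tangent to the $D_{\beta_i}$-directions (not wrapping), so that the transverse line field really is the $\tau_i$-direction rather than differing by a twist.

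The main obstacle I expect is part (2): making precise that the surface $\Sigma'$, as glued together from $\Sigma\times\{0\}$, a portion of $\Sigma\times\{1\}$, and the bending annuli $A_\pm$, contributes no framing twist to $\ell_i$ — i.e. that the framing is genuinely well defined independent of the auxiliary trivializations and equals the surface framing. This is intuitively clear from Figure~\ref{fig::52} but requires care because a careless choice of how the annuli $A_\pm$ bend could in principle introduce a $\pm1$-twist; the resolution is that the trivializations $\tau_i,\tau'$ are used consistently to build $\Sigma'$, $\ell_i$, and $\phi$ simultaneously, so the framings are defined by the \emph{same} product structure and the comparison reduces, as above, to connectedness of sections over an interval. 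Part (1) should be routine once the local model is set up, essentially a matter of reading off Figure~\ref{fig::52} and invoking uniqueness of the dual arc $b_i$ up to isotopy.
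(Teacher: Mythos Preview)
Your approach is the same as the paper's---work in the local model near $D_{\beta_i}$ and read the claims off Figure~\ref{fig::52}---just elaborated in much more detail than the paper gives (the paper's entire proof is the sentence ``Both claims are easily verified by drawing a picture''). Your plan for part~(2) is correct: in the product coordinates on $N(D_{\beta_i})$ both the $\tau_i$-framing and the surface-normal framing are sections of the pullback of $TD_{\beta_i}$ over an interval, so they agree up to homotopy with no twist.

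For part~(1), however, your reduction contains a genuine gap. The assertion that ``any two properly embedded arcs in $\Sigma\setminus N'(\ws)$ meeting $\beta_i$ once, missing the other $\beta_j$, and with the prescribed boundary behavior are isotopic'' is false in general. Such an arc lives in a component $P$ of $\Sigma(\ws)\setminus\bigcup_{j\neq i}\beta_j$; after further cutting along $\beta_i$ one obtains planar pieces, but these pieces may have four or more boundary components (e.g.\ when several $\beta_j$ border the same region of $\Sigma\setminus\bs$). In a sphere with $n\ge 4$ holes there are non-isotopic embedded arcs joining two fixed boundary components, distinguished by how they separate the remaining holes. Consequently the doubled curves $\delta_i$ need not be isotopic on the surface $\Sigma\,\#_{\ws}\bar\Sigma$ either (they can have different geometric intersection with the $\as$ curves, for instance).

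What Figure~\ref{fig::52} directly shows is that $\ell_i$ is isotopic in $Y(S_1,\dots,S_n)$ to the double of the \emph{specific} short arc visible in the picture (the arc $\{p\}\times[-1,1]$ for $p\in\beta_i$), and this is all that is invoked later (Lemma~\ref{lem:randomtrianglemapis2-handlemap} uses only the particular $b_i$ of Lemma~\ref{lem:particularbasisisarealbasis}). If you want to prove the lemma in the stated generality, the isotopy from a general double $\delta_i$ to $\ell_i$ must take place in the ambient $3$-manifold, not on the Heegaard surface: the extra ``winding'' of a general $b_i$ around other $\beta_j$-boundaries can be pushed off $\Sigma'$ and across the compressing disks in the $\bs$-handlebody, which is where the surface argument fails.
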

\begin{proof}Both claims can be verified by examining Figure~\ref{fig::52}.
\end{proof}

   We note that Proposition~\ref{prop:changeofdiagramsmapcomp} is stated in terms of an arbitrary $\Ds$, constructed using the doubling procedure from Section~\ref{prop:changeofdiagramsmapcomp}. We describe a choice of $\Ds$ which is particularly convenient for our purposes. Pick a collection $b_1,\dots, b_n$ of pairwise disjoint arcs in $H_1(\Sigma(\ws), A; \Z)$, which have both endpoints on $A$, and which are dual to the curves $\beta_1,\dots, \beta_n$ in the sense that $|\beta_i\cap b_j|=\delta_{ij}$, where $\delta_{ij}$ denotes the Kronecker-delta (as in Lemma~\ref{lem:welldefinedframing}). Additionally, we construct arcs $b_1',\dots, b_n'$ on $\Sigma(\ws)$ by isotoping $\beta_i$ (which intersects $b_i$ in a single point) along $b_i$ (in either direction), until it intersects $\d \Sigma(\ws)$ at two points. We let $\Ds$ be the collection of curves determined by doubling $b_1,\dots, b_n,b_1',\dots, b_n'$ onto $\Sigma\, \#_{\ws} \bar{\Sigma}$. An example of the arcs $b_1,\dots, b_n,b_1',\dots, b_n'$ is shown in Figure~\ref{fig::34}.

    \begin{figure}[ht!]
    	\centering
    	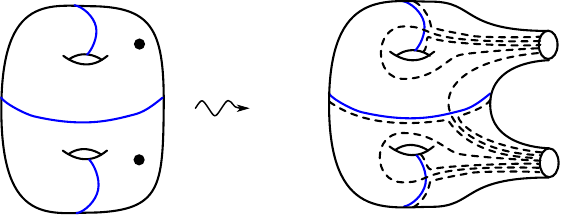
    	\caption{\textbf{The arcs $b_i$ and $b_i'$ on $\Sigma(\ws):=\Sigma\setminus \Int N'(\ws)$.} On the left are the curves $\bs\subset \Sigma$. On the right, the surface $\Sigma(\ws)$ is shown, as well as the arcs $A\subset \d N'(\ws)$ (shown in bold), and the arcs $b_i$ and $b_i'$ (both shown as dashed lines). \label{fig::34}}
    \end{figure}

   We now show that the $\Ds$ curves constructed by doubling the arcs $b_1,\dots, b_n,b_1',\dots, b_n'$ forms a valid set of attaching curves. By Lemma~\ref{lem:doublingvalid}, this amounts to proving the following:

   \begin{lem}\label{lem:particularbasisisarealbasis}If $b_1,\dots, b_n,b_1',\dots, b_n'$ denote the arcs described above, then the classes $[b_1],\dots, [b_n],$ $[b_1'],\dots, [b_n']$ form a basis of $H_1(\Sigma(\ws), A)$.
   \end{lem}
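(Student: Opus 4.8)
The plan is to show that the $2n$ classes $[b_1],\dots,[b_n],[b_1'],\dots,[b_n']$ are linearly independent in the free abelian group $H_1(\Sigma(\ws),A;\Z)$, and then observe that this group has rank exactly $2n$, so independence forces them to be a basis. For the rank computation, recall that $\Sigma(\ws)=\Sigma\setminus\Int N'(\ws)$ is a compact surface of genus $g=g(\Sigma)$ with $|\ws|$ boundary circles, and $A\subset\d\Sigma(\ws)$ consists of one arc in each boundary circle. The long exact sequence of the pair $(\Sigma(\ws),A)$, together with the fact that $A$ is a disjoint union of $|\ws|$ contractible arcs, gives $H_1(\Sigma(\ws),A;\Z)\cong H_1(\Sigma(\ws)/A;\Z)$, and collapsing each arc does not change the homotopy type, so $H_1(\Sigma(\ws),A;\Z)\cong H_1(\Sigma(\ws);\Z)\cong \Z^{2g+|\ws|-1}=\Z^{2n}$, using $n=g+|\ws|-1$. (Alternatively one can just cite the hypothesis of Lemma~\ref{lem:doublingvalid}'s setup, where the $d_i$ were chosen to be a basis of exactly this rank.)

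For linear independence, the idea is to exhibit a dual pairing. The curves $\beta_1,\dots,\beta_n$ are simple closed curves in $\Int\Sigma(\ws)$, hence define classes in $H_1(\Sigma(\ws);\Z)$, and capping off the boundary circles recovers classes in $H_1(\Sigma;\Z)$. There is an intersection pairing $H_1(\Sigma(\ws),A;\Z)\otimes H_1(\Sigma(\ws),\d\Sigma(\ws)\setminus A;\Z)\to\Z$ (Lefschetz duality for the surface with boundary, with the boundary partitioned into $A$ and its complement), and $\beta_i$, being a closed curve disjoint from the boundary, represents a class in $H_1(\Sigma(\ws),\d\Sigma(\ws)\setminus A;\Z)$. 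By construction $b_j$ is dual to $\beta_i$, i.e. $\beta_i\cdot b_j=\delta_{ij}$, and since $b_i'$ is obtained by pushing $\beta_i$ along $b_i$ off to the boundary, $b_i'$ is isotopic (rel endpoints, through arcs) to $\beta_i$ as far as the intersection pairing with the $\beta_j$ is concerned — more precisely $\beta_j\cdot b_i'=\beta_j\cdot\beta_i=0$ for all $j$ since the $\beta$'s are disjoint. So against the test classes $\{[\beta_j]\}_{j=1}^n$ the matrix of pairings for $\{[b_i]\}$ is the identity and for $\{[b_i']\}$ is zero. This immediately shows the $[b_i]$ are independent and that no nontrivial combination of the $[b_i]$ lies in the span of the $[b_i']$.

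It remains to see that the $[b_i']$ are themselves independent. For this I would use a second family of test classes: choose, for each $i$, a simple closed curve $\gamma_i$ in $\Int\Sigma(\ws)$ dual to $\beta_i$ (so $\gamma_i\cdot\beta_j=\delta_{ij}$ in $\Sigma$, available since the $\beta_j$ are part of a symplectic-type basis after capping) and arrange the $\gamma_i$ to be disjoint from the arcs $b_j$. Then $\gamma_i\cdot b_j'$ equals $\gamma_i\cdot\beta_j=\delta_{ij}$ because $b_j'$ differs from $\beta_j$ by an isotopy supported near $b_j$, away from $\gamma_i$. So the $[b_i']$ pair to the identity matrix against $\{[\gamma_i]\}$, and moreover $\gamma_i\cdot b_j=0$. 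Assembling the $2n\times 2n$ pairing matrix of $\{[b_1],\dots,[b_n],[b_1'],\dots,[b_n']\}$ against the test classes $\{[\beta_1],\dots,[\beta_n],[\gamma_1],\dots,[\gamma_n]\}$ gives a block matrix $\begin{pmatrix} I & 0\\ 0 & I\end{pmatrix}$ (up to sign conventions on the pairing), which has determinant $\pm1$; hence the $2n$ classes are independent, and by the rank count they form a basis. The main obstacle is bookkeeping rather than conceptual: one must be careful about which relative homology group each class naturally lives in and set up the Lefschetz/intersection pairing between $H_1(\Sigma(\ws),A)$ and $H_1(\Sigma(\ws),\d\Sigma(\ws)\setminus A)$ correctly, and verify that the isotopies defining the $b_i'$ really can be taken disjoint from the chosen dual curves $\gamma_i$ — this last point is where I would draw the picture referenced in Figure~\ref{fig::34} and argue it explicitly.
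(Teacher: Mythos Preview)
Your approach differs from the paper's, which instead reduces to showing that $[b_1],\dots,[b_n],[\beta_1],\dots,[\beta_n]$ form a basis (using $[b_i']=[\beta_i]$) and then argues by induction on $|\bs|$, surgering out one $\beta_k$ at a time and invoking Mayer--Vietoris to see that the rank jumps by exactly two, with $[\beta_k]$ and $[b_k]$ supplying the two new generators.

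Your intersection-pairing strategy is natural, but there is a genuine gap in the second step. The closed curves $\gamma_i$ with $\gamma_i\cdot\beta_j=\delta_{ij}$ need not exist when $|\ws|>1$. In a multi-pointed diagram there are $n=g(\Sigma)+|\ws|-1$ curves in $\bs$, and while these are independent in $H_1(\Sigma\setminus\ws;\Z)$, their images in $H_1(\Sigma;\Z)\cong\Z^{2g}$ span only a rank-$g$ subspace (the components of $\Sigma\setminus\bs$ are planar, so $|\ws|-1$ independent relations hold). The intersection number of two closed curves in $\Sigma(\ws)$ equals their intersection number in the capped surface $\Sigma$, so no family of closed test curves can distinguish the $\beta_j$'s once $|\ws|>1$. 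Concretely, take $\Sigma=S^2$ with $|\ws|=2$: then $n=1$, $\Sigma(\ws)$ is an annulus with core $\beta_1$, and every closed curve has zero algebraic intersection with $\beta_1$. Your argument as written is therefore only valid in the singly-pointed case $|\ws|=1$. To repair it you would need test classes that are arcs in $H_1(\Sigma(\ws),\partial\Sigma(\ws)\setminus A;\Z)$ rather than closed curves; setting this up correctly is possible but requires more care than you indicate.

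A smaller point: your rank computation reaches the correct answer $2n$, but the step ``collapsing each arc does not change the homotopy type, so $H_1(\Sigma(\ws),A)\cong H_1(\Sigma(\ws))$'' is not right when $|\ws|>1$, since $\Sigma(\ws)/A$ identifies the $|\ws|$ arcs to a \emph{single} point, which does change the homotopy type (it adds $|\ws|-1$ to the first Betti number). The long exact sequence of the pair gives the honest count $\operatorname{rank}H_1(\Sigma(\ws),A)=(2g+|\ws|-1)+(|\ws|-1)=2n$.
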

   \begin{proof}As $[b_i']$ is homologous to $[\beta_i]$, it is sufficient to show that the classes $[b_1],\dots, [b_n], [\beta_1],\dots, [\beta_n]$ form a basis of $H_1(\Sigma(\ws), A;\Z)$. The claim can then be proven by induction on the number of $\bs$ curves. In the case that $\bs$ is empty, the surface $\Sigma(\ws)$ is a collection of disks, and $H_1(\Sigma(\ws),A)$ vanishes. Assuming the claim holds for any diagram where $|\bs|=k-1$, we can prove that the claim also holds for diagrams with $|\bs|=k$ by surgering out a curve $\beta_k\in \bs$ and considering the effect on $H_1(\Sigma(\ws), A)$.  Using a Mayer-Vietoris exact sequence for the subspaces $N(\beta_k)$ and $\Sigma(\ws)\setminus \beta_k$, it is easy to see that \[\rank H_1(\Sigma(\ws),A)=\rank H_1(\Sigma(\ws)(\beta_k), A)+2.\] Furthermore, a basis of $H_1(\Sigma(\ws),A)$ is obtained from a basis of $H_1(\Sigma(\ws)(\beta_k),A)$ by adding the two generators $[\beta_k]$ and $[b_k]$, where $b_k$ is a dual arc to $\beta_k$.
    \end{proof}
 
 \begin{lem}\label{lem:randomtrianglemapis2-handlemap} Let $\Sigma\, \#_{\ws} \bar{\Sigma}$ be embedded in $Y(S_1,\dots, S_n)$ as described above. With the choice of $\Ds$ considered in Lemma~\ref{lem:particularbasisisarealbasis}, the map $F_{\a\cup \bar{\b}, \b\cup \bar{\b},\Dt}(-,\Theta_{\b\cup \bar{\b},\Dt}^+)$ is the 2-handle map, for surgery on the framed link $\bL\subset Y(S_1,\dots, S_n)$.
 \end{lem}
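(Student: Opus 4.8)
The plan is to recognize $F_{\as\cup\bar{\bs},\bs\cup\bar{\bs},\Ds}(-,\Theta_{\bs\cup\bar{\bs},\Ds}^+)$ as precisely the holomorphic triangle count that \emph{defines} the $2$-handle cobordism map, so that the substance of the lemma is the identification of $(\Sigma\,\#_{\ws}\bar{\Sigma},\as\cup\bar{\bs},\bs\cup\bar{\bs},\Ds)$ — viewed inside $Y(S_1,\dots,S_n)$ via the embedding $\phi$ of \eqref{eq:embeddingphidef} — with a Heegaard triple subordinate to a bouquet for the framed link $\bL$. Recall that for surgery on a framed link $\bL'$ in a $3$-manifold $M$, the $2$-handle map is computed (\cite{OSTriangles}, in the multi-pointed setting \cite{ZemGraphTQFT}*{Section~8}) from a subordinate triple $(\Sigma'',\mathbf{a},\mathbf{b},\mathbf{c})$: the pair $(\mathbf{a},\mathbf{b})$ is a diagram for $M$ in which a distinguished sub-collection of the $\mathbf{b}$ curves are meridians of the components of $\bL'$, the pair $(\mathbf{b},\mathbf{c})$ is a diagram for a connected sum of copies of $S^1\times S^2$ obtained by replacing each meridian with the corresponding surgery framing curve and taking small Hamiltonian translates of the remaining $\mathbf{b}$ curves, and $(\mathbf{a},\mathbf{c})$ is a diagram for $M(\bL')$; the map is then $F_{\mathbf{a},\mathbf{b},\mathbf{c}}(-,\Theta^+_{\mathbf{b},\mathbf{c}})$, and it is independent of the subordinate triple.

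First I would pin down the three corner diagrams. The diagram $(\Sigma\,\#_{\ws}\bar{\Sigma},\as\cup\bar{\bs},\Ds)$ is $D_{\as}(\cH)$ and represents $Y$; since the handlebody on the $\as\cup\bar{\bs}$ side is $Y\setminus U_\Sigma$, which is untouched by the surgeries along the $S_i$, under $\phi$ this exhibits $Y$ as $Y(S_1,\dots,S_n)(\bL)$. The diagram $(\Sigma\,\#_{\ws}\bar{\Sigma},\as\cup\bar{\bs},\bs\cup\bar{\bs})$ I would show represents $Y(S_1,\dots,S_n)$: the $\as\cup\bar{\bs}$ curves bound the same compressing disks as in $D_{\as}(\cH)$, while the $\bs\cup\bar{\bs}$ curves bound compressing disks in the handlebody obtained from $U_\Sigma$ by attaching the $n$ one-handles dual to the $S_i$ — a direct inspection of Figure~\ref{fig::52}. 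Finally $(\Sigma\,\#_{\ws}\bar{\Sigma},\bs\cup\bar{\bs},\Ds)$ represents a connected sum of copies of $S^1\times S^2$ (as already used to make sense of $\Theta_{\bs\cup\bar{\bs},\Ds}^+$).

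The core of the argument is the curve dictionary. By Lemma~\ref{lem:welldefinedframing}(1) the component $\ell_i$ is isotopic in the Heegaard surface to the doubled dual arc, namely to $\delta_i$, and one checks that $\ell_i$ meets $\bs\cup\bar{\bs}$ transversally only along $\ell_i\cap\beta_i$ and $\ell_i\cap\bar{\beta}_i$, each a single point; moreover $\beta_i=\d D_{\beta_i}$ bounds a disk pierced once by $\ell_i$ and hence is a meridian of $\ell_i$, whereas $\bar{\beta}_i$ bounds a disk on the $\bar{\Sigma}$-side disjoint from $\ell_i$. I would therefore take the $\bs$ curves to be the meridian sub-collection; then $\delta_i$ is a parallel copy of $\ell_i$ meeting $\beta_i$ once, and by Lemma~\ref{lem:welldefinedframing}(2) its surface framing is exactly the framing of $\bL$ induced by the trivializations $\tau_i,\tau'$, so $\delta_i$ is the surgery framing curve replacing the meridian $\beta_i$. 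The curve $\delta_i'$, being the double of an arc isotopic to $\beta_i$, is isotopic to $\beta_i$ in the Heegaard surface and can be pushed off $\ell_i$, so it is a Hamiltonian translate of the non-meridian curve $\bar{\beta}_i$ in the complement of $\bL$; Lemma~\ref{lem:particularbasisisarealbasis} guarantees that $\Ds$ is a legitimate attaching set with the required block-diagonal intersection pattern. After a finite sequence of handleslides and isotopies bringing the triple to the literal normal form of the definition — which changes the triangle map only up to chain homotopy by naturality of the triangle maps — the triple is subordinate to a bouquet for $\bL$ with the stated framing, and hence $F_{\as\cup\bar{\bs},\bs\cup\bar{\bs},\Ds}(-,\Theta_{\bs\cup\bar{\bs},\Ds}^+)$ is the $2$-handle map for surgery on $\bL$.

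I expect the main obstacle to be exactly this topological bookkeeping: confirming that $\phi$ carries $\bs\cup\bar{\bs}$ onto a compressing set for the relevant handlebody in $Y(S_1,\dots,S_n)$, that it is the $\bs$ curves (and not the $\bar{\bs}$ curves) that are the meridians of $\bL$, that $\Ds$ splits as framing curves $\sqcup$ translates of $\bar{\bs}$ with the correct framings, and writing out the handleslide/isotopy sequence that brings the triple into standard subordinate form. All of this is carried by Figure~\ref{fig::52} together with Lemmas~\ref{lem:welldefinedframing} and \ref{lem:particularbasisisarealbasis}, but it has to be spelled out carefully, and keeping track of the connected-sum tubes of $\Sigma\,\#_{\ws}\bar{\Sigma}$ when $|\ws|>1$ is the delicate point.
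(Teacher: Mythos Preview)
Your overall strategy is exactly the paper's: show that the triple is, after handleslides, subordinate to a bouquet for $\bL$, and then invoke independence of the triangle map under handleslides. The gap is in your curve dictionary.

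The claim that ``$\delta_i'$, being the double of an arc isotopic to $\beta_i$, is isotopic to $\beta_i$ in the Heegaard surface'' is false. The doubled curve $\delta_i'=b_i'\cup\bar{b}_i'$ passes through a connected-sum tube and is symmetric under the $\Sigma\leftrightarrow\bar{\Sigma}$ involution; in $H_1(\Sigma\,\#_{\ws}\bar{\Sigma})$ one has $[\delta_i']=[\beta_i]+[\bar{\beta}_i]$, so $\delta_i'$ is isotopic to neither $\beta_i$ nor $\bar{\beta}_i$. (Your next sentence, that $\delta_i'$ is a Hamiltonian translate of $\bar{\beta}_i$, is therefore also false, and in any case would not follow from $\delta_i'\sim\beta_i$.) Relatedly, the original triple $(\Sigma\,\#_{\ws}\bar{\Sigma},\as\cup\bar{\bs},\bs\cup\bar{\bs},\Ds)$ cannot be subordinate with either choice of meridian sub-collection: since $|b_i\cap\beta_j|=\delta_{ij}$, the framing curve $\delta_i$ meets \emph{both} $\beta_i$ and $\bar{\beta}_i$ once, so whichever of $\bs,\bar{\bs}$ you declare ``non-meridian'' still intersects $\ds$.

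The paper's fix is exactly the handleslide you anticipated but did not specify: slide each $\beta_i$ over $\bar{\beta}_i$ along the arc $b_i$ to obtain $\hat{\beta}_i$. Then $\hat{\beta}_i$ is isotopic to $\delta_i'$ (with $|\hat{\beta}_i\cap\delta_j'|=2\delta_{ij}$ and $|\hat{\beta}_i\cap\delta_j|=0$), and now $\bar{\beta}_i$---not $\beta_i$---serves as the meridian of $\ell_i$, dual to the longitude $\delta_i$. With these identifications the triple $(\Sigma\,\#_{\ws}\bar{\Sigma},\as\cup\bar{\bs},\hat{\bs}\cup\bar{\bs},\Ds)$ is subordinate to a bouquet for $\bL$, and associativity carries this back to the original triangle map.
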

 \begin{proof}Let us write $\ds=\{\delta_1,\dots, \delta_n\}$ for the curves obtained by doubling $b_1,\dots, b_n$ and let us write $\ds'=\{\delta_1',\dots, \delta_n'\}$ for the curves obtained by doubling $b_1',\dots, b_n'$. By definition 
 \[
 \Ds=\ds'\cup \ds.
 \]
   The 2-handle map is defined by picking a Heegaard triple which is subordinate to a bouquet for $\bL$. In this case, the triple $(\Sigma\, \#_{\ws} \bar{\Sigma},\as\cup \bar{\bs},\bs\cup \bar{\bs},\Ds)$ is not quite a triple subordinate to a bouquet for $\bL$. Nonetheless, we will show that it is related to such a triple by a sequence of handleslides. A straightforward argument using associativity of the holomorphic triangle maps can be used to show that this implies $F_{\a\cup \bar{\b}, \b\cup \bar{\b},\Dt}(-,\Theta_{\b\cup \bar{\b},\Dt}^+)$ is chain homotopic to the 2-handle map.
 
 First handleslide each $\bs$ curve across the corresponding curve in $\bar{\bs}$. Let $\hat{\bs}$ denote the resulting curves, and let us now consider the triple $(\Sigma\, \#_{\ws} \bar{\Sigma}, \as\cup \bar{\bs}, \hat{\bs}\cup \bar{\bs}, \Ds)$. Note that to handleslide the curve $\beta_i$ over $\bar{\beta}_i$, we need to pick a path from $\beta_i$ to $\bar{\beta}_i$. The arc $b_i$ (which intersects $\beta_i$ exactly once) provides two choices of path from $\beta_i$ to $\bar{\beta}_i$, and we choose the one which is consistent with our choice for the arcs $b_i'$ in the construction of the curves $\Ds$. It follows that $\hat{\beta}_i$ is isotopic to $\delta_i'$. Clearly we can pick $\hat{\beta}_i$ so that 
 \[|\hat{\beta}_i\cap \delta_j'|=\begin{cases} 2 & \text{if } i=j,\\
 0& \text{otherwise},\end{cases}\] and $|\hat{\beta}_i\cap \delta_j|=0$ for all $i$ and $j$. According to Lemma~\ref{lem:welldefinedframing}, the link component $\ell_i\in \bL$ corresponding to the curve $\beta_i$ is isotopic to the knot obtained by pushing $\delta_i$ off of $\Sigma \, \#_{\ws} \bar{\Sigma}$. The framing from Lemma~\ref{lem:welldefinedframing} is the one which is tangent to $\Sigma\, \#_{\ws} \bar{\Sigma}$. Hence $\delta_i$ is a longitude of $\ell_i$.  Furthermore,  $\bar{\beta}_i$ is a meridian of $\ell_i$.
 
  The curves $\delta_i$ and $\bar{\beta}_i$ are dual, in the sense that 
  \[|\bar{\beta}_i\cap \delta_j|=\begin{cases} 1 & \text{if } i=j,\\
   0& \text{otherwise}.\end{cases}\] Since $\bar{\beta}_i\cup \delta_j$ is also disjoint from the $\hat{\beta}_j$ and $\delta_j'$ curves, it follows that a regular neighborhood of $\bar{\beta}_i\cup \delta_i$ is a diffeomorphic to a once punctured torus $F_i$, which does not intersect any of the other $\bar{\beta}_j$ or $\delta_j$ curves, or any of the $\hat{\beta}_j$ or $\delta'_j$ curves. We note that 
  \[(\Sigma\, \#_{\ws} \bar{\Sigma}) \setminus \bigcup_{i=1}^n(F_i\cup \hat{\beta}_i)\] is homeomorphic to a collection of punctured disks, each containing exactly one $\ws$ basepoint. There are thus disjoint, embedded arcs on $(\Sigma\, \#_{\ws} \bar{\Sigma})$ (avoiding $\hat{\bs},$ $\bar{\bs}$, $\ds$ and $\ds'$, except at their endpoints) connecting each $\delta_i$ to one of the basepoints. The union of these arcs (pushed slightly off of $\Sigma\, \#_{\ws} \bar{\Sigma}$) is a bouquet for $\bL$, and the triple $(\Sigma\, \#_{\ws} \bar{\Sigma}, \as\cup \bar{\bs}, \hat{\bs}\cup \bar{\bs}, \Ds)$ is, by definition, subordinate to this bouquet for $\bL$.
 \end{proof}

We can now prove Proposition~\ref{prop:changeofdiagramsmapcomp}, by showing that 
\[
\Psi_{\cH\to D_{\a}(\cH)}\simeq F_{\a\cup \bar{\b}, \b\cup \bar{\b}, \Dt}( F_1^{\bar{\b},\bar{\b}}, \Theta_{\b\cup \bar{\b},\Dt}^+).
\]

\begin{proof}[Proof of Proposition~\ref{prop:changeofdiagramsmapcomp}] The composition appearing in the statement is unchanged (up to chain homotopy) by isotopies or handleslides of the $\Ds$ curves amongst each other. As any two sets of attaching curves for the fixed handlebody $U_\Sigma$ are related by a sequence of handleslides and isotopies, it is sufficient to show the claim for the $\Ds$ curves considered in Lemma~\ref{lem:randomtrianglemapis2-handlemap}. The main proposition statement is now a consequence of Lemmas~\ref{lem:generalized1-handlemapiscompof1-handles} and \ref{lem:randomtrianglemapis2-handlemap}, as well as invariance of the cobordism maps under 4-dimensional handle cancellations.
\end{proof}

Analogously, the proof of Proposition~\ref{prop:changeofdiagramsmapcomp} adapts to compute the transition map in the opposite direction:

\begin{prop}\label{prop:changeofdiagramsmapcompundouble}
If $\cH=(\Sigma,\as,\bs,\ws)$ is a multi-pointed Heegaard diagram, then the transition map $\Psi_{ D_{\a}(\cH)\to \cH}$ satisfies
\[
\Psi_{ D_{\a}(\cH)\to \cH}\simeq F_3^{\bar{\b},\bar{\b}}(F_{\a\cup \bar{\b}, \Delta, \b\cup \bar{\b}}(-, \Theta^+_{\b\cup \bar{\b}, \Dt})).
\]
\end{prop}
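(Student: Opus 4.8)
The plan is to mirror the proof of Proposition~\ref{prop:changeofdiagramsmapcomp} almost verbatim, replacing every generalized 1-handle map with the dual generalized 3-handle map and reversing the direction of all cobordisms involved. First I would record the formal observation that the diagram $D_{\as}(\cH)$ is obtained from $\cH$ by the reverse of the handle operations used in Proposition~\ref{prop:changeofdiagramsmapcomp}: where the transition map $\Psi_{\cH\to D_{\as}(\cH)}$ was built as a composition of $n$ topologically-cancelling 1-handle maps followed by a 2-handle map for the framed link $\bL$, the transition map $\Psi_{D_{\as}(\cH)\to \cH}$ should be built as the 2-handle map for the mirror framed link (turning around the cobordism of Lemma~\ref{lem:randomtrianglemapis2-handlemap}) followed by the $n$ 3-handle maps that cancel the $S^1\times S^2$ summands. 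Since the transition maps satisfy $\Psi_{D_{\as}(\cH)\to\cH}\circ \Psi_{\cH\to D_{\as}(\cH)}\simeq \id$ and each of the constituent maps is itself invertible up to chain homotopy (the 3-handle map is a homotopy inverse to the corresponding 1-handle map, and the 2-handle map for $\bL$ reversed is a homotopy inverse to the 2-handle map for $\bL$), the desired identity would follow formally once I have the analogue of Proposition~\ref{prop:changeofdiagramsmapcomp} oriented correctly.

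To make this precise and self-contained I would carry out the following steps. First, establish the analogue of Lemma~\ref{lem:generalized1-handlemapiscompof1-handles} for the generalized 3-handle map: namely that $F_3^{\bar{\bs},\bar{\bs}}$ agrees, up to chain homotopy and after applying $(\phi_*)^{-1}$, with the composition $F_{S_1}^{3}\circ\cdots\circ F_{S_n}^{3}$ of the 3-handle maps for the 0-spheres $S_1,\dots,S_n$ (now attaching 3-handles to cancel the corresponding 1-handles). The proof is the same neck-stretching and index-counting argument as in Lemma~\ref{lem:generalized1-handlemapiscompof1-handles}, using the second half of Proposition~\ref{prop:differentialcomp} and the fact that both maps are given on generators by pairing against $\Theta^-_{\bar{\bs},\bar{\bs}}$; the only point to check is that a single almost complex structure, stretched on all $2|\bar{\bs}|+|\ws|$ necks simultaneously, computes both sides, which follows from the identity $\Psi_{J(\ve{T})\to J(\ve{T}')}(\ys\times \Theta^-_{\bar{\bs},\bar{\bs}})=\ys\times \Theta^-_{\bar{\bs},\bar{\bs}}$ proven exactly as Equation~\eqref{eq:cxstr1-handle=generalized1-handle}. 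Second, establish the analogue of Lemma~\ref{lem:randomtrianglemapis2-handlemap}: with the choice of $\Ds$ from Lemma~\ref{lem:particularbasisisarealbasis}, the map $F_{\as\cup \bar{\bs},\Ds,\bs\cup\bar{\bs}}(-,\Theta^+_{\bs\cup\bar{\bs},\Ds})$ is the 2-handle map for surgery on the mirror framed link, using the associativity of holomorphic triangle maps to pass from the given triple $(\Sigma\,\#_{\ws}\bar\Sigma,\as\cup\bar{\bs},\Ds,\bs\cup\bar{\bs})$ to one subordinate to a bouquet, via the same sequence of handleslides (the only change is that the roles of the $\bs\cup\bar{\bs}$ and $\Ds$ curves are swapped in the last two slots of the triple).

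Third, I would assemble these pieces exactly as in the proof of Proposition~\ref{prop:changeofdiagramsmapcomp}: the composition $F_3^{\bar{\bs},\bar{\bs}}\circ F_{\as\cup\bar{\bs},\Ds,\bs\cup\bar{\bs}}(-,\Theta^+_{\bs\cup\bar{\bs},\Ds})$ is, by the two lemmas just discussed, the cobordism map for a 2-handle attachment followed by $n$ 3-handle attachments, where the 2-handles and 3-handles cancel topologically in pairs, leaving the identity cobordism $Y\times[0,1]$; by invariance of the cobordism maps under 4-dimensional handle cancellation this composition is chain homotopic to $\Psi_{D_{\as}(\cH)\to\cH}$. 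As in Proposition~\ref{prop:changeofdiagramsmapcomp}, the statement for an arbitrary $\Ds$ obtained by the doubling procedure follows because the composition is unchanged up to chain homotopy under handleslides and isotopies among the $\Ds$ curves, and any two choices of $\Ds$ are so related. I expect the main obstacle to be purely bookkeeping: getting the framings and the orientation conventions right so that the ``mirror'' framed link of Lemma~\ref{lem:randomtrianglemapis2-handlemap} is exactly the one whose 2-handle map cancels against the 3-handles of the analogue of Lemma~\ref{lem:generalized1-handlemapiscompof1-handles}, rather than, say, its reverse; the analytic inputs are all identical to those already established and require no new ideas.
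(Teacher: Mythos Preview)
Your proposal is correct and takes essentially the same approach as the paper; indeed, the paper gives no separate proof and merely states ``Analogously, the proof of Proposition~\ref{prop:changeofdiagramsmapcomp} adapts to compute the transition map in the opposite direction.'' Your write-up is a faithful elaboration of that one-line justification, with the expected dualization of Lemmas~\ref{lem:generalized1-handlemapiscompof1-handles} and~\ref{lem:randomtrianglemapis2-handlemap} and the same handle-cancellation argument at the end.
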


\section{Connected sums and graph cobordisms}
\label{sec:connectedsumsandgraphTQFT}

In this Section, we show that the graph cobordism maps for connected sums have an alternate description (Proposition~\ref{prop:OSmapsaregraphcobmaps}), which we will need when proving the Heegaard triple cobordism formula in Theorem~\ref{thm:triplesandgraphcobordismmaps}.

There is a natural graph cobordism 
\[
(W,\Gamma):(Y_1\sqcup Y_2,\{w_1,w_2\})\to (Y_1\# Y_2, w),
\] 
as follows. We construct $W$ by attaching a 1-handle to $Y_1\sqcup Y_2$ at $w_1$ and $w_2$, i.e., gluing a copy of $D^3\times [-1,1]$ to $(Y_1\sqcup Y_2)\times [0,1]$ along regular neighborhoods of $(w_1,1)$ and $(w_2,1)$ in $(Y_1\sqcup Y_2)\times \{1\}$. We view $D^3$ as the unit ball in $\R^3$ and pick a point $v\in S^2=\d D^3$. We take the basepoint in $Y_1\# Y_2$ to be $w:= (v,0)\in S^2\times [-1,1]$, which sits inside of the connected sum region of $Y_1\# Y_2$. We define the graph $\Gamma$ to be
\[\Gamma:=\big(\{w_1,w_2\}\times [0,1]\big)\cup \big(\{(0,0,0)\}\times [-1,1]\big)\cup \{(t v,0): t\in [0,1]\}.\] We give $\Gamma$ the ribbon structure induced by the cyclic ordering of the boundary manifolds $Y_1,$ $Y_2,$ $Y_1\# Y_2$ (read left to right). This is shown in Figure~\ref{fig::38}.

\begin{figure}[ht!]
	\centering
	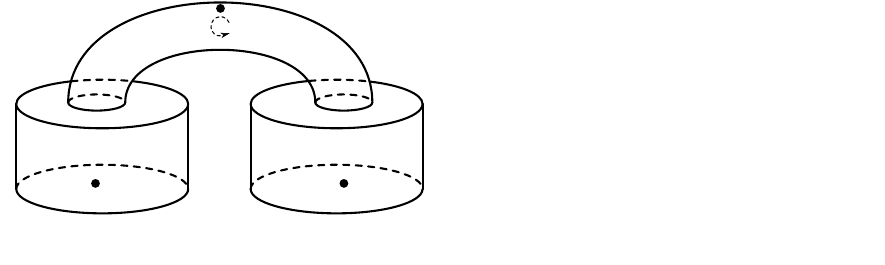
	\caption{\textbf{The graph cobordisms $(W,\Gamma)$ and $(W',\Gamma')$ for connected sums used to define the maps $E_1^A$ and $ E_1^B$ (left) as well as $G_1^A$ and $G_1^B$ (right).} In the case that $Y_1$ and $Y_2$ have more basepoints, the cobordisms have additional 1-handles or 3-handles, and additional graph components. The cyclic order on the trivalent vertices are shown.\label{fig::38}}
\end{figure}

Define graph cobordism maps $E_1^A$ and $E_2^A$ by 
\begin{equation}E_1^A:=F_{W,\Gamma,\frt}^A\qquad \text{and} \qquad E_2^A:=F_{W,\bar{\Gamma},\frt}^A,\label{eq:EiAmapsdefinition}\end{equation}  where $\bar{\Gamma}$ is the ribbon graph obtained by taking $\Gamma$ and reversing the cyclic orderings. Also $\frt$ is the unique $\Spin^c$ structure on $Y_1\# Y_2$ which extends $\frs_1\sqcup \frs_2$. We define maps $E_1^B$ and $E_2^B$ analogously, by using the type-$B$ graph cobordism maps. Note that
\[E_1^A\simeq E_2^B \qquad \text{and} \qquad E_2^A\simeq E_1^B,\] by Lemma~\ref{lem:reversecyclicordering}.

We can also define a graph cobordism $(W',\Gamma')$ from $(Y_1\# Y_2,w)$ to $(Y_1\sqcup Y_2, \{w_1,w_2\})$. The 4-manifold $W'$ is a 3-handle cobordism and $\Gamma'$ is the graph $\Gamma$, with reversed cyclic ordering. We define maps $G_1^A$ and $G_2^A$ as
\begin{equation}
G_1^A:=F_{W',\Gamma',\frt}^A \qquad \text{and} \qquad G_2^A:=F_{W',\bar{\Gamma}',\frt}^A,\label{eq:GiABdefinition}
\end{equation}
where $\frt$ is the unique 4-dimensional $\Spin^c$ structure extending $\frs_1\sqcup \frs_2$ on $Y_1\sqcup Y_2$. We define maps $G_1^B$ and $G_2^B$ analogously.

In \cite{HMZConnectedSum}*{Proposition~5.4}, the maps $E_1^A$ and $G_1^A$ are shown to be chain homotopy inverses (the same holds for the other 3 natural pairs).

In the original proof of the connected sum formula \cite{OSProperties}*{Theorem~1.5}, Ozsv\'{a}th and Szab\'{o} constructed different maps from  $\CF^-(Y_1)\otimes \CF^-(Y_2)$ to $\CF^-(Y_1\# Y_2)$, which do not obviously have a cobordism interpretation. We describe their maps presently. If $(\Sigma_1,\as_1,\bs_1,w_1)$ and $(\Sigma_2,\as_2,\bs_2,w_2)$ are two singly pointed Heegaard diagrams for $(Y_1,w_1)$ and $(Y_2,w_2)$, then the diagram $(\Sigma_1\# \Sigma_2,\as_1\cup \as_2,\bs_1\cup \bs_2,w)$ is a diagram for $(Y_1\# Y_2,w)$, where the connected sum is taken near $w_1$ and $w_2$, and $w$ is a basepoint in the connected sum region of $\Sigma_1\# \Sigma_2$. Ozsv\'{a}th and Szab\'{o} construct a map 
\[
\cE_1\colon \CF^-(\Sigma_1,\as_1,\bs_1,w_1)\otimes_{\bF_2[U]} \CF^-(\Sigma_2,\as_2,\bs_2,w_2)\to \CF^-(\Sigma_1\# \Sigma_2,\as_1\cup \as_2,\bs_1\cup \bs_2,w)
\] and prove that it is a quasi-isomorphism.

 The map $\cE_1$ is defined as a composition of two generalized 1-handle maps and a triangle map, via the formula
\begin{equation}
\cE_1(\xs,\ys):=F_{\a_1\cup \a_2, \b_1\cup \a_2, \b_1\cup \b_2}(F_1^{\a_2,\a_2}(\xs)\otimes F_1^{\b_1,\b_1}(\ys)).
\label{eq:cE_1def}
\end{equation} 

We note that the formula for $\cE_1$ is not symmetric in $Y_1$ and $Y_2$. By switching the roles of $Y_1$ and $Y_2$, we can define a potentially different map $\cE_2$ by the formula
\begin{equation}
\cE_2(\xs,\ys):=F_{\a_1\cup \a_2, \a_1\cup \b_2, \b_1\cup \b_2}(F_1^{\a_1,\a_1}(\ys)\otimes F_1^{\b_2,\b_2}(\xs)).
\label{eq:cE_2def}
\end{equation}
 We refer to $\cE_1$ and $\cE_2$ as the \textit{intertwining maps}.

 We prove the following in this section:

\begin{prop}\label{prop:OSmapsaregraphcobmaps}The connected sum maps $\cE_i$, $E_i^A$ and $E_i^B$ satisfy the relations
\[
\cE_1\simeq E_1^B\simeq E_2^A\qquad \text{and} \qquad\cE_2\simeq E_2^B\simeq E_1^A.
\]
\end{prop}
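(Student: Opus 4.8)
The plan is to show directly that the two descriptions of the connected sum map---the one via generalized $1$-handle maps and a triangle map (the map $\cE_1$ of Equation~\eqref{eq:cE_1def}), and the one via the graph cobordism $(W,\Gamma)$ (the map $E_1^B$ of Equation~\eqref{eq:EiAmapsdefinition})---are computed by the same composition of elementary maps. First I would give an explicit handle decomposition of the cobordism $W:Y_1\sqcup Y_2\to Y_1\#Y_2$. Since $W$ is obtained by attaching a single $4$-dimensional $1$-handle to $(Y_1\sqcup Y_2)\times[0,1]$ (plus, when there are extra basepoints, additional $1$- and $3$-handles and graph strands which can be pushed out of the way), the graph cobordism map $F^A_{W,\Gamma,\frt}$ is, by the construction recalled in Section~\ref{sec:outlineofconstruction}, a composition of a $1$-handle map together with the graph action maps $A_e$ (or $B_e$) for the strands of $\Gamma$. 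The key point is to read off precisely which Heegaard diagrams and which intermediate moves arise, so that the graph action strands of $\Gamma$ become relative homology maps that can be absorbed into the bookkeeping of the triangle map.

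The heart of the argument is a comparison of Heegaard diagrams. Starting from $(\Sigma_1,\as_1,\bs_1,w_1)\sqcup(\Sigma_2,\as_2,\bs_2,w_2)$, the $1$-handle attaching the two pieces produces, after choosing a genuine Heegaard surface, the diagram $(\Sigma_1\#\Sigma_2,\as_1\cup\as_2,\bs_1\cup\bs_2,w)$; but the natural way to realize this via handle maps passes through the stabilized diagram $(\Sigma_1\#\Sigma_2,\as_1\cup\as_2,\bs_1\cup\as_2,\bs_1\cup\bs_2)$, exactly the Heegaard triple appearing in $\cE_1$. Concretely I would argue that attaching the $4$-dimensional $1$-handle corresponds (on the level of Heegaard triples) to first applying the generalized $1$-handle maps $F_1^{\as_2,\as_2}$ and $F_1^{\bs_1,\bs_1}$ of Section~\ref{sec:generalized1--handleand3--handlemaps}, and then the triangle map $F_{\as_1\cup\as_2,\bs_1\cup\as_2,\bs_1\cup\bs_2}$, using Proposition~\ref{prop:generalized1-handlesandtriangles} to commute the $1$-handle maps past the triangle maps and to identify the $\Spin^c$ structure as the one extending $\frs_1\sqcup\frs_2$. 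The ribbon strand of $\Gamma$ running through the connected-sum tube contributes no relative homology factor because the corresponding arc is isotopic into a region where $\Gamma$ is a trivial path between adjacent basepoints, so by Relations~\ref{rel:R8} and the free-stabilization identities \ref{rel:R1}--\ref{rel:R2} it acts as the identity. The ribbon/cyclic-order data is what forces us to land on $E_1^B$ rather than $E_1^A$: the cyclic ordering at the trivalent vertices of $\Gamma$ dictates whether one sees $A$-type or $B$-type relative homology maps, and since $\cE_1$ is built asymmetrically using the $\as_2$-handle on the $Y_1$ side, the matching convention is the type-$B$ one. The relations $\cE_1\simeq E_1^B\simeq E_2^A$ and $\cE_2\simeq E_1^A\simeq E_2^B$ then follow, the second chain of equivalences by symmetry (swapping $Y_1\leftrightarrow Y_2$) together with Lemma~\ref{lem:reversecyclicordering}, which exchanges type-$A$ and type-$B$ maps under reversal of cyclic orderings.

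Once $\cE_1\simeq E_1^B$ is established, the remaining identifications are formal: $E_1^B\simeq E_2^A$ is Lemma~\ref{lem:reversecyclicordering} applied to $(W,\Gamma)$ versus $(W,\bar\Gamma)$, and $\cE_2\simeq E_2^B\simeq E_1^A$ is obtained by running the whole argument with the roles of $Y_1$ and $Y_2$ interchanged (which is precisely the passage from $\cE_1$ to $\cE_2$ in Equations~\eqref{eq:cE_1def}--\eqref{eq:cE_2def}) and then applying Lemma~\ref{lem:reversecyclicordering} once more. I would organize the write-up so that the bulk is the proof of the single equivalence $\cE_1\simeq E_1^B$, with the other three equivalences deduced in a short closing paragraph.

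The main obstacle I anticipate is the careful identification, in the handle decomposition of $W$, of the triangle-map bookkeeping with the graph action maps on $\Gamma$---in particular, checking that the extra ribbon strand through the connected-sum region genuinely contributes nothing and that the cyclic ordering conventions line up to give the $B$-version (and not $A$) on the nose, rather than merely up to sign or up to a power of $U$. This is the step where one must be scrupulous about the distinction between $A_\lambda$ and $B_\lambda$ (cf.\ Equation~\eqref{eq:Alambda+Blambda}) and about the ribbon structure; I expect to lean heavily on Proposition~\ref{prop:generalized1-handlesandtriangles}, Lemma~\ref{lem:computationofYshapedgraphcobordisms}, and the relations \ref{rel:R1}--\ref{rel:R9} to push everything into a normal form. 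A secondary technical point is handling the case $|\ws_i|>1$: there the cobordism $W$ acquires auxiliary $1$- and $3$-handles and graph components, but these can be dealt with by Remark~\ref{rem:extrabasepoints} and the fact that free-stabilization maps commute with the constructions involved, so this should not require new ideas.
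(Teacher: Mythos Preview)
Your proposal has a genuine gap at its core step. You assert that ``attaching the $4$-dimensional $1$-handle corresponds (on the level of Heegaard triples) to first applying the generalized $1$-handle maps $F_1^{\as_2,\as_2}$ and $F_1^{\bs_1,\bs_1}$ \ldots\ and then the triangle map,'' but this is exactly the statement to be proved, not something one can read off from the construction. In the graph TQFT, the $1$-handle map attaches a \emph{single} tube between the Heegaard surfaces, with a pair of small isotopic circles in the neck (see Section~\ref{section:defgeneralized1-handlemaps} and the remarks following it). The generalized $1$-handle maps $F_1^{\as_2,\as_2}$ and $F_1^{\bs_1,\bs_1}$ in $\cE_1$, by contrast, attach entire diagrams $(\Sigma_2,\as_2,\as_2)$ and $(\Sigma_1,\bs_1,\bs_1)$ for $(S^1\times S^2)^{\# g}$. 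There is no evident mechanism by which the single $1$-handle map of the graph TQFT factors through these much larger generalized $1$-handle maps followed by a triangle map; bridging this is the whole content of the proposition.

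A second, more concrete error: your claim that ``the ribbon strand of $\Gamma$ running through the connected-sum tube contributes no relative homology factor'' is false. Lemma~\ref{lem:connsumgraphcobcomp} computes $E_1^B\simeq \phi_* S_{\psi(w_2)}^- B_\lambda F_1\psi_*$, and the $B_\lambda$ term for the path $\lambda$ crossing the connected sum region is essential---it is not chain homotopic to the identity and cannot be absorbed by Relation~\ref{rel:R8}. The paper's proof takes a different route precisely to avoid having to directly match these two very different-looking compositions: since $G_1^B$ is a homotopy inverse of $E_1^B$ (Proposition~\ref{prop:connectedsummapsarehomotopyinverses}), it suffices to show $G_1^B\circ \cE_1\simeq \id$. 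This is then carried out by a long but concrete manipulation---pulling the free-stabilization into the triangle map via Proposition~\ref{prop:generalized1-handlesandtriangles}, inserting and cancelling transition maps between two choices of auxiliary curves, commuting $B_\lambda$ via Lemma~\ref{lem:graphactionandtriangles}, and finally collapsing everything to a pair of transition maps. The relative homology map $B_\lambda$ survives until the very end, where it is disposed of via Relation~\ref{rel:R9} (basepoint moving), not \ref{rel:R8}. Your formal remarks reducing the other three equivalences to $\cE_1\simeq E_1^B$ via symmetry and Lemma~\ref{lem:reversecyclicordering} are correct and match the paper.
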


\begin{rem}\label{rem:morebasepoints}
The connected sum maps $\cE_i,$  $E_i^A$ and $E_i^B$ can all be defined when $(Y_1,\ws_1)$ and $(Y_2,\ws_2)$ are multi-pointed manifolds, as long a bijection $f:\ws_1\to \ws_2$ is specified. We will need to use this generalization when we prove Theorem~\ref{thm:triplesandgraphcobordismmaps}. 

 If $(Y_1,\ws_1)$ and $(Y_2,\ws_2)$ are two multi-pointed 3-manifolds, with a bijection $f:\ws_1\to \ws_2$,  add a connected sum tube between $Y_1$ and $Y_2$ for each pair of basepoints in $\ws_1$ and $\ws_2$ identified by $f$. In each connected sum tube, we add a basepoint. We write $\ws$ for the new basepoints, and $Y_1\, \#_{\ws} Y_2$ for the  manifold obtained by this connected sum operation. In the case that $Y_1$ and $Y_2$ are connected, we  have  $Y_1\, \#_{\ws} Y_2\iso Y_1\# Y_2\# (S^1\times S^2)^{\#(|\ws|-1)}$.

 One can define intertwining maps $\cE_i$ essentially the same as in equation~\eqref{eq:cE_1def}, using the generalized 1-handle operation from Section~\ref{sec:generalized1--handleand3--handlemaps} followed by a triangle map. A graph cobordism $(W,\Gamma)$ can be defined from $(Y_1\sqcup Y_2,\ws_1\cup \ws_2)$ to $(Y_1\, \#_{\ws} Y_2,\ws)$ by attaching $|\ws|$ 1-handles, and constructing a graph with $|\ws|$ components, each with 3 edges and a single trivalent vertex. There are $2^{|\ws|}$ potential choices of cyclic orderings on this graph. For definiteness, we pick the cyclic order on $\Gamma$ induced by the cyclic ordering of the boundaries of $W$ given by $Y_1,$ $Y_2,$ $Y_1\, \#_{\ws}Y_2$ (read left to right).
\end{rem}

\subsection{Properties of the connected sum graph cobordism maps}
\label{sec:descconnsumgraphcobs}
In this section we describe some  properties of the connected sum graph cobordism maps $E_i^A,$ $E_i^B,$ $G_i^A$ and $G_i^B$, defined in equations~\eqref{eq:EiAmapsdefinition} and~\eqref{eq:GiABdefinition}. 

 It will be useful for our purposes to have a more explicit description of the maps $E_1^A$ and $E_1^B$:

\begin{lem}\label{lem:connsumgraphcobcomp}Suppose $(Y_1,w_1)$ and $(Y_2,w_2)$ are two singly pointed 3-manifolds, and  $(Y_1\# Y_2, w)$ is their connected sum, with the connected sum taken at $w_1\in Y_1$ and $w_2\in Y_2$. The maps $E_1^A$ and $E_1^B$ from equation~\eqref{eq:EiAmapsdefinition} satisfy 
\[E_1^A\simeq \phi_* S_{\psi(w_2)}^- A_\lambda F_1\psi_*, \qquad \text{and} \qquad E_1^B\simeq \phi_*S_{\psi(w_2)}^- B_\lambda F_1\psi_*,\] where 
\begin{itemize}
\item $\psi$ is a diffeomorphism of $Y_1\sqcup Y_2$, which is supported in a neighborhood of $\{w_1,w_2\}$ and moves $w_1$ and $w_2$ along paths in $Y_1$ and $Y_2$ outside of where the 1-handle is attached;
\item $F_1$ is the 1-handle map for attaching a 1-handle with feet centered at $w_1$ and $w_2$;
\item $\lambda$ is a path from $\psi(w_1)$ to $\psi(w_2)$ in $Y_1\# Y_2$, obtained by concatenating the paths used to construct $\psi$ with a path across the connected sum region;
\item $\phi$ is a diffeomorphism of $Y_1\# Y_2$ which is supported in a neighborhood of the path  $\lambda$ and moves $\psi(w_1)$ to the point $w$.
\end{itemize}

If $(Y_1,\ws_1)$ and $(Y_2,\ws_2)$ are two multi-pointed 3-manifolds, with a chosen bijection $f:\ws_1\to \ws_2$, then the connected sum maps $E_1^A$ and $E_1^B$  are a composition of $|\ws|$ maps, each taking the above form.
\end{lem}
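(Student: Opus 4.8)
The plan is to realize the ribbon graph cobordism $(W,\Gamma)$ defining $E_1^A=F^A_{W,\Gamma,\frt}$ as an explicit stacking of elementary pieces, each already one of the maps used to build the graph TQFT, and then read off the formula. The key observation is that, as a ribbon graph cobordism, $(W,\Gamma)$ is diffeomorphic to the composition of three pieces: (i) a collar $(Y_1\sqcup Y_2)\times[0,1]$ carrying the trace of an isotopy $\psi$ of $Y_1\sqcup Y_2$ which drags $w_1,w_2$ off the attaching region of the $1$-handle along arcs $\mu_1\subset Y_1$, $\mu_2\subset Y_2$; (ii) the $1$-handle cobordism from $(Y_1\sqcup Y_2,\{\psi(w_1),\psi(w_2)\})$ to $(Y_1\# Y_2,\{\psi(w_1),\psi(w_2)\})$, with the $1$-handle attached at $w_1,w_2$, now disjoint from the basepoints, and carrying only the two vertical strands over $\psi(w_1),\psi(w_2)$; (iii) a collar $(Y_1\# Y_2)\times[0,1]$ carrying a flow graph $\Gamma_0:\{\psi(w_1),\psi(w_2)\}\to\{w\}$ with a single interior trivalent vertex $v_0$ in the connected-sum neck, whose three edges $e_1$ (to $\psi(w_1)$), $e_2$ (to $\psi(w_2)$), $e_3$ (to $w$) carry the cyclic order inherited from the $(Y_1,Y_2,Y_1\# Y_2)$ ribbon structure of $\Gamma$; each $e_i$ runs from $\psi(w_i)$ back along $\mu_i$ and then along the core of the $1$-handle to $v_0$. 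Since graph cobordism maps are invariant under diffeomorphisms of ribbon graph cobordisms \cite{ZemGraphTQFT}, $E_1^A$ is the composite of the three associated maps.

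Next I would identify those maps. Piece (i) is the trace of an isotopy, so its map is the diffeomorphism map $\psi_*$; for each basepoint this is an elementary flow graph of Type~\ref{elementarygraphtype1} with a single edge, whose graph action map is $\psi_*$ by Relation~\ref{rel:R9} and \cite{ZemGraphTQFT}. Piece (ii) is a $1$-handle attached away from the basepoints over a trivial graph, so its map is by definition the connected-sum $1$-handle map $F_1$ of \cite{ZemGraphTQFT}, a special case of the generalized $1$-handle map of Section~\ref{sec:generalized1--handleand3--handlemaps}. Piece (iii) is cylindrical, so its map is the type-$A$ graph action map $A^A_{\Gamma_0}$. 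Hence $E_1^A\simeq A^A_{\Gamma_0}\circ F_1\circ\psi_*$, and it remains only to compute $A^A_{\Gamma_0}$.

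To compute $A^A_{\Gamma_0}$ I would observe that $\Gamma_0$ is an elementary flow graph of Type~\ref{elementarygraphtype2} (all three edges meet $v_0$), and that up to ribbon-isotopy it is the ``$\curlywedge$'' graph $\Gamma^{\curlywedge}_\lambda:\{\psi(w_1),\psi(w_2)\}\to\{\psi(w_1)\}$ of Lemma~\ref{lem:computationofYshapedgraphcobordisms} — with $\lambda$ the path from $\psi(w_1)$ to $\psi(w_2)$ obtained from $\bar{\mu}_1$, an arc across the neck, and $\mu_2$, equivalently $e_1$ followed by $e_2$ reversed — post-composed with the trace of a diffeomorphism $\phi$ supported near $\lambda$ carrying $\psi(w_1)$ to the connected-sum basepoint $w$ (the diffeomorphism graph attached along $e_3$). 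The $(Y_1,Y_2,Y_1\# Y_2)$ cyclic order at $v_0$ is the one matching the barred graph $\bar\Gamma^{\curlywedge}_\lambda$, for which Lemma~\ref{lem:computationofYshapedgraphcobordisms} gives $F^A_{\bar\Gamma^{\curlywedge}_\lambda}\simeq S^-_{\psi(w_2)}A_\lambda$; therefore $A^A_{\Gamma_0}\simeq\phi_*S^-_{\psi(w_2)}A_\lambda$ and
\[E_1^A\simeq\phi_*\,S^-_{\psi(w_2)}\,A_\lambda\, F_1\,\psi_*.\]
The statement for $E_1^B=F^B_{W,\Gamma,\frt}$ follows by running the identical argument with the type-$B$ graph action map, or more cheaply from $F^B_{W,\Gamma,\frt}\simeq F^A_{W,\bar\Gamma,\frt}$ (Lemma~\ref{lem:reversecyclicordering}), since reversing the cyclic order at $v_0$ turns $\bar\Gamma^{\curlywedge}_\lambda$ into $\Gamma^{\curlywedge}_\lambda$ and hence $A_\lambda$ into $B_\lambda$. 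In the multi-pointed case, $W$ is built from $|\ws|$ disjoint $1$-handles and $\Gamma$ from $|\ws|$ disjoint trivalent-vertex trees, each localized near one pair of basepoints $(w,f(w))$; since all the operations involved act near a single basepoint and commute with those at the others (Relations~\ref{rel:R3'} and~\ref{rel:R7}), the same decomposition applies componentwise and exhibits $E_1^A,E_1^B$ as a composition of $|\ws|$ copies of the displayed expression.

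I expect the main obstacle to be the ribbon/cyclic-order bookkeeping: checking that $\Gamma_0$ is, up to ribbon-isotopy, exactly the barred $\curlywedge$-graph of Lemma~\ref{lem:computationofYshapedgraphcobordisms} composed with a diffeomorphism — so that one lands on $A_\lambda$ and not $B_\lambda$ — together with the (soft but fiddly) verification that the three-piece splitting of $(W,\Gamma)$ is a diffeomorphism of ribbon graph cobordisms on the nose. Everything else is routine assembly from the relations of Section~\ref{sec:graphTQFT} and the definition of the graph action map.
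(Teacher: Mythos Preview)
Your proposal is correct and follows essentially the same route as the paper's proof: both decompose $(W,\Gamma)$ into the trace of $\psi$, the $1$-handle cobordism $F_1$, a cylindrical piece carrying the trivalent $\curlywedge$-graph computed via Lemma~\ref{lem:computationofYshapedgraphcobordisms}, and the trace of $\phi$. Your write-up is in fact more explicit than the paper's about the cyclic-order check that lands you on $\bar\Gamma^{\curlywedge}_\lambda$ (hence $A_\lambda$) rather than $\Gamma^{\curlywedge}_\lambda$; the paper simply asserts the outcome and refers to Figure~\ref{fig::37}.
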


\begin{rem} It might appear overly fastidious to keep track of the diffeomorphisms $\psi$ and $\phi$. This is helpful for the proof of Proposition~\ref{prop:OSmapsaregraphcobmaps}, which is our main application of Lemma~\ref{lem:connsumgraphcobcomp}.
\end{rem}

\begin{proof}[Proof of Lemma~\ref{lem:connsumgraphcobcomp}]We manipulate the graph, so that it has the configuration shown in Figure~\ref{fig::37}. Viewing the cobordism as a 4-dimensional movie, the first step moves the basepoints $w_1$ and $w_2$ away from the feet of the 1-handle, using a diffeomorphism $\psi$. Next, one attaches the 1-handle. Next, one inserts a trivalent graph  which deletes $\psi(w_2)$ and does not move $\psi(w_1)$, as considered in Lemma~\ref{lem:computationofYshapedgraphcobordisms}. Finally one moves $\psi(w_1)$ into the connected sum region using the diffeomorphism $\phi$. Using the computation from Lemma~\ref{lem:computationofYshapedgraphcobordisms} for a cylindrical cobordism containing a trivalent graph, the entire graph cobordism map becomes
\[
E_1^A\simeq \phi_* S_{\psi(w_2)}^-A_\lambda F_1\psi_*.
\]

The formula for $E_1^B$ follows similarly. The claim about the formulas for $E_1^A$ and $E_1^B$ when $Y_1$ and $Y_2$ have several basepoints follows by applying the above argument at each pair of basepoints.
\end{proof}

 \begin{figure}[ht!]
	\centering
	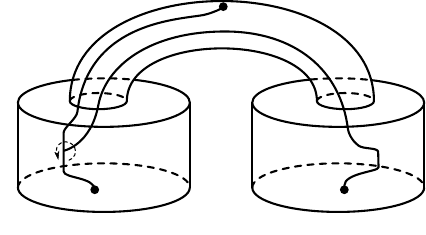
	\caption{\textbf{Computing $E_1^A$ by manipulating the graph inside the connected sum graph cobordism.} One first moves the basepoints $w_1$ and $w_2$ slightly away from the feet of the 1-handle (corresponding to the diffeomorphism $\psi$). Then one attaches a 1-handle at $w_1$ and $w_2$. This is followed by a copy of the $(Y_1\# Y_2)\times [0,1]$, containing a trivalent graph.  Finally one moves the basepoint $\psi(w_1)$ into the connected sum region, using the diffeomorphism $\phi$. \label{fig::37}}
\end{figure}

 Analogously to Lemma~\ref{lem:connsumgraphcobcomp}, we have the following:

\begin{lem}\label{lem:connsumgraphcobcomp2}Suppose that $(Y_1,w_1)$ and $(Y_2,w_2)$ are two singly pointed 3-manifolds, and $G_1^A$ and $G_1^B$ are the cobordism maps for the  3-handle graph cobordism  $(W',\Gamma')$ with trivalent graph shown in Figure~\ref{fig::38}. Then
\[G_1^A\simeq  \phi_* F_3 A_\lambda S_{w_2'}^+\psi_* \qquad \text{and} \qquad G_1^B\simeq  \phi_* F_3 B_\lambda S_{w_2'}^+\psi_*,\] where 
\begin{itemize}
\item $\psi$ is a diffeomorphism of $Y_1\# Y_2$, supported in a neighborhood of the connected sum region, which  pushes $w$ slightly into $Y_1$;
\item $w_2'$ is a new basepoint in the $Y_2$ side of $Y_1\#Y_2$, near the connected sum region;
\item  $\lambda$ is a path from $\psi(w)$ to $w_2'$ crossing over the connected sum region;
\item $\phi$ is diffeomorphism of $Y_1\sqcup Y_2$ which is supported in a neighborhood of $w_1$ and $w_2$ and moves $\psi(w)$ to $w_1$ and moves $w_2'$ to $w_2$.
\end{itemize}

If $(Y_1,\ws_1)$ and $(Y_2,\ws_2)$ are two multi-pointed 3-manifolds, with a chosen bijection $f:\ws_1\to \ws_2$, then the maps $G_1^A$ and $G_1^B$ are a composition of $|\ws_i|$ maps, each given by one of the above formulas.
\end{lem}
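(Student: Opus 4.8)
The proof is entirely parallel to that of Lemma~\ref{lem:connsumgraphcobcomp}, with the roles of $1$-handles and $3$-handles, and of free-stabilizations and free-destabilizations, interchanged; indeed $(W',\Gamma')$ is obtained from $(W,\Gamma)$ by turning the cobordism around and reversing orientation, so the argument is that of Lemma~\ref{lem:connsumgraphcobcomp} read upside down. I first treat the case $|\ws_1|=|\ws_2|=1$.

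Using invariance of the graph cobordism maps under isotopies of the embedded graph together with functoriality under composition, I would rewrite $(W',\Gamma')$ as a composition of four pieces, read from the incoming end $(Y_1\#Y_2,w)$: (i) a cylindrical cobordism $(Y_1\#Y_2)\times[0,1]$ carrying a path which drags $w$ slightly into the $Y_1$ side of the connected sum region, inducing the diffeomorphism map $\psi_*$; (ii) a cylindrical cobordism $(Y_1\#Y_2)\times[0,1]$ carrying a trivalent graph of the form $\bar{\Gamma}^{\curlyvee}_{\lambda}$ of Lemma~\ref{lem:computationofYshapedgraphcobordisms}, creating the new basepoint $w_2'$ on the $Y_2$ side near the connected sum region; (iii) the $3$-handle cobordism $(Y_1\#Y_2)\to(Y_1\sqcup Y_2)$, attached away from $\{\psi(w),w_2'\}$ and carrying two vertical paths, which induces the $3$-handle map $F_3$; and (iv) a cylindrical cobordism $(Y_1\sqcup Y_2)\times[0,1]$ carrying two paths moving $\psi(w)$ to $w_1$ and $w_2'$ to $w_2$, inducing $\phi_*$. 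That this composite graph cobordism is isotopic to $(W',\Gamma')$, with the cyclic order at the trivalent vertex matching that of $\Gamma'$, is the point to be checked; the picture mirrors Figure~\ref{fig::37}.

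By Lemma~\ref{lem:computationofYshapedgraphcobordisms}, the cobordism in (ii) induces $A_\lambda S_{w_2'}^+$ for the cyclic order inherited from $\Gamma'$, and $B_\lambda S_{w_2'}^+$ for the opposite order, hence for the type-$B$ map via Lemma~\ref{lem:reversecyclicordering}. Composing the four maps gives
\[G_1^A\simeq \phi_* F_3 A_\lambda S_{w_2'}^+\psi_* \qquad\text{and}\qquad G_1^B\simeq \phi_* F_3 B_\lambda S_{w_2'}^+\psi_*,\]
as claimed. For the multi-pointed case, one performs the same decomposition simultaneously at each of the $|\ws|$ connected sum tubes of $Y_1\,\#_{\ws}Y_2$; since the $3$-handle, free-stabilization, relative-homology, and diffeomorphism maps associated to disjoint tubes are supported in disjoint regions, they commute up to chain homotopy (Relations~\ref{rel:R7} and \ref{rel:R7'}, and the discussion after Remark~\ref{rem:morebasepoints}), so $G_1^A$ and $G_1^B$ factor as compositions of $|\ws|$ maps each of the stated form.

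The main obstacle is steps (ii)--(iii): one must verify that, after isotoping the graph and sliding the $3$-handle, the cobordism genuinely decomposes in the indicated way --- in particular that the trivalent vertex of $\Gamma'$ can be pushed below the $3$-handle into a cylindrical region so that Lemma~\ref{lem:computationofYshapedgraphcobordisms} applies --- and that the cyclic orientation conventions carried by $\Gamma'$ produce $A_\lambda$ rather than $B_\lambda$ in the type-$A$ formula. Everything else is bookkeeping of the diffeomorphisms $\psi$ and $\phi$, already familiar from the proof of Lemma~\ref{lem:connsumgraphcobcomp}.
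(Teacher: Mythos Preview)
Your proposal is correct and follows essentially the same approach as the paper, which simply states that the proof is identical to that of Lemma~\ref{lem:connsumgraphcobcomp}. You have spelled out the adaptation in more detail than the paper does, decomposing $(W',\Gamma')$ into the four pieces (diffeomorphism, trivalent graph, 3-handle, diffeomorphism) and invoking Lemma~\ref{lem:computationofYshapedgraphcobordisms} for the trivalent piece; your caution about matching the cyclic orientation conventions is appropriate, though the paper treats this as routine.
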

\begin{proof}The proof is identical to the proof of Lemma~\ref{lem:connsumgraphcobcomp}.
\end{proof}

An important property of the maps $E_i^A,$ $E_i^B,$ $ G_i^A$ and $G_i^B$ is that they are chain homotopy inverses of each other:

\begin{prop}\label{prop:connectedsummapsarehomotopyinverses}The maps $E_i^A,$ and $G_i^A$ satisfy the relations
\[E_i^A\circ G_i^A\simeq \id \qquad \text{and} \qquad G_i^A\circ E_i^A\simeq \id,\] for $i=1,2$. The same relations hold for the type-$B$ maps $E_i^B$ and $G_i^B$.
\end{prop}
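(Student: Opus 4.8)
<br>

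The plan is to prove Proposition~\ref{prop:connectedsummapsarehomotopyinverses} by composing the explicit formulas from Lemmas~\ref{lem:connsumgraphcobcomp} and \ref{lem:connsumgraphcobcomp2} and simplifying using the relations \ref{rel:R1}--\ref{rel:R9} from Section~\ref{sec:mapsandrelations}. I will focus on the case $i=1$ and on connected sums of singly pointed manifolds; the general multi-pointed case follows since both maps are then compositions of the corresponding singly pointed pieces, and the $i=2$ case is identical after reversing cyclic orders. It suffices to treat the type-$A$ maps, since the type-$B$ statement follows by an identical computation with each $A_\lambda$ replaced by $B_\lambda$ (the only relations used, \ref{rel:R1}, \ref{rel:R2}, \ref{rel:R8}, are symmetric in $A$ and $B$, or one invokes Lemma~\ref{lem:reversecyclicordering}).

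For $E_1^A\circ G_1^A\simeq \id$: writing out the two formulas, $E_1^A\circ G_1^A\simeq (\phi_* S_{\psi(w_2)}^- A_{\lambda} F_1 \psi_*)\circ(\phi'_* F_3 A_{\lambda'} S_{w_2'}^+ \psi'_*)$. The topological point is that $F_1$ followed by $F_3$ on a canceling 1-handle/3-handle pair is chain homotopic to the identity, so after absorbing the various diffeomorphisms (which are supported in small neighborhoods and can be isotoped to cancel each other's effect on the relevant basepoints) the composition reduces to an expression of the form $S_{w}^- A_{\lambda} A_{\lambda'} S_{w}^+$, where $\lambda * \lambda'$ is a loop at $w$ that is null-homotopic in the connected sum region. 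By Relation~\ref{rel:R7'} (or \ref{rel:R4} together with the fact that a null-homotopic concatenation of edges gives $A_e$ for a short edge) this collapses, and Relation~\ref{rel:R8} then kills the remaining $S_w^-(\cdots)S_w^+$ sandwich, leaving $\id$. Alternatively, and more cleanly, I can argue at the level of cobordisms: $W'$ is (by construction) the turnaround of $W$, and $(W\cup W', \Gamma\cup\Gamma')$ is a graph cobordism whose underlying 4-manifold is $(Y_1\sqcup Y_2)\times[0,1]$ with a canceling 1-/3-handle pair, and whose graph, after the isotopies encoded in $\psi,\phi$, is a disjoint union of trivial product arcs; functoriality of the graph TQFT (from \cite{ZemGraphTQFT}) then gives the identity directly.

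For $G_1^A\circ E_1^A\simeq \id$: this is the analogous computation with the roles reversed, $(\phi'_* F_3 A_{\lambda'} S_{w_2'}^+ \psi'_*)\circ(\phi_* S_{\psi(w_2)}^- A_{\lambda} F_1 \psi_*)$. Here the middle portion is $S^+_{\bullet} A_{\lambda'} A_\lambda S^-_\bullet$ between two canceling handle maps; one uses $S^-_w S^+_w\simeq 0$ and $S^+_w S^-_w \simeq \Phi_w$ (Relations \ref{rel:R1}, \ref{rel:R2}) carefully — the intermediate free-stabilization at $w_2'$ is distinct from the basepoint being swapped, so Relation~\ref{rel:R3'} lets the $\Phi$ terms pass through harmlessly — and again the concatenated path is null-homotopic so $A_{\lambda'}A_\lambda$ simplifies, yielding $\id$ after $F_3 F_1 \simeq \id$.

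The main obstacle I anticipate is bookkeeping rather than conceptual: keeping precise track of \emph{which} basepoint each free-stabilization, each $\Phi_w$, and each relative homology map $A_\lambda$ refers to, and verifying that the diffeomorphisms $\psi,\phi,\psi',\phi'$ from the two lemmas genuinely compose to an isotopically trivial diffeomorphism of the relevant punctured manifold (so that their induced maps cancel). I expect the cleanest route is actually to bypass the relation-juggling entirely and deduce the proposition from functoriality of the graph cobordism maps together with the 4-dimensional handle-cancellation invariance already used in the proof of Proposition~\ref{prop:changeofdiagramsmapcomp}: the composite cobordism, with its graph, is diffeomorphic rel boundary to a product cobordism carrying a product graph, whose induced map is the identity by \cite{ZemGraphTQFT}. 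I will present that argument as the primary proof and remark that the relation-based computation gives an alternative verification.
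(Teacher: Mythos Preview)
Your proposed functoriality argument does not work: neither composite cobordism is a product.  An Euler-characteristic computation shows $\chi(W)=\chi(W')=-1$ (attaching a $1$-handle or a $3$-handle to a product each contributes $-1$), so $\chi(W'\circ W)=\chi(W\circ W')=-2\neq 0=\chi\big((Y_1\#Y_2)\times[0,1]\big)=\chi\big((Y_1\sqcup Y_2)\times[0,1]\big)$.  Concretely, a $1$-handle and a $3$-handle are not of adjacent index and never form a cancelling pair; in the model case $Y_1=Y_2=S^3$ the composite is $S^4$ with four balls removed, which is connected while $(S^3\sqcup S^3)\times[0,1]$ is not.  Even in the direction where one might hope for a product, the composite graph has two genuine trivalent vertices and is not isotopic to a product graph, so functoriality alone would still not give the identity.

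Your relation-based sketch also has a gap.  In $E_1^A\circ G_1^A$ the middle piece is $F_1(\cdots)F_3$, i.e.\ a $3$-handle followed by a $1$-handle on $Y_1\#Y_2$; this is \emph{not} chain homotopic to the identity, and nothing in \ref{rel:R1}--\ref{rel:R9} addresses compositions of genuine $1$- and $3$-handle maps.  The paper's argument (following \cite{HMZConnectedSum}*{Proposition~5.2}) instead proves two new relations
\[
A_\lambda F_1 F_3 A_\lambda\simeq A_\lambda+U\,F_1F_3\qquad\text{and}\qquad F_3A_\lambda F_1\simeq \id
\]
by a direct count of holomorphic disks on the doubly pointed diagram for $(Y_1\#Y_2,\{w_1,w_2\})$, obtained by degenerating the almost complex structure along the connected-sum neck.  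These relations are the missing ingredient; once you have them, the manipulation of the formulas from Lemmas~\ref{lem:connsumgraphcobcomp} and \ref{lem:connsumgraphcobcomp2} goes through as you outlined.  The point is that $F_1F_3$ genuinely appears in the first relation (it is not $\simeq\id$), but after sandwiching between $S^-$, $A_\lambda$, $A_\lambda$, $S^+$ and using \ref{rel:R8} on the $A_\lambda$ term, the remaining $U\,F_1F_3$ contribution is killed.
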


A proof of Proposition~\ref{prop:connectedsummapsarehomotopyinverses} can be found in \cite{HMZConnectedSum}*{Proposition~5.2}, in the context of 3-manifolds with a single basepoint. The proof extends without change to the case when $Y_1$ and $Y_2$ each have extra basepoints which are not involved in the connected sum (i.e. when we take two multi-pointed 3-manifolds, $(Y_1,\ws_1)$ and $(Y_2,\ws_2)$, and take their connected sum at just one pair of basepoints $w_1\in \ws_1$ and $w_2\in \ws_2$). The full version of Proposition~\ref{prop:connectedsummapsarehomotopyinverses}, where the cobordisms for $E_i^A$ and $G_i^A$ involve $|\ws_i|$ 1-handles or 3-handles, with each handle containing a trivalent vertex, can then be proven by applying the composition law.

\subsection{Proof of Proposition~\ref{prop:OSmapsaregraphcobmaps}:  \texorpdfstring
{$\cE_i$}{curlyE\_i} and \texorpdfstring{$E_i^B$}{E\_i B} are equal}

\begin{proof}[Proof of Proposition~\ref{prop:OSmapsaregraphcobmaps}] We will show that $\cE_1\simeq E_1^B$. Once we establish this, the relation $\cE_2\simeq E_2^B$ will also follow, since both $\cE_2$ and $E_2^B$ are simply the maps obtained by switching the roles of $Y_1$ and $Y_2$ in the definitions of $\cE_1$ and $E_1^B$. We will also only consider the case that $Y_1$ and $Y_2$ are singly pointed. The case that they are multi-pointed follows from a straightforward modification of the argument we present.

As $E_1^B$ and $G_1^B$ are chain homotopy inverses of each other by Proposition~\ref{prop:connectedsummapsarehomotopyinverses}, it is sufficient to show that
\[G_1^B\circ \cE_1\simeq \id.\] By Lemma~\ref{lem:connsumgraphcobcomp2}, this amounts to showing that
\begin{equation}\phi_* F_3 B_\lambda S_{w_2'}^+\psi_* \cE_1\simeq \id_{\CF^-(\Sigma_1,\a_1,\b_1)\otimes \CF^-(\Sigma_2,\a_2,\b_2)},\label{eq:OS=GraphTQFT1}\end{equation}  where $\psi$ is a diffeomorphism pushing $w$ into $Y_1$ slightly, $w_2'$ is a new basepoint in the $Y_2$ side of $Y_1\# Y_2$, $\lambda$ is a path from $\psi(w)$ to $w_2'$ crossing the connected sum region, and $\phi$ is a diffeomorphism of $Y_1\sqcup Y_2$ which moves $\psi(w)$ to $w_1$ and $w_2'$ to $w_2$.

The remainder of the proof establishes equation~\eqref{eq:OS=GraphTQFT1}. We warn the reader that the proof is quite involved, though the strategy is relatively straightforward to summarize.  Using properties of the graph TQFT and the holomorphic triangle counts used to show the well-definedness of the generalized 1-handle and 3-handle maps, we will manipulate the expression in equation~\eqref{eq:OS=GraphTQFT1} until it becomes a holomorphic triangle count on $\Sigma_1\sqcup \Sigma_2$ which counts the same holomorphic triangles as appear in the transition map associated to a small isotopy of the attaching curves on each diagram.

Pick diagrams $(\Sigma_1,\as_1,\bs_1,w_1)$ and $(\Sigma_2,\as_2,\bs_2,w_2)$ for $(Y_1,w_1)$ and $(Y_2,w_2)$, respectively. We can form a diagram for $(Y_1\# Y_2,w)$ by taking the connected sum of the two diagrams at $w_1$ and $w_2$, and placing a single basepoint $w$ in the connected sum region. We also need to consider doubly pointed diagrams for $(Y_1\#Y_2,\{\psi(w),w_2'\})$, where $\psi$ and $w_2'$ are as above. To get such a diagram, we need to add an additional pair of attaching curves to $(\Sigma_1\# \Sigma_2,\as_1\cup \as_2,\bs_1\cup \bs_2)$. There are two convenient choices, which are shown in Figure~\ref{fig::36} and are labeled by $\zeta$ and $\tau$.  The curves marked with $\tau$ are chosen so that they can compute the free-stabilization maps at $w_2'$, and the curves marked with $\zeta$ can be used to compute the 3-handle map (we are abusing notation and writing $\zeta$ or $\tau$ for both a curve and a small Hamiltonian translate). We will write $F_3^{\zeta,\zeta}$ for $F_3$, for clarity.

\begin{figure}[ht!]
	\centering
	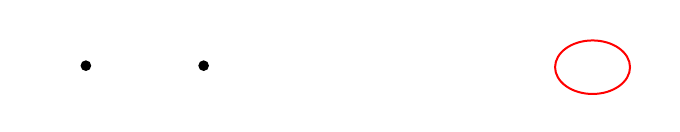
	\caption{\textbf{The pairs of attaching curves labeled $\tau$ and $\zeta$ in the connected sum region of $\Sigma_1\# \Sigma_2$.}  The $\zeta$ curves can be used to compute the 3-handle map $F_3=F_3^{\zeta,\zeta}$, while the $\tau$ curves can be used to compute the free-stabilization map $S_{w_2'}^+$. The path $\lambda$ is also shown (dashed). \label{fig::36}}
\end{figure}

 Rewriting equation~\eqref{eq:OS=GraphTQFT1} using the definition of $\cE_1$, we wish to show that
\begin{equation}
\begin{split}\phi_*F_3^{\zeta,\zeta}  B_\lambda S^+_{w_2'}\psi_* F_{\a_1\cup\a_2,\b_1\cup \a_2,\b_1\cup\b_2} (F_1^{\a_2,\a_2}(-),  F_1^{\b_1,\b_1}(-))\\
 \simeq \id_{\CF^-(\Sigma_1,\a_1,\b_1)}(-)\otimes \id_{\CF^-(\Sigma_2,\a_2,\b_2)}(-).\end{split}\label{eq:OS=GraphTQFT2'}
 \end{equation}

 Noting that $\psi$ can be chosen to fix the Heegaard surface $\Sigma_1\# \Sigma_2\subset Y_1\# Y_2$ setwise, and also fix the curves $\as_i$ and $\bs_i$, we can bring $\psi_*$ inside the triangle map so that the composition on the left side of equation~\eqref{eq:OS=GraphTQFT2'} becomes
\begin{equation}\phi_*F_3^{\zeta,\zeta}  B_\lambda S^+_{w_2'} F_{\a_1\cup\a_2,\b_1\cup \a_2,\b_1\cup\b_2} (\psi_*F_1^{\a_2,\a_2}(-),  \psi_*F_1^{\b_1,\b_1}(-))\label{eq:OS=GraphTQFT2}.
\end{equation}

We define the following sets of attaching curves on $\Sigma_1\# \Sigma_2$:
\begin{align*}\mathcal{L}_{\tau}&:=\as_1\cup\{\tau\}\cup  \as_2,& \mathcal{\cL}_{\zeta}&:= \as_1\cup \{\zeta\}\cup \as_2,\\
\mathcal{M}_{\tau}&:= \bs_1\cup \{\tau\} \cup \as_2,& \mathcal{M}_{\zeta}&:=\bs_1\cup \{\zeta\}\cup \as_2,\\
\mathcal{R}_{\tau}&:=\bs_1\cup \{\tau\} \cup \bs_2,& \mathcal{R}_{\zeta}&:=\bs_1\cup \{\zeta\}\cup \bs_2.
\end{align*}

Using the triangle counts used to show well-definedness of the free-stabilization maps \cite{ZemGraphTQFT}*{Theorem~6.7} (these can be viewed as a special case of Proposition~\ref{prop:generalized1-handlesandtriangles} of this paper; see Remark~\ref{rem:extrabasepoints}), we can pull the expression $S_{w_2'}^+$ inside the triangle map for an appropriately chosen almost complex structure, and conclude that equation~\eqref{eq:OS=GraphTQFT2} is chain homotopic to
\begin{equation}\phi_*F_3^{\zeta,\zeta} B_{\lambda} F_{\cL_{\tau},\cM_{\tau}, \cR_{\tau}}(S_{w_2'}^+ \psi_* F_1^{\a_2,\a_2}(-),  S_{w_2'}^+ \psi_* F_1^{\b_1,\b_1}(-)).\label{eq:OS=GraphTQFT3}\end{equation}

The expression in equation~\eqref{eq:OS=GraphTQFT3} is not quite sufficient to actually compute the composition since it still implicitly involves a change of diagrams map to handleslide the two $\tau$ curves into the position of the two $\zeta$ curves. Hence, we insert the transition map $\Psi_{\cL_\tau\to \cL_\zeta}^{\cR_\tau\to \cR_\zeta}$ immediately to the left of the triangle map in equation~\eqref{eq:OS=GraphTQFT3} to rewrite equation~\eqref{eq:OS=GraphTQFT3} as
\[
\phi_*F_3^{\zeta,\zeta} B_{\lambda} \Psi_{\cL_\tau\to \cL_\zeta}^{\cR_\tau\to \cR_\zeta} F_{\cL_{\tau},\cM_{\tau}, \cR_{\tau}}(S_{w_2'}^+\psi_* F_1^{\a_2,\a_2}(-),  S_{w_2'}^+ \psi_*F_1^{\b_1,\b_1}(-)).
\] By the construction of the transition maps, we have
\[
\Psi_{\cL_\tau\to \cL_\zeta}^{\cR_\tau\to \cR_\zeta}=\Psi_{\cL_\tau\to \cL_\zeta}^{\cR_{\zeta}}\circ \Psi_{\cL_\tau}^{\cR_\tau\to \cR_\zeta}.
\]
 Individually, each of $\Psi_{\cL_\tau\to \cL_{\zeta}}^{\cR_{\zeta}}$ and $\Psi^{\cR_\tau\to \cR_\zeta}_{\cL_\tau}$ can be computed by a holomorphic triangle map. For example, the map $\Psi_{\cL_\tau}^{\cR_{\tau}\to \cR_{\zeta}}$ satisfies
\[
\Psi_{\cL_\tau}^{\cR_{\tau}\to \cR_{\zeta}}(-)\simeq F_{\cL_\tau, \cR_{\tau}, \cR_{\zeta}}(-, \Theta_{\cR_\tau, \cR_{\zeta}}^+),
\] 
where $\Theta_{\cR_\tau, \cR_{\zeta}}^+\in \CF^-(\Sigma_1\# \Sigma_2, \cR_\tau, \cR_{\zeta})$ is a cycle which represents the top degree element of homology. The map $\Psi_{\cL_\tau\to \cL_{\zeta}}^{\cR_{\zeta}}$ takes a similar form. A straightforward argument using associativity of the triangle maps (twice) shows that
\begin{equation}
\begin{split}\,&\phi_*F_3^{\zeta,\zeta} B_{\lambda}\Psi_{\cL_\tau\to \cL_\zeta}^{\cR_{\zeta}} \Psi_{\cL_\tau}^{\cR_\tau\to \cR_\zeta}F_{\cL_{\tau},\cM_{\tau}, \cR_{\tau}}(S_{w_2'}^+ \psi_*F_1^{\a_2,\a_2}(-),  S_{w_2'}^+ \psi_* F_1^{\b_1,\b_1}(-)) \\
 \simeq  &\phi_*F_3^{\zeta,\zeta} B_{\lambda}F_{\cL_{\zeta},\cM_{\tau}, \cR_{\zeta}}(\Psi_{\cL_\tau\to \cL_{\zeta}}^{\cM_\tau} S_{w_2'}^+ \psi_* F_1^{\a_2,\a_2}(-),\Psi^{\cR_\tau\to \cR_\zeta}_{\cM_\tau}  S_{w_2'}^+\psi_*  F_1^{\b_1,\b_1}(-)).\end{split}
 \label{eq:OS=GraphTQFT4}
 \end{equation}

We now wish to change the $\cM_\tau$ to $\cM_{\zeta}$, in the above triangle map. Naturality of Heegaard Floer homology implies that
\begin{equation}\Psi_{\cL_\zeta}^{\cM_{\zeta}\to \cM_{\tau}}\circ \Psi_{\cL_\zeta}^{\cM_{\tau}\to \cM_{\zeta}}\simeq \id_{\CF^-(\Sigma_1\# \Sigma_2, \cL_{\zeta}, \cM_{\tau})}.\label{eq:changethenchangeback=id}\end{equation}

Using equation~\eqref{eq:changethenchangeback=id}, the fact that $\Psi_{\cL_\zeta}^{\cM_{\zeta}\to \cM_{\tau}}$ can be realized as the triangle map $F_{\cL_{\zeta},\cM_{\zeta},\cM_{\tau}}(-, \Theta_{\cM_{\zeta},\cM_\tau}^+)$, as well  as associativity of the triangle maps, we perform the following manipulation to equation~\eqref{eq:OS=GraphTQFT4}:
\begin{align*}\, &\phi_* F_3^{\zeta,\zeta} B_{\lambda} F_{\cL_{\zeta},\cM_{\tau}, \cR_{\zeta}}(\Psi_{\cL_\tau\to \cL_{\zeta}}^{\cM_\tau} S_{w_2'}^+ \psi_*F_1^{\a_2,\a_2},\Psi^{\cR_\tau\to \cR_\zeta}_{\cM_\tau}  S_{w_2'}^+\psi_* F_1^{\b_1,\b_1})\\
\simeq  &\phi_* F_3^{\zeta,\zeta} B_{\lambda}  F_{\cL_{\zeta},\cM_{\tau}, \cR_{\zeta}}(\Psi_{\cL_\zeta}^{\cM_{\zeta}\to \cM_{\tau}}\Psi_{\cL_\zeta}^{\cM_{\tau}\to \cM_{\zeta}}\Psi_{\cL_\tau\to \cL_{\zeta}}^{\cM_\tau} S_{w_2'}^+\psi_* F_1^{\a_2,\a_2},\Psi^{\cR_\tau\to \cR_\zeta}_{\cM_\tau}  S_{w_2'}^+ \psi_*F_1^{\b_1,\b_1})\\
\simeq &\phi_* F_3^{\zeta,\zeta} B_{\lambda} F_{\cL_{\zeta},\cM_{\zeta}, \cR_{\zeta}}(\Psi_{\cL_\zeta}^{\cM_{\tau}\to \cM_{\zeta}}\Psi_{\cL_\tau\to \cL_{\zeta}}^{\cM_\tau} S_{w_2'}^+\psi_* F_1^{\a_2,\a_2},\Psi^{\cR_{\zeta}}_{\cM_{\tau}\to \cM_{\zeta}}\Psi^{\cR_{\tau}\to \cR_{\zeta}}_{\cM_{\tau}}  S_{w_2'}^+\psi_* F_1^{\b_1,\b_1}).
\end{align*}
We condense the above expression slightly by combining some of the transition maps to arrive at the expression

\begin{equation}
\phi_* F_3^{\zeta,\zeta} B_{\lambda} F_{\cL_{\zeta},\cM_{\zeta}, \cR_{\zeta}}(\Psi_{\cL_\tau\to \cL_{\zeta}}^{\cM_{\tau}\to \cM_{\zeta}} S_{w_2'}^+ \psi_* F_1^{\a_2,\a_2}(-),\Psi_{\cM_{\tau}\to \cM_{\zeta}}^{\cR_{\tau}\to \cR_{\zeta}} S_{w_2'}^+\psi_* F_1^{\b_1,\b_1}(-))
.
\label{eq:OS=GraphTQFT7} 
\end{equation} 
 Lemma~\ref{lem:graphactionandtriangles} allows us to bring $B_{\lambda}$ inside the triangle map to see that equation~\eqref{eq:OS=GraphTQFT7} is chain homotopic to
\begin{equation}
\phi_* F_3^{\zeta,\zeta}  F_{\cL_{\zeta},\cM_{\zeta}, \cR_{\zeta}}(\Psi_{\cL_\tau\to \cL_{\zeta}}^{\cM_{\tau}\to \cM_{\zeta}} S_{w_2'}^+ \psi_*F_1^{\a_2,\a_2}(-),B_{\lambda}\Psi_{\cM_{\tau}\to \cM_{\zeta}}^{\cR_{\tau}\to \cR_{\zeta}} S_{w_2'}^+\psi_* F_1^{\b_1,\b_1}(-)).
\label{eq:OS=GraphTQFT5}
\end{equation}

We now define a map
\begin{equation}\Top^+_{(\Sigma_2,\a_2,\a_2)}:\CF^-(\Sigma_1,\as_1,\bs_1,w_1)\to \CF^-(\Sigma_1, \as_1, \bs_1, w_1)\otimes \CF^-(\Sigma_2,\as_2,\as_2,w_2')\label{eq:OS=GraphTQFT9}
\end{equation} 
by the formula
\[\Top^+_{(\Sigma_2,\a_2,\a_2)}(\ve{x})=\ve{x}\otimes \Theta_{\a_2,\a_2}^+,\]
extended $\bF_2[U]$-equivariantly,
 where $\Theta^+_{\a_2,\a_2}\in \CF^-(\Sigma_2,\as_2,\as_2,w_2')$ is the top degree generator. 
 We claim that 
\begin{equation}\Psi_{\cL_\tau\to \cL_{\zeta}}^{\cM_{\tau}\to \cM_{\zeta}} S_{w_2'}^+ \psi_* F_1^{\a_2,\a_2}(-)\simeq F_1^{\zeta,\zeta} \psi'_* \Top_{(\Sigma_2,\a_2,\a_2)}^+(-) ,\label{eq:OS=GraphTQFT6}
\end{equation} 
where $\psi'$ is the diffeomorphism of $\Sigma_1$ obtained by pushing $w_1$ to $\psi(w)$, and $F_1^{\zeta,\zeta}$ is the 1-handle map for attaching a 1-handle with feet at $w_1$ and $w_2$, using attaching curves in the 1-handle region equal to Hamiltonian translates of $\zeta$. Equation~\eqref{eq:OS=GraphTQFT6}  follows from the fact that the generalized 1-handle map is well-defined (i.e. the holomorphic triangle counts of Proposition~\ref{prop:generalized1-handlesandtriangles}, as well as the change of almost complex structure computation of Lemma~\ref{lem:generalized1-handlemapiscompof1-handles}). This is demonstrated schematically in Figure~\ref{fig::53}. We remark that technically we are using the well-definedness of a version of the generalized 1-handle map where we allow additional basepoints on our diagram for $(S^1\times S^2)^{\# g(\Sigma_0)}$, though the holomorphic disk and triangle counts from Section~\ref{sec:generalized1--handleand3--handlemaps} can be adapted  to this situation with only minor notational changes (see Remark~\ref{rem:extrabasepoints}).

\begin{figure}[ht!]
	\centering
	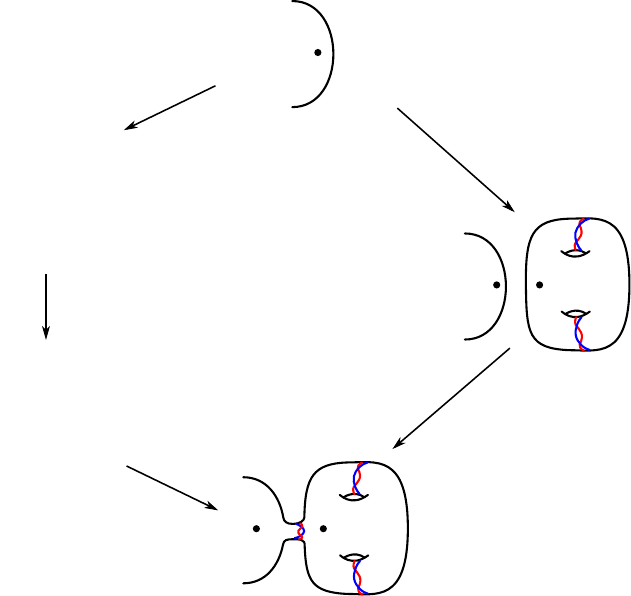
	\caption{\textbf{A schematic of the relation $\Psi_{\cL_\tau\to \cL_{\zeta}}^{\cM_{\tau}\to \cM_{\zeta}} S_{w_2'}^+ \psi_* F_1^{\a_2,\a_2}(-)\simeq F_1^{\zeta,\zeta} \psi'_* \Top_{(\Sigma_2,\a_2,\a_2)}^+(-)$.} The relation follows from the well-definedness of the generalized 1-handle map, since both compositions can be interpreted as a version of the generalized 1-handle map, with the connected sum operation being taken near $\psi(w)$.\label{fig::53}}
\end{figure}

Using equation~\eqref{eq:OS=GraphTQFT6}, equation~\eqref{eq:OS=GraphTQFT5} now becomes 
\begin{equation}\phi_*F_3^{\zeta,\zeta}  F_{\cL_{\zeta},\cM_{\zeta}, \cR_{\zeta}}(F_1^{\zeta,\zeta} \psi'_*  \Top_{(\Sigma_2,\a_2,\a_2)}^+(-),B_{\lambda}\Psi_{\cM_{\tau}\to \cM_{\zeta}}^{\cR_{\tau}\to \cR_{\zeta}} S_{w_2'}^+ \psi_* F_1^{\b_1,\b_1}(-)).\label{eq:OS=graphTQFT20}
\end{equation} 
Using the relation between the triangle maps and the generalized 3-handle maps from Proposition~\ref{prop:generalized1-handlesandtriangles} to move $F_3^{\zeta,\zeta}$ inside the triangle map and conclude that equation~\eqref{eq:OS=graphTQFT20} is equal to
\begin{equation}\phi_*F_{\a_1\cup\a_2,\b_1\cup \a_2,\b_1\cup\b_2}( \psi'_*  \Top_{(\Sigma_2,\a_2,\a_2)}^+(-), F_3^{\zeta,\zeta}B_{\lambda}\Psi_{\cM_{\tau}\to \cM_{\zeta}}^{\cR_{\tau}\to \cR_{\zeta}} S_{w_2'}^+\psi_* F_1^{\b_1,\b_1}(-)).\label{eq:OS=graphTQFT11}
\end{equation} We remark that the underlying Heegaard surface of the triangle count in equation~\eqref{eq:OS=graphTQFT11} is the disjoint union $\Sigma_1\sqcup \Sigma_2$, since we surger out $\zeta$ when moving the 3-handle map inside the triangle map.

We now wish to rearrange the terms appearing in the right component of the triangle map. Recall that $\psi$ is the diffeomorphism of $Y_1\# Y_2$ which moves $w$ into the $Y_1$ side of $Y_1\# Y_2$. The diffeomorphism $\psi$ and the curves $\zeta$ and $\tau$ can be chosen so that $\psi$ fixes $w_2',$ $\tau$ and $\zeta$, implying
\begin{equation}
\Psi_{\cM_{\tau}\to \cM_{\zeta}}^{\cR_{\tau}\to \cR_{\zeta}}S_{w_2'}^+\psi_*=\psi_*\Psi_{\cM_{\tau}\to \cM_{\zeta}}^{\cR_{\tau}\to \cR_{\zeta}}S_{w_2'}^+. \label{eq:OS=graphTQFT19}
\end{equation}
 Define
$\lambda'':=\psi^{-1}(\lambda),$ so that tautologically
\begin{equation}\psi_* B_{\lambda''}\simeq B_{\lambda}\psi_*.\label{eq:OS=graphTQFT17}\end{equation}

Using equations~\eqref{eq:OS=graphTQFT19} and~\eqref{eq:OS=graphTQFT17}, we see that
\begin{equation}
F_3^{\zeta,\zeta}B_{\lambda}\Psi_{\cM_{\tau}\to \cM_{\zeta}}^{\cR_{\tau}\to \cR_{\zeta}} S_{w_2'}^+\psi_* F_1^{\b_1,\b_1}(-)\simeq F_3^{\zeta,\zeta} \psi_* B_{\lambda''} \Psi_{\cM_{\tau}\to \cM_{\zeta}}^{\cR_{\tau}\to \cR_{\zeta}} S_{w_2'}^+ F_1^{\b_1,\b_1}(-).\label{eq:OS=GraphTQFT8}
\end{equation} We note that $F_1^{\b_1,\b_1}$ commutes with $S_{w_2'}^+$ since 1-handle and free-stabilization maps commute with each other \cite{ZemGraphTQFT}*{Lemma~8.13}.  Furthermore
\begin{equation}\Psi_{\cM_{\tau}\to \cM_{\zeta}}^{\cR_{\tau}\to \cR_{\zeta}} F_1^{\b_1,\b_1}(-)\simeq F_1^{\b_1,\b_1} \Psi_{\a_2\cup \{\tau\}\to \a_2\cup \{\zeta\}}^{\b_2\cup \{\tau\}\to \b_2\cup \{\zeta\}}(-)\label{eq:OS=graphTQFT16}\end{equation}  by the holomorphic triangle computation of Proposition~\ref{prop:generalized1-handlesandtriangles}.  

Since the relative homology map $B_{\lambda''}$ commutes with 1-handle maps by \cite{ZemGraphTQFT}*{Lemma~8.11}, we note that
\begin{equation}B_{\lambda''}F_1^{\b_1,\b_1}\simeq F_1^{\b_1,\b_1} B_{\lambda_0},\label{eq:OS=graphTQFT15}
\end{equation}
for a path $\lambda_0$ in $Y_2$ from $w_2$ to $w_2'$, contained in a neighborhood of $w_2\in Y_2$.

Using equations~\eqref{eq:OS=graphTQFT16} and~\eqref{eq:OS=graphTQFT15}, we conclude that equation~\eqref{eq:OS=GraphTQFT8} is chain homotopic to
\begin{equation} F_3^{\zeta,\zeta} \psi_*F_1^{\b_1,\b_1}B_{\lambda_0} \Psi_{\a_2\cup \{\tau\}\to \a_2\cup \{\zeta\}}^{\b_2\cup \{\tau\}\to \b_2\cup \{\zeta\}} S_{w_2'}^+(-).\label{eq:OS=GraphTQFT10}
\end{equation} 
We now claim that 
\begin{equation}F_3^{\zeta,\zeta}\psi_*F_1^{\b_1,\b_1}(-)\simeq \Top_{(\Sigma_1,\b_1,\b_1)}^+S^-_{w_2}(-),
\label{eq:OS=graphTQFT12}
\end{equation} 
where
 \[
 \Top_{(\Sigma_1,\b_1,\b_1)}^+:\CF^-(\Sigma_2,\a_2,\b_2,w_2')\to \CF^-(\Sigma_1,\bs_1,\bs_1,\psi(w_1))\otimes \CF^-(\Sigma_2,\as_2,\bs_2,w_2')
 \] 
 is defined by the formula $\Top^+_{(\Sigma_1,\b_1,\b_1)}(\xs)=\Theta_{\b_1,\b_1}^+\otimes \xs$, 
 analogously to the map in equation~\eqref{eq:OS=GraphTQFT9}. Equation~\eqref{eq:OS=graphTQFT12} essentially follows immediately from the formulas of the maps involved, however one should also use an argument similar to Lemma~\ref{lem:generalized1-handlemapiscompof1-handles} to ensure that a single almost complex structure can be chosen which allows both sides of the equivalence to be computed simultaneously.

\begin{figure}[ht!]
	\centering
	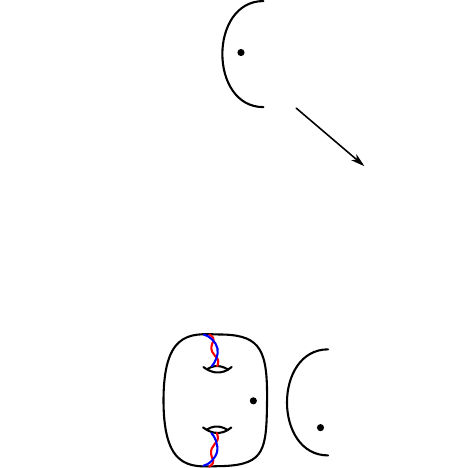
	\caption{\textbf{A schematic of the relation $F_3^{\zeta,\zeta}\psi_*F_1^{\b_1,\b_1}(-)\simeq \Top_{(\Sigma_1,\b_1,\b_1)}^+S^-_{w_2}(-)$.} The relation follows from the formulas for the maps in the composition.\label{fig::54}}
\end{figure}

 Using equation~\eqref{eq:OS=graphTQFT12}, we see that equation~\eqref{eq:OS=GraphTQFT10} is chain homotopic to
\begin{equation}
\Top_{(\Sigma_1,\b_1,\b_1)}^+ S_{w_2}^- B_{\lambda_0} \Psi_{\a_2\cup \{\tau\}\to \a_2\cup \{\zeta\}}^{\b_2\cup \{\tau\}\to \b_2\cup \{\zeta\}} S_{w_2'}^+(-).
\label{eq:OS=graphTQFT14}
\end{equation} Since the maps appearing in equation~\eqref{eq:OS=graphTQFT14} are all natural, we will omit writing the transition map. This reduces equation~\eqref{eq:OS=graphTQFT14} to
\begin{equation}
\Top_{(\Sigma_1,\b_1,\b_1)}^+ S_{w_2}^- B_{\lambda_0} S_{w_2'}^+(-).
\label{eq:OS=graphTQFT13}
\end{equation} 
By Relation~\ref{rel:R9} (the basepoint moving relation) the  expression in equation~\eqref{eq:OS=graphTQFT13} is chain homotopic to
\begin{equation}
\Top^+_{(\Sigma_1,\b_1,\b_1)}\phi^{\lambda_0}_*(-),
\label{eq:OS=graphTQFT18}
\end{equation} 
where $\phi^{\lambda_0}$ is the diffeomorphism of $Y_1\sqcup Y_2$ obtained by moving $w_2$ to $w_2'$ along the path $\lambda_0$.  Inserting equation~\eqref{eq:OS=graphTQFT18} into equation~\eqref{eq:OS=graphTQFT11}, we see that 
\[
(G_1^B\circ \cE_1)(-,-)\simeq \phi_*F_{\a_1\cup\a_2,\b_1\cup \a_2,\b_1\cup\b_2}( \psi'_*  \Top_{(\Sigma_2,\a_2,\a_2)}^+(-), \Top^+_{(\Sigma_1,\b_1,\b_1)}\phi^{\lambda_0}_*(-)).
\]
  Bringing $\phi_*$ inside the triangle map cancels the diffeomorphism maps already inside the triangle map, and we are simply left with
\[
(G_1^B\circ \cE_1)(-,-)\simeq F_{\a_1\cup\a_2,\b_1\cup \a_2,\b_1\cup\b_2}(  \Top_{(\Sigma_2,\a_2,\a_2)}^+(-), \Top^+_{(\Sigma_1,\b_1,\b_1)}(-)).
\]
 The above triangle map counts triangles on $\Sigma_1\sqcup \Sigma_2$, and is just the tensor product of two transition maps,
\[
\Psi_{\a_1}^{\b_1\to \b_1}(-)\otimes \Psi_{\a_2\to \a_2}^{\b_2}(-),
\]
 completing the proof.
 \end{proof}

\section{Heegaard triples and graph cobordisms}
\label{sec:Heegaardtriplesandgraphcobordisms}

Given a Heegaard triple $(\Sigma,\ve{\alpha},\ve{\beta},\ve{\gamma},\ve{w})$, Ozsv\'{a}th and Szab\'{o} construct a smooth 4-manifold $X_{\a,\b,\g}$ \cite{OSDisks}*{Section~8}. The manifold $X_{\a,\b,\g}$ has boundary
\[
\d X_{\a,\b,\g}=-Y_{\a,\b}\sqcup -Y_{\b,\g}\sqcup Y_{\a,\g}.
\]
 The construction of $X_{\a,\b,\g}$ is described in equation~\eqref{eq:Xabgdef} of this paper.

There is a graph $\Gamma_{\a,\b,\g}\subset X_{\a,\b,\g}$, defined as follows. Let $v_0\in \Delta$ be a chosen center point. A graph $\Gamma_0\subset \Delta$ can be defined by attaching three edges to $v_0$, which extend radially from $v_0$ to the vertices of $\Delta$. The graph $\Gamma_{\a,\b,\g}$ is defined as
\[
\Gamma_{\a,\b,\g}:= \ws\times \Gamma_0.
\]
 We give $\Gamma_{\a,\b,\g}$ the cyclic order determined by giving the ends of $X_{\a,\b,\g}$ the ordering $-Y_{\a,\b},$ $-Y_{\b,\g},$ $Y_{\a,\g}$ (read left to right). See Figure~\ref{fig::40}.

\begin{figure}[ht!]
	\centering
	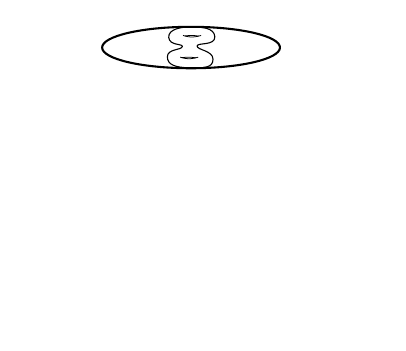
	\caption{\textbf{The 4-manifold with embedded graph $(X_{\a,\b,\g},\Gamma_{\a,\b,\g})$.} \label{fig::40}}
\end{figure}

A natural question is whether the holomorphic triangle map $F_{\a,\b,\g,\frs}$, defined by the formula
\begin{equation}
F_{\a,\b,\g,\frs}(\xs,\ys)=\sum_{\substack{\psi\in \pi_2(\xs,\ys,\zs)\\ \mu(\psi)=0\\
\frs_{\ws}(\psi)=\frs}}\# \cM(\psi) U^{n_{\ws}(\psi)} \cdot \zs,\label{eq:holtrianglemapdef}
\end{equation}
 is chain homotopic to the graph cobordism map for $(X_{\a,\b,\g},\Gamma_{\a,\b,\g})$. We answer this question in the affirmative:

\begin{thm}\label{thm:triplesandgraphcobordismmaps}Suppose that $(\Sigma, \ve{\alpha},\ve{\beta},\ve{\gamma},\ws)$ is a multi-pointed Heegaard triple, and let $(X_{\a,\b,\g}, \Gamma_{\a,\b,\g}):(Y_{\a,\b}\sqcup Y_{\b,\g},\ws\sqcup \ws)\to (Y_{\a,\g},\ws)$ denote the ribbon graph cobordism described above. If $\frs\in \Spin^c(X_{\a,\b,\g})$, the graph cobordism map $F_{X_{\a,\b,\g},\Gamma_{\a,\b,\g},\frs}^B$ is chain homotopic to the holomorphic triangle map 
\[
F_{\a,\b,\g,\frs}\colon \CF^-(\Sigma,\ve{\alpha},\ve{\beta},\ws,\frs|_{Y_{\a,\b}})\otimes_{\bF_2[U]} \CF^-(\Sigma,\ve{\beta},\ve{\gamma},\ws,\frs|_{Y_{\b,\g}})\to \CF^-(\Sigma,\ve{\alpha},\ve{\gamma},\ws,\frs|_{Y_{\a,\g}}),
\]
defined in equation~\eqref{eq:holtrianglemapdef}. 
\end{thm}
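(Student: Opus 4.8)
The plan is to reduce the claim to the already-established results of Sections~\ref{sec:generalized1--handleand3--handlemaps}, \ref{sec:doubleddiagrams}, and \ref{sec:connectedsumsandgraphTQFT}, by giving an explicit handle decomposition of $(X_{\as,\bs,\gs}, \Gamma_{\as,\bs,\gs})$ compatible with the construction of the graph cobordism map. First I would recall that the graph cobordism map $F^B_{X_{\as,\bs,\gs},\Gamma_{\as,\bs,\gs},\frs}$ is computed by choosing any decomposition of the pair $(X_{\as,\bs,\gs}, \Gamma_{\as,\bs,\gs})$ into elementary cobordisms --- $0$-, $1$-, $2$-, $3$-, $4$-handle cobordisms and cylindrical cobordisms carrying graphs --- and composing the corresponding maps. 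The key observation, already used by Ozsv\'ath--Szab\'o in \cite{OSTriangles}, is that $X_{\as,\bs,\gs}$, with the two incoming ends $Y_{\as,\bs}$ and $Y_{\bs,\gs}$ filled in appropriately, is built from $(Y_{\as,\bs}\sqcup Y_{\bs,\gs})\times[0,1]$ by attaching $1$-handles that connect the two components, then $2$-handles (subordinate to the $\bs$ curves viewed on the connected-sum surface), and the triangle count on the triple records precisely the composite of a generalized $1$-handle map with a $2$-handle/triangle map. The reason we get the type-$B$ graph cobordism map rather than the type-$A$ one is that the cyclic order on $\Gamma_{\as,\bs,\gs}$ is chosen with the ends ordered $-Y_{\as,\bs}, -Y_{\bs,\gs}, Y_{\as,\gs}$ left to right, matching the asymmetry of the holomorphic triangle count (which, in the conventions of Lemma~\ref{lem:graphactionandtriangles}, is annihilated by $A_\lambda$ pushed to the left on the first factor, i.e.\ behaves like the $B$-type construction).

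\textbf{Key steps in order.} (1) Observe that it suffices to treat the case of a single basepoint, i.e.\ $|\ws|=1$: for general $\ws$ one stabilizes by adding copies of $(S^2,w)$ at each extra basepoint and applies the generalized $1$-handle/triangle formalism of Section~\ref{sec:generalized1--handleand3--handlemaps} together with Remark~\ref{rem:extrabasepoints}; the graph $\Gamma_{\as,\bs,\gs}=\ws\times\Gamma_0$ splits as a disjoint union of trivalent-vertex graphs, one per basepoint, and the composition law reduces the general case to the singly-pointed one. (2) For the singly-pointed case, first identify the relevant graph cobordism: the pair $(X_{\as,\bs,\gs},\Gamma_{\as,\bs,\gs})$ is the composition of (a) a $1$-handle cobordism from $(Y_{\as,\bs}\sqcup Y_{\bs,\gs},\{w,w\})$ to $(Y_{\as,\bs}\# Y_{\bs,\gs}, w)$, this being exactly the connected-sum graph cobordism $(W,\bar\Gamma)$ whose type-$B$ map is $E_1^A$ (equivalently, $E_2^B$), in the notation of Section~\ref{sec:connectedsumsandgraphTQFT}; followed by (b) a $2$-handle cobordism from $Y_{\as,\bs}\# Y_{\bs,\gs}$ to $Y_{\as,\gs}$, carried by a path-graph, inducing a holomorphic triangle map on a doubled/connected-sum diagram. (3) Identify the composite of these two with the triple map $F_{\as,\bs,\gs,\frs}$: by Proposition~\ref{prop:OSmapsaregraphcobmaps}, $E_1^B\simeq \cE_1$ is the Ozsv\'ath--Szab\'o intertwining map, which is \emph{defined} (Equation~\eqref{eq:cE_1def}) as a composite of generalized $1$-handle maps and a triangle map; combining this with Proposition~\ref{prop:generalized1-handlesandtriangles} (which lets one pull generalized $1$-handle maps in and out of triangle maps) and the associativity of holomorphic triangle maps, the full composite collapses to a single holomorphic triangle count on $(\Sigma,\as,\bs,\gs,\ws)$ with the prescribed $\Spin^c$ structure. (4) Finally, check the $\Spin^c$ bookkeeping: Lemma~\ref{lem:connectedsumspincstructures} and the discussion around $\frs\#\frs_0$ show that the $\Spin^c$ structure on the enlarged triple matching $\frs$ on $X_{\as,\bs,\gs}$ and the torsion one on the auxiliary $(S^1\times S^2)$-summands is unique, so the triangle counts genuinely match.

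\textbf{Main obstacle.} The hard part will be step (2)--(3): carefully matching the \emph{cyclic ordering} data on $\Gamma_{\as,\bs,\gs}$ with the specific asymmetric combination of $A_\lambda$, $B_\lambda$, free-stabilization, and $1$-/$3$-handle maps that appears when one unwinds $\cE_1$ via Lemmas~\ref{lem:connsumgraphcobcomp} and~\ref{lem:connsumgraphcobcomp2}, and verifying that this produces the type-$B$ map (not type-$A$) on the nose. This is precisely the kind of computation that the $\cE_1\simeq E_1^B$ identification in Section~\ref{sec:connectedsumsandgraphTQFT} was set up to handle, so the proof will largely consist of citing Propositions~\ref{prop:OSmapsaregraphcobmaps}, \ref{prop:generalized1-handlesandtriangles}, and~\ref{prop:connectedsummapsarehomotopyinverses}, together with the holomorphic triangle associativity lemmas, and then tracking diffeomorphism maps and transition maps through a chain of chain-homotopies --- the bookkeeping of which diffeomorphism-of-$\Sigma$ maps cancel (as in the proof of Proposition~\ref{prop:OSmapsaregraphcobmaps}) being the genuinely delicate point. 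A secondary subtlety is that one should phrase the $2$-handle step using a triple related by handleslides to one subordinate to a bouquet, exactly as in Lemma~\ref{lem:randomtrianglemapis2-handlemap}, so that the triangle map on the composite diagram is literally the $2$-handle cobordism map; invariance of the cobordism maps under handle cancellation (between the $1$-handles of step (a) and part of the $2$-handles of step (b)) then finishes the argument.
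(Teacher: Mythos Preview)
Your approach is essentially the paper's proof: decompose $(X_{\as,\bs,\gs},\Gamma_{\as,\bs,\gs})$ into a connected-sum graph cobordism followed by a $2$-handle cobordism, identify the first piece with $\cE_1$ via Proposition~\ref{prop:OSmapsaregraphcobmaps}, realize the second via a triple subordinate to a bouquet as in Lemma~\ref{lem:randomtrianglemapis2-handlemap}, and then collapse using associativity and Proposition~\ref{prop:generalized1-handlesandtriangles}. Two points are worth flagging.

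First, there is an internal inconsistency in step (2)(a): you identify the connected-sum piece with $(W,\bar{\Gamma})$ and write its type-$B$ map as $E_1^A\simeq E_2^B$, but then in step (3) you invoke $E_1^B\simeq\cE_1$. The cyclic order on $\Gamma_{\as,\bs,\gs}$ ($-Y_{\as,\bs},-Y_{\bs,\gs},Y_{\as,\gs}$) matches the cyclic order defining $\Gamma$ (not $\bar\Gamma$), so the correct identification is that the type-$B$ map of the connected-sum piece is $E_1^B$, and Proposition~\ref{prop:OSmapsaregraphcobmaps} then gives $E_1^B\simeq\cE_1$ as you use in step (3). This is easily repaired.

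Second, and more substantively, your ``collapse'' in step (3) is where the paper does real work that you have not spelled out. After the $2$-handle map, you are on the \emph{doubled} diagram $(\Sigma\,\#_{\ws}\bar\Sigma,\as\cup\bar\gs,\Ds)$ for $Y_{\as,\gs}$, not on $(\Sigma,\as,\gs)$; a transition map is needed, and the paper uses the explicit formula for the undoubling map from Proposition~\ref{prop:changeofdiagramsmapcompundouble}. The actual mechanism of the collapse is then: apply associativity twice and Proposition~\ref{prop:generalized1-handlesandtriangles} to move $F_1^{\bar\gs,\bar\gs}$ and $F_3^{\bar\gs,\bar\gs}$ through the outer triangle map, leaving $F_{\as,\bs,\gs}(-\otimes G(-))$ where $G$ is recognized, via Propositions~\ref{prop:changeofdiagramsmapcomp} and~\ref{prop:changeofdiagramsmapcompundouble}, as a doubling-then-undoubling composite, hence the transition map $\Psi_{(\bar\Sigma,\bar\gs,\bar\bs)\to(\Sigma,\bs,\gs)}$. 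This cancels against the initial transition map. Your invocation of ``handle cancellation'' at the end is morally this, but the doubled-diagram formulas from Section~\ref{sec:doubleddiagrams} are precisely what makes the cancellation visible at the level of triangle maps, and you should cite them explicitly.
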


\begin{rem} A sketch of a similar result was communicated to the author by Lipshitz, Ozsv\'{a}th and Thurston \cite{LOTPersonalCommunication}. 
	\end{rem}

\begin{proof} We can obtain a handle decomposition for the cobordism $X_{\a,\b,\g}$ by examining the handle decomposition of the trace cobordism described in Section~\ref{sec:handledecomptracecobordism}. We start with a Morse function $f_{\b}$ on $U_{\b}$ which has $|\ws|$ index 0 critical points, and $|\b|$ index 1 critical points, whose ascending manifolds intersect $\Sigma$ along the $\bs$ curves, and has $\Sigma$ as its maximal level set. By adapting the handle decomposition for the trace cobordism from Section~\ref{sec:handledecomptracecobordism}, we can give $X_{\a,\b,\g}$ the following handle decomposition:
	\begin{itemize}
		\item a 1-handle attached for each index 0 critical point of $f_{\b}$, with one foot attached at an index 0 critical point of $f_{\b}$ in $U_{\b}\subset Y_{\a,\b}$, and the other foot attached at the corresponding critical point of $f_{\b}$ in $-U_{\b}\subset Y_{\b,\g}$;
		\item a collection of $|\bs|$ 2-handles, whose framed attaching link $\bL$ is formed by taking the descending manifolds of the index 1 critical points of $f_{\b}$ in $U_{\b}\subset Y_{\a,\b}$, and concatenating them across the 1-handles with their mirrors in $-U_{\b}\subset Y_{\b,\g}$.
	\end{itemize}

We can isotope the handles in this decomposition so that each of the 1-handles are attached with one foot at a basepoint $w\in \ws\subset Y_{\a,\b}$ and the other foot at the corresponding basepoint in $Y_{\b,\g}$. Furthermore, we can perform an isotopy of the graph $\Gamma_{\a,\b,\g}$, so that the two edges connected to $Y_{\a,\b}$ and $Y_{\b,\g}$, as well as the trivalent vertex, are contained in the interior of the corresponding 1-handle.

	The cobordism map $F_{X_{\a,\b,\g}, \Gamma_{\a,\b,\g},\frs}^B$ is thus equal to  the connected sum graph cobordism map $E_1^B$ from Section~\ref{sec:connectedsumsandgraphTQFT} (with $Y_{\a,\b}$ playing the role of $Y_1$, and $Y_{\b,\g}$ playing the role of $Y_2$), followed by the 2-handle map for surgery on $\bL$.
	
	We will  write $Y_{\a,\b}\, \#_{\ws} Y_{\b,\g}$ for the manifold obtain by adding $|\ws|$ connected sum tubes between $Y_{\a,\b}$ and $Y_{\b,\g}$, and we will abuse notation slightly and write $\ws$ for the basepoints in the connected sum regions of $Y_{\a,\b}\, \#_{\ws} Y_{\b,\g}$. The graph $\Gamma_{\a,\b,\g}$ intersects $Y_{\a,\b}\, \#_{\ws} Y_{\b,\g}$ at $\ws$. 

It is convenient to start our computation of $F_{X_{\a,\b,\g},\Gamma_{\a,\b,\g},\frs}$  at the diagram $(\Sigma\sqcup \bar{\Sigma}, \as\cup \bar{\gs}, \bs\cup \bar{\bs}, \ws\sqcup \ws)$. Hence we begin by composing with the transition map
\[
\id_{\CF^-(\Sigma,\a,\b)}\otimes \Psi_{(\Sigma,\b,\g)\to (\bar{\Sigma},\bar{\g},\bar{\b})}.
\]
 We will omit writing this transition map for most of the argument, to condense the notation, however it will reappear at the end.

 By Proposition~\ref{prop:OSmapsaregraphcobmaps},  we know that the graph cobordism map $E_1^B$ is chain homotopic to the Ozsv\'{a}th-Szab\'{o} intertwining map 
 \[
 \cE_1:\CF^-(\Sigma,\ve{\alpha},\ve{\beta},\ws,\frs|_{Y_{\a,\b}})\otimes_{\bF_2[U]} \CF^-(\bar{\Sigma},\bar{\ve{\gamma}},\ve{\bar{\beta}},\ws,\frs|_{Y_{\b,\g}})\]
 \[\to \CF^-(\Sigma\, \#_{\ws} \bar{\Sigma}, \as\cup \bar{\gs}, \bs\cup \bar{\bs}, \ws, \frs|_{Y_{\a,\b}}\# \frs|_{Y_{\b,\g}}),
 \] 
 defined by the formula
\[
\cE_1(-,-):=F_{\a\cup \bar{\g},\b\cup \bar{\g} ,\b\cup \bar{\b}}(F_1^{\bar{\g},\bar{\g}}(-)\otimes F_1^{\b,\b}(-)).
\]

We now pick curves $\Ds$ on $\Sigma\, \#_{\ws} \bar{\Sigma}$ as in Section~\ref{sec:doubleddiagram}, for a doubled diagram. Adapting the proof of Lemma~\ref{lem:randomtrianglemapis2-handlemap}, we see that the triple $(\Sigma\, \#_{\ws} \bar{\Sigma}, \as\cup \bar{\gs}, \bs\cup \bar{\bs}, \Ds,\ws)$ is (after performing a sequence of handleslides and isotopies) subordinate to a bouquet for the framed link $\bL\subset U_{\b\cup \bar{\b}}$.

 Thus the graph cobordism map $F_{X_{\a,\b,\g},\Gamma_{\a,\b,\g},\frs}(-,-)$ is chain homotopic to the composition
\begin{equation}
F_{\a\cup \bar{\g}, \b\cup \bar{\b}, \Dt}(F_{\a\cup \bar{\g},\b\cup \bar{\g} ,\b\cup \bar{\b}}(F_1^{\bar{\g},\bar{\g}}(-)\otimes  F_1^{\b,\b}(-))\otimes \Theta_{\b\cup \bar{\b}, \Dt}^+).\label{eq:trianglemap=graphTQFT1}\end{equation} The associativity relations for the quadruple $(\as\cup \bar{\gs}, \bs\cup \bar{\gs},\bs\cup \bar{\bs}, \Ds)$ imply that equation~\eqref{eq:trianglemap=graphTQFT1} is chain homotopic to
\begin{equation}F_{\a\cup \bar{\g}, \b\cup \bar{\g}, \Dt}(F_1^{\bar{\g},\bar{\g}}(-)\otimes F_{\b\cup \bar{\g},\b\cup \bar{\b},\Dt}( F_1^{\b,\b}(-)\otimes \Theta^+_{\b\cup \bar{\b},\Dt})).\label{eq:trianglemap=graphTQFT}\end{equation} The final Heegaard diagram in this composition is the double of the diagram $(\Sigma,\as,\gs)$, so we must post-compose with a transition map to undo the doubling operation.  Proposition~\ref{prop:changeofdiagramsmapcompundouble} shows that the transition map associated to undoing the doubling operation satisfies
\begin{equation}\Psi_{(\Sigma\, \#_{\ws} \bar{\Sigma},\a\cup \bar{\g},\Dt)\to (\Sigma,\a,\g)}(-)\simeq F_{3}^{\bar{\g},\bar{\g}}F_{\a\cup \bar{\g}, \Dt, \g\cup \bar{\g}}(- \otimes \Theta_{\Dt, \g\cup\bar{\g}}^+).\label{eq:trianglemap=graphTQFT3}\end{equation} Composing equation~\eqref{eq:trianglemap=graphTQFT} with equation~\eqref{eq:trianglemap=graphTQFT3}, we see that the  graph cobordism map for $(X_{\a,\b,\g}, \Gamma_{\a,\b,\g})$ is chain homotopic to
\begin{equation}F_{3}^{\bar{\g},\bar{\g}}F_{\a\cup \bar{\g}, \Dt, \g\cup \bar{\g}}(F_{\a\cup \bar{\g}, \b\cup \bar{\g}, \Dt}(F_1^{\bar{\g},\bar{\g}}(-)\otimes F_{\b\cup \bar{\g},\b\cup \bar{\b},\Dt}( F_1^{\b,\b}(-)\otimes \Theta_{\b\cup \bar{\b},\Dt}^+))\otimes \Theta_{\Dt, \g\cup \bar{\g}}^+ ).\label{eq:trianglemap=graphTQFT4}
\end{equation} 
Applying the associativity relations for the quadruple $(\as\cup \bar{\gs}, \bs\cup \bar{\gs}, \Ds, \gs\cup \bar{\gs})$ to the left two triangle maps in equation~\eqref{eq:trianglemap=graphTQFT4}, we conclude that equation~\eqref{eq:trianglemap=graphTQFT4} is chain homotopic to
\begin{equation}F_3^{\bar{\g},\bar{\g}}F_{\a\cup \bar{\g},\b\cup \bar{\g}, \g\cup \bar{\g}}(F_1^{\bar{\g},\bar{\g}}(-)\otimes F_{\b\cup \bar{\g},\Dt, \g\cup \bar{\g}}(F_{\b\cup \bar{\g},\b\cup \bar{\b},\Dt}( F_1^{\b,\b}(-)\otimes \Theta_{\b\cup \bar{\b},\Dt}^+)\otimes \Theta_{\Dt, \g\cup \bar{\g}}^+)).\label{eq:trianglemap=graphTQFT5}\end{equation}  Using the holomorphic triangle counts from Proposition~\ref{prop:generalized1-handlesandtriangles}, we conclude that equation~\eqref{eq:trianglemap=graphTQFT5} is equal to
\begin{equation}F_{\a,\b,\g}(-\otimes F_3^{\bar{\g},\bar{\g}}( F_{\b\cup \bar{\g},\Dt, \g\cup \bar{\g}}(F_{\b\cup \bar{\g},\b\cup \bar{\b},\Dt}(F_1^{\b,\b}(-)\otimes \Theta_{\b\cup \bar{\b},\Dt}^+)\otimes \Theta_{\Dt, \g\cup \bar{\g}}^+))).\label{eq:trianglemap=graphTQFT6}\end{equation} 
The composition
\begin{equation}F_3^{\bar{\g},\bar{\g}}( F_{\b\cup \bar{\g},\Dt, \g\cup \bar{\g}}(F_{\b\cup \bar{\g},\b\cup \bar{\b},\Dt}(F_1^{\b,\b}(-)\otimes \Theta_{\b\cup \bar{\b},\Dt}^+)\otimes \Theta_{\Dt, \g\cup \bar{\g}}^+)\label{eq:trianglemap=graphTQFT7}\end{equation} is chain homotopic to the transition map
\begin{equation}\Psi_{(\bar{\Sigma},\bar{\g},\bar{\b})\to (\Sigma,\b,\g)},\label{eq:trianglemap=graphTQFT8}\end{equation} since equation~\eqref{eq:trianglemap=graphTQFT7} represents the composition of a change of maps for doubling, followed by the transition map for undoing the doubling operation, by Propositions~\ref{prop:changeofdiagramsmapcomp} and \ref{prop:changeofdiagramsmapcompundouble}. Using the fact that equations~\eqref{eq:trianglemap=graphTQFT7} and~\eqref{eq:trianglemap=graphTQFT8} are chain homotopic, our expression for the graph cobordism map for $(X_{\a,\b,\g}, \Gamma_{\a,\b,\g})$ from equation~\eqref{eq:trianglemap=graphTQFT6} reduces to
\begin{equation}
F_{\a,\b,\g}(-\otimes \Psi_{(\bar{\Sigma},\bar{\g},\bar{\b})\to (\Sigma,\b,\g)}(-)).\label{eq:trianglecobordism1}\end{equation} On the other hand, we started the proof by composing with the transition map 
\[
\id_{\CF^-(\Sigma,\a,\b)}\otimes \Psi_{(\Sigma,\b,\g)\to (\bar{\Sigma},\bar{\g},\bar{\b})}.\] Composing equation~\eqref{eq:trianglecobordism1} with this transition map, which we have been omitting until now, leaves just $F_{\a,\b,\g}(-,-)$, completing the proof. 
\end{proof}

\section{Duality and the graph TQFT}
\label{sec:traceandcotrace}
In this section, we prove Theorem~\ref{thm:dualityv1} by computing the maps induced by the trace and cotrace cobordisms.

\subsection{Turning around graph cobordisms}

If $(W,\Gamma):(Y_1,\ve{w}_1)\to (Y_2,\ve{w}_2)$ is a graph cobordism, then we can turn around $(W,\Gamma)$ to get a graph cobordism 
\[
(W^\vee,\Gamma^\vee):(-Y_2,\ve{w}_2)\to (-Y_1,\ve{w}_1).
\]
 Here we give $\Gamma^\vee$ the same cyclic order as $\Gamma$. In this section, we extend the duality result of \cite{OSTriangles}*{Theorem~3.5} to graph cobordisms, by proving that the graph cobordism map for $(W^\vee,\Gamma^\vee)$ is the dual of the cobordism map for $(W,\Gamma)$:

\begin{prop}\label{prop:turningaroundgraphcobordisms}If $(W,\Gamma):(Y_1,\ve{w}_1)\to (Y_2,\ve{w}_2)$ is a ribbon graph cobordism, then
	\[
	F_{W^\vee,\Gamma^\vee,\frs}^A\simeq   (F_{W,\Gamma,\frs}^A)^{\vee},\] with respect to the natural pairing between $\CF^-(Y,\ws,\frs)$ and $\CF^-(-Y, \ws, \frs)$. The same holds for the type-$B$ graph cobordism maps.
\end{prop}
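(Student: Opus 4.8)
The plan is to reduce the statement about turning around a graph cobordism to the analogous statements for the elementary pieces out of which the graph cobordism maps are constructed, namely the handle maps (0-, 1-, 2-, 3-, 4-handles), the free-stabilization maps, and the relative homology maps. Since the type-$A$ graph cobordism map $F_{W,\Gamma,\frs}^A$ is by definition a composition of such elementary maps associated to a handle decomposition of $W$ together with a Cerf decomposition of the flow graph on the product pieces, and since dualizing reverses the order of a composition, $(F_{W,\Gamma,\frs}^A)^\vee$ will be the composition in reverse order of the duals of these elementary maps. On the other side, turning around $W$ reverses the handle decomposition (an index $k$ handle becomes an index $4-k$ handle attached from the other side), reverses the Cerf decomposition of the flow graph, and conjugates all the $\Spin^c$ structures on the intermediate 3-manifolds, as explained in the paragraph preceding the proposition. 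So the proof amounts to checking, piece by piece, that the dual of each elementary map (with respect to the trace pairing of Equation~\eqref{eq:F[U]duality}) agrees with the elementary map associated to the turned-around piece.

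First I would record the duality statements for the individual building blocks. For the 1-, 2-, and 3-handle maps this is essentially \cite{OSTriangles}*{Theorem~3.5}, i.e. Equation~\eqref{eq:OSturningaroundcob}; one should note its generalized-1-handle/3-handle analog, which follows from the explicit formulas $F_1^{\xis,\zetas}(\xs)=\xs\otimes\Theta^+_{\xis,\zetas}$ and $F_3^{\xis,\zetas}(\xs\times\theta)=\xs\cdot\langle\theta,\Theta^-_{\xis,\zetas}\rangle$ together with the observation that the pairing $\langle,\rangle$ in Equation~\eqref{eq:pairingdefinition} identifies $F_3^{\xis,\zetas}$ as the dual of $F_1^{\xis,\zetas}$ on $(-\Sigma_0,\zetas,\xis)$ (using that $\Theta^+$ on one diagram is $\Theta^-$ on the reversed diagram). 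For the 0- and 4-handle maps the formulas $F_0(\xs)=\xs\times \ve{c}_0$, $F_4(\xs\times \ve{c}_0)=\xs$ immediately give $F_4=F_0^\vee$ with respect to the trace pairing on $\CF^-(S^3,w_0)$. For the free-stabilization maps, the formulas $S_w^+(\xs)=\xs\times\theta^+$, $S_w^-(\xs\times\theta^+)=0$, $S_w^-(\xs\times\theta^-)=\xs$ show that $S_w^-$ is the dual of $S_w^+$ with respect to the trace pairing on the free-stabilization region (here $\theta^+$ on $(\Sigma,\as,\bs)$ becomes $\theta^-$ on $(\Sigma,\bs,\as)$, so the bijection of flowlines respects the pairing). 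For the relative homology maps $A_\lambda$ and $B_\lambda$, the key point is that reversing orientation of $Y$ swaps the roles of the $\as$ and $\bs$ curves, so the quantity $a(\lambda,\phi)$ computed on $(\Sigma,\as,\bs)$ becomes $b(\lambda,\phi)$ computed on $(\Sigma,\bs,\as)$ for the corresponding flowline; combined with the flowline bijection $\pi_2(\xs,\ys)\leftrightarrow \pi_2(\ys,\xs)$ this gives $A_\lambda^\vee = B_\lambda$ and $B_\lambda^\vee = A_\lambda$ — which is precisely why the turned-around type-$A$ map is again a type-$A$ map rather than a type-$B$ map (the orientation reversal of the Heegaard surface has already swapped $a\leftrightarrow b$ twice, once from dualizing and once from the diagram reversal, so we land back on type-$A$).

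Then I would assemble these. Fix a handle decomposition of $W$ realizing $F_{W,\Gamma,\frs}^A$ as a composition $F_k\circ\cdots\circ F_1$ of elementary maps (with the product-cobordism pieces further decomposed via a Cerf decomposition of the flow graph into the expressions of Equations~\eqref{eq:elemengraphtype1} and \eqref{eq:elemengraphtype2}). Turning around $W$ yields the handle decomposition in the reverse order with each handle's index complemented, and the flow graph reversed; the intermediate 3-manifolds $Y^{(i)}$ become $-Y^{(i)}$ with conjugated $\Spin^c$ structures. By the block computations above, the elementary map associated to the $i$-th turned-around handle is exactly $F_i^\vee$ (with respect to the trace pairings on the intermediate complexes), so $F_{W^\vee,\Gamma^\vee,\frs}^A \simeq F_1^\vee\circ\cdots\circ F_k^\vee = (F_k\circ\cdots\circ F_1)^\vee = (F_{W,\Gamma,\frs}^A)^\vee$. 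One must also check that the transition maps used to move between Heegaard diagrams are compatible with dualization — this follows from naturality together with the block statements, or one can invoke invariance of the graph cobordism map to choose compatible diagrams throughout. Finally, the type-$B$ statement follows from the type-$A$ statement via Lemma~\ref{lem:reversecyclicordering}: $F_{W^\vee,\Gamma^\vee,\frs}^B \simeq F_{W^\vee,\bar{\Gamma}^\vee,\frs}^A = F_{(W,\bar\Gamma)^\vee,\frs}^A \simeq (F_{W,\bar\Gamma,\frs}^A)^\vee \simeq (F_{W,\Gamma,\frs}^B)^\vee$, using that $\overline{\Gamma^\vee} = (\bar\Gamma)^\vee$.

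The main obstacle I anticipate is the relative homology maps and the bookkeeping of the $a\leftrightarrow b$ swap under the combination of orientation reversal and dualization — one needs to be careful that the net effect of (i) reversing the Heegaard surface orientation (which interchanges $\as$ and $\bs$, hence interchanges the $a$- and $b$-weightings) and (ii) taking the adjoint of a flowline-counting operator (which also interchanges the $a$- and $b$-weightings, since on the reversed diagram a flowline from $\ys$ to $\xs$ sees the $\bs$ curves where the original saw the $\as$ curves) really does compose to the identity on the $a/b$ labels, so that type-$A$ genuinely goes to type-$A$. A secondary subtlety is checking the cyclic-order conventions: the proposition stipulates $\Gamma^\vee$ carries the \emph{same} cyclic order as $\Gamma$, and one must verify this is consistent with the way the elementary expressions in Equations~\eqref{eq:elemengraphtype1}–\eqref{eq:elemengraphtype2} dualize (e.g. that $(S_{v_0}^-A_{e_n}\cdots A_{e_1}S_{v_0}^+)^\vee = S_{v_0}^-B_{e_1}\cdots B_{e_n}S_{v_0}^+$ is the type-$A$ expression for the reversed elementary flow graph with its induced cyclic order). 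Everything else — the handle maps, the 0/4-handle maps, the free-stabilizations — is routine given \cite{OSTriangles}*{Theorem~3.5} and the explicit formulas.
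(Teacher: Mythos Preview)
Your overall strategy---reduce to the elementary pieces (handle maps, free-stabilizations, relative homology maps) and check duality for each---is exactly the paper's approach. The handle and free-stabilization pieces are fine. The gap is in your treatment of the relative homology maps, specifically the ``double swap'' claim.

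There is only \emph{one} swap, not two. Dualizing and diagram reversal are not independent operations: the identification $\CF^-(-Y,\ws,\bar\frs)\cong \CF^-(Y,\ws,\frs)^\vee$ \emph{is} the diagram reversal $(\Sigma,\as,\bs)\mapsto(\Sigma,\bs,\as)$. Concretely, a disk $\phi\in\pi_2(\xs,\ys)$ on $(\Sigma,\as,\bs)$ corresponds to $\bar\phi\in\pi_2(\ys,\xs)$ on $(\Sigma,\bs,\as)$ with the same domain, so the weight $a(\lambda,\phi)$ (changes across $\as$) equals the \emph{intrinsic} $b$-weight $b(\lambda,\bar\phi)$ on the reversed diagram. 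Hence $(A_\lambda)^\vee$ is the intrinsic type-$B$ map $B_\lambda$ on $-Y$, not the type-$A$ map. Consequently your computation $(S_{v_0}^-A_{e_n}\cdots A_{e_1}S_{v_0}^+)^\vee = S_{v_0}^-B_{e_1}\cdots B_{e_n}S_{v_0}^+$ is a type-$B$ expression on $-Y$, with the cyclic order reversed (since the edges are now applied in the order $e_n,\dots,e_1$). Globally this gives $(F_{W,\Gamma,\frs}^A)^\vee \simeq F_{W^\vee,\bar\Gamma^\vee,\frs}^B$, not $F_{W^\vee,\Gamma^\vee,\frs}^A$ directly.

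The fix is to invoke Lemma~\ref{lem:reversecyclicordering} \emph{here}, in the type-$A$ argument itself (not only at the end for the type-$B$ statement): it gives $F_{W^\vee,\bar\Gamma^\vee,\frs}^B \simeq F_{W^\vee,\Gamma^\vee,\frs}^A$, which is the claim. This is precisely what the paper does. Your derivation of the type-$B$ statement from the type-$A$ statement via Lemma~\ref{lem:reversecyclicordering} is then fine.
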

\begin{proof}It is sufficient to show the claim for a cobordism obtained by attaching a single 0-, 1-, 3- or 4-handle, a collection of 2-handles, or a graph cobordism with underlying 4-manifold $Y\times [0,1]$.

Note that when we turn around a $k$-handle, we get a $4-k$ handle. Duality between the  1- and 3-handle maps follows exactly as in \cite{OSTriangles}*{Section~5} and is immediate from the formulas. The argument for the 2-handle maps is the same as in \cite{OSTriangles}*{Section~5}: if $W$ is a 2-handle cobordism whose map can be computed by counting triangles in the triple $(\Sigma,\as,\bs,\bs')$, then the turned around cobordism $W^\vee$ is a 2-handle cobordism, whose map can be computed by counting triangles in the triple $(\Sigma,\bs,\bs',\as)$. Since the holomorphic triangles on the two triples are in bijection, it is easy to see that the  maps $F_{\a,\b,\b',\frs}(-,\Theta_{\b,\b'}^+)$ and $F_{\b,\b',\a,\frs}(\Theta_{\b,\b'}^+,-)$ are dual to each other.

We now consider the new maps appearing in the graph TQFT. These were summarized in Section~\ref{sec:outlineofconstruction}. Firstly, it is clear that the 0-handle and 4-handle maps are dual to each other. The remaining maps are the graph action maps (which give the graph cobordism maps for graph cobordisms with underlying 4-manifold $Y\times [0,1]$). If $\Gamma:V_0\to V_1$ is an embedded flow graph in $Y$ between two disjoint collections of vertices $V_0,V_1\subset Y$, then the map $A_{\Gamma}:\CF^-(Y,V_0)\to \CF^-(Y,V_1)$ is defined as a composition of free-stabilization maps $S_{w}^{\pm}$ and relative homology maps $A_{\lambda}$, for various edges $\lambda$ and vertices $w$ of a subdivision of  $\Gamma$.  The graph action map $B_{\Gamma}$ is defined by replacing each instance of $A_{\lambda}$ in the formula for $A_{\Gamma}$ with $B_{\lambda}$.  We claim that
\begin{equation}(A_{\Gamma})^\vee\simeq B_{\bar{\Gamma}^\vee}.\label{eq:dualgraphaction}\end{equation} Here $\bar{\Gamma}^\vee:V_1\to V_0$ is the flow graph in $Y$ obtained by turning $\Gamma$ around (i.e. viewing it as a flow graph from $V_1$ to $V_0$) and reversing all of the cyclic orders. To establish equation~\eqref{eq:dualgraphaction}, observe that from the definition of the graph action map \cite{ZemGraphTQFT}*{Section~7}, the formula for $B_{\bar{\Gamma}^\vee}$ can be obtained by taking the formula for $A_{\Gamma}$, reversing the order of all maps (hence reversing the cyclic orders at each vertex), replacing every free-stabilization $S_w^+$ with the corresponding free de-stabilization $S_w^-$ (and vice versa), and replacing each $A_\lambda$ map with $B_\lambda$. Just like with the 1-handle and 3-handle maps, it is straightforward to see that the maps $S_w^+$ and $S_w^-$ are dual to each other. Similarly, the two maps
\[
B_\lambda:\CF^-(\Sigma,\bs,\as)\to \CF^-(\Sigma,\bs,\as)
\]
 and
\[
A_{\lambda}:\CF^-(\Sigma,\as,\bs)\to \CF^-(\Sigma,\as,\bs)
\] 
are dual to each other, since they count the same holomorphic curves, with the same factor, since the roles of the $\ve{\alpha}$ and $\ve{\beta}$ curves have been changed on the two diagrams. Equation~\eqref{eq:dualgraphaction} follows. It follows that
	\[(F_{W,\Gamma,\frs}^A)^\vee=F_{W^\vee,\bar{\Gamma}^\vee,\frs}^B.\] Applying Lemma~\ref{lem:reversecyclicordering} shows that
	\[F_{W^\vee,\bar{\Gamma}^\vee,\frs}^B\simeq F_{W^\vee,\Gamma^\vee,\frs}^A,\] completing the proof for the type-$A$ maps. The same argument works for the type-$B$ maps.
\end{proof}

\subsection{Trace and cotrace cobordism maps}

In this section, we prove that the trace and cotrace graph cobordisms induce the canonical trace and cotrace maps.

\begin{customthm}{\ref{thm:dualityv1}}
If $(Y,\ws)$ is a multi-pointed 3-manifold, the trace graph cobordism $(Y\times [0,1],\ws\times [0,1]): (Y\sqcup -Y,\ws\sqcup \ws)\to \emptyset$ induces the canonical trace map
\[
\tr:\CF^-(Y,\ws,\frs)\otimes_{\bF_2[U]} \CF^-(-Y,\ws,\frs)\to \bF_2[U].
\] Similarly, the cotrace graph cobordism $(Y\times [0,1],\ws\times [0,1]):\emptyset\to (Y\sqcup -Y,\ws\sqcup \ws)$ induces the canonical cotrace map
\[
\cotr: \bF_2[U]\to \CF^-(Y,\ws,\frs)\otimes_{\bF_2[U]} \CF^-(-Y,\ws,\frs).
\] The formulas hold for both the type-$A$ and type-$B$ graph cobordism maps.
\end{customthm}
\begin{proof}[Proof of Theorem~\ref{thm:dualityv1}] We first consider the trace cobordism. Pick a diagram $(\Sigma,\ve{\alpha},\ve{\beta},\ws)$ for $Y$. We note that according to \cite{OSTriangles}*{Proposition~4.3}, the 4-manifold  $X_{\a,\b,\a}$ is diffeomorphic to $Y\times [0,1]\setminus N(U_{\a}\times \{\tfrac{1}{2}\})$, where $U_{\a}\subset Y$ denotes the $\alpha$-handlebody. Each $\as$ curve determines a compressing disk in $U_{\a}$. The union of this disk in $U_{\a}$, together with its image in $-U_{\a}$, determines a 2-sphere in $Y_{\a,\a}\subset \d X_{\a,\b,\a}$.  Let $(X',\Gamma')$ denote the graph cobordism obtained by attaching $|\as|$ 3-handles to $(X_{\a,\b,\a},\Gamma_{\a,\b,\a})$, along these 2-spheres in $Y_{\a,\a}$. The cobordism $(W',\Gamma')$ is diffeomorphic to $(Y\times [0,1],\ws\times [0,1])$ with $|\ws|$ 4-balls removed, and graph $\Gamma'$ obtained by adding a strand from each 3-sphere in the boundary of $W'$ to one of the components of $\ws\times [0,1]$.

Let $\ve{\alpha}'$ be small Hamiltonian translates of the $\ve{\alpha}$ curves. Note that $X_{\a,\b,\a}$ is diffeomorphic to $X_{\a,\b,\a'}$.  Using Theorem~\ref{thm:triplesandgraphcobordismmaps}, the cobordism map for $X_{\a,\b,\a'}$ is chain homotopic to the holomorphic triangle map $F_{\a,\b,\a'}$. The type-$B$ graph cobordism map for the trace cobordism 
\[
F_{Y\times [0,1], \ws\times [0,1]}^B:\CF^-(\Sigma,\as,\bs)\otimes \CF^-(\Sigma, \bs, \as)\to \bF_2[U]
\]
 can thus be computed as the composition of the change of diagrams map $\id \otimes \Psi_{\b}^{\a\to \a'}$, followed by the triangle map $F_{\a,\b,\a'}$, followed by  $|\ve{\alpha}|$ 3-handle maps and $|\ws|$ 4-handle maps. If we identify $\CF^-(S^3,w)$ with $\bF_2[U]$ via the 4-handle map, the graph cobordism map for the trace cobordism thus takes the form
	\begin{align*}&F_{Y\times [0,1],\ws\times [0,1]}^B(\ve{x}\otimes \ve{y})\\=&F_3^{\a,\a'}(F_{\a,\b,\a'}(\ve{x}\otimes \Psi_{\b}^{\a\to \a'}(\ve{y})))\\=&\langle F_{\a,\b,\a'}(\ve{x}\otimes \Psi_{\b}^{\a\to \a'}(\ve{y})), \Theta^-_{\a,\a'}\rangle'\\
	=&\tr( F_{\a',\a,\b}(\Theta^+_{\a',\a}\otimes \ve{x})\otimes\Psi_{\b}^{\a\to \a'}(\ve{y}))\\
	=&\tr( \Psi^{\beta}_{\alpha\to \alpha'}(\ve{x})\otimes \Psi_{\b}^{\a\to \a'}(\ve{y}) )\\
	=&\tr(\xs\otimes\ys). \end{align*} The first equality follows from the topological reasoning of the previous paragraph. The second equality is the definition of the 3-handle map. The third equality follows from observing that $F_{\a,\b,\a'}$ and $F_{\a',\a,\b}$ count the same holomorphic triangles, and also noting that $\Theta_{\a,\a'}^-=\Theta_{\a',\a}^+$. The fourth equality is obtained by observing that the triangle map in the fourth line computes the transition map. The final equality follows by noting that 
	\[
	\Psi_{\b}^{\a\to \a'}: \CF^-(\Sigma,\bs,\as)\to \CF^-(\Sigma,\bs,\as'),
	\]
	 is the dual of 
	\[
	\Psi_{\a'\to \a}^{\b}: \CF^-(\Sigma,\as',\bs)\to \CF^-(\Sigma,\as,\bs),
	\]
	 and that $\Psi_{\a'\to \a}^{\b}\circ \Psi_{\a\to \a'}^{\b}\simeq \id_{\CF^-(\Sigma,\a,\b)}$.

As the graph cobordism $(Y\times [0,1],\ws\times [0,1])$ has no vertices of valence 3 or greater, it follows that \[F^A_{Y\times [0,1],\ws\times [0,1]}\simeq F^B_{Y\times [0,1],\ws\times [0,1]}\] by Lemma~\ref{lem:reversecyclicordering}, so the same formula holds for the type-$A$ graph cobordism maps.

	The statement about the cotrace cobordism follows from the formula for the trace cobordism map, together with Proposition~\ref{prop:turningaroundgraphcobordisms}, since the cotrace cobordism is obtained by turning around the trace cobordism.
\end{proof}

\section{Mixed invariants and non-separating cuts}
\label{sec:mixedinvariants}

In this section, we prove the following:

\begin{customthm}{\ref{thm:mixedinvariantmappingtorus}}
Suppose $X^4$ is a closed, oriented 4-manifold with $b_2^+(X)> 1$ and $Y^3\subset X$ is a closed, oriented, connected and non-separating 3-dimensional submanifold. Write $W$ for the cobordism obtained by cutting $X$ along $Y$. Suppose $\frs\in \Spin^c(W)$ is a $\Spin^c$ structure whose restrictions to both copies of $Y$ in $\d W$ agree. Suppose further that at least one of the following holds:
\begin{enumerate}
\item $\frs|_Y$ is non-torsion, or
\item $b_2^+(W)>0$.
\end{enumerate}
 If $\xi\in \Lambda^*(H_1(W)/\Tors)\otimes \bF_2[U]$, then the $\bF_2$ mixed invariants of $X$ satisfy
\[
\Lef\big(F_{W,\frs}(\xi\otimes -):\HF^+_{\red}(Y,\frs|_{Y})\to \HF^+_{\red}(Y,\frs|_{Y})\big)=\sum_{\substack{\frt\in \Spin^c(X)\\ \frt|_W=\frs}}\Phi_{X,\frt}(\xi).
\]
\end{customthm}

By specializing to mapping tori, we obtain the following:

\begin{customcor}{\ref{cor:mixedinvariantofactualmappingtori}}Suppose $Y^3$ is a closed, oriented 3-manifold and $\phi:Y\to Y$ is an orientation preserving diffeomorphism such that  $b_2^+(X_\phi)>1$. If $\frs\in \Spin^c(Y)$ is non-torsion and  $\phi_*(\frs)=\frs$, then the mixed invariants of $X_\phi$ satisfy
\[
\Lef\big(\phi_*:\HF^+_{\red}(Y,\frs)\to \HF^+_{\red}(Y,\frs)\big)=\sum_{\substack{\frt\in \Spin^c(X_\phi)\\ \frt|_Y=\frs}}\Phi_{X_\phi,\frt}(1).
\]
\end{customcor}

A key step in our proof of Theorem~\ref{thm:mixedinvariantmappingtorus} is a general result about computing mixed invariants using graph cobordisms, which is stated in Theorem~\ref{thm:mixed-invariants-from-graphs}. Using this result, we prove Theorem~\ref{thm:mixedinvariantmappingtorus} in Section~\ref{sec:proof:1.1}.

\subsection{Mixed invariants and graph cobordisms}

In this section, we prove a general result about computing the mixed invariants using graph cobordisms, Theorem~\ref{thm:mixed-invariants-from-graphs}. It will be used when we prove Theorem~\ref{thm:mixedinvariantmappingtorus}.

\begin{define}
\label{def:admissible-1}
 Suppose that $X^4$ is a closed, connected, and oriented 4-manifold. We say that a tuple $\cC=(N,\Gamma,\frs_1,\frs_2)$ is a \emph{graphed-decorated cut} of $X$ if it satisfies the following:
 \begin{enumerate}[label=(C-\arabic*), ref=C-\arabic*,leftmargin=*, widest=III]
\item\label{def:cut-1} $N^3\subset X$ is an oriented 3-manifold dividing $X$ into the composition of two cobordisms, $W_1$ and $W_2$.
\item\label{def:cut-2} $\frs_1\in \Spin^c(W_1)$ and $\frs_2\in \Spin^c(W_2)$ are $\Spin^c$ structures which have a common restriction, $\frs\in \Spin^c(N)$.
\item\label{def:cut-3} $\Gamma$ is a ribbon graph in $X$ which intersects each component of $N$ transversely and non-trivially. Furthermore, $\Gamma$ consists of a tree $T$ with loops spliced on away from the vertices of $T$, with the configuration shown on the left hand side of Figure~\ref{fig::26}.
 \end{enumerate}
If $\cC$ is a graph-decorated cut, we say that $\cC$ is \emph{semi-admissible} if it satisfies at least one of the following:
 \begin{enumerate}[label=(SA-\arabic*), ref=SA-\arabic*,leftmargin=*, widest=IIII]
 \item\label{SA-1} There is a surface $\Sigma$ of positive square in $X\setminus N$, and both of the following maps vanish:
 \[
 \begin{split}
F_{W_1,\Gamma_1,\frs_1}^{B}&\colon \boldHF^\infty(S^3)\to \boldHF^\infty(N,\ws,\frs), \quad \text{and}\\
F_{W_2,\Gamma_2,\frs_2}^{B}&\colon  \boldHF^\infty(N,\ws,\frs)\to \boldHF^\infty(S^3).
\end{split}
\]
 \item\label{SA-2}  $c_1(\frs)\in H^2(N)$ is non-torsion.
 \end{enumerate}
\end{define}

Note that Definition~\ref{def:admissible-1}, $N$, $W_1$ and $W_2$ are not required to be connected.

The following is a small but helpful extension of well known facts about the path cobordism maps:
\begin{lem}
Suppose $X$ is a closed, connected and oriented 4-manifold and $\cC=(N,\Gamma,\frs_1,\frs_2)$ is a graph-decorated cut.  Suppose additionally that one of the following two conditions is satisfied:
\begin{enumerate}
\item $b_2^+(W_i)>0$ for $i=1,2$.
\item $c_1(\frs)$ is non-torsion in $H^2(N)$, where $\frs=\frs_i|_N$.
\end{enumerate}
Then $F_{W_i,\Gamma_i,\frs_i}^{B}=0$ on $\boldHF^\infty$, for $i=1,2$.
\end{lem}

\begin{proof} First, suppose $c_1(\frs)$ is non-torsion on $N$. We claim 
\begin{equation}
\boldHF^\infty(N,\ws,\frs)=0,\label{eq:hfinfty=0}
\end{equation}
which obviously implies the claim.  Indeed if $c_1(s)$ is non-torsion, then $c_1(\frs)$ is non-torsion on some component of $N$. Call this component $N_0$ and let $\frs_0=\frs|_N$. It follows from \cite{OSTrianglesandSymplectic}*{Lemma~2.3} that $\boldHF^\infty(N_0,\frs_0)=0$ (where the latter group is the singly based version of the Heegaard Floer complex). Since adding a basepoint has the effect on homology of tensoring over $\bF_2$ with a 2 dimensional vector space $\bF_2\oplus \bF_2$ (see \cite{OSLinks}*{Proposition~6.5}), and the definition of the complex for a disjoint union is the tensor product over $\bF_2[[U]]$, equation~\eqref{eq:hfinfty=0} quickly follows from the K\"{u}nneth theorem (e.g. using Lemma~\ref{lem:classificationfgPID}).
 
Next, we consider the case when $b_2^+(W_i)>0$ for both $i=1,2$. It is sufficient to show that both maps $F_{W_i,\Gamma_i,\frs_i}^B$ vanish on $\boldHF^\infty$. Consider $(W_1,\Gamma_1)$.  We can decompose $(W_1,\Gamma_1)$ as a composition $(Z_2,\Gamma_2')\circ (Z_1,\Gamma_1')$, where $Z_1$ is a disjoint union of 4-dimensional 1-handlebodies (i.e. is obtained by attaching 0-handles and 1-handles to $\emptyset$), while $(Z_2,\Gamma_2')$ has the property that each component of $Z_2$ has exactly one incoming boundary component, and exactly one outgoing boundary component, and $\Gamma_2'$ is a disjoint union of arcs, each of which connects the incoming boundary of $Z_2$ to the outgoing boundary.
 By the composition law,
\begin{equation}
F_{W_1,\Gamma_1,\frs_1}^{B}=F_{Z_2,\Gamma_2',\frs_1|_{Z_2}}^{B}\circ F_{Z_1,\Gamma_1',\frs_1|_{Z_1}}^{B},\label{eq:composition-law-W_1}
\end{equation}
since $Z_1$ is a 1-handlebody. We note that $b_2^+(Z_2)>0$, so at least one component of $Z_2$ has positive $b_2^+$. The map $F_{Z_2,\Gamma_2',\frs_1|_{Z_2}}^{B}$ is, by definition, the tensor product over $\bF_2[[U]]$ of the maps for each component. It follows from \cite{OSTriangles}*{Lemma~8.2}
that the map for one of the components of $Z_2$ vanishes on $\boldHF^\infty$. An easy extension of Lemma~\ref{lem:0-map-tensor} implies that the map $F_{Z_2,\Gamma_2',\frs_1|_{Z_2}}^B$ itself vanishes on $\boldHF^\infty$. Equation~\eqref{eq:composition-law-W_1} implies the map for $W_1$ vanishes on $\boldHF^\infty$.  The same argument works for $W_2$.
\end{proof}

Given a semi-admissible graph-decorated cut $\cC$, we define a mixed invariant $\Phi_{X,\cC}$, via the following formula:
\begin{equation}
\Phi_{X,\cC}:=\left\langle \Theta^-, \left(F_{W_2,\Gamma_2,\frs_2}^{B}\circ \delta^{-1} \circ F_{W_1,\Gamma_1,\frs_1}^{B}\right)(1)\right\rangle \in \bF.\label{eq:mixed-invariant-graph}
\end{equation}

\begin{thm}\label{thm:mixed-invariants-from-graphs}
 Suppose that $X^4$ is a closed, oriented 4-manifold with $b_2^+(X)>1$, and $\cC=(N,\Gamma,\frs_1,\frs_2)$ is a semi-admissible, graph-decorated cut of $X$. Then
\begin{equation}
\Phi_{X,\cC}= \sum_{\substack{\frt\in \Spin^c(X)\\ \frt|_{W_1}=\frs_1\\ 
\frt|_{W_2}=\frs_2}} \Phi_{X,\frt}(\xi_1\wedge\cdots \wedge \xi_n),
\label{eq:composition-law-mixed-invarariant-statement}
\end{equation}
where $\xi_1,\dots, \xi_n\in H_1(X)/\Tors$ are the classes of the loops of the graph $\Gamma$.
\end{thm}

\begin{proof} We focus first on~\eqref{SA-2}, i.e. when $c_1(\frs)$ is non-torsion. 
Our proof is an adaptation of \cite{OSTrianglesandSymplectic}*{Proposition~2.5} to the setting of graph cobordisms. See also \cite{JabukaMarkProduct}*{Theorem~8.17}.

We introduce some general notation. Suppose that a cobordism $Z$ decomposes as a composition $Z_2\circ Z_1$. If $\frU\subset \Spin^c(Z_2)\times \Spin^c(Z_1)$, then we write $\frS(Z, \frU)$ for the set of $\Spin^c$ structures on $Z$ which restrict to an element of $\frU$.

 We pick an embedded surface $\Sigma\subset N$ such that $\langle c_1(\frs),\Sigma \rangle \neq 0$, which we push slightly into $W_1$. 
Define the following cobordisms
\[
W_1'=N(\Sigma),\quad W_2'=W_1\setminus N(\Sigma)\quad \text{ and } \quad W_3'=W_2.
\]
Write $N'$ for $\d N(\Sigma)\iso S^1\times \Sigma$. We may perturb $\Sigma$ and choose an appropriate regular neighborhood so that $N(\Sigma)$ is disjoint from $\Gamma$. We add a leaf to $\Gamma$ so that it intersects $N'$ transversely in a single point. Abusing notation we write $\Gamma$ for the resulting graph. Write $\Gamma_i'$ for $\Gamma \cap W_i'$, and $\frs_1':=\frs_1|_{W_1'}$, $\frs_2'=\frs_1|_{W_2'}$ and $\frs_3'=\frs_2$. Write $W_{2,3}'$ for $W_{2}'\cup W_{3}'$, and similarly write $\Gamma_{2,3}'$ for $\Gamma\cap W_{2,3}'$.

Consider the following commutative diagram:
 \begin{equation}
\begin{tikzcd}\, &
  \boldHF^-(\emptyset)
  \arrow{d}{F_{W_1',\Gamma_1',\frs_1'}^B}\\
\HF^+(N',\frs')
	\arrow{r}{\iso}[swap]{\delta}
 	\arrow{d}[swap]{F_{W_2',\Gamma_2',\frs_2'}^B}& 
\boldHF^-(N',\frs')
 	\arrow{d}{F_{W_2', \Gamma_2',\frs_2'}^B}\\
\HF^+(N,\frs)\arrow{r}{\iso}[swap]{\delta} 
 	\arrow{d}[swap]{F_{W_3',\Gamma_3', \frs_3'}^B}&
\boldHF^-(N,\frs)\\
\HF^+(\emptyset)&
 \end{tikzcd}
 \label{eq:commutative-diagram-factorizations}
\end{equation}
Note that $\delta(H^1(N'))$ vanishes in $H^2(W_{1})\iso H_2(W_{1},\d W_{1})$, since the coboundary map is Poincar\'{e} dual to the inclusion of $H_2(N')$ into $H_2(W_1,\d W_1)$, and $\Sigma$ may be isotoped into $\d W_1$. Hence
\[
F_{W_1,\Gamma_1,\frs_1}^B=F_{W_2',\Gamma_2',\frs_2'}^B\circ F_{W_1',\Gamma_1',\frs_1'}^B
\]
by the composition law.
Furthermore, by considering equation~\eqref{eq:commutative-diagram-factorizations} and using the $\Spin^c$ composition law, we obtain that
\begin{equation}
\Phi_{X,\cC}=
 \bigg\langle \Theta^-,\bigg(\sum_{\frt\in \frS(W_{2,3}',\frs_2',\frs_3')} F_{W_{2,3}',\Gamma_{2,3}',\frt}^{B}\circ \delta^{-1} \circ F_{W_{1}',\Gamma_{1}',\frs_{1}'}^B\bigg)(1)\bigg\rangle,
 \label{eq:Phi-composition-1}
\end{equation}
where $\frS(W_{2,3}',\frs_2',\frs_3')$ is the set of $\Spin^c$ structures on $W_{2,3}'$ which restrict to $\frs_2'$ and $\frs_3'$.

Since $b_2^+(X)>1$, we may pick a surface $\Sigma'$ of positive square in $X$ which is disjoint from $\Sigma$. We may pick $\Sigma'$ generically, so that it is disjoint from $\Gamma$, then add a leaf to $\Gamma$, so that it intersects $\d N(\Sigma')$ transversely in a single point (abusing notation, we call the new graph $\Gamma$).

Set 
\[
W_1''=W_1'=N(\Sigma),\quad W_2''=X\setminus (N(\Sigma)\cup N(\Sigma')), \quad \text{and} \quad W_3''=N(\Sigma').
\]

Since $\Sigma'$ is a surface of positive square
\[
\delta H^1(\d N(\Sigma'))=\{0\}\subset H^2(W_{2,3}').
\]
In particular, if we let $\frU_{2,3}''\subset \Spin^c(W_3'')\times \Spin^c(W_2'')$ denote the set of pairs $(\fru|_{W_3''}, \fru|_{W_2''})$ where $\fru\in \frS(W_{2,3}',\frs_2',\frs_3')$, then
\[
\frS(W_{2,3}',\frU_{2,3}'')=\frS(W_{2,3}',\frs_2',\frs_3').
\]

The composition law implies that
\[
\sum_{\frt\in \frS(W_{2,3}',\frs_2',\frs_3')} F_{W_{2,3}', \Gamma_{2,3}',\frt}=\sum_{(\fru_3,\fru_2)\in \frU_{2,3}''} F_{W_3'',\Gamma_{3}'',\fru_3}^B\circ F_{W_2'',\Gamma_{2}'',\fru_2}^{B}.
\]
By considering a commutative diagram like the one shown in equation~\eqref{eq:commutative-diagram-factorizations}, we manipulate equation~\eqref{eq:Phi-composition-1} to become
\begin{equation}
\Phi_{X,\cC}=
 \bigg\langle \Theta^-,\bigg(\sum_{(\fru_3,\fru_{2})\in\frU_{2,3}''} F_{W_3'', \Gamma_3'', \fru_3}^B\circ \delta^{-1} \circ F_{W_{2}'', \Gamma_{2}'', \fru_{2}}^B\circ F_{W_1'', \Gamma_1'', \frs_1''}^B\bigg)(1)\bigg\rangle.
 \label{eq:Phi-composition-2}
\end{equation}
Using the $\Spin^c$ composition law, this is easily rearranged to
\begin{equation}
\Phi_{X,\cC}= \bigg\langle \Theta^-,\bigg(\sum_{(\fru_3,\fru_{1,2})\in \frU''} F_{W_3'', \Gamma_3'',\fru_3}^B\circ \delta^{-1} \circ F_{W_{1,2}'', \Gamma_{1,2}'', \fru_{1,2}}^B\bigg)(1)\bigg\rangle,
\label{eq:almost-sum-of-mixed-invariants}
\end{equation}
where $\frU''\subset \Spin^c(W_{3}'')\times \Spin^c(W_{1,2}'')$ consists of all elements which have restriction equal to an element in $\frU''_{2,3}\times \{\frs_1'\}\subset \Spin^c(W_3'')\times \Spin^c(W_2'')\times \Spin^c(W_1'')$.

Using Proposition~\ref{prop:spliceinloopsforUandH_1}, we see that
\[
 F_{W_{1,2}'',\Gamma_{1,2}'',\fru_{1,2}}^B=F_{W_{1,2}'', \fru_{1,2}}(\xi_1\wedge\cdots \wedge \xi_n).
 \]
  Similarly $F_{W_3'',\Gamma_3'',\fru_3}^B$ is the ordinary cobordism map for $W_3''$, since $\Gamma_3''$ is an arc.
  In particular, Equation~\eqref{eq:almost-sum-of-mixed-invariants} rearranges to give
    \[
\Phi_{X,\cC}=\sum_{\frt\in \frS(X,\frU'')} \Phi_{X,\frt}  (\xi_1\wedge\cdots \wedge \xi_n).
  \]
  Finally, it is an easy exercise to see that $\frS(X,\frU'')=\{\frt\in \Spin^c(X): \frt|_{W_1}=\frs_1, \frt|_{W_2}=\frs_2\}$, completing the proof in case~\eqref{SA-2}.

When instead~\eqref{SA-1} is satisfied, the proof follows from the same reasoning as \eqref{SA-2}, by factoring $\Phi_{X,\cC}$ through a neighborhood of a surface of positive square.
\end{proof}

\subsection{Proof of Theorem~\ref{thm:mixedinvariantmappingtorus}}

\label{sec:proof:1.1}

\begin{proof}[Proof of Theorem~\ref{thm:mixedinvariantmappingtorus}] Suppose, for simplicity, that $\xi=\xi_1\wedge\cdots \wedge \xi_n$, where $\xi_i\in H_1(W)/\Tors$. We view $X$ as being the union of $W$ and $Y\times [0,1]$. Let $\Gamma\subset W$ be a graph constructed by taking a path connecting the two boundary components of $\d W$, and attaching loops representing $\xi_1,\dots, \xi_n$, with cyclic ordering as on the left side of Figure~\ref{fig::48}. Let $\hat{\Gamma}$ denote an extension of $\Gamma$ into $Y\times [0,1]$, such that $\hat{\Gamma}\cap (Y\times [0,1])$ consists of two disjoint arcs, one connected to each boundary component.

We decompose $(X,\widehat{\Gamma})$ as in Figure~\ref{fig:59}. We use the central copy of $Y\sqcup -Y$ as the cut, $N$. Lemma~\ref{lem:phi=brokenpathcobordism} identifies the map for the copy of $Y\sqcup -Y\times [0,1]$ with a broken path as $\id\otimes \Phi_w^\vee$.

 If $c_1(\frs)$ is non-torsion, then it follows from \cite{OSTrianglesandSymplectic}*{Lemma~2.3} that $\boldHF^\infty(Y\sqcup -Y,\frs|_Y\sqcup \frs|_Y)=0$.  In the case that $b_2^+(W)>0$, equation~\eqref{eq:Phi-null-homotopic} implies that $\Phi_w=0$ on $\boldHF^\infty(Y,\frs|_Y)$. Hence, Lemma~\ref{lem:0-map-tensor} implies that $\id\otimes \Phi_w^\vee$ vanishes on $\boldHF^\infty(Y\sqcup -Y,\frs|_Y\sqcup \frs|_Y)$. Furthermore $F_{W,\frs}(\xi\otimes -)\otimes \id$ similarly vanishes on $\boldHF^\infty(Y\sqcup -Y,\frs|_Y\sqcup \frs|_Y)$ by \cite{OSTriangles}*{Lemma~8.2} and Lemma~\ref{lem:0-map-tensor} since $b_2^+(W)>0$. 
 
 In both cases, $\cC=(N,\widehat{\Gamma}, \frs, \frs|_Y)$ determines a semi-admissible graph-decorated cut of $X$, so Theorem~\ref{thm:mixed-invariants-from-graphs} implies that
\[
\Phi_{X,\cC}=\sum_{\substack{\frt\in \Spin^c(X)\\ \frt|_W=\frs}} \Phi_{X,\frt}(\xi_1\wedge \cdots \wedge \xi_n).
\]
By definition, $\Phi_{X,\cC}$ is the coefficient of $U^{-1}$ in the expression
\[
\left(\tr\circ (F_{W,\frs}(\xi\otimes -)\otimes\id)\circ \delta^{-1}\circ (\id\otimes \Phi^{\vee}_w)\circ \cotr\right)(1),
\]
which is 
\[
\Lef\left(F_{W,\frs}(\xi\otimes -)\colon \HF^+_{\red}(Y,\frs)\to \HF^+_{\red}(Y,\frs)\right),
\]
by Proposition~\ref{prop:algebraicmappingtorus}. The proof is complete.
 \end{proof}

 \begin{figure}[ht!]
 	\centering
 	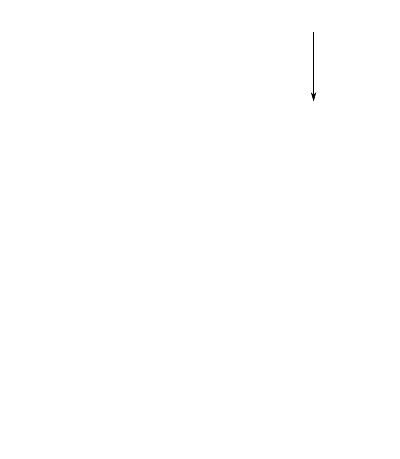
 	\caption{\textbf{Decomposing $(X,\widehat{\Gamma})$ into graph cobordisms.}}
 	\label{fig:59}
 \end{figure}

\subsection{Mixed invariants of \texorpdfstring{$Y\times S^1$}{Y x S1} for non-torsion \texorpdfstring{$\Spin^c$}{Spinc} structures}
\label{sec:YxS1}

We now consider the 4-manifold $ Y\times S^1$, which has more structure than a general mapping torus.  The projection map $\pi:Y\times S^1\to Y$ induces a map
\[
\pi^*:\Spin^c(Y)\to \Spin^c(Y\times S^1).
\] 
Similarly, the inclusion map $i\colon Y\to Y\times \{pt\}$ induces a map
\[
i^*:\Spin^c(Y\times S^1)\to \Spin^c(Y),
\]
Clearly, $i^*\circ \pi^*=\id_{\Spin^c(Y)}.$

 \begin{define}We say that a $\Spin^c$ structure on $Y\times S^1$ is $S^1$-invariant if it is in the image of  $\pi^*:\Spin^c(Y)\to \Spin^c(Y\times S^1)$.  
 \end{define}
 
 We prove the following:
 \begin{customcor}{\ref{cor:invariantsofYxS1}}If $Y^3$ has $b_1(Y)>1$ and $\frs\in \Spin^c(Y)$ is non-torsion, then 
 \[\Phi_{Y\times S^1,\pi^*(\frs)}(1)=\chi(\HF^+(Y,\frs)).\] Furthermore, if $\frt\in \Spin^c(Y\times S^2)$ is not $S^1$-invariant, then
 \[\Phi_{Y\times S^1,\frt}(1)=0.\]
 \end{customcor}
 \begin{proof}The proof is a straightforward combination of Corollary~\ref{cor:mixedinvariantofactualmappingtori} with the adjunction inequality \cite{OSTriangles}*{Theorem~1.5} (compare~\cite{BaldridgeSWCircleActions}*{Lemma~5}).
 
  As in Lemma~\ref{lem:CWhomologyofmappingtori}, write
 \begin{equation}
 H_2(Y\times S^1)\iso H_1(Y)\oplus H_2(Y).\label{eq:homologyYxS1}
 \end{equation}
  The first summand is generated by tori of the form 
 \[F_\gamma:=\gamma\times S^1,\] where $\gamma\in H_1(Y)$. The second summand is generated by $H_2(Y)$ under the inclusion $Y\times \{pt\}\subset Y\times S^1$.  If $[F_\gamma]$ is in the first summand of equation~\eqref{eq:homologyYxS1}, and $[F]$ is in the second summand, then
 \[\pi^*(\frs+\PD[\gamma])=\pi^*(\frs)+\PD[F_\gamma] \qquad \text{and} \qquad i^*(\frt+\PD[F])=i^*(\frt).\]  Hence it is sufficient to show that
 \[
 \Phi_{\pi^*(\frs)+\PD[F]}(1)=0
 \] for any surface $F$ embedded in $Y\times \{pt\}$, which represents a non-zero class in  $H_2(Y)$. If $[F]\neq 0\in H_2(Y)$, we can pick an element $\gamma\in H_1(Y)$ such that $\#(F\cap \gamma)>0$ since $H_2(Y)$ is torsion-free. However, $F_\gamma$ is a genus one surface of self intersection number zero, on which $c_1(\pi^*(\frs)+\PD[F])=c_1(\pi^*(\frs))+2 \PD[F]$ evaluates non-trivially. As this violates the adjunction inequality, we conclude that $\Phi_{\pi^*(\frs)+\PD[F]}(1)=0$.
 \end{proof}

\section{Perturbed coefficients}
\label{sec:perturbed}
In this section, we prove a refinement of Theorem~\ref{thm:mixedinvariantmappingtorus} using twisted coefficients.

\subsection{Perturbed Heegaard Floer complexes}

\label{sec:perturbed-complexes}

In this section, we recall the definition of perturbed Heegaard Floer homology, which is a special case of a more general construction of Heegaard Floer homology with twisted coefficients \cite{OSDisks}*{Section~8}. The construction is due to Ozsv\'{a}th and Szab\'{o} \cite{OSGenusBounds}*{Section~3.1}. See also the work of Jabuka and Mark \cite{JabukaMarkProduct}. We mostly follow the treatment \cite{JuhaszZemkeConcordanceSurgery}, because several details regarding naturality and functoriality were carefully written down there.

We recall that the Novikov ring $\Lambda$ is the ring of formal sums $\sum_{\alpha\in \R} c_\alpha e^{\alpha}$ where $c_{\alpha}\in \bF_2$, and for each $N\in \R$, the set $\{\alpha\in \R: c_{\alpha}\neq 0, \alpha\le N\}$ is finite. The ring $\Lambda$ is a field, since $(1+x)^{-1}=\sum_{n\in \N} x^n$, if $x$ has leading term $e^z$ with $z>0$. If $\omega$ is a closed 2-form on  $Y^3$, then we may endow $\Lambda$ with the structure of an $\bF_2[H_2(Y)]$-module, via the formula
\[
e^{\alpha}\cdot e^{h}=e^{\alpha+\int_h \omega}.
\] 
We write $\Lambda_{\omega}$ for $\Lambda$ with this module structure.

If $M$ is a $\Lambda$-module, and $a,b\in M$, we write
\[
a\doteq b
\]
if there is a $z\in \R$ such that $e^z\cdot a=b$.

We define a chain complex $\CF^-(Y, \ws, \frs; \Lambda_{\omega})$, as follows. If $\cH$ is a weakly admissible Heegaard diagram for $(Y,\ws)$, we let $\CF^-(\cH, \frs; \Lambda_{\omega})$ be the free $\Lambda[U]$-module generated by intersection points $\xs\in \bT_{\a}\cap \bT_{\b}$ satisfying $\frs_{\ws}(\xs)=\frs$. We pick a compressing disk in $U_{\a}$ for each curve in $\as$, and similarly we pick a compressing disk in $U_{\b}$ for each curve in $\bs$. Given a homology class of disks $\phi\in \pi_2(\xs,\ys)$ on $\cH$, we write $\cD(\phi)$ for the domain of $\phi$, which we view as an integral 2-chain on $\Sigma$. The boundary $\d \cD(\phi)$ can be viewed as a 1-chain $A_{\a}+ A_{\b}$, where $A_{\a}$ is a 1-chain in $\as$, and $A_{\b}$ is a 1-chain in $\bs$, such that $\d A_{\a}=-\d A_{\b}=\ys-\xs$. 
We may radially cone $\cD(\phi)$ along the compressing disks to obtain a 2-chain $\tilde{\cD}(\phi)$ in $Y$. The boundary of $\tilde{\cD}(\phi)$ only depends on $\xs$ and $\ys$. We write
\[
\tilde{A}_\omega(\phi):=\int_{\tilde{\cD}(\phi)} \omega.
\]
 We define the differential on $\CF^-(\cH,\frs;\Lambda_{\omega})$ via the formula
\[
\d \xs=\sum_{\substack{\phi\in \pi_2(\xs,\ys) \\ \mu(\phi)=1}} \# (\cM(\phi)/\R)e^{\tilde{A}_{\omega}(\phi)} U^{n_{\ws}(\phi)}\cdot \ys,
\]
extended equivariantly over $\Lambda[U]$.

Clearly, if $\omega$ is the zero 2-form, then there is a chain isomorphism
\[
\CF^-(\cH,\frs;\Lambda_{\omega})\iso\CF^-(\cH,\frs)\otimes_{\bF_2} \Lambda.
\]

It is helpful to also consider a perturbed complex which has been completed over the $U$ variable. We write $\boldCF^-(\cH,\frs;\Lambda_\omega)$ for this complex, which is a module over $\Lambda[[U]]$.

\begin{rem}\label{rem:naturality}
Naturality of the perturbed Heegaard Floer complexes is explored in \cite{JuhaszZemkeConcordanceSurgery}*{Section~6} and is slightly subtle. In general, the perturbed transition maps for changing Heegaard diagrams are only well-defined up to an overall factor of $e^z$ (see \cite{JuhaszZemkeConcordanceSurgery}*{Theorem~3.1} for the statement of naturality, and \cite{JuhaszZemkeConcordanceSurgery}*{Section~6.5} for examples of non-trivial monodromy). Furthermore the transition maps are usually only well-defined in this sense if we restrict to one $\Spin^c$ structure at a time. Of course, if $\omega=0$ then we can consider multiple $\Spin^c$ structures at the same time.
\end{rem}

We now briefly discuss one constituent of the transition maps: the triangle maps. Suppose that $(\Sigma,\as,\bs,\ws)$ is a Heegaard diagram for $Y$, and $\bs'$ is a set of attaching curves obtained from $\bs$ by a sequence of handleslides and isotopies. Furthermore, we assume that $(\Sigma,\as,\bs,\bs',\ws)$ is weakly admissible. We can define a chain complex $\bCF^-(\Sigma,\bs,\bs',\ws;\Lambda_{\omega})$ as follows. It is generated over $\Lambda[[U]]$ by intersection points $\bT_{\b}\cap \bT_{\b'}$. The differential counts holomorphic disks, weighted by the factor $e^{\tilde{A}_{\omega}(\phi)} U^{n_{\ws}(\phi)}$. Here, $\tilde{A}_{\omega}(\phi)$ is the $\omega$-area of the 2-chain obtained by coning both the $\bs$ and $\bs'$ components of the boundary of $\cD(\phi)$ into the $U_{\b}$ handlebody.

A top degree generator $\Theta_{\b,\b'}^{\omega}\in \CF^-(\Sigma,\bs,\bs',\ws;\Lambda_{\omega})$ is specified in \cite{JuhaszZemkeConcordanceSurgery}*{Equation~6.4}, which we call the \emph{$\omega$-canonical generator}. We recall the construction. Let $\Theta_{\b,\b'}\in \CF^-(\Sigma,\bs,\bs',\ws)$ be a top degree generator of the unperturbed complex. Since $H^2(U_{\b})=0$, we pick any smooth 1-form $\eta$ on $U_{\b}$ such that $d \eta=\omega|_{U_{\b}}$. We define a map
\[
\Psi_{\eta}\colon \CF^-(\Sigma,\bs,\bs',\ws)\otimes \Lambda\to \CF^-(\Sigma,\bs,\bs',\ws;\Lambda_{\omega})
\]
via the formula
\[
\theta \mapsto e^{\int_{\g_\theta} \eta} \cdot \theta,
\]
extended equivariantly over $\Lambda[U]$. Here, $\g_{\theta}$ denotes the 1-chain obtained by coning the intersection point $\theta$ into the chosen compressing disks for $\bs$ and $\bs'$. We set 
\begin{equation}
\Theta_{\b,\b'}^\omega:=\Psi_{\eta}(\Theta_{\b,\b'}\otimes 1_\Lambda). \label{eq:def-omega-canonical}
\end{equation}
The class $[\Theta^{\omega}_{\b,\b'}]$ is well-defined up overall multiplication by some $e^z$.

 The transition map $\Psi_{\a;\omega}^{\b\to \b'}$ is defined by counting holomorphic triangles, weighted by their $\omega$-area, with one input equal to $\Theta_{\b,\b'}^{\omega}$. An important aspect is that if $\psi$ is a class of triangles, then the representatives of $\psi$ are weighted by the $\omega$-area of a 2-chain $\tilde{\cD}(\psi)$ in $Y$, obtained by coning $\cD(\psi)\subset \Sigma$. In particular, we cone both the $\bs$ and $\bs'$ boundaries of $\cD(\psi)$ into $U_{\b}$, whereas we cone the $\as$ boundary into $U_{\a}$. (This is opposed to constructing a coned-off 2-chain in the 4-manifold $X_{\a,\b,\b'}$).

A related situation is when $\cH=(\Sigma,\as,\bs,\ws)$ is a diagram for $Y=(S^1\times S^2)^{\# n}$ and $\omega=d \eta$ on all of $Y$. In this case, equation~\eqref{eq:def-omega-canonical} adapts to give a class $\Theta_{\a,\b}^{\omega}\in \CF^-(\Sigma,\as,\bs,\ws;\Lambda_{\omega})$, which we also call the $\omega$-canonical class.

\begin{lem}\label{lem:omega-canonical-class-well-def}
 Suppose $\cH=(\Sigma,\as,\bs,\ws)$ represents $Y=(S^1\times S^2)^{\# n}$ and $\omega$ is a 2-form on $Y$ such that $\omega=d\eta$. Then the $\omega$-canonical class of $\cH$ is preserved by the perturbed transition maps for changing Heegaard diagrams.
\end{lem}
\begin{proof} We focus on the case of a beta-handleslide. Invariance under alpha-handleslides is an easy adaptation, and we leave invariance under isotopies of $\Sigma$ and stabilizations to the reader. Suppose that $\bs'$ is obtained from $\bs$ by a sequence of handleslides and isotopies, and that $(\Sigma,\as,\bs,\bs',\ws)$ is weakly admissible. We make the following claim:
\begin{equation}
\Psi_{\a;\omega}^{\b\to \b'} \circ \Psi_{\eta}\doteq \Psi_{\eta} \circ \Psi_{\a}^{\b\to \b'}.
\label{eq:perturbed-transition-map-canonical-gen}
\end{equation}
Applying equation~\eqref{eq:perturbed-transition-map-canonical-gen} to $\Theta_{\a,\b}$ (the unperturbed canonical class) yields the main claim, since the unperturbed transition maps preserve the top degree generator, by naturality. Unpacking the definition, equation~\eqref{eq:perturbed-transition-map-canonical-gen} follows from the fact that if $\psi\in \pi_2(\theta_1,\theta_2,\theta_3)$ is a class of triangles on $(\Sigma,\as,\bs,\bs',\ws)$, then
\[
\tilde{A}_{\omega}(\psi)=\int_{\tilde{\cD}(\psi)} d\eta =\int_{\gamma_{\theta_3}}\eta-\int_{\gamma_{\theta_2}} \eta -\int_{\gamma_{\theta_1}} \eta, 
\]
by Stokes' theorem, since $\d \tilde{\cD}(\psi)=\gamma_{\theta_3}-\gamma_{\theta_2}-\gamma_{\theta_1}$. 
\end{proof}

\subsection{Perturbed Heegaard Floer complexes and duality}

We now discuss duality and the Heegaard-Floer complexes with perturbed coefficients. A similar account may be found in \cite{JabukaMarkProduct}*{Section~4}. If $\cH=(\Sigma,\as,\bs,\ws)$ is a diagram for $Y$, then $\cH^{\vee}:=(\Sigma,\bs,\as,\ws)$ is a diagram for $-Y$. Furthermore, if $\phi\in \pi_2(\xs,\ys)$ is a class of disks on $\cH$, then there is a corresponding class $\phi^\vee\in \pi_2(\ys,\xs)$ on $\cH^\vee$ which has the same domain as $\phi$. We may use the same compressing disks for both diagrams. Since the orientation of $\Sigma$ is unchanged, 
\[
\tilde{A}_{\omega}(\phi)=\tilde{A}_{\omega}(\phi^{\vee}).
\]
Consequently, the complex $\CF^-(\cH^\vee,\frs;\Lambda_{\omega})$ is chain isomorphic to $\Hom_{\Lambda[U]}(\CF^-(\cH,\frs;\Lambda_{\omega}),\Lambda[U])$.
In particular, there is a trace pairing
\[
\tr_{\omega}\colon \CF^-(\cH,\frs;\Lambda_{\omega})\otimes_{\Lambda[U]} \CF^-(\cH^\vee,\frs;\Lambda_{\omega})\to \Lambda[U],
\]
analogous to the unperturbed case. Dually, there is a perturbed cotrace map $\cotr_{\omega}$ in the opposite direction.

Note that our trace pairing is slightly different than the one in \cite{JabukaMarkProduct}*{Section~4} because we consider a $\Lambda[U]$-equivariant pairing, instead of a $\Lambda$-sesquilinear pairing. The incorporation of the $U$ variable into the pairing is similar to the unperturbed setting. Compare Lemma~\ref{lem:F-pairing-equivalence}. The reason our pairing is not sesquilinear with respect to $\Lambda$ is that Jabuka and Mark use different orientations on $\Sigma$ for $Y$ and $-Y$, whereas we use the same orientation of $\Sigma$ for $Y$ and $-Y$, but reverse the roles of $\as$ and $\bs$.

\subsection{Perturbed cobordism maps}
\label{sec:perturbed-cobordism-maps-overview}
In this section, we describe the perturbed graph cobordism maps, and several refinements involving $\Spin^c$ structures.

If $(W,\Gamma)$ is a ribbon graph cobordism from $(Y_1,\ws_1)$ to $(Y_2,\ws_2)$, then there is a perturbed graph cobordism map
\[
F_{W,\Gamma,\frs;\omega}^B\colon \CF^-(Y_1,\ws_1,\frs_1; \Lambda_{\omega_1})\to \CF^-(Y_2,\ws_2,\frs_2;\Lambda_{\omega_2}),
\]
which is well-defined up to an overall factor of $e^z$, for some $z\in \R$. Ozsv\'{a}th and Szab\'{o} described the construction for the path cobordism maps in \cite{OSGenusBounds}*{Section~3.1}, and a closely related construction is described in \cite{OSTriangles}*{Section~3.1}. A survey proving invariance of the construction may be found in \cite{JuhaszZemkeConcordanceSurgery}*{Section~3}.

We describe the construction of $F_{W,\Gamma,\frs;\omega}^B$ presently, focusing first on the case when $\Gamma$ is a path connecting $Y_1$ to $Y_2$ (which we drop from the notation).
 We decompose $W$ as $W=W_3\circ W_2\circ W_1$, where $W_i$ is obtained by attaching $i$-handles. Then, 
\begin{equation}
F_{W,\frs;\omega}=F_{W_3;\omega|_{W_3}}\circ F_{W_2,\frs|_{W_2};\omega|_{W_2}}\circ F_{W_1;\omega|_{W_1}}.\label{eq:F-W-S-w-def}
\end{equation}
The maps $F_{W_1;\omega|_{W_1}}$ and $F_{W_3;\omega|_{W_3}}$ are the perturbed 1-handle and 3-handle maps of \cite{OSGenusBounds}  (see \cite{JuhaszZemkeConcordanceSurgery}*{Section~7.3} for an exposition with additional details). The map $F_{W_2, \frs|_{W_2};\omega|_{W_2}}$ is a holomorphic triangle map, as follows. We pick a Heegaard triple $(\Sigma,\as,\bs,\bs',\ws)$ which is subordinate to a bouquet for $\bL$, in the sense of \cite{OSTriangles}*{Definition~4.2}. Given a Morse function on $W_2$ with only index 2 critical points, and a gradient-like vector field which induces the framed link $\bL$ in $Y_1$, there is an embedding of the 4-manifold $X_{\a,\b,\b'}$ into $W(Y_1,\bL)=W_2$. The map $F_{W_2,\frs|_{W_2};\omega|_{W_2}}$ counts holomorphic triangles on $(\Sigma,\as,\bs,\bs',\ws)$ which satisfy $\frs_{\ws}(\psi)=\frs|_{W_2}$. Furthermore, a holomorphic triangle is weighted by its $\omega$-area, as follows. A homology class $\psi\in \pi_2(\xs,\ys,\zs)$ determines an immersed surface in $\Sigma\times \Delta$, whose relative homology class is well-defined. The boundary is mapped to $\as\times e_{\a}\cup \bs\times e_{\b}\cup \bs'\times e_{\b'}$. We may cone this surface into $U_{\a}\times e_{\a}\cup U_{\b}\times e_{\b}\cup U_{\b'}\times e_{\b'}$ to obtain a 2-chain $\tilde{\cD}(\psi)$ in $X_{\a,\b,\b'}\subset W(Y_1,\bL)$. We note that the boundary of this 2-chain depends only on $\xs$, $\ys$ and $\zs$. The 2-handle map counts holomorphic triangles weighted by $e^{\int_{\tilde{\cD}(\psi)} \omega} U^{n_{w}(\psi)}$.

We now sketch the additional steps that are necessary to construct the perturbed graph cobordism maps. The main extra step is to prove that the graph action map is well-defined on the perturbed complexes. The free-stabilization maps $S_w^+$ and $S_w^-$ are well-defined by the same argument as the perturbed 1-handle and 3-handle maps; see, e.g. \cite{JuhaszZemkeConcordanceSurgery}*{Section~7.3}. The relative homology actions may be defined analogously to the unperturbed case using equation~\eqref{def:rel-homology-action}, except with an additional weighting of $e^{\tilde{A}_\omega(\phi)}$. The relations ~\ref{rel:R1}--\ref{rel:R9} hold by the same arguments as in the unperturbed case. By combining the arguments from \cite{JuhaszZemkeConcordanceSurgery}*{Section~7} and \cite{ZemGraphTQFT}*{Sections~7 and 10}, one proves that $F_{W,\Gamma,\frs;\omega}^{B}$ is well-defined up to an overall factor of $e^{z}$.

More generally, if $\frS\subset \Spin^c(W)$ is a set, all of whose elements have the same restrictions to $Y_1$ and $Y_2$, then we can define a cobordism map
\begin{equation}
F_{W,\Gamma,\frS ;\omega}^B\colon \boldCF^-(Y_1,\ws_1,\frs_1;\Lambda_{\omega_1})\to \boldCF^-(Y_2,\ws_2,\frs_2;\Lambda_{\omega_2}), \label{eq:refinement-spinc-sets-graph-cob}
\end{equation}
where $\omega_i=\omega|_{Y_i}$. The difference from the case of a single $\Spin^c$ structure is that in the 2-handle portion of equation~\eqref{eq:F-W-S-w-def}, we define $F_{W_2,\frS|_{W_2};\omega|_{W_2}}$ to count all triangles with $\frs_{\ws}(\psi)\in \frS|_{W_2}$.

Note that the condition that each $\frs\in \frS$ restrict to $\frs_1$ and $\frs_2$ is necessary because the perturbed Heegaard Floer groups are only natural if one restricts to one $\Spin^c$ structure at a time, unless $\omega=0$. Since we are working over the ring $\Lambda[[U]]$, we do not require $\frS$ to be a finite set, since \cite{OSTriangles}*{Theorem~3.3} can be used to show that the map is well-defined over the power series ring.

\begin{rem} 
The procedure of assigning $\omega$-area to a triangle class used in the definition of the 2-handle map is slightly different than the procedure to assign $\omega$-area to a triangle class used in the definition of the transition maps. For the 2-handle map above, we construct a 2-chain in the 4-manifold $X_{\a,\b,\b'}$. For the transition map in Section~\ref{sec:perturbed-complexes}, we cone the triangle to obtain a 2-chain in $Y$.
\end{rem}

\begin{lem}\label{lem:well-defined-spinc-sum}
Suppose that $\frS$ is a set of $\Spin^c$ structures on $W$, all of which have the same restriction to $Y_1$, and the same restriction to $Y_2$. Then the map $F_{W,\Gamma,\frS,\omega}^B$ is well-defined up to an overall factor of $e^z$, for $z\in \R$. We can relax the requirement that all elements of $\frS$ have the same restriction to $Y_1$ if $\omega|_{Y_1}=0$, and similarly for $Y_2$.
\end{lem}
\begin{proof}[Proof sketch] The content of the lemma which differs from the case of a single $\Spin^c$ structure is that when we change the auxiliary data used to construct the cobordism map, each $F_{W,\Gamma,\frs;\omega}^B$ changes by the same factor of $e^z$, where $z$ is independent of $\frs$.

Many details are described in \cite{JuhaszZemkeConcordanceSurgery}*{Section~7}. We refer the reader there for additional details, however we will cover one step of the proof, to illustrate associativity of holomorphic quadrilaterals with perturbed coefficients, which we will use later. We focus on answering why we can consider multiple 4-dimensional $\Spin^c$ structures at once, whereas we can often not consider multiple 3-dimensional $\Spin^c$ structures at once (see Remark~\ref{rem:naturality}).

We consider the case that $W$ has a Morse function $f$ with only index 2 critical points. Furthermore, we assume that the graph $\Gamma$ is a collection of arcs which connect $Y_1$ to $Y_2$. There is an induced, framed link $\bL$ in $Y_1$. We pick a Heegaard triple $(\Sigma,\as,\bs,\bs',\ws)$ which is subordinate to a bouquet for $\bL$. We suppose that $\bs''$ is obtained from $\bs'$ by a sequence of handleslides and isotopies, and furthermore, that the quadruple $(\Sigma,\as,\bs,\bs',\bs'',\ws)$ is weakly admissible. The 2-form $\omega|_{Y_{\b,\b'}}$ is a boundary, since 3 and 4-handles are attached to $Y_{\b,\b'}\subset X_{\a,\b,\b'}$ to obtain $W(Y_1,\bL)$. We use equation~\eqref{eq:def-omega-canonical} to give $\omega$-canonical classes $\Theta_{\b,\b'}^{\omega}$ and $\Theta_{\b,\b''}^\omega$. We will show that
\begin{equation}
\Psi_{\a;\omega}^{\b'\to \b''}\circ  F_{\a,\b,\b', \frS;\omega}(-, \Theta_{\b,\b'}^{\omega})\dotsimeq F_{\a,\b,\b'',\frS;\omega}(-,\Theta_{\b,\b''}^{\omega}).
\label{eq:perturbed-associativity-example}
\end{equation}
This computation is a key step toward proving that the perturbed 2-handle map is independent of the choice of Heegaard triple subordinate to $\bL$, and is also unchanged by handleslides amongst the components of $\bL$. Equation~\eqref{eq:perturbed-associativity-example} will follow from a perturbed associativity argument for holomorphic quadrilaterals, with careful attention paid to $\omega$-areas.

We now consider the 4-ended 4-manifold $X_{\a,\b,\b',\b''}$, which we view as being obtained by stacking $X_{\a,\b,\b'}$ and $X_{\a,\b',\b''}$ along $Y_{\a,\b'}$. Since $X_{\a,\b',\b''}$ embeds in $Y_2\times [0,1]$,  we may view $X_{\a,\b,\b',\b''}$ as being embedded in $W(Y_1,\bL)$, via an embedding $I$. We define smooth maps
\[
\Pi_1, \Pi_2\colon X_{\a,\b,\b',\b''}\to X_{\a,\b,\b'},
\]
as follows. Write $I_0$ for the standard inclusion of $X_{\a,\b,\b'}$ into $W(Y_1,\bL)$. We write $X_{\a,\b,\b',\b''}$ as $X_{\a,\b,\b'}\cup X_{\a,\b',\b''}$. On $X_{\a,\b,\b'}$, we set $\Pi_1$ to be $I_0$, except on a small collar neighborhood $Y_{\a,\b'}\times [0,1]$ of $Y_{\a,\b'}$, where the $[0,1]$ component of $\Pi_1$ is modified slightly using a bump function (this will ensure $\Pi_1$ to be smooth).  On $X_{\a,\b',\b''}$, we have $\Pi_1$ map  $(z,w)\in \Sigma\times \Delta$ to $z\in Y_2$ (viewing the Heegaard surface as being embedded in $Y_2$). Similarly, if $(x,t)\in U_{\a}\times e_{\a}$, we set $\Pi_1(x,t)=x\in Y_2$, viewing $U_{\a}$ as being embedded in $Y_2$. Analogously, if $(x,t)\in U_{\b'}\times e_{\b'}$ or $U_{\b''}\times e_{\b''}$, then $\Pi_1(x,t)=x$, where we are viewing $U_{\b'}=U_{\b''}\subset Y_2$.

The map $\Pi_2$ is similar. We view $X_{\a,\b,\b',\b''}$ as the union of $X_{\a,\b,\b''}$ and $X_{\b,\b',\b''}$. On $X_{\a,\b,\b''}$, we define $\Pi_2$ to be $I_0$ (noting that $X_{\a,\b,\b''}=X_{\a,\b,\b'}$) except on a small collar neighborhood of $Y_{\b,\b'}$. Similarly, $\Pi_2$ maps $X_{\b,\b',\b''}$ into $Y_{\b,\b'}$, in a similar manner to how $\Pi_1$ mapped $X_{\a,\b',\b''}$ into $Y_2$. We illustrate the maps $\Pi_1$ and $\Pi_2$ in Figure~\ref{fig:60}. Some care is required in defining the smooth structure at the corner points of $X_{\a,\b,\b',\b''}$ to ensure that $\Pi_i$ are smooth, though we leave these details to the reader.

 \begin{figure}[ht!]
 	\centering
 	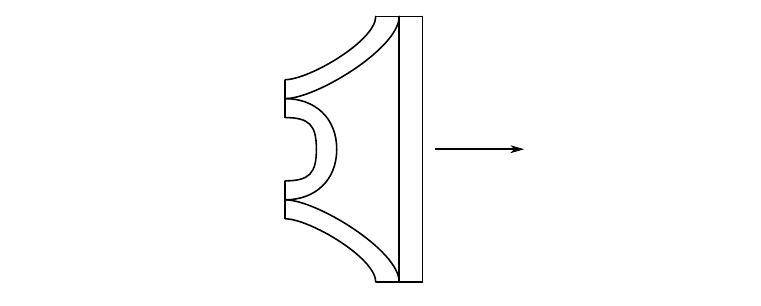
 	\caption{\textbf{A schematic of the maps $\Pi_1$ and $\Pi_2$.}}
 	\label{fig:60}
 \end{figure}

We note that $\Pi_1,$ $I$ and $\Pi_2$ are homotopic as continuous maps, relative to $Y_{\a,\b}$ and $Y_{\a,\b''}$. Let 
\[
h_1\colon X_{\a,\b,\b',\b''}\times [0,1]\to W(Y_1,\bL)
\]
 be a smooth homotopy between $\Pi_1$ and $I$ such that $h_1(z,0)=I(z)$ and $h_1(z,1)=\Pi_1(z)$. Also, let $h_2$ be a smooth homotopy between $\Pi_2$ and $I$.

The chain homotopy appearing in equation~\eqref{eq:perturbed-associativity-example} is defined by counting holomorphic rectangles with perturbed coefficients via the formula
\[
H(-)=e^z\cdot  H_{\a,\b,\b',\b'', \frS;\omega}(-,\Theta_{\b,\b'}^{\omega}, \Theta_{\b',\b''}^{\omega}),
\]
for some $z\in \R$ (to be determined in the proof). Note that the $\omega$-canonical classes in the definition of $H$ are not the same $\omega$-canonical classes used in the definition of the maps in equation~\eqref{eq:perturbed-associativity-example}, since the 2-forms are different. Instead, canonical classes appearing as inputs of $H$ are computed with respect to the embedding $I$ of $X_{\a,\b,\b',\b''}$ into $W(Y_1,\bL)$.

Suppose now that $\psi_{\a,\b,\b'}\in \pi_2(\xs,\theta_{\b,\b'},\ys)$ and $\psi_{\a,\b',\b''}\in \pi_2(\ys,\theta_{\b',\b''}, \zs)$ are homotopy classes of triangles, and consider the rectangle class $\psi_{\a,\b,\b'}*\psi_{\a,\b',\b''}$. Write $S$ for $\tilde{\cD}(\psi_{\a,\b,\b'}* \psi_{\a,\b',\b''})$, where we have performed the coning of $S$ in  $X_{\a,\b,\b',\b''}$. Stokes' theorem implies that
\[
\int_{\d(S\times [0,1])} h_1^*(\omega)=\int_{S\times [0,1]} h_1^* (d \omega)=0,
\]
since $\omega$ is closed. Of course,
\[
\d (S\times [0,1])=\d S\times [0,1]+S\times \{1\}-S\times \{0\}.
\]
We note that 
\begin{equation}
\d S=-\gamma_{\xs}-\gamma_{\theta_{\b,\b'}}-\gamma_{\theta_{\b',\b''}}+\gamma_{\zs}+C_{\a}+C_{\b}+C_{\b'}+C_{\b''}.\label{eq:boundary-coned-class-S}
\end{equation}
In equation~\eqref{eq:boundary-coned-class-S}, $\gamma_{\xs}\subset Y_{\a,\b}$ denotes the 1-chain obtained by extending $\xs$ into $U_{\a}$ and $U_{\b}$ via the radial foliations on the chosen compressing disks. The other $\g$ terms are defined similarly.  Also, $C_{\a}$ denotes the 1-chain $c_{\a}\times e_{\a}$, where $c_\a$ is the sum of the center points of each compressing disk in $U_{\a}$, and $e_{\a}$ is the $\as$-segment of $\d \Box$. The other $C$ terms are defined analogously. Note that the integrals over $-\g_{\xs}\times [0,1]$ and $\g_{\zs}\times [0,1]$ vanish, since $h_1$ is constant on $Y_{\a,\b}$ and $Y_{\a,\b''}$.

We note that $\int_{S\times \{0\}} h_1^*(\omega)$ is the $\omega$-area of the quadrilateral class $\psi_{\a,\b,\b'}*\psi_{\a,\b',\b''}$, as counted by the quadrilateral map, while $\int_{S\times \{1\}} h_1^*(\omega)$ is the sum of the $\omega$-areas of $\psi_{\a,\b,\b'}$ and $\psi_{\a,\b',\b''}$, as would be contributed in the composition 
\[
\Psi_{\a;\omega}^{\b'\to \b''}\circ F_{\a,\b,\b',\frS;\omega}(-,\Theta_{\b,\b'}^{\omega}).
\]

Next, we note that $h^*_1(\omega)|_{Y_{\b,\b'}\times [0,1]}$ is a boundary, since $\omega$ extends over a 1-handlebody which bounds $Y_{\b,\b'}$. For the same reason, $h^*_1(\omega)|_{Y_{\b',\b''}\times [0,1]}$ is also a boundary. Let $\eta_{\b,\b'}$ and $\eta_{\b',\b''}$ be 1-forms on $Y_{\b,\b'}\times [0,1]$ and $Y_{\b',\b''}\times [0,1]$ such that 
\[
d\eta_{\b,\b'}=h^*_1(\omega)|_{Y_{\b,\b'}\times [0,1]}, \quad \text{and} \quad d\eta_{\b',\b''}=h^*_1(\omega)|_{Y_{\b',\b''}\times [0,1]}.
\]
 Hence
 \[
\int_{\g_{\theta_{\b',\b''}}\times [0,1]}h_1^*(\omega)=\int_{\g_{\theta_{\b',\b''}}\times [0,1]} d \eta_{\b',\b''}=\int_{\g_{\b',\b''}\times \{1\}-\g_{\b',\b''}\times \{0\}+\d \g_{\b',\b''}\times [0,1]} \eta_{\b',\b''}. 
 \]
 The integral $\int_{\g_{\b',\b''}\times \{1\}} \eta_{\b',\b''}$ is the quantity used to define the $\omega$-canonical class $\Theta_{\b',\b''}^{\omega}$ in the definition of the transition map $\Psi_{\a;\omega}^{\b'\to \b''}$. The integral $\int_{\g_{\b',\b''}\times \{0\}} \eta_{\b',\b''}$ is the quantity used to define the $\omega$-canonical class $\Theta_{\b',\b''}^{\omega}$ when it appears as the input for the quadrilateral map. A similar pattern holds for $\int_{\gamma_{\theta_{\b,\b'}}\times [0,1]} h_1^*(\omega)$. 
 
 The remaining area terms are
 \begin{equation}
 \int_{(C_{\a}+C_{\b}+C_{\b'}+C_{\b''})\times [0,1]} \omega+ \int_{(\d \g_{\b,\b'})\times [0,1]} \eta_{\b,\b'} +\int_{(\d \g_{\b',\b''})\times [0,1]} \eta_{\b',\b''}. \label{eq:remaining-terms-Pi-1}
 \end{equation}
Importantly, equation~\eqref{eq:remaining-terms-Pi-1} is independent of the choices of triangle classes and intersection points.

One may perform a similar analysis using $\Pi_2$ and $h_2$, and relate the $\omega$-area of a rectangle class which decomposes as a splice of two triangle classes $\psi_{\a,\b,\b''}*\psi_{\b,\b',\b''}$, with the $\omega$-areas of $\psi_{\a,\b,\b''}$ and $\psi_{\b,\b',\b''}$.

 By counting the ends of moduli spaces of quadrilaterals, weighted by these areas, one obtains that there are constants $z_1,z_2\in \R$ such that
\[
\begin{split}
&e^{z_1}\cdot \Psi_{\a;\omega}^{\b\to \b'} \circ F_{\a,\b,\b',\frS;\omega}(-,\Theta_{\b,\b''}^\omega)+e^{z_2}\cdot F_{\a,\b,\b'',\frS;\omega}(-,F_{\b,\b',\b'';\omega}(\Theta_{\b,\b'}^{\omega}, \Theta_{\b',\b''}^{\omega}))\\
=&[\d, H_{\a,\b,\b',\b'',\frS;\omega}(-,\Theta_{\b,\b'}^{\omega},\Theta_{\b',\b''}^{\omega})].
\end{split}
\]
By Lemma~\ref{lem:omega-canonical-class-well-def}, $F_{\b,\b',\b'';\omega}(\Theta_{\b,\b'}^{\omega}, \Theta_{\b',\b''}^{\omega})\doteq\Theta_{\b,\b''}^{\omega}$, so the proof of equation~\eqref{eq:perturbed-associativity-example} is complete.
\end{proof}

For a full proof of Lemma~\ref{lem:well-defined-spinc-sum}, there are additional steps which need to be verified. One must consider more general changes between Heegaard triples which are subordinate to a bouquet of the framed link. Additionally, one must consider changing the Morse function $f$ via cancellations of index 1/2 or index 2/3 pairs of critical points, as well as handleslides amongst the index 2 critical points. The remaining details follow closely Ozsv\'{a}th and Szab\'{o}'s original proof \cite{OSTriangles}, while keeping track of areas. This analysis in the perturbed setting is carried out in \cite{JuhaszZemkeConcordanceSurgery}*{Section~7}, and is a similar flavor to the step described above.

We now state several additional properties. If $\omega$ vanishes on $\d W$, then by a simple adaptation of the argument in \cite{JuhaszZemkeConcordanceSurgery}*{Section~3}, we have
\begin{equation}
F_{W,\Gamma,\frS;\omega}^B\doteq \sum_{\frs\in \frS} e^{\langle (\frs-\frs_0)\cup \omega,[W,\d W]\rangle} \cdot (F_{W,\Gamma, \frs}^B\otimes \id_{\Lambda}),
\label{eq:sum-over-spin^c-structures}
\end{equation}
where $\frs_0$ is any choice of base $\Spin^c$ structure.

Another relation concerns the effect of changing the 2-form by a boundary. If $\omega$ is a closed 2-form on $W$, and $\eta$ is a 1-form which vanishes on $\d W$, then
\begin{equation}
F_{W,\Gamma,\frS;\omega}^B\doteq F_{W,\Gamma,\frS;\omega+d\eta}^B. \label{eq:change-2-form-boundary}
\end{equation}
See \cite{JuhaszZemkeConcordanceSurgery}*{Lemma~3.4} for a proof.

The refinement over sets of $\Spin^c$ structures in equation~\eqref{eq:refinement-spinc-sets-graph-cob} is not entirely sufficient for our purposes, so we define several further refinements. Suppose we have an $n$-fold composition of graph cobordisms $(W,\Gamma)=(W_n,\Gamma_n)\circ \cdots \circ (W_1,\Gamma_1)$, where each $W_i$ is a cobordism from $Y_{i-1}$ to $Y_i$. For notational simplicity, we assume that the graphs are all arcs, which we will omit from the notation. Let $\omega$ be a closed 2-form on $W$.

 Suppose we have a chosen subset
\[
\frU\subset \Spin^c(W_n)\times \cdots \times \Spin^c(W_1).
\]
We say that $\frU$ is \emph{$\omega$-compatible} if whenever $\omega$ is non-zero on a component $Y$ of $\d W_i$, all elements of $\frU$ have the same restriction to $Y$.

If $\frU$ is $\omega$-compatible, then we  define a decomposed cobordism map
\begin{equation}
F_{W_n|\cdots |W_1, \frU;\omega}\colon \boldCF^-(Y_0,\frs_0;\Lambda_{\omega|_{Y_0}})\to \boldCF^-(Y_n,\frs_n;\Lambda_{\omega|_{Y_n}}), \label{eq:decomposed-map-def-perturbed}
\end{equation}
as follows. The definition is essentially the sum over $\frU$ of the composition of the cobordism maps. In more detail, the map is constructed by picking one sequence of Heegaard diagrams and Heegaard triples to compute the cobordism map for each $W_i$. Since we are using coefficients in $\Lambda[[U]]$, a single sequence of Heegaard diagrams and triples can be used to compute the maps for all $\Spin^c$ structures, as long as we use weakly admissible Heegaard diagrams and triples. With respect to this fixed sequence of diagrams, we define
\begin{equation}
F_{W_n|\cdots|W_1,\frU;\omega}:=\sum_{(\fru_n,\dots, \fru_1)\in \frU} F_{W_n,\fru_n;\omega_n}\circ \cdots \circ F_{W_1,\fru_1;\omega_1},
\label{eq:decomposed-perturbed-map}
\end{equation}
where each $F_{W_i,\fru_i;\omega_i}$ is a composition as in equation~\eqref{eq:F-W-S-w-def}, together with change of diagrams maps (which we have omitted). Also $\omega_i=\omega|_{W_i}$.

It is also helpful to define a mixed version of the decomposed map. Suppose that $W_n,\dots, W_1$ is a sequence of cobordisms, as above, and $\frU$ is an $\omega$-compatible subset of $\Spin^c(W_n)\times \cdots \times \Spin^c(W_1)$. Suppose additionally that at least one of the following holds:
\begin{enumerate}[label=(M-\arabic*), ref=M-\arabic*,leftmargin=*, widest=IIII]
\item\label{pert:mixed-map-1} There is a $k\in \{2,\dots, n-1\}$ such that if $(\fru_n,\dots, \fru_1)\in \frU$, then $F_{W_n,\fru_n;\omega_n}\circ \cdots \circ F_{W_k,\fru_k;\omega_k}$ and $F_{W_{k-1}, \fru_{k-1};\omega_{k-1}}\circ \cdots \circ F_{W_{1}, \fru_1;\omega_1}$ both vanish on $\boldHF^\infty$.
\item\label{pert:mixed-map-2} $\boldHF^\infty(Y_0,\frs_0;\Lambda_{\omega|_{Y_0}})=\{0\}$ or $\boldHF^\infty(Y_n, \frs_n;\Lambda_{\omega|_{Y_n}})=\{0\}$.
\end{enumerate}
If either~\eqref{pert:mixed-map-1} or~\eqref{pert:mixed-map-2} holds, then there is a mixed map
\begin{equation}
F^{\mix}_{W_n|\cdots|W_1,\frU;\omega}\colon \boldCF^-(Y_0,\frs_0;\Lambda_{\omega|_{Y_0}})\to \boldCF^+(Y_{n}, \frs_n;\Lambda_{\omega|_{Y_n}}). \label{eq:mixed-perturbed-map}
\end{equation}
The mixed map is defined similarly to equation~\eqref{eq:decomposed-perturbed-map}, but with $\delta^{-1}$ inserted between the maps for $W_k$ and $W_{k-1}$ if \eqref{pert:mixed-map-1} is satisfied, and a $\delta^{-1}$ at the start or end if instead~\eqref{pert:mixed-map-2} is satisfied. Note that the resulting map is independent of the choice of $k$ where $\delta^{-1}$ is inserted, since the following diagram commutes on the nose (not up to an overall factor of $e^z$):
\[
\begin{tikzcd}[row sep=1cm,column sep=.35cm]
\cdots \ar[r]
&\boldHF^+(Y_i,\frs_i;\Lambda_{\omega|_{Y_i}})\ar[r, "\delta"]\ar[d, "F_{W_{i+1}, \fru_{i+1};\omega_{i+1}}"]
&\boldHF^-(Y_i,\frs_i;\Lambda_{\omega|_{Y_i}})\ar[r]\ar[d,"F_{W_{i+1}, \fru_{i+1};\omega_{i+1}}"]
&\boldHF^\infty(Y_i,\frs_i;\Lambda_{\omega|_{Y_i}}) \ar[r] \ar[d, "F_{W_{i+1}, \fru_{i+1};\omega_{i+1}}"]
&\cdots
\\
\cdots \ar[r]&
\boldHF^+(Y_{i+1},\frs_{i+1};\Lambda_{\omega|_{Y_{i+1}}})\ar[r, "\delta"]
&\boldHF^-(Y_{i+1},\frs_{i+1};\Lambda_{\omega|_{Y_{i+1}}})\ar[r]
&\boldHF^\infty(Y_{i+1},\frs_{i+1};\Lambda_{\omega|_{Y_{i+1}}}) \ar[r]
&\cdots
\end{tikzcd}
\]

The same argument as in Lemma~\ref{lem:well-defined-spinc-sum} implies that the decomposed maps $F_{W_n|\cdots|W_1,\frU;\omega}$ and $F_{W_n|\cdots|W_1, \frU;\omega}^{\mix}$ are also well-defined up to an overall factor of $e^z$.  The natural extension works to construct decomposed graph cobordism maps $F_{W_n|\cdots|W_1,\Gamma, \frU;\omega}^B$.

\subsection{Composition laws}
We now state several versions of the composition law. The most basic version of the composition law is the following:

\begin{lem}\label{lem:perturbed-composition-law-1}
 Suppose that 
\[
(W_1,\Gamma_1)\colon (Y,\ws)\to (Y',\ws'),\quad \text{and}\quad (W_2,\Gamma_2)\colon (Y',\ws')\to (Y'',\ws'')
\]
are graph cobordisms.  Write $(W,\Gamma)=(W_2,\Gamma_2)\circ (W_1,\Gamma_1)$. If $\frS_1\subset \Spin^c(W_1)$ and $\frS_2\subset \Spin^c(W_2)$ are sets of $\Spin^c$ structures which all have the same restrictions to $Y$, $Y'$ and $Y''$, write $\frS(W,\frS_1,\frS_2)$ for the set of $\Spin^c$ structures on $W$ which restrict to an element of $\frS_1$ and an element of $\frS_2$. Then
\begin{equation}
F_{W,\Gamma,\frS;\omega}^B\dotsimeq F_{W_2,\Gamma_2,\frS_2;\omega_2}^B\circ F_{W_1,\Gamma_1,\frS_1;\omega_1}^B,
\label{eq:perturbed-composition-law}
\end{equation}
where $\omega_i=\omega|_{W_i}$. If $\omega$ vanishes on one of the 3-manifolds $Y$, $Y'$ or $Y''$, we may relax the requirement that all elements of $\frS_1$ and $\frS_2$ have the same restriction to that 3-manifold. 
\end{lem}

Lemma~\ref{lem:perturbed-composition-law-1} follows by adapting the standard proofs of the composition law \cite{OSTriangles}*{Theorem~3.4} \cite{ZemGraphTQFT}*{Theorem~C} to the perturbed setting by using a perturbed version of associativity, as in our proof-sketch of Lemma~\ref{lem:well-defined-spinc-sum}, above. See also  \cite{JuhaszZemkeConcordanceSurgery}*{Proposition~3.2} for a related discussion.

The decomposed maps in equation~\eqref{eq:decomposed-map-def-perturbed} satisfy the following composition laws:
\begin{lem}\label{lem:perturbed-composition-law-2} Suppose that $(W,\Gamma)$ is decomposed into a composition $(W_n,\Gamma_n)\circ \cdots \circ (W_1,\Gamma_1)$ and $\omega$ is a closed 2-form on $W$.
\begin{enumerate}
\item Suppose $n<k<1$, and
\[
\frU\subset\Spin^c(W_n)\times \cdots \times \Spin^c(W_k)\quad \text{and} \quad \frT\subset \Spin^c(W_{k-1})\times \cdots \times \Spin^c(W_1)
\]
are $\omega$-compatible subsets. Write $\Gamma_{n,\dots, k}$ for $(W_n\circ \cdots\circ  W_k) \cap \Gamma$, and define $\Gamma_{k-1,\dots, 1}$ similarly. Define $\omega_{n,\dots, k}$ and $\omega_{k-1,\dots, 1}$ analogously, by restriction. Then
\[
F_{W_n|\cdots|W_k, \Gamma_{n,\dots, k},\frU;\omega_{n,\dots, k}}^B\circ F_{W_{k-1}|\cdots|W_1, \Gamma_{k-1,\dots, 1},\frT;\omega_{k-1,\dots, 1}}^B\dotsimeq F_{W_n|\cdots |W_1,\Gamma, \frU\times \frT;\omega}^B.
\]
\item If $\frU\subset \Spin^c(W_n)\times \cdots \times \Spin^c(W_1)$ is $\omega$-compatible, then
\[
F_{W_n|\cdots| W_1,\Gamma;\frU;\omega}^B\dotsimeq F_{W,\Gamma, \frS(W, \frU);\omega}^B
\]
where $\frS(W,\frU)$ denotes the set of $\Spin^c$ structures on $W$ whose restriction lies in $\frU$.
\end{enumerate}
\end{lem}

Similar composition laws hold for the mixed maps for decomposed cobordisms.

\subsection{The perturbed trace formula}

In this section, we describe a perturbed version of the trace cobordism formula, Theorem~\ref{thm:dualityv1}. The key step is to describe a perturbed version of the triangle cobordism formula of Theorem~\ref{thm:triplesandgraphcobordismmaps}. Suppose $(\Sigma,\as,\bs,\gs,\ws)$ is a Heegaard triple and $\omega$ is a closed 2-form on $X_{\a,\b,\g}$. As described in Section~\ref{sec:perturbed-cobordism-maps-overview}, if $\frs\in \Spin^c(X_{\a,\b,\g})$, then there is a perturbed holomorphic triangle map
\[
F_{\a,\b,\g,\frs;\omega}\colon \CF^-(\Sigma,\ve{\alpha},\ve{\beta},\frs_{\a,\b};\Lambda_{\omega_{\a,\b}})\otimes_{\Lambda[U]} \CF^-(\Sigma,\ve{\beta},\ve{\gamma},\frs_{\b,\g};\Lambda_{\omega_{\b,\g}})\to \CF^-(\Sigma,\ve{\alpha},\ve{\gamma},\frs_{\a,\g};\Lambda_{\omega_{\a,\g}}),
\]
obtained by counting holomorphic triangles representing $\frs$, weighted by $e^{\tilde{A}_\omega(\psi)} U^{n_{\ws}(\psi)}$. Here, $\frs_{\a,\b}=\frs|_{Y_{\a,\b}}$ and $\omega_{\a,\b}=\omega|_{Y_{\a,\b}}$, and similarly for the $\Spin^c$ structures and 2-forms on the other two ends of $X_{\a,\b,\g}$.

Analogously, we have a perturbed graph cobordism map for $(X_{\a,\b,\g},\Gamma_{\a,\b,\g})$. The following is an analog of Theorem~\ref{thm:triplesandgraphcobordismmaps}:\begin{thm}\label{thm:perturbed-triangle-map}
 Suppose $(\Sigma,\as,\bs,\gs,\ws)$ is a strongly $\frs$-admissible Heegaard triple, and $(X_{\a,\b,\g},\Gamma_{\a,\b,\g})$ is the associated graph cobordism. Let $\omega$ denote a closed 2-form on $X_{\a,\b,\g}$. Then
 \[
F_{X_{\a,\b,\g},\Gamma_{\a,\b,\g},\frs;\omega}^B\dotsimeq F_{\a,\b,\g,\frs;\omega},
 \]
 where $\dotsimeq$ denotes chain homotopic up to an overall factor of $e^z$.
\end{thm}
\begin{proof}[Sketch of proof]
We now sketch the necessary changes needed to adapt the proof of Theorem~\ref{thm:triplesandgraphcobordismmaps} to the present setting of perturbed complexes. The argument follows the same outline as in the unperturbed setting. We first define a perturbed version of the intertwining map for connected sums, and show that it coincides with the perturbed graph cobordism map for connected sums. Then we compose the perturbed intertwining map with the 2-handle portion of the cobordism map for $(X_{\a,\b,\g},\Gamma_{\a,\b,\g})$ and use associativity to identify the composition with the perturbed holomorphic triangle map on $(\Sigma,\as,\bs,\gs,w)$. We now discuss a few more details of the proof, but will leave the remaining bookkeeping to the interested reader.

We first discuss the perturbed version of the intertwining map for connected sums.
  Suppose that $(\Sigma_1,\as_1,\bs_1,w_1)$ and $(\Sigma_2,\as_2,\bs_2,w_2)$ are Heegaard diagrams for $Y_1$ and $Y_2$, respectively, and let $X_1$ denote the 1-handle cobordism from $Y_1\sqcup Y_2$ to $Y_1\# Y_2$. We remark that the 4-manifold $X_1':=X_{\a_1\cup \a_2, \b_1\cup \a_2, \b_1\cup \b_2}$ used to define the intertwining map naturally embeds into the 1-handle cobordism $X_1$. We can see this as follows. Attach $g(\Sigma_2)$ 1-handles to  $Y_1$ and $g(\Sigma_1)$ 1-handles to $Y_2$ to obtain $Y'=Y_1\# (S^1\times S^2)^{\# g(\Sigma_2)} \sqcup Y_2\#(S^1\times S^2)^{\# g(\Sigma_1)}$, the incoming boundary of $X_1'$. A handle decomposition of $X_1'$ is described in the proof of Theorem~\ref{thm:triplesandgraphcobordismmaps}. Namely, it consists of one 1-handle, connecting the 2-components, as well as $g(\Sigma_1)+g(\Sigma_2)$ 2-handles. Using the explicit description therein of the 2-handles given in the proof of Theorem~\ref{thm:triplesandgraphcobordismmaps}, it is easy to see that the 2-handles may be chosen so that each is dual to the co-core of one of the 1-handles attached to $Y_1$ or $Y_2$. In particular, all the 1-handles and 2-handles cancel, except for the single 1-handle which connects the two components. 
  
  Hence, given a closed 2-form on $X_1$, we can naturally restrict $\omega$ to $X_1'$, and define a perturbed intertwining map. The same associativity argument as in Proposition~\ref{prop:OSmapsaregraphcobmaps} implies that the perturbed intertwining map coincides with the perturbed graph cobordism map $G_1^B$.

The next step of the proof of Theorem~\ref{thm:triplesandgraphcobordismmaps} is to consider the composition of the perturbed 1-handle map $G^B_{1;\omega|_{X_1}}$, connecting $(\Sigma,\as,\bs,w)$ and $(\bar{\Sigma}, \bar{\gs}, \bar{\bs},w)$, followed by the perturbed 2-handle map associated to the cobordism $X_{\a,\b,\g}$. The argument in the perturbed setting is not substantially different than the argument in the unperturbed setting: one uses associativity of the perturbed holomorphic polygon maps, as in the proof-sketch of Lemma~\ref{lem:well-defined-spinc-sum} above, in place of ordinary holomorphic polygon maps, following the argument of Theorem~\ref{thm:triplesandgraphcobordismmaps}. We leave the details to the interested reader.
\end{proof}

\begin{cor}
Suppose $(Y,\ws)$ is a multi-pointed 3-manifold, and let $\omega$ be a closed 2-form on $Y$. Let $\pi^*\omega$ denote the pullback of $\omega$ under the projection $Y\times [0,1]\to Y$. The perturbed trace graph cobordism $(Y\times [0,1],\ws\times [0,1];\pi^*\omega): (Y\sqcup -Y,\ws\sqcup \ws)\to \emptyset$ induces the canonical trace map
\[
\tr_{\omega}:\CF^-(Y,\ws,\frs;\Lambda_{\omega})\otimes_{\Lambda[U]} \CF^-(-Y,\ws,\frs;\Lambda_{\omega})\to \Lambda[U].
\]
 Similarly, the perturbed cotrace graph cobordism $(Y\times [0,1],\ws\times [0,1];\pi^*\omega):\emptyset\to (Y\sqcup -Y,\ws\sqcup \ws)$ induces the canonical cotrace map
\[
\cotr_{\omega}: \Lambda[U]\to \CF^-(Y,\ws,\frs;\Lambda_{\omega})\otimes_{\Lambda[U]} \CF^-(-Y,\ws,\frs;\Lambda_{\omega}).
\]
\end{cor}
\begin{proof}
The proof follows by the same argument as the proof of Theorem~\ref{thm:dualityv1}, using Theorem~\ref{thm:perturbed-triangle-map} in place of Theorem~\ref{thm:triplesandgraphcobordismmaps}.
\end{proof}

\subsection{Computing mixed invariants using perturbed coefficients}

In this Section, we discuss using perturbed coefficients to compute the mixed invariants.

\begin{define}
 Suppose that $X^4$ is a closed, oriented 4-manifold. Suppose $\cC=(N,\Gamma,\frs_1,\frs_2)$ is a graph-decorated cut of $X$ (i.e. is a tuple satisfying ~\eqref{def:cut-1}--\eqref{def:cut-3}), and $\omega$ is a closed 2-form on $X$.   We say that $\cC$ is an \emph{$\omega$-admissible cut} if $F_{W_i,\Gamma_i, \frs_i;\omega_i}^{B}$ vanishes on $\boldHF^\infty$ with coefficients in $\Lambda[[U]]$, for $i=1,2$, and at least one of the following holds:
 \begin{enumerate}[label=($\omega$-A-\arabic*), ref=$\omega$-A-\arabic*,leftmargin=*, widest=IIII]
 \item\label{omega-A-1} There is a closed, oriented surface $\Sigma\subset X\setminus N$ of positive square.
 \item\label{omega-A-2} $\omega$ restricts non-trivially to $H^2(N;\R)$.
 \item \label{omega-A-3} $c_1(\frs)$ is non-torsion on $N$. 
 \end{enumerate}
\end{define}

If $\cC=(N,\Gamma,\frs_1,\frs_2)$ is an $\omega$-admissible cut of $X$, then we may define a perturbed mixed invariant
\[
\Phi_{X,\cC;\omega}\in \Lambda,
\]
by adapting equation~\eqref{eq:mixed-invariant-graph} in the obvious way.

\begin{rem} If $c_1(\frs)$ is non-torsion on $N$, or $\omega$ is non-trivial in $H^2(N;\R)$, then $\boldHF^\infty(N,\frs;\Lambda_{\omega})=0$.  See \cite{JabukaMarkProduct}*{Corollary~8.8}. 
\end{rem}

We have the following analog of Theorem~\ref{thm:mixed-invariants-from-graphs}:
\begin{thm}
\label{thm:perturbed-mixed-invariants-graphs} Suppose that $X^4$ is a closed, oriented 4-manifold with $b_2^+(X)>1$, $\omega$ is a closed 2-form, and $\cC=(N,\Gamma,\frs_1,\frs_2)$ is an $\omega$-admissible, graph-decorated cut of $X$. Then
\begin{equation}
\Phi_{X,\cC;\omega}\doteq \sum_{\substack{\frt\in \Spin^c(X)\\ \frt|_{W_1}=\frs_1\\ 
\frt|_{W_2}=\frs_2}} e^{\langle (\frt-\frt_0) \cup \omega, [X] \rangle}\cdot \Phi_{X,\frt}(\xi_1\wedge\cdots \wedge \xi_n),
\label{eq:perturbed-mixed-graph-statement}
\end{equation}
where $\frt_0$ is any chosen $\Spin^c$ structure on $X$, and $\xi_1,\dots, \xi_n\in H_1(X)/\Tors$ are the classes of the loops of the graph $\Gamma$.
\end{thm}
\begin{proof}We focus on~\eqref{omega-A-2}. Cases~\eqref{omega-A-1} and~\eqref{omega-A-3} follow from the same line of reasoning. The proof follows from almost exactly the same line of reasoning as our proof of Theorem~\ref{thm:mixed-invariants-from-graphs}, though care needs to be taken with regards to sums of perturbed cobordism maps, since the cobordism maps are only well-defined up to an overall factor of $e^z$. To handle this, we work with the cobordism maps which are indexed over sets of $\Spin^c$ structures, as described in Section~\ref{sec:perturbed-cobordism-maps-overview}. Additionally, one also needs to use the maps for decomposed 4-manifolds, and their mixed versions, as in equations~\eqref{eq:decomposed-perturbed-map} and~\eqref{eq:mixed-perturbed-map}. The necessary composition laws are in Lemmas~\ref{lem:perturbed-composition-law-1} and ~\ref{lem:perturbed-composition-law-2}.

There is one additional caveat. Care must be taken when cutting along 3-manifolds where multiple 3-dimensional $\Spin^c$ structures are encountered simultaneously, because of naturality issues (c.f. Remark~\ref{rem:naturality}). The only 3-manifold where this occurs is $N''$, which is the boundary of a neighborhood of a surface of positive square. However, the inclusion map $H_2(N'')\to H_2(X)$ is trivial, and hence $H^2(X;\R)\to H^2(N'';\R)$ is trivial. In particular, $\omega$ is null-homologous on $N''$. Given this fact, it is easy to find a 1-form $\eta$ on $X$ such that $\omega+\d \eta$ vanishes on a neighborhood of $N''$ in $X$. By equation~\eqref{eq:change-2-form-boundary}, changing $\omega$ by a boundary only changes $\Phi_{X,\cC;\omega}$ by an overall factor of $e^{z}$. Compare \cite{JuhaszZemkeConcordanceSurgery}*{Lemma~4.1}. In particular,  we may assume the group corresponding to $N''$ is perturbed by the zero 2-form, and hence we can work with multiple $\Spin^c$ structures on $N''$ at the same time.

Paying attention to the above considerations, the proof of the present theorem proceeds nearly identically to Theorem~\ref{thm:perturbed-mixed-invariants-graphs} until equation~\eqref{eq:almost-sum-of-mixed-invariants}. By the same logic, one obtains that
\begin{equation}
\Phi_{X,\cC;\omega}\doteq \left\langle \Theta^-, F^{\mix}_{W_{3}''|W_{1,2}'', \frU'';\omega}(\xi_1\wedge\cdots \wedge \xi_n)\right\rangle.
\label{eq:perturbed-mixed-invariant-almost-done}
\end{equation}
Since $\omega$ vanishes on the boundaries of $W_{3}''$ and $W_{1,2}''$, we may use equation~\eqref{eq:sum-over-spin^c-structures} to decompose each cobordism map in equation~\eqref{eq:perturbed-mixed-invariant-almost-done} as a weighted sum of the ordinary cobordism maps, which is easy to rearrange into equation~\eqref{eq:perturbed-mixed-graph-statement}.
\end{proof}

\subsection{Mapping tori and manifolds with non-separating cuts}

We have the following perturbed version of Theorem~\ref{thm:mixedinvariantmappingtorus}:

\begin{customthm}{\ref{thm:non-separating-cut-mixed-invariant}}
Suppose $X^4$ is a closed, oriented 4-manifold with $b_2^+(X)> 1$ and $Y^3\subset X$ is a closed, oriented, connected and non-separating 3-dimensional submanifold. Write $W$ for the cobordism obtained by cutting $X$ along $Y$. Suppose $\frs\in \Spin^c(W)$ is a $\Spin^c$ structure whose restrictions to both copies of $Y$ in $\d W$ agree, $\omega$ is a closed 2-form on $W$, and $\xi\in \Lambda^*(H_1(W)/\Tors)\otimes \bF_2[U]$. Furthermore, suppose that at least one of the following holds:
\begin{enumerate}
\item $c_1(\frs)$ is non-torsion on $Y$,
\item $[\omega|_Y]\neq 0\in H^2(Y;\R)$, or
\item $b_2^+(W)>0$.
\end{enumerate}
 Then the $\bF_2$ mixed invariants of $X$ satisfy
\[
\begin{split}
&\Lef\big(F_{W,\frs;\omega|_{W}}(\xi\otimes -):\HF^+_{\red}(Y,\frs|_{Y};\Lambda_{\omega|_Y})\to \HF^+_{\red}(Y,\frs|_{Y};\Lambda_{\omega|_Y})\big)\\
\doteq&\sum_{\substack{\frt\in \Spin^c(X)\\ \frt|_W=\frs}}e^{\langle (\frt-\frt_0)\cup \omega, [X]\rangle }\cdot \Phi_{X,\frt}(\xi),
\end{split}
\]
where $\frt_0$ denotes any choice of base $\Spin^c$ structure.
\end{customthm}
\begin{proof} The proof follows from the same line of reasoning as Theorem~\ref{thm:mixedinvariantmappingtorus}, but using Theorem~\ref{thm:perturbed-mixed-invariants-graphs} instead of Theorem~\ref{thm:mixed-invariants-from-graphs}.
\end{proof}

An immediate corollary is the following:

\begin{customcor}{\ref{cor:perturbed-S1xY}} Suppose that $Y^3$ has $b_1(Y)>1$, $\frs\in \Spin^c(Y)$, and $\omega$ is a closed 2-form which induces a non-zero element of $H^2(Y;\R)$. Then
\[
\Phi_{Y\times S^1, \pi^*(\frs)}(1)=\chi(\HF^+_{\red}(Y,\frs;\Lambda_\omega)),
\]
and $\Phi_{Y\times S^1,\frt}(1)=0$ if $\frt$ is not $S^1$-invariant.
\end{customcor}

\bibliographystyle{custom} 
\bibliography{biblio}

\end{document}